\def\Aut{\operatorname{Aut}}
\def\ker{\operatorname{ker}}
\def\dim{\operatorname{dim}}
\def\ad{\operatorname{ad}}
\def\rk{\operatorname{rk}}
\def\Ber{\operatorname{Ber}}
\def\Ind{\operatorname{Ind}}
\def\Ad{\operatorname{Ad}}
\def\Hom{\operatorname{Hom}}
\def\Rep{\operatorname{Rep}}
\def\C{\mathbb{C}}
\def\R{\mathbb{R}}
\def\N{\mathbb{N}}
\def\Z{\mathbb{Z}}
\def\F{\mathbb{F}}
\def\G{\mathbb{G}}
\def\TT{\mathcal{T}}
\def\LL{\mathcal{L}}
\def\OO{\mathcal{O}}
\def\KK{\mathcal{K}}
\def\BB{\mathcal{B}}
\def\HH{\mathcal{H}}
\def\UU{\mathcal{U}}
\def\XX{\mathcal{X}}
\def\EE{\mathcal{E}}
\def\FF{\mathcal{F}}
\def\VV{\mathcal{V}}
\def\TT{\mathcal{T}}
\def\DD{\mathcal{D}}
\def\GG{\mathcal{G}}
\def\NN{\mathcal{N}}
\def\PP{\mathcal{P}}
\def\CC{\mathcal{C}}
\def\a{\mathfrak{a}}
\def\b{\mathfrak{b}}
\def\c{\mathfrak{c}}
\def\m{\mathfrak{m}}
\def\n{\mathfrak{n}}
\def\p{\mathfrak{p}}
\def\q{\mathfrak{q}}
\def\g{\mathfrak{g}}
\def\h{\mathfrak{h}}
\def\r{\mathfrak{r}}
\def\k{\mathfrak{k}}
\def\l{\mathfrak{l}}
\def\s{\mathfrak{s}}
\def\o{\mathfrak{o}}
\def\z{\mathfrak{z}}
\def\vol{\mbox{vol}}
\def\d{\partial}
\def\ol{\overline}
\def\Spec{\text{Spec}}
\def\Ext{\text{Ext}}
\def\sub{\subseteq}
\def\xto{\xrightarrow}
\newtheorem{thm}{Theorem}[section]
\newtheorem{cor}[thm]{Corollary}
\newtheorem{lemma}[thm]{Lemma}
\newtheorem{prop}[thm]{Proposition}
\theoremstyle{definition}
\newtheorem{definition}[thm]{Definition}
\theoremstyle{remark}
\newtheorem{remark}[thm]{Remark}
\newtheorem{example}[thm]{Example}
\numberwithin{equation}{section}
\begin{document}

	\title{Sylow theorems for supergroups}
	
	\author{Vera Serganova, Alexander Sherman, Dmitry Vaintrob}
	
	\begin{abstract}
		We introduce Sylow subgroups and $0$-groups to the theory of complex algebraic supergroups, which mimic Sylow subgroups and $p$-groups in the theory of finite groups.  We prove that Sylow subgroups are always $0$-groups, and show that they are unique up to conjugacy.  Further, we give an explicit classification of $0$-groups which will be very useful for future applications. Finally, we prove an analogue of Sylow's third theorem on the number of Sylow subgroups of a supergroup.
	\end{abstract}
	\maketitle
	\pagestyle{plain}
	
	\section{Introduction}
	
	In this paper, we introduce \emph{0-groups} and \emph{Sylow subgroups} of supergroups, which are super-analogues of $p$-groups and Sylow subgroups of finite groups. We classify $0$-groups, and establish analogues of several basic statements concerning Sylow subgroups in the supergroup setting.  These include the uniqueness of Sylow subgroups up to conjugacy.
	
	\subsection{Finite groups} 
	Sylow subgroups, and more generally $p$-local subgroups, play a central, if mysterious, role in the representation theory of finite groups. As a simple example, consider the McKay conjecture (see \cite{N}): the number of simple representations of a finite group $G$ whose dimension is coprime to $p$ should be the same as that of the normalizer of a $p$-Sylow subgroup.

	Sylow subgroups, and more generally $p$-local subgroups, are particularly important when studying modular (i.e.~characteristic $p$) representations of a finite group $G$. Any block of representations determines a defect subgroup $H$ (which is a $p$-group), and restriction to the the defect is faithful on derived categories.  For instance, a $p$-Sylow subgroup is a defect subgroup of the principal block, and thus detects all the homological complexity of $\Rep_{\Bbbk}G$ ($\Bbbk$ an algebraically closed field of characteristic $p$).

	Such guiding principles in modular representation theory often go under the heading of local representation theory. Famous results and conjectures arising from these ideas include Green’s correspondence, the Broué abelian defect conjecture, and the Alperin weight conjecture. In addition, the study of $p$-local subgroups plays a key role in understanding the Balmer spectrum of $\Rep_{\Bbbk}G$.
	
	\subsection{Supergroups} One may initially look suspiciously upon attempted analogies between finite groups and supergroups.  Supergroups are geometric in nature, usually involving high-dimensional group manifolds, and are not determined by their closed points.  Nevertheless, as we seek to demonstrate here and in future work, there are clear (if not always perfect) analogies, and much can be understood by naively following them.
	
	Perhaps the first indications of an analogy appear in the works of Boe, Kujawa and Nakano on cohomological support varieties (\cite{BKN}).  For a simple Lie superalgebra $\g$, they define a detecting subalgebra $\mathfrak{f}\sub\g$, which conjecturally controls the cohomological support variety of a $\g$-module, similarly to how a Sylow subgroup would.  Their ideas led them more recently to a description of the Balmer spectrum for $\Rep\g\l(m|n)$, see \cite{BKN2}.  In a future work, we will use our ideas to answer many questions raised in \cite{BKN} about cohomological support varieties
	
	\subsection{Sylow subgroups}
	We take a broader approach to these questions.  Let $\GG$ be a complex algebraic supergroup such that $\GG_0$, the even underlying algebraic group of $\GG$, has reductive identity component. Such supergroups are called quasireductive. Then $\Rep\GG$, the category of all $\GG$-modules (not necessarily finite-dimensional) is a Frobenius category, just as $\Rep_{\Bbbk} G$ is.  In fact, every finitely generated Frobenius symmetric tensor category of moderate growth over the complex numbers is equivalent to the category of finite-dimensional representations of some quasireductive supergroup.  This follows from Deligne's theorem \cite{D} and Prop.~2.3.1 of \cite{ES}.

	In \cite{SSh}, the first two named authors introduced the notion of a \emph{splitting subgroup} $\KK\sub\GG$, which analogizes the condition of containing a Sylow subgroup.  Namely, we require that $\KK$ is quasireductive and that the restriction functor gives a faithful embedding of $\DD^b(\Rep\GG)$ inside $\DD^b(\Rep\KK)$. 
	From this one may define a \emph{Sylow subgroup} of $\GG$ to be a minimal splitting subgroup of $\GG$. With this definition, the existence of Sylow subgroups is a trivial matter.
	
	In \cite{SSh} and \cite{SV}, nontrivial splitting subgroups were constructed for simple supergroups by computing volumes of certain homogeneous superspaces.  Already these results led to strong projectivity criterion for simple supergroups, as explained in \cite{SV}. However, the questions of whether these subgroups are Sylow, i.e.~minimal, and further if they are unique up to conjugacy, were not addressed.
	
	In this paper we deal with these questions for arbitrary quasireductive supergroups.  Namely, we prove the following:
	
	\begin{thm}\label{thm intro 1}
		Let $\GG$ be quasireductive.  Then all Sylow subgroups of $\GG$ are unique up to conjugacy.
	\end{thm}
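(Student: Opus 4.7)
The plan is to combine a tower property for splitting subgroups with an orbit argument on the homogeneous superspace $\GG/\KK_1$. Let $\KK_1, \KK_2 \sub \GG$ be two Sylow subgroups. As a preliminary step I would establish the tower property: if $\HH \sub \KK \sub \GG$ with $\HH$ splitting $\KK$ and $\KK$ splitting $\GG$, then $\HH$ splits $\GG$. This is immediate from the definition, since restriction functors compose and the composite of faithful embeddings of derived categories is faithful. A direct corollary is that any splitting subgroup of a Sylow subgroup must coincide with the whole Sylow.

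The main step is then to produce $g \in \GG$ for which $\HH := \KK_2 \cap g\KK_1 g^{-1}$ is a splitting subgroup of $\KK_2$. Granted this, the proof concludes quickly: by the tower property applied to $\HH \sub \KK_2 \sub \GG$, the subgroup $\HH$ splits $\GG$, so by minimality of $\KK_2$ we get $\HH = \KK_2$, hence $\KK_2 \sub g\KK_1 g^{-1}$. Since $g\KK_1 g^{-1}$ is also a minimal splitting subgroup of $\GG$ (being conjugate to $\KK_1$) and $\KK_2$ itself splits $\GG$, minimality of $g\KK_1 g^{-1}$ forces $\KK_2 = g\KK_1 g^{-1}$.

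To construct such a $g$, I would study the action of $\KK_2$ on $\GG/\KK_1$, noting that the stabilizer at the coset $g\KK_1$ is exactly $\KK_2 \cap g\KK_1 g^{-1}$. Heuristically, since $\KK_1$ is a minimal splitting subgroup, $\GG/\KK_1$ carries precisely the even and odd directions of $\GG$ that are ``missing'' from $\KK_1$; a $\KK_2$-orbit of matching superdimension would therefore yield a stabilizer faithful enough to detect the representation theory of $\KK_2$. Existence of such an orbit should follow from a Berezinian/volume calculation on $\GG/\KK_1$, in the spirit of \cite{SSh, SV}, or alternatively from the explicit classification of $0$-groups together with the fact (established earlier in the paper) that Sylow subgroups are $0$-groups. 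The main obstacle is precisely this geometric/classificatory step: one must show that the ``type'' of a Sylow $0$-group depends only on $\GG$, and that any two $0$-groups of $\GG$ sharing this type are $\GG$-conjugate.
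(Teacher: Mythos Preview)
Your outline mimics the standard finite-group proof of Sylow II, and the formal reduction in your first two paragraphs is sound once one has the full tower property. Two technical caveats: first, the forward direction of transitivity is indeed immediate from composition of restriction functors, but to conclude $\KK_2 = g\KK_1 g^{-1}$ from $\KK_2 \sub g\KK_1 g^{-1}$ you implicitly use the \emph{reverse} direction (if $\HH\sub\KK\sub\GG$ with $\HH$ splitting in $\GG$, then $\HH$ is splitting in $\KK$), which is not formal and in the paper requires the volume multiplicativity of Proposition~\ref{prop integral bundle} and Corollary~\ref{cor  reflective transitivity}. Second, the intersection $\KK_2\cap g\KK_1 g^{-1}$ need not be quasireductive, so the splitting condition is not even defined for it without further argument.

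The genuine gap, however, is your third paragraph. In the finite-group case the orbit argument works because $|G/P_1|$ is a finite sum of orbit sizes, each a $p$-power, and the total is prime to $p$; hence some orbit has size $1$. In the super setting there is no analogous decomposition: $\KK_2$-orbits on $\GG/\KK_1$ are positive-dimensional supervarieties, generically non-closed and non-compact, and $\vol(\GG/\KK_1)$ does not break up as a sum of orbit volumes in any useful way. Your heuristic about ``matching superdimension'' has no mechanism behind it, and you correctly flag this as the main obstacle --- but you do not resolve it. Your closing sentence (``the type of a Sylow $0$-group depends only on $\GG$, and any two of the same type are conjugate'') is in fact exactly what the paper proves, but it does so by an entirely different route: a reduction via structure theory (Lemma~\ref{lemma cent extn even deriv}, Lemma~\ref{lemma bijection sylows c(g) g}) to the case of simple $\g$, followed by a case-by-case analysis for each simple type (Lemmas~\ref{lem irred}--\ref{lem periplectic}) using the representation-theoretic constraints on $0$-superalgebras from Lemma~\ref{lem simplezero}. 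None of this is an orbit or volume argument on $\GG/\KK_1$; it is explicit classification. Your proposal identifies the right target but supplies no method to reach it.
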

	
	To prove Theorem \ref{thm intro 1}, we introduce analogs of $p$-groups to the super setting, which we call \emph{0-groups}.  Just as in the case of finite groups, one can view $0$-groups as an orthogonal notion to that of a reductive supergroup $\GG$, i.e.~one for which $\Rep\GG$ is semisimple. 
	
	We use 0 in our notation on the one hand because we are working in characteristic $0$.  Another reason is as follows: one may define a $p$-group $P$ to be one for which $\vol(P/H)=|P/H|$ is divisible by $p$ for every nontrivial subgroup $H\sub P$.  Equivalently, $\vol(P/H)$ is zero in $\Bbbk$.  For $0$-groups, we have the following equivalent characterizations. (We refer to Section \ref{section char 0 gps} for definitions.)
	
	\begin{thm}[Theorem/Definition]\label{thm intro 2} Let $\OO$ be quasireductive.  Then $\OO$ is a $0$-group if one of any of the following equivalent conditions hold:  
		\begin{enumerate}
			\item $\OO$ is its own Sylow subgroup.
			\item $\vol(\OO/\KK)=0$ for any proper, quasireductive subgroup $\KK\sub\OO$ with $\Ber(\o/\k)$ a trivial $\KK$-module (a necessary condition in order for volume to be defined).
			\item $\OO$ is oddly generated and $\o_{\ol{1}}^{neat}=\{0\}$.
			\item $\OO$ is oddly generated and the nil-cone is contained in the cone of self-commuting odd elements. 
		\end{enumerate}
	\end{thm}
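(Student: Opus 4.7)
The plan is to establish the four conditions equivalent via the chain (1)$\Leftrightarrow$(2), (2)$\Leftrightarrow$(3), and (3)$\Leftrightarrow$(4); each link uses a different flavor of argument.

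For (1)$\Leftrightarrow$(2), I would invoke the volume criterion for splitting established in \cite{SSh,SV}: a quasireductive $\KK\sub\OO$ with $\Ber(\o/\k)$ a trivial $\KK$-module is splitting if and only if $\vol(\OO/\KK)\neq 0$, the Berezinian-triviality hypothesis being precisely what makes $\vol(\OO/\KK)$ well defined as a complex number. Condition (1) then says $\OO$ admits no proper splitting subgroup, which translates term-for-term into the volume-vanishing statement (2).

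The equivalence (2)$\Leftrightarrow$(3) carries most of the work. In the direction not-(3)$\Rightarrow$not-(2): if $\OO$ fails to be oddly generated, then the quasireductive subgroup generated by $\OO_0$ together with $[\o_{\ol{1}},\o_{\ol{1}}]$ (or the natural candidate produced by this failure) is proper and yields a quotient with trivial Berezinian and nonzero volume by a direct Berezin integration; similarly, a nonzero $x\in\o_{\ol{1}}^{neat}$ should by its defining property produce, via a small explicit construction, a proper quasireductive subgroup whose volume quotient is demonstrably nonzero. Conversely, assuming (3), for any proper $\KK\sub\OO$ with $\Ber(\o/\k)$ trivial I would decompose the Berezin integral over $\OO/\KK$ via a Fubini-type argument along the even/odd split of $\o/\k$: odd generation forces $\k_{\ol{0}}\subsetneq\o_{\ol{0}}$ in a controlled fashion, while the vanishing of $\o_{\ol{1}}^{neat}$ rules out any nonzero odd contribution to the resulting integral, so it must vanish.

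For (3)$\Leftrightarrow$(4), I would unpack the definition of $\o_{\ol{1}}^{neat}$ from the earlier section. An odd $x$ should be neat precisely when it lies in the nil-cone of $\o$ while $[x,x]\neq 0$, so that $\o_{\ol{1}}^{neat}=\{0\}$ is an immediate reformulation of the containment in (4). Any residual gap between these formulations can be closed by a short root-space argument using a Cartan of $\OO_0^\circ$ together with the standing hypothesis of being oddly generated.

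The main obstacle will be the implication (3)$\Rightarrow$(2): arranging the Berezin integral over $\OO/\KK$ so that the hypotheses of oddly generated together with vanishing neat part really do force it to vanish for \emph{every} admissible $\KK$. This step will rely crucially on the explicit volume calculations of \cite{SSh} and \cite{SV}, and on a classification of the possible proper $\KK$ for which $\Ber(\o/\k)$ is trivial.
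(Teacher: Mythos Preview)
Your (1)$\Leftrightarrow$(2) is correct and matches the paper exactly: this is precisely the content of Lemma~\ref{lemma nonzero}.

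The serious gap is in (3)$\Rightarrow$(2). Your proposed ``Fubini-type'' Berezin argument has no mechanism: there is no reason why the absence of neat elements should force the Berezin integral over $\OO/\KK$ to vanish for an \emph{arbitrary} proper quasireductive $\KK$, and the sentence ``vanishing of $\o_{\ol{1}}^{neat}$ rules out any nonzero odd contribution'' is not a step one can make precise. The paper proceeds entirely differently, through a structural classification (Theorem~\ref{thm classification 0 groups}). One observes that every simple quasireductive Lie superalgebra contains a nonzero neat element, so $\o_{\ol{1}}^{neat}=\{0\}$ forces $\o$ (mod even centre) to have no simple ideals; a structure lemma (Lemma~\ref{lemma no simple ideals}) then shows $\OO$ is a central extension of a Takiff 0-supergroup times an odd abelian group. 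For algebras of this explicit shape one proves directly, by a semidirect-product argument (Lemma~\ref{lemma semidirect product}), that every splitting subgroup equals $\OO$. The classification theorem is thus the bridge between the volume side and the neat side, and your proposal has nothing to replace it.

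Your (3)$\Leftrightarrow$(4) also does not work as stated. The definition of neat is that $x$ lies in an $\o\s\p(1|2)$-subalgebra, \emph{not} that $x$ lies in the nil-cone with $[x,x]\neq 0$; the latter is strictly weaker. For instance, in $\s\l(1|1)\times\o\s\p(1|2)$ take $x=y+z$ with $0\neq y\in\s\l(1|1)_{\ol{1}}$ satisfying $[y,y]=0$ and $0\neq z\in\o\s\p(1|2)_{\ol{1}}$: then $x$ is in the nil-cone and $[x,x]\neq 0$, but $x$ is not neat (any $\o\s\p(1|2)$ containing $x$ would have $\s\l_2$-part inside the second factor, forcing the odd part there too). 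So your argument proves (4)$\Rightarrow$(3) but not the converse. In the paper, (3)$\Rightarrow$(4) is again obtained via the classification: once $\OO$ is known to be (a central extension of) Takiff~$\times$~odd abelian, the nil-cone sits inside the Takiff odd part, which is self-commuting (Proposition~\ref{prop 0 characterization}).
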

	In Theorem \ref{thm intro 2}, we write $\vol(\OO/\KK)$ for the volume of an associated compact CS manifold, see Section 3.  In the super setting, volume forms are generalized to Berezin forms, hence the importance of the Berezin module $\Ber(\o/\k)$ in defining volumes. 
	Understanding the relationship between volumes and splitting subgroups was the key ingredient that allowed us to make progress on these problems.  Namely, we have:
	
	\begin{thm}\label{thm intro 3}
		Let $\KK\sub\GG$ be quasireductive supergroups.  Then $\KK$ is splitting in $\GG$ if and only if $\Ber(\g/\k)$ is a trivial $\KK$-module, and $\vol(\GG/\KK)\neq0$.
	\end{thm}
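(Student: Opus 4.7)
The plan is to reduce the theorem to a concrete statement about the trivial module $\C_\GG$, and then to interpret that statement geometrically via invariant Berezin forms. Writing $\eta:\C_\GG\hookrightarrow\Ind^\GG_\KK\C_\KK\cong\OO(\GG/\KK)$ for the embedding of constants (the unit of the $\operatorname{Res}\dashv\Ind$ adjunction at $\C_\GG$), the key intermediate claim is that
\begin{equation*}
\KK\text{ is splitting in }\GG\;\iff\;\eta\text{ is split mono in }\Rep\GG.
\end{equation*}

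\textbf{Reduction to the trivial module.} The $(\Leftarrow)$ direction uses the projection formula $\Ind^\GG_\KK\operatorname{Res}V\cong V\otimes\Ind^\GG_\KK\C_\KK$: a splitting of $\eta$ produces, by tensoring with $V$, a splitting of $V\hookrightarrow\Ind^\GG_\KK\operatorname{Res}V$ for every $V\in\Rep\GG$, so any morphism $f$ with $\operatorname{Res}f=0$ satisfies $\Ind^\GG_\KK\operatorname{Res}f=0$ and hence $f=0$. For $(\Rightarrow)$, complete $\eta$ to a cofiber triangle
\begin{equation*}
\C_\GG\xrightarrow{\;\eta\;}\Ind^\GG_\KK\C_\KK\longrightarrow C\xrightarrow{\;\partial\;}\C_\GG[1].
\end{equation*}
Applying $\operatorname{Res}$, the evaluation-at-identity map $\ev_e:\OO(\GG/\KK)\to\C_\KK$ is $\KK$-equivariant (right-$\KK$-invariance of $f\in\OO(\GG/\KK)$ gives $f(k^{-1})=f(e)$), and is a left inverse to $\operatorname{Res}\eta$. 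Hence the restricted triangle splits, $\operatorname{Res}\partial=0$, and faithfulness of $\operatorname{Res}$ on $\DD^b$ forces $\partial=0$, so $\eta$ splits.

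\textbf{Geometric interpretation.} A retraction of $\eta$ is a $\GG$-invariant linear functional on $\OO(\GG/\KK)$ sending the constant function $\mathbf{1}$ to $1$, i.e., a $\GG$-invariant distribution on $\GG/\KK$ of unit mass. Standard super-geometry identifies the space of $\GG$-invariant distributions on $\GG/\KK$ with $\Hom_\KK(\Ber(\g/\k),\C_\KK)$: any such distribution is determined by its germ at $eK$, which transforms under $\KK$ via $\Ber(\g/\k)^{-1}$. This space is therefore one-dimensional exactly when $\Ber(\g/\k)$ is a trivial $\KK$-module, and zero otherwise; when nonzero, the canonical functional is integration against the invariant Berezin form, and its value on $\mathbf{1}$ equals $\vol(\GG/\KK)$. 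Hence $\eta$ admits a retraction iff $\Ber(\g/\k)$ is $\KK$-trivial and $\vol(\GG/\KK)\neq 0$, which together with the reduction step proves the theorem.

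\textbf{Main obstacle.} The delicate point is the converse in the reduction step, where one passes from abstract faithfulness of $\operatorname{Res}$ on $\DD^b$ to an honest splitting in $\Rep\GG$. The argument works because for the trivial module the restricted triangle admits the tautological retraction $\ev_e$; no such canonical $\KK$-equivariant section is available for a general $V\in\Rep\GG$, which is why the projection-formula reduction to $V=\C_\GG$ is essential. A secondary technical burden is the identification of $\GG$-invariant distributions on $\GG/\KK$ with $\KK$-invariants in $\Ber(\g/\k)^*$, which in the super setting demands care with parities and with the correct definition of the homogeneous superspace.
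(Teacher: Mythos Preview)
Your reduction step is sound and is essentially the equivalence (1)$\Leftrightarrow$(3) in the paper's Definition~\ref{definition splitting} (cited from \cite{SSh}); the cofiber-triangle argument is correct though heavier than needed, since taking $V=\C$ in condition (2) of that definition already gives the $(\Rightarrow)$ direction.

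The real gap is in your geometric interpretation. You assert that ``standard super-geometry identifies the space of $\GG$-invariant distributions on $\GG/\KK$ with $\Hom_\KK(\Ber(\g/\k),\C_\KK)$,'' justified by the claim that an invariant distribution ``is determined by its germ at $e\KK$, which transforms under $\KK$ via $\Ber(\g/\k)^{-1}$.'' This is where the argument breaks down. A $\GG$-equivariant functional $\iota:\C[\GG/\KK]\to\C$ has no meaningful ``germ at a point''---it is a global object, not a section of any sheaf---and there is no direct mechanism that carries it to an element of $\Ber(\g/\k)^*$. What \emph{is} easy (the paper's Lemma~\ref{lemma berezin form trivializes}) is that $\GG$-invariant \emph{Berezin forms} correspond to $\KK$-invariants in $\Ber(\g/\k)$; but the passage from invariant Berezin forms to invariant functionals, and especially the fact that this exhausts \emph{all} invariant functionals, is precisely the hard content of the theorem and is not free.

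The paper handles this by passing to a compact CS form $\GG_c/\KK_c$ and using analytic Berezin integration to build a $\GG$-equivariant pairing $\C[\GG/\KK]\otimes\Ind_\KK^\GG\Ber(\g/\k)\to\C$, then proving it is perfect via simplicity of $\C[\GG/\KK]$ as a $\GG$-algebra (Lemma~\ref{lemma simple module}). This yields $\Gamma_\GG(\C[\GG/\KK]^*)\cong\Ind_\KK^\GG\Ber(\g/\k)$, and taking $\GG$-invariants gives exactly the identification you want (Theorem~\ref{thm existence integral}). Separately, one must show that the Berezin integral is itself a \emph{nonzero} $\GG$-equivariant functional (Proposition~\ref{prop integral nonzero}, which requires integration along odd fibres and the existence of a nonvanishing top form on $\GG_{0,c}/\KK_{0,c}$), and that any algebraic integral is a scalar multiple of it (Lemma~\ref{lemma integral unique}). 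Only then can one conclude that a retraction of $\eta$ forces $\vol(\GG/\KK)\neq 0$. Your proposal elides all of this machinery under the phrase ``standard super-geometry''; without it, the argument is incomplete.
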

	
	Using Theorem \ref{thm intro 3}, we obtain the following characterization of Sylow subgroups.
	
	\begin{thm}\label{thm intro 4}
		Let $\OO\sub\GG$ be quasireductive supergroups.  Then $\OO$ is a Sylow subgroup if and only if it is a splitting $0$-subgroup.
	\end{thm}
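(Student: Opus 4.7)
The plan is to combine Theorem \ref{thm intro 3} (the volume criterion for splitting) with characterization~(1) of Theorem/Definition \ref{thm intro 2}, according to which $\OO$ is a $0$-group exactly when it is its own Sylow subgroup. I would treat the two directions separately: the forward implication is essentially formal, while the backward implication rests on multiplicativity of Berezin modules together with a Fubini-type identity for volumes along a tower of quasireductive subgroups.

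For the forward direction, suppose $\OO$ is Sylow in $\GG$. Then $\OO$ is splitting by definition, so I must only verify that $\OO$ is a $0$-group, and by characterization~(1) this amounts to ruling out a proper quasireductive $\KK \subsetneq \OO$ that is splitting in $\OO$. Given such a $\KK$, the two restriction functors $\DD^b(\Rep\GG) \to \DD^b(\Rep\OO)$ and $\DD^b(\Rep\OO) \to \DD^b(\Rep\KK)$ are both faithful by hypothesis, hence so is their composition; thus $\KK$ would be splitting in $\GG$, contradicting minimality of $\OO$.

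For the converse, suppose $\OO$ is a splitting $0$-subgroup and assume for contradiction that there exists a proper quasireductive $\KK \subsetneq \OO$ which is splitting in $\GG$. Applying Theorem \ref{thm intro 3} to both $\KK \sub \GG$ and $\OO \sub \GG$ yields triviality of $\Ber(\g/\k)$ as a $\KK$-module, of $\Ber(\g/\o)$ as an $\OO$-module, and the nonvanishings $\vol(\GG/\KK), \vol(\GG/\OO) \neq 0$. From the short exact sequence $0 \to \o/\k \to \g/\k \to \g/\o \to 0$ of $\KK$-modules I would extract an isomorphism
\[
\Ber(\g/\k) \cong \Ber(\g/\o) \otimes \Ber(\o/\k)
\]
of $\KK$-modules; triviality of two of the three factors then forces triviality of $\Ber(\o/\k)$, so $\vol(\OO/\KK)$ is defined. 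The crux is then a Fubini-type identity
\[
\vol(\GG/\KK) = c \cdot \vol(\GG/\OO) \cdot \vol(\OO/\KK)
\]
with $c \neq 0$, which together with $\vol(\GG/\KK) \neq 0$ gives $\vol(\OO/\KK) \neq 0$. Theorem \ref{thm intro 3} would then force $\KK$ to be splitting in $\OO$, contradicting the $0$-group property of $\OO$.

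The main obstacle is establishing the Fubini identity above in the CS-supermanifold framework used to define $\vol$ (cf.~\cite{SSh, SV}). Classically this is integration over fibers of the bundle $\GG/\KK \to \GG/\OO$ with fiber $\OO/\KK$, and the task is to verify that the identification $\Ber(\g/\k) \cong \Ber(\g/\o) \otimes \Ber(\o/\k)$ is compatible with the invariant Berezin forms underlying the three volumes, yielding the product formula with nonzero constant. Once this technical ingredient is in place, everything else is a direct application of Theorems \ref{thm intro 2} and \ref{thm intro 3}.
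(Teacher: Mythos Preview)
Your proposal is correct and follows essentially the same route as the paper: the statement is packaged there as an immediate consequence of the two-out-of-three property for splitting subgroups in a tower $\KK\sub\OO\sub\GG$ (Corollary~\ref{cor  reflective transitivity}), whose proof is precisely the Fubini-type volume identity you flag (Proposition~\ref{prop integral bundle}) combined with the volume criterion of Theorem~\ref{thm intro 3}. Your derivation of the triviality of $\Ber(\o/\k)$ from the short exact sequence is exactly what underlies the reduction step in the paper's argument.
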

	
	While Theorem \ref{thm intro 2} is a very neat characterization of $0$-groups, it is useful in practice to have a more explicit description of what a $0$-group looks like.  In the setting of finite groups, one could never hope to classify all $p$-groups; however they are all nilpotent, and this controls aspects of their representation theory.
	
	While the problem of classifying $0$-groups is also wild, we have the following:
	
	\begin{thm}\label{thm intro 5}
		Every $0$-group $\OO$ is isomorphic to a central extension of $\TT\times\VV$, where $\TT$ is a Takiff 0-supergroup, and $\VV$ is an odd abelian supergroup.
	\end{thm}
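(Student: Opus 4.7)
The strategy is to locate inside $\OO$ an odd abelian normal supersubgroup $\VV$ and a complementary Takiff supersubgroup $\TT$, show they commute modulo a central subgroup $Z\sub\OO$, and thereby realize $\OO$ as a central extension of $\TT\times\VV$.

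First, using that $\OO_0$ has reductive identity component, I would decompose
$$
\o_{\ol{1}}=\o_{\ol{1}}^{\OO_0}\oplus\o_{\ol{1}}'
$$
as $\OO_0$-modules, with $\o_{\ol{1}}^{\OO_0}$ the maximal trivial submodule and $\o_{\ol{1}}'$ a stable complement. Let $\VV\sub\OO$ be the closed supersubgroup with Lie algebra $\mathfrak{v}=\o_{\ol{1}}^{\OO_0}$ (purely odd). Since its generators are $\OO_0$-fixed, one has $[\mathfrak{v},\mathfrak{v}]\sub\o_{\ol{0}}^{\OO_0}=\z(\o_{\ol{0}})$, so $\VV$ is odd abelian up to a central even piece. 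The vanishing of $\o_{\ol{1}}^{neat}$ in Theorem \ref{thm intro 2}(3), combined with the nil-cone condition (4), should rule out any $\OO_0$-invariant odd element that is not self-commuting modulo the center, so that $\mathfrak{v}$ is genuinely odd abelian after passing to $\OO/Z(\OO)$.

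For the Takiff part, I would consider $\s=\o_{\ol{1}}'+[\o_{\ol{1}}',\o_{\ol{1}}']$, a Lie sub-superalgebra. The nil-cone condition, combined with reductivity of $\OO_0$, should force the bracket on $\o_{\ol{1}}'$ to factor through $\z(\o_{\ol{0}})$, so that odd elements of $\o_{\ol{1}}'$ are self-commuting modulo center; pairing $\o_{\ol{1}}'$ against $\o_{\ol{0}}/\z(\o_{\ol{0}})$ via the bracket should then identify $\s$ modulo its center with a standard Takiff Lie superalgebra, giving rise to the Takiff $0$-supergroup $\TT$. The commutativity of $\TT$ with $\VV$ modulo the center follows from $\OO_0$-equivariance: $[\o_{\ol{1}}^{\OO_0},\o_{\ol{1}}']\sub\o_{\ol{0}}^{\OO_0}=\z(\o_{\ol{0}})$, so after gathering all central contributions into a single central subgroup $Z$ the multiplication map identifies $\OO/Z$ with $\TT\times\VV$.

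The main obstacle will be cleanly producing the Takiff structure on $\s$: establishing the $\OO_0$-module isomorphism $\o_{\ol{1}}'\cong\o_{\ol{0}}/\z(\o_{\ol{0}})$ (up to center) and verifying that the bracket on $\o_{\ol{1}}'$ takes exactly the Takiff form. The $0$-group conditions constrain the nil-cone and the self-commuting locus strongly, but turning these constraints into a precise module isomorphism requires invariant-theoretic input --- presumably a classification of reductive-group representations whose null-cone lies in the set of self-commuting vectors --- combined with the quasireductivity of $\OO$. The remaining bookkeeping, arranging the various ``up to center'' adjustments into a single central extension $1\to Z\to\OO\to\TT\times\VV\to 1$, should then follow by standard cohomological methods for central extensions of supergroups.
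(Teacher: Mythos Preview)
Your proposed decomposition does not match the one in the theorem, and the key commutativity claim is false. Consider the basic example $\o=\p\q(2)\cong(\s\l_2\otimes\C[\xi])\rtimes\C\langle\partial_\xi\rangle$, which is itself a Takiff $0$-superalgebra (so here $\TT=\OO$ and $\VV=0$). In your decomposition, $\o_{\ol{1}}^{\OO_0}=\C\partial_\xi$ and $\o_{\ol{1}}'=\s\l_2\xi$. Your central claim that $[\o_{\ol{1}}^{\OO_0},\o_{\ol{1}}']\sub\o_{\ol{0}}^{\OO_0}=\z(\o_{\ol{0}})$ fails: $[\partial_\xi,u\xi]=u$ for $u\in\s\l_2$, so this bracket surjects onto $\s\l_2=\o_{\ol{0}}$, which has trivial center. (The $\OO_0$-equivariance argument is wrong: for fixed $v\in\o_{\ol{1}}^{\OO_0}$, the map $\o_{\ol{1}}'\to\o_{\ol{0}}$, $a\mapsto[v,a]$ is $\OO_0$-equivariant, but its image is an $\OO_0$-submodule of $\o_{\ol{0}}$, not a subspace of the invariants.) Consequently your $\VV$ and $\TT$ do not commute modulo center, and the construction cannot produce a product $\TT\times\VV$. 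Moreover, your $\s=\o_{\ol{1}}'+[\o_{\ol{1}}',\o_{\ol{1}}']$ is, in this example, the purely odd abelian space $\s\l_2\xi$, not a Takiff $0$-superalgebra; the derivations $\partial_{\xi_i}$ --- which are $\OO_0$-fixed --- must live in $\TT$, not in $\VV$.

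The paper takes a quite different route. Rather than analyzing $\o_{\ol{1}}$ as an $\OO_0$-module directly, it first proves (Lemma~\ref{lemma 0 group properties}(4)) that $\o$ has no simple ideal: any simple ideal would force a simple quotient, and simple quasireductive supergroups are never $0$-groups because they all admit proper splitting subgroups (this is the hard input, drawn from \cite{SSh} and \cite{SV}). With simple ideals excluded, the general structure theory of quasireductive superalgebras from \cite{S} (the socle $\c(\o)$ is a product of minimal ideals, each simple, odd abelian, or Takiff; and $\o/\c(\o)$ is reductive-plus-odd-abelian) gives $\o=(\s\times\mathfrak{v})\rtimes\l$ with $\s$ Takiff, $\mathfrak{v}$ odd abelian, and $\l$ odd abelian acting by outer derivations. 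Odd generation kills the reductive part, and the known description of $\operatorname{Der}(\s)$ then yields the Takiff $0$-superalgebra $\s\rtimes\l$. The invariant-theoretic classification you anticipated is thus bypassed entirely; the module isomorphism $\o_{\ol{1}}'\cong\o_{\ol{0}}/\z(\o_{\ol{0}})$ comes out as a consequence, not an ingredient.
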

	
	For the definition of a Takiff 0-supergroup, we refer to Section \ref{section takiff}.   Theorem \ref{thm intro 5} allows us to understand something about the representation theory of $0$-groups, which is both very useful in the proof of Theorem \ref{thm intro 1}, and will be critical in future works.
	
	\subsection{Maximal 0-subgroups}  Our analogy with finite groups breaks down in the following way: for a finite group $G$, every maximal $p$-subgroup is Sylow.  However it is not true that every maximal $0$-subgroup is Sylow, i.e.~it need not be splitting.  A simple counterexample is provided by the subgroup of $\GG\LL(n|n)$ corresponding to the following subalgebra of $\g\l(n|n)$:
	\[
	\begin{bmatrix}
		A & B\\ \lambda I_n & A
	\end{bmatrix},
	\]
	where $A,B$ are arbitrary $n\times n$ matrices, and $\lambda\in\C$.
	
	We expect that there are only finitely many maximal $0$-subgroups up to conjugacy.  An important question is whether they have a representation-theoretic meaning, and at this point we have no indications as such.  The study of maximal $0$-subgroups will be taken up in future work.
	
	\subsection{The third Sylow theorem}
	
	The third Sylow theorem may be stated as follows: if $G$ is a finite group and $P$ is a Sylow subgroup of $G$, then $|G/N_G(P)|$ is congruent to $1$ (mod $p$), i.e. $\vol(G/N_G(P))=1$ in the base field.  
	
	Now let $\GG$ be quasireductive with Sylow subgroup $\OO$.  Our definition of volume is only well-defined up to non-zero scaling, so we cannot hope to assign meaning to $\vol(\GG/\NN_\GG(\OO))$.  However we have another approach to understand the `size' of $\GG/\NN_\GG(\OO)$. Let $\g=\operatorname{Lie}\GG$, and let $x\in\g_{\ol{1}}$ such that $[x,x]$ is semisimple in $\g_{\ol{0}}$; we call such an $x$ homological.  Then we obtain the Duflo-Serganova functor $DS_x:\Rep\GG\to s\operatorname{Vec}$, a symmetric monoidal functor (see Section \ref{section third sylow thm}).  For a finite-dimensional module $V\in\Rep\GG$, we have $\operatorname{sdim}(DS_xV)=\operatorname{sdim} V$, so in this sense $DS_x$ preserves size.  
	
	\begin{thm}
		Let $\TT\sub\OO_0$ be a maximal torus of $\OO_0$, and define
		\[
		W_{\GG}:=\NN_{\GG}(\TT)/\CC_{\GG}(\TT), \ \ \ W_{\NN_{\GG}(\OO)}:=\NN_{\NN_{\GG}(\OO)}(\TT)/\CC_{\NN_{\GG}(\OO)}(\TT).
		\]
		Then $W_{\NN_{\GG}(\OO)}\sub W_{\GG}$ are finite groups, and for a generic (see Section \ref{section third sylow thm}) homological $x\in\g_{\ol{1}}$, we have a natural isomorphism of algebras:
		\[
		DS_x\C[\GG/\NN_{\GG}(\OO)]\cong\C[W_{\GG}/W_{\NN_{\GG}(\OO)}].
		\]
	\end{thm}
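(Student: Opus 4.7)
The plan is to compute $DS_x\C[\GG/\NN_\GG(\OO)]$ in two reductions: first to the fixed-point subscheme of the semisimple part $t:=[x,x]$, and then to the connected components of the resulting superscheme. Super-Jacobi forces $[x,t]=0$, and the identity $\{x,x\}=2t$ supplies a contracting homotopy on every nonzero $t$-weight space of $\C[\GG/\NN_\GG(\OO)]$, so $x$-cohomology is concentrated on the $t$-invariants. For generic $x$, $t$ is regular semisimple in $\TT$, so by reductivity of $\TT$ these invariants coincide with $\C[(\GG/\NN_\GG(\OO))^\TT]$. A standard torus-fixed-point argument, which extends to the super setting by matching tangent spaces, identifies $(\GG/\NN_\GG(\OO))^\TT$ with $\NN_\GG(\TT)/\NN_{\NN_\GG(\OO)}(\TT)$, which decomposes as
\[
\bigsqcup_{[w]\in W_\GG/W_{\NN_\GG(\OO)}} w\cdot\bigl(\CC_\GG(\TT)/\CC_{\NN_\GG(\OO)}(\TT)\bigr).
\]
It therefore suffices to show $DS_x\C[\CC_\GG(\TT)/\CC_{\NN_\GG(\OO)}(\TT)]\cong\C$; a naturality check using the product on $\C[\GG/\NN_\GG(\OO)]$ restricted to the fixed-point scheme then upgrades the component-wise identification to an algebra isomorphism with $\C[W_\GG/W_{\NN_\GG(\OO)}]$.

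For the final vanishing, the genericity of $x$ should include $x\in\o_{\ol{1}}^\TT$; such $x$ exist because $\OO$ is oddly generated and, by the classification in Theorem \ref{thm intro 5}, its Takiff factor provides homological elements with $[x,x]$ regular in the Lie algebra of $\TT$. Then $x$ lies in the Lie algebra of the stabilizer $\CC_{\NN_\GG(\OO)}(\TT)$. The crucial observation is that $\CC_\OO(\TT)$ should be a splitting $0$-subgroup of $\CC_\GG(\TT)$---splitting by inheritance from $\OO\sub\GG$, and a $0$-group because the condition $\o_{\ol{1}}^{neat}=\{0\}$ from Theorem \ref{thm intro 2} descends to torus-centralizers---and hence Sylow in $\CC_\GG(\TT)$ by Theorem \ref{thm intro 4}. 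Since $\CC_\OO(\TT)\sub\CC_{\NN_\GG(\OO)}(\TT)$, applying Theorem \ref{thm intro 3} to the pair $\CC_{\NN_\GG(\OO)}(\TT)\sub\CC_\GG(\TT)$ yields $\Ber$-triviality and $\vol(\CC_\GG(\TT)/\CC_{\NN_\GG(\OO)}(\TT))\neq 0$.

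The principal technical obstacle is converting this nonvanishing of the Berezin volume into the DS-acyclicity statement $DS_x\C[\CC_\GG(\TT)/\CC_{\NN_\GG(\OO)}(\TT)]\cong\C$. Cleanly, one would want a lemma: for any splitting pair $\KK\sub\GG$ and generic homological $x\in\k_{\ol{1}}$, $DS_x\C[\GG/\KK]\cong\C$. Establishing this DS--volume correspondence would also conceptually explain why Theorem \ref{thm intro 3} provides the correct characterization of splitting subgroups.
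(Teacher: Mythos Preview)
Your reduction in step 1 is fine: the contracting homotopy furnished by $x^2=t/2$ does kill nonzero $t$-weight spaces, so $DS_x\C[X]$ is computed on the $t$-invariants $\C[X]^t$. But step 2 is where the argument breaks. The $t$-invariants $\C[X]^t$ (equivalently the $\TT$-invariants, for $t$ generating a dense one-parameter subgroup) are functions on the GIT quotient $X/\!\!/\TT$, not on the fixed locus $X^\TT$. Reductivity of $\TT$ gives you nothing here: as soon as $\TT$ acts with both positive and negative weights on the tangent space at a fixed point---which it certainly does on $\g/\n_\g(\o)$, since root systems are symmetric---the restriction map $\C[X]^\TT\to\C[X^\TT]$ has a large kernel. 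So your passage to $\C[(\GG/\NN_\GG(\OO))^\TT]$ is unjustified, and the subsequent component-by-component analysis is computing the wrong object.

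There is also a structural input you are missing that changes the picture entirely: the paper proves (Corollary~\ref{cor cent in norm}) that $\CC_\GG(\TT)\sub\NN_\GG(\OO)$, hence $\CC_{\NN_\GG(\OO)}(\TT)=\CC_\GG(\TT)$, and your ``principal technical obstacle'' concerns a single point. What the paper actually does is bypass the two-step reduction and apply a localization theorem for homological vector fields (Thm.~4.1 of \cite{SSh2}) directly to $x$: one shows, using Corollary~\ref{cor normalizer qred hom elts} (namely $\n_\g(\o)^{hom}_{\ol{1}}=\o_{\ol{1}}^{hom}$) together with $\CC_\GG(\TT)\sub\NN_\GG(\OO)$, that the vanishing locus of $x$ on $\GG/\NN_\GG(\OO)$ is precisely the finite set $W_\GG/W_{\NN_\GG(\OO)}$, and that $[x,-]$ is nondegenerate on each cotangent space. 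The localization theorem then yields the isomorphism immediately. Your proposed ``DS--volume correspondence'' lemma is therefore not needed; the relevant black box is the localization theorem, and the relevant geometric inputs are the two corollaries about $\n_\g(\o)$ that you did not invoke.
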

	
	In this way, we may view $\GG/\NN_{\GG}(\OO)$ as having `size' $
	|W_{\GG}/W_{\NN_{\GG}(\OO)}|$, which we note is a finite number, and is often equal to one (see Section \ref{section table}).

	\subsection{Supergroups vs.~superalgebras}  So far we have only discussed supergroups, but in fact our above discussion works just as well for quasireductive Lie superalgebras (see Definition \ref{defn qred algebra}).  In fact, $0$-superalgebras and Sylow subalgebra are often easier to discuss and work with than $0$-groups and Sylow subgroups. 
	
	In Section 2 we will explain the passages between supergroups and superalgebras, and throughout the paper we will work with one or the other, as is convenient.  All structure theorems stated about supergroups, for example, will have an analogue for superalgebras, and vice-versa.
	
	\subsection{Future work and open problems} In an upcoming preprint, the first two named authors and J.~Pevtsova will continue the work of extending the analogy between finite groups and supergroups.  Namely, we will define analogs of elementary abelian subgroups in the super setting, and prove a strong projectivity criteria for supergroups which analogizes Chouinard's theorem.  Using this, we will give an explicit description of the cohomological support variety.
	
	Further questions of future interest are to develop a theory of defect subgroups for blocks, and understanding what Green's correspondence looks like for supergroups.  We expect that a meaningful, general definition of atypicality and defect should arise from our machinery. 
	
	\subsection{Outline of article} In Section 2 we recall the definitions of quasireductive supergroups/Lie superalgebras, and explain how to pass between them via the notion of global forms.  The notion of an oddly generated supergroup/superalgebra will be of particular importance later on.  In Section 3 we define algebraic integrals on homogeneous superspaces and explain the consequences of admitting an algebraic integral.  Section 4 recalls necessary language and facts from the theory of Berezin integration on CS manifolds.  In Section 5 we apply Berezin integration to CS manifolds arising from complex homogeneous superspaces, and use this to classify when a homogeneous superspace admits an algebraic integral.  Section 6 then defines Sylow subgroups and 0-groups, and relates their definitions to the work on volumes in Section 5.  Section 7 characterizes and classifies $0$-groups, and deduces facts about their representation theory.  In Section 8 we prove that all Sylow subgroups are unique up to conjugacy.  Finally, in Section 9 we prove the third Sylow theorem as stated in the introduction.
	
	\subsection{Acknowledgements} The authors would like to thank Inna Entova-Aizenbud and Julia Pevtsova for many helpful discussions around this project.  We further thank Chris Hone and Geordie Williamson for patiently explaining to us aspects of modular representation theory.  The first author was partially supported by NSF grant 2001191 and by Tromso Research Foundation (project “Pure Mathematics in Norway"). The first author is  also thankful to the Sydney Mathematical Research Institute for support and hospitality during her visit.  The second author was partially supported by ARC grant DP210100251.
	
	\section{Quasireductive supergroups and quasireductive Lie superalgebras}
	
	Throughout, unless stated otherwise, we work over the complex numbers $\C$.  For a super vector space $V$, we write $V=V_{\ol{0}}\oplus V_{\ol{1}}$ for its parity decomposition. 
	
	\subsection{Supergroups and superalgebras} By a supergroup $\GG$ we mean an algebraic supergroup, that is the spectrum of a supercommutative Hopf superalgebra, which we write as $\C[\GG]$.  Given a supergroup $\GG$, we write $\GG_0$ for its even part, given by the spectrum of $\C[\GG]/(\C[\GG]_{\ol{1}})$, which will be an algebraic group.  
	We use the symbols $\GG,\HH,\KK,\OO\dots$ for supergroups, and the symbols $\g,\h,\k,\o\dots$ for Lie superalgebras.  If not otherwise stated, $\g,\h,\k,\o,\dots$ will be the Lie superalgebra of $\GG,\HH,\KK,\OO,\dots$.  However in some cases our notation will be set up so that $\g$ and $\operatorname{Lie}\GG$ are distinct.  However, usually $\g\sub\operatorname{Lie}\GG$. We hope the relationship between $\GG$ and $\g$ is always clear from context.
	
	Whenever we discuss subsupergroups of a supergroup $\GG$ we will simply refer to them as subgroups, and same for Lie subalgebras, etc. 
	
	\begin{definition}
		We say that a supergroup $\GG$ is quasireductive if the connected component of the identity of $\GG_0$ is reductive.  In particular, we do not assume that $\GG$ is connected.  We write $\Rep\GG$ for the abelian, symmetric monoidal category of rational $\GG$-modules; that is,  the category of comodules over $\C[\GG]$.  We emphasize that infinite-dimensional modules are included.
	\end{definition}
	
	In much of the rest of the text we use many known facts about quasireductive supergroups.  We refer to \cite{S} for the foundations of their structure and representation theory.
	
	\subsection{Global forms of quasireductive Lie superalgebras}  
	
	\begin{definition}\label{defn qred algebra}
		We say that a Lie superalgebra $\g$ is quasireductive if $\g_{\ol{0}}$ is reductive and acts $\ad$-semisimply on $\g$. We write $\Rep_{\g_{\ol{0}}}\g$ for the category of locally finite $\g$-modules which are semisimple  over $\g_{\ol{0}}$.  We say a module is locally finite if it is a union of its finite-dimensional submodules.

	\end{definition}
	
	Note that there  may not exist a quasireductive supergroup $\GG$ for which $\g=\operatorname{Lie}\GG$.
	
	\begin{definition}
		We say that $\k\sub\g$ are quasireductive superalgebras if $\g$ is quasireductive and $\k$ is a subalgebra of $\g$ such that $\k_{\ol{0}}$ acts $\text{ad}$-semisimply on $\g$.  In particular, $\k$ will also be quasireductive under this definition.
	\end{definition}
	
	\begin{definition}
		Let $\g$ be a quasireductive Lie superalgebra (see Definition \ref{defn qred algebra}).  Then we call a quasireductive supergroup $\GG$ a global form of $\g$ if the following conditions hold:
		\begin{enumerate}
			\item $\g$ is a Lie subalgebra of $\operatorname{Lie}\GG$;
			\item if $\KK\sub\GG$ is any subgroup with $\g\sub \operatorname{Lie}\KK$, then $\KK=\GG$.
		\end{enumerate}
	\end{definition}
	
	\begin{example}
		Let $\g$ be the quasireductive Lie superalgebra with even basis $x$ and odd basis $H$, and such that $[H,H]=x$.  Then for any positive integer $n$ there exits a global form $\GG$ of $\g$ of dimension $(n|1)$.  In fact for a fixed $n>1$, there are uncountably many such global forms up to isomorphism.
	\end{example}
	
	\begin{lemma}\label{lemma global forms properties}
		\begin{enumerate}
			\item  Every quasireductive Lie superalgebra admits a global form, and it is always connected.
			\item If $\GG$ is a global form of $\g$, then $\g_{\ol{1}}=(\operatorname{Lie}\GG)_{\ol{1}}$ and $\GG_0$ is a global form of the Lie algebra $\g_{\ol{0}}$.
			\item If $\k\sub\g$ are quasireductive Lie superalgebras and $\GG$ is a global form of $\g$, then there exists a unique subgroup $\KK\sub\GG$ such that $\k\sub\operatorname{Lie}\KK$, and $\KK$ is a global form of $\k$.
		\end{enumerate}
	\end{lemma}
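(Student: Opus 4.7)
The plan is to use the equivalence between quasireductive supergroups and Harish-Chandra pairs $(G_0,\mathfrak{a})$, where $G_0$ is a quasireductive algebraic group with $\operatorname{Lie}G_0=\mathfrak{a}_{\ol{0}}$ and $\mathfrak{a}$ is a Lie superalgebra on which $G_0$ acts rationally, extending the adjoint action; subgroups correspond to sub-pairs $(K_0,\k')$ with $K_0\sub G_0$ closed, $\operatorname{Lie}K_0=\k'_{\ol 0}$, and $\k'\sub \mathfrak{a}$ a $K_0$-stable sub-superalgebra. For part (1), choose a connected algebraic group $G_0$ with $\operatorname{Lie}G_0=\g_{\ol{0}}$ on which the adjoint action of $\g_{\ol 0}$ on $\g$ lifts to a rational $G_0$-action (for instance, take $G_0$ to be a product of the simply connected cover of the derived group with a sufficiently large torus), and let $\GG$ be the supergroup corresponding to $(G_0,\g)$. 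Then $\GG$ is quasireductive, connected (since $G_0$ is), and $\operatorname{Lie}\GG=\g$; any subgroup $\KK$ with $\g\sub\operatorname{Lie}\KK$ corresponds to a sub-pair $(K,\k')$ with $\g\sub\k'\sub\g$, forcing $\k'=\g$ and $\operatorname{Lie}K=\g_{\ol 0}=\operatorname{Lie}G_0$, hence $K=G_0$ by connectedness, so $\KK=\GG$.

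For part (2), let $\GG$ be a global form of $\g$ and set $\tilde\g:=\operatorname{Lie}\GG$. Let $\KK_0\sub\GG_0$ be the closed subgroup stabilizing $\g_{\ol 1}\sub\tilde\g_{\ol 1}$. Since $[\g_{\ol 0},\g_{\ol 1}]\sub\g_{\ol 1}$, we have $\g_{\ol 0}\sub\operatorname{Lie}\KK_0$, and the pair $(\KK_0,\operatorname{Lie}\KK_0\oplus\g_{\ol 1})$ is a valid sub-HC-pair (closure under brackets uses $[\g_{\ol 1},\g_{\ol 1}]\sub\g_{\ol 0}\sub\operatorname{Lie}\KK_0$, and $\KK_0$-stability is immediate). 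The corresponding subgroup $\KK$ has $\g\sub\operatorname{Lie}\KK$, so $\KK=\GG$ by minimality, giving $\tilde\g=\operatorname{Lie}\KK_0\oplus\g_{\ol 1}$, whence $\tilde\g_{\ol 1}=\g_{\ol 1}$ and $\KK_0=\GG_0$. For the second claim, if $H_0\subsetneq\GG_0$ were a proper closed subgroup with $\g_{\ol 0}\sub\operatorname{Lie}H_0$, then since $\tilde\g_{\ol 1}=\g_{\ol 1}$ is $\GG_0$-stable (hence $H_0$-stable) and $[\tilde\g_{\ol 1},\tilde\g_{\ol 1}]\sub\g_{\ol 0}\sub\operatorname{Lie}H_0$, the sub-pair $(H_0,\operatorname{Lie}H_0\oplus\tilde\g_{\ol 1})$ would give a proper subgroup of $\GG$ containing $\g$ in its Lie algebra, contradicting minimality.

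For part (3), uniqueness is immediate: if $\KK_1,\KK_2\sub\GG$ are both global forms of $\k$ with $\k\sub\operatorname{Lie}\KK_i$, then $\KK_1\cap\KK_2$ is a subgroup of each $\KK_i$ containing $\k$ in its Lie algebra, and minimality forces $\KK_1=\KK_2$. For existence, mimic the construction in (2): take $\KK_0\sub\GG_0$ to be the minimal closed subgroup with $\k_{\ol 0}\sub\operatorname{Lie}\KK_0$ stabilizing $\k_{\ol 1}$ (as an intersection of such subgroups), and let $\KK$ correspond to $(\KK_0,\operatorname{Lie}\KK_0\oplus\k_{\ol 1})$. The main obstacle is verifying that $\KK_0^\circ$ is reductive so that $\KK$ is quasireductive: this relies on the hypothesis that $\k_{\ol 0}$ acts $\ad$-semisimply on $\g$, which ensures that the algebraic hull of $\k_{\ol 0}$ in $\GG_0^\circ$ is reductive.
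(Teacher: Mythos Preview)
Your arguments for parts (2) and (3) are essentially the same as the paper's: both construct the desired subgroup via a super Harish--Chandra pair $(\KK_0,\operatorname{Lie}\KK_0\oplus\k_{\ol 1})$ with $\KK_0$ minimal containing $\k_{\ol 0}$, and both leave the reductivity of $\KK_0$ as a claim to be checked from the $\ad$-semisimplicity hypothesis. The paper orders the proof as (3), then (1), then (2); its proof of (1) is to embed $\g$ faithfully and $\g_{\ol 0}$-semisimply into some $\g\l(V)$ via Ado's theorem and then invoke (3) with $\GG=\GG\LL(V)$.

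There is, however, a genuine gap in your part (1). You assert one can choose a connected algebraic group $G_0$ with $\operatorname{Lie}G_0=\g_{\ol 0}$ such that the $\g_{\ol 0}$-action on $\g$ integrates to $G_0$. This fails in general. Take $\g_{\ol 0}=\C h$ acting on a two-dimensional $\g_{\ol 1}$ with eigenvalues $1$ and $\sqrt 2$ (and all brackets in $\g_{\ol 1}$ zero); then any one-dimensional torus has character lattice isomorphic to $\Z$, which cannot contain both $1$ and $\sqrt 2$, so no such $G_0$ exists. Your parenthetical ``sufficiently large torus'' is the correct fix, but it forces $\operatorname{Lie}G_0\supsetneq\g_{\ol 0}$, and then your minimality argument ``$\g\sub\k'\sub\g$'' collapses: the resulting $\GG$ need not be a global form, and you must still pass to a minimal quasireductive subgroup containing $\g$ and argue that it is reductive. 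That last step is precisely the content of (3), which is why the paper proves (3) first and derives (1) from it via Ado's theorem rather than attempting a direct construction.
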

	
	\begin{proof}
		We first prove (3).  Suppose that $\GG$ is a global form of $\g$.  Then let $\KK\sub\GG$ be the subgroup of $\GG$ corresponding to the super Harish-Chandra pair (see \cite{CF}) $(\KK_0,\k)$, where $\KK_0$ is the minimal algebraic subgroup of $\GG_0$ whose Lie superalgebra contains $\k_{\ol{0}}$.  Then it is clear that $\KK$ is a global form of $\k$ and is connected.
		
		To prove (1), apply Ado's theorem for Lie superalgebras to obtain a faithful representation $\k\sub\g\l(V)$ which is semisimple over $\k_{\ol{0}}$.  Now we are reduced to (3) where $\GG=\GG\LL(V)$.
		
		Finally, to prove (2), if $\GG$ is a global form of $\g$, then use (3) to obtain $\KK_0\sub\GG_0$ which is a global form of $\g_{\ol{0}}$.  Note that $\g\sub\operatorname{Lie}\GG$ is $\KK_0$-stable with respect to the adjoint action and $[\g_{\ol{1}},\g_{\ol{1}}]\subset \g_{\ol{0}}\subset\operatorname{Lie}\KK_0$.  Hence we obtain a super Harish-Chandra pair $(\KK_0,\operatorname{Lie}\KK_0\oplus\g_{\ol{1}})$, which will give a quasireductive subgroup of $\GG$ whose Lie superalgebra contains $\g$. Thus $\KK=\GG$ by the definition of a global form.
	\end{proof}
	
	\begin{lemma}\label{lemma full subcat}
		Let $\g$ be quasireductive, and let $\GG$ be a global form of $\g$.  Then $\Rep\GG$ is a full monoidal, topologizing, Serre subcategory of $\Rep_{\g_{\ol{0}}}\g$, and in particular contains the principal block of $\Rep_{\g_{\ol{0}}}\g$.
	\end{lemma}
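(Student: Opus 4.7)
The plan is to check each property in turn, relying on the minimality clause (2) in the definition of a global form as the main engine for fullness, subobjects and quotients, and on the super Harish-Chandra pair description of $\GG$ for extensions. To see that $\Rep\GG$ is a subcategory of $\Rep_{\g_{\ol{0}}}\g$, note that any rational $\GG$-module is locally finite by standard results on affine (super)group schemes, and is semisimple over the reductive identity component $\GG_0^\circ$; since $\g_{\ol{0}}$ is a reductive subalgebra of $\operatorname{Lie}\GG_0$ by Lemma~\ref{lemma global forms properties}(2), its action on any such module remains semisimple.

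For fullness, let $V,W\in\Rep\GG$ and let $f\colon V\to W$ be $\g$-linear. Local finiteness lets me reduce to the case $V,W$ finite dimensional (replacing $V$ by a finite-dimensional $\GG$-submodule and $W$ by the $\GG$-submodule generated by its image). Then $f$ is an element of the rational $\GG$-module $\Hom_\C(V,W)$, and its stabilizer in $\GG$ is an algebraic subgroup whose Lie algebra contains $\g$, precisely because $f$ is $\g$-linear. Condition (2) in the definition of a global form then forces this stabilizer to equal all of $\GG$, so $f$ is $\GG$-equivariant. The same stabilizer-plus-minimality trick, now applied to a $\g$-stable subspace $U\subseteq V$ viewed as a point of the appropriate Grassmannian, shows closure under subobjects; closure under quotients then follows formally, and monoidality is immediate from the natural $\GG$-action on tensor products.

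The Serre condition, i.e.\ closure under extensions, is the main obstacle, because for a general extension no prior $\GG$-structure is available to stabilize. Given $0\to V'\to V\to V''\to 0$ in $\Rep_{\g_{\ol{0}}}\g$ with $V',V''\in\Rep\GG$, I would reduce to the finite-dimensional case by local finiteness and then invoke the super Harish-Chandra pair description $(\GG_0,\operatorname{Lie}\GG)$ of $\GG$ from the proof of Lemma~\ref{lemma global forms properties}, together with the identification $\g_{\ol{1}}=(\operatorname{Lie}\GG)_{\ol{1}}$ of part~(2) of that lemma. Semisimplicity of the $\g_{\ol{0}}$-action yields a $\g_{\ol{0}}$-linear splitting $V\cong V'\oplus V''$, along which I transport the $\GG_0$-module structures of the summands and recognize the extension class as a $\g_{\ol{0}}$-equivariant odd map $V''\to V'$. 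The delicate step is to upgrade this to $\GG_0$-equivariance and to verify compatibility with the full $\operatorname{Lie}\GG$-action: reductivity of $\GG_0^\circ$ promotes Lie equivariance to group equivariance on the identity component, and the minimality condition (2) applied to the subgroup of $\GG_0$ preserving the candidate structure controls the component group. Finally, the simples in the principal block of $\Rep_{\g_{\ol{0}}}\g$ have trivial $\g_{\ol{0}}$-action up to a character in the appropriate lattice of $\GG_0$, hence already lie in $\Rep\GG$; the whole principal block then lies in $\Rep\GG$ by the Serre property just established.
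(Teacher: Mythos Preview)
Your stabilizer-plus-minimality approach is genuinely different from the paper's and in some ways more thorough. The paper argues only fullness, via super-Tannakian reconstruction: the Tannakian subcategory of $\Rep^{fd}_{\g_{\ol{0}}}\g$ generated by the image of any $V\in\Rep^{fd}\GG$ is $\Rep^{fd}\GG'$ for a closed $\GG'\subseteq\GG$ whose Lie algebra contains $\g$, whence $\GG'=\GG$. The topologizing, Serre, and principal-block claims are left implicit there, whereas you attempt them directly; your fullness and subobject arguments are clean and avoid Tannaka duality entirely.

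There are, however, two gaps. In the extension argument you write that ``reductivity of $\GG_0^\circ$ promotes Lie equivariance to group equivariance on the identity component,'' reserving the minimality clause only for the component group. But you only have $\g_{\ol{0}}$-equivariance of the cocycle, and $\g_{\ol{0}}\subsetneq\operatorname{Lie}\GG_0$ in general (cf.\ the $(n|1)$-dimensional global forms of a $(1|1)$-dimensional $\g$ in the paper), so reductivity alone does not bridge this; moreover $\GG_0$ is already connected by Lemma~\ref{lemma global forms properties}(1), so there is no component group to control. The fix is simply to reuse your stabilizer trick at this step: the cocycle is a vector in the rational $\GG_0$-module $\Hom_\C(\g_{\ol{1}}\otimes V'',V')$, its $\GG_0$-stabilizer has Lie algebra containing $\g_{\ol{0}}$, and since $\GG_0$ is a global form of $\g_{\ol{0}}$ by part~(2) of that lemma, the stabilizer is all of $\GG_0$. (Also, the extension data is a $\g_{\ol{0}}$-map $\g_{\ol{1}}\otimes V''\to V'$, not an ``odd map $V''\to V'$''.)

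Second, your principal-block argument is incorrect: simples in the principal block need not have $\g_{\ol{0}}$ acting through a character (e.g.\ for $\g=\s\l(2|1)$), and a Serre subcategory containing $\C$ need not in general contain the block of $\C$. A correct argument: if $\Ext^1_{(\g,\g_{\ol{0}})}(\C,L)\neq 0$ for simple $L$, the relative Koszul resolution forces $\Hom_{\g_{\ol{0}}}(\g_{\ol{1}},L)\neq 0$, so some $\g_{\ol{0}}$-constituent $L_0$ of $L$ appears in the $\GG_0$-module $\g_{\ol{1}}$; since $L=\UU(\g_{\ol{1}})\cdot L_0$, its restriction to $\g_{\ol{0}}$ is a subquotient of $\bigwedge^\bullet\g_{\ol{1}}\otimes L_0\in\Rep\GG_0$, whence $L\in\Rep\GG$. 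Iterating along the Ext-chain then gives the whole block.
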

	
	\begin{proof}
		We have a natural faithful, exact tensor functor $F:\Rep^{fd}\GG\to\Rep^{fd}_{\g_{\ol{0}}}\g$, where $(-)^{fd}$ denotes the full subcategories of finite-dimensional modules.  Note that these are super-Tannakian categories.  Let $\CC$ be the finitely generated super-Tannakian subcategory of $\Rep^{fd}_{\g_{\ol{0}}}\g$ generated by $F(V)$, where $V$ is a finite-dimensional $\GG$-module.  By super-Tannakian reconstruction, $\CC=\Rep^{fd}\GG'$ for some supergroup $\GG'$, and we have a natural inclusion of supergroups $\GG'\sub\GG$.  However, we have $\g\sub\operatorname{Lie}\GG'\sub\operatorname{Lie}\GG$.  Since $\GG$ is a global form of $\g$, this implies $\GG'=\GG$, meaning that $F$ is also full.
	\end{proof}
	
	\subsection{Oddly generated supergroups and superalgebras}
	
	\begin{definition}\label{defn oddly generated}    
		\begin{enumerate}
			\item We say that a quasireductive supergroup $\GG$ is oddly generated if $\GG_0$ is a global form of $[(\operatorname{Lie}\GG)_{\ol{1}},(\operatorname{Lie}\GG)_{\ol{1}}]$. 
			\item We say that a quasireductive Lie superalgebra $\g$ is oddly generated if $\g_{\ol{0}}=[\g_{\ol{1}},\g_{\ol{1}}]$.
		\end{enumerate}
		
		\begin{remark}
			\begin{enumerate}
				\item[(i)] An oddly generated supergroup is necessarily connected.
				\item[(ii)] If $\g$ is an oddly generated superalgebra, then any global form of it is also oddly generated.  However, if $\GG$ is oddly generated then $\operatorname{Lie}\GG$ need not be oddly generated.
			\end{enumerate}
		\end{remark}
		
	\end{definition}
	
	To help put the following lemma in context, we remark that given a global form $\GG$ of $\g$ and a subgroup $\HH\sub\GG$, there may or may not exist a subalgebra $\h\sub\g$ for which $\HH$ is a global form of $\h$.  Further, when such a subalgebra exists it need not be unique.
	\begin{lemma}\label{lemma bijection oddly generated}
		Suppose that $\g$ is quasireductive, and let $\GG$ be a global form of $\g$.  Then we have a $\GG_0$-equivariant bijection of $\GG_0$-sets
		\[
		\mathfrak{G}:\text{sAlg}_{ogen}\to \text{sGrp}_{ogen}.
		\]
		Here $\text{sAlg}_{ogen}$ is the set of oddly generated subalgebras $\h\sub\g$, and $\text{sGrp}_{ogen}$ is the set of oddly generated subgroups $\HH\sub\GG$.
	\end{lemma}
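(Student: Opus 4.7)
My plan is to construct the bijection $\mathfrak{G}$ as follows. Given an oddly generated subalgebra $\h \subseteq \g$, let $\mathfrak{G}(\h)$ be the unique subgroup of $\GG$ that is a global form of $\h$, whose existence and uniqueness follow from Lemma \ref{lemma global forms properties}(3). For the inverse map $\Psi$, to each oddly generated subgroup $\HH \subseteq \GG$ I associate
\[
\Psi(\HH) := (\operatorname{Lie}\HH)_{\ol{1}} \oplus [(\operatorname{Lie}\HH)_{\ol{1}}, (\operatorname{Lie}\HH)_{\ol{1}}].
\]
I will then verify, in order: (i) $\Psi(\HH) \in \text{sAlg}_{ogen}$, (ii) $\mathfrak{G}(\h) \in \text{sGrp}_{ogen}$, (iii) $\Psi$ and $\mathfrak{G}$ are mutually inverse, and (iv) both maps are $\GG_0$-equivariant.

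For (i), the containment $\Psi(\HH) \subseteq \g$ uses Lemma \ref{lemma global forms properties}(2) applied to $\GG$ to get $(\operatorname{Lie}\GG)_{\ol{1}} = \g_{\ol{1}}$, so $(\operatorname{Lie}\HH)_{\ol{1}} \subseteq \g_{\ol{1}}$ and hence $[(\operatorname{Lie}\HH)_{\ol{1}}, (\operatorname{Lie}\HH)_{\ol{1}}] \subseteq [\g_{\ol{1}}, \g_{\ol{1}}] \subseteq \g_{\ol{0}}$; closure under the super bracket then follows from super Jacobi. Quasireductivity of $\Psi(\HH)$ is forced by the oddly generated hypothesis on $\HH$: by definition $\HH_0$ is a global form of $\Psi(\HH)_{\ol{0}} = [(\operatorname{Lie}\HH)_{\ol{1}}, (\operatorname{Lie}\HH)_{\ol{1}}]$, and the notion of global form requires this even Lie algebra to be reductive, while the $\ad$-semisimple action on $\Psi(\HH)_{\ol{1}}$ is obtained by differentiating the algebraic (hence semisimple) adjoint action of the reductive group $\HH_0$ on $(\operatorname{Lie}\HH)_{\ol{1}}$. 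By construction $\Psi(\HH)_{\ol{0}} = [\Psi(\HH)_{\ol{1}}, \Psi(\HH)_{\ol{1}}]$, so $\Psi(\HH)$ is oddly generated. For (ii), applying Lemma \ref{lemma global forms properties}(2) to $\HH := \mathfrak{G}(\h)$ yields $(\operatorname{Lie}\HH)_{\ol{1}} = \h_{\ol{1}}$ and $\HH_0$ a global form of $\h_{\ol{0}}$; combined with $\h_{\ol{0}} = [\h_{\ol{1}}, \h_{\ol{1}}]$ coming from oddly-generatedness of $\h$, this is exactly the oddly generated condition for $\HH$.

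For (iii), the formula in (ii) immediately gives $\Psi(\mathfrak{G}(\h)) = \h_{\ol{1}} \oplus [\h_{\ol{1}}, \h_{\ol{1}}] = \h$. Conversely, $\HH$ is itself a global form of $\Psi(\HH)$: containment $\Psi(\HH) \subseteq \operatorname{Lie}\HH$ is immediate, and any subgroup $\KK \subseteq \HH$ with $\Psi(\HH) \subseteq \operatorname{Lie}\KK$ satisfies $(\operatorname{Lie}\KK)_{\ol{1}} = (\operatorname{Lie}\HH)_{\ol{1}}$ by squeezing, and $\KK_0 = \HH_0$ by the minimality clause of $\HH_0$ being a global form of $\Psi(\HH)_{\ol{0}}$; the super Harish--Chandra pair correspondence then forces $\KK = \HH$. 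Uniqueness in Lemma \ref{lemma global forms properties}(3) gives $\mathfrak{G}(\Psi(\HH)) = \HH$. Finally, (iv) is automatic since every ingredient --- odd part of $\operatorname{Lie}$, super bracket, and formation of the global form --- commutes with conjugation by $\GG_0$. The main obstacle I anticipate is step (i), and specifically the identification of $\Psi(\HH)$ as quasireductive: a generic subalgebra of $\g_{\ol{0}}$ need not be reductive, so we must use essentially that $\HH$ is oddly generated (rather than merely quasireductive), which packages $\Psi(\HH)_{\ol{0}}$ as precisely the Lie algebra of which the reductive group $\HH_0$ is a global form. This asymmetry --- the inverse map discards $(\operatorname{Lie}\HH)_{\ol{0}}$ and rebuilds the even part purely from odd data --- is exactly what makes ``oddly generated'' the correct hypothesis on both sides of the bijection.
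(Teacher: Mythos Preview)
Your construction is exactly the paper's: define $\mathfrak{G}$ via Lemma~\ref{lemma global forms properties}(3) and the inverse via $\HH \mapsto [(\operatorname{Lie}\HH)_{\ol{1}},(\operatorname{Lie}\HH)_{\ol{1}}]+(\operatorname{Lie}\HH)_{\ol{1}}$, then check bijectivity and $\GG_0$-equivariance. The paper's proof is terse (it declares the correspondence ``clearly bijective'' without spelling out (i)--(iii)), whereas you supply the verifications; your added details are correct, with the small caveat that your argument for ad-semisimplicity in (i) leans on the implicit hypothesis in the definition of ``global form'' rather than on a direct computation---this is fine, and the paper does not address the point at all.
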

	\begin{proof}
		The map $\mathfrak{G}$ takes an oddly generated subalgebra $\h\sub\g$ to a global form $\mathfrak{G}(\h)\sub\GG$ as in (3) of Lemma \ref{lemma global forms properties}.  The inverse map $\mathfrak{G}^{-1}$ takes an oddly generated subgroup $\HH\sub\GG$ with the given properties to the Lie superalgebra 
		\[
		\mathfrak{G}^{-1}(\HH)=[(\operatorname{Lie}\HH)_{\ol{1}},(\operatorname{Lie}\HH)_{\ol{1}}]+(\operatorname{Lie}\HH)_{\ol{1}}.
		\]
		This correspondence is both clearly bijective and intertwines the adjoint action of $\GG_0$ on $\text{sAlg}_{ogen}$ with the conjugation action on $\text{sGrp}_{ogen}$.
	\end{proof}
	
	\section{Algebraic integrals on homogeneous affine supervarieties}
	
	\subsection{Homogeneous spaces}  Given a subgroup $\HH\sub\GG$, we may construct the smooth homogeneous supervariety $\GG/\HH$, see \cite{MT}.  Note that $\GG/\HH$ admits an action of $\GG$ on the left by translation.  If both $\GG$ and $\HH$ are quasireductive, then $\GG/\HH$ will be an affine supervariety. 
	
	\subsection{Geometric induction}\label{section induction functors} 
	Given a a subgroup $\HH\sub\GG$, we have a functor \linebreak $\Ind_{\HH}^{\GG}:\Rep\HH\to\Rep\GG$ given by 
	\[
	\Ind_{\HH}^{\GG}V=(V\otimes\C[\GG])^{\HH},
	\]
	where we take the $\HH$ action on $\C[\GG]$ by right translation. 
	In the case that both $\HH$ and $\GG$ are quasireductive, $\Ind_{\HH}^{\GG}(-)$ is exact.  
	
	Observe that $\Ind_{\HH}^{\GG}\C=\C[\GG]^{\HH}=\C[\GG/\HH]$.  Further, $\Ind_{\HH}^{\GG}V$ always has the structure of a $\C[\GG/\HH]$-module which is compatible with the $\GG$-action.
	
	\begin{lemma}\label{lemma simple module}  Let $\HH\sub\GG$ be quasireductive supergroups, and let $V$ be a simple $\HH$-module. 
		Then $\Ind_{\HH}^{\GG}V$ is simple as a $\C[\GG/\HH]-\GG$ module.  In particular, $\C[\GG/\HH]$ is a simple $\GG$-algebra.  
	\end{lemma}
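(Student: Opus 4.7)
My plan is to prove the lemma via the standard equivalence between $\GG$-equivariant quasi-coherent sheaves on $\GG/\HH$ and $\Rep\HH$, sending a sheaf to its fiber at the base point $e\HH$. Under this equivalence, the sheaf attached to an $\HH$-module $W$ has global sections $\Ind_\HH^\GG W$, so $\C[\GG/\HH]$-$\GG$-submodules of $\Ind_\HH^\GG V$ correspond to $\HH$-submodules of $V$, and simplicity transfers.

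To make this concrete, I would work with the counit of adjunction
\[
\ev_e \colon \Ind_\HH^\GG V \to V, \qquad f \mapsto f(e),
\]
which is $\HH$-equivariant and $\C[\GG/\HH]$-linear with respect to the augmentation $\C[\GG/\HH] \onto \C$ at $e\HH$. Using that $\GG \to \GG/\HH$ is a principal $\HH$-bundle and $V$ is finite-dimensional, $\Ind_\HH^\GG V$ is a locally free $\C[\GG/\HH]$-module of rank $\dim V$, and one identifies $\ker(\ev_e) = \mathfrak{m}_e \cdot \Ind_\HH^\GG V$, where $\mathfrak{m}_e \sub \C[\GG/\HH]$ is the maximal ideal at $e\HH$. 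Now let $M \sub \Ind_\HH^\GG V$ be a nonzero $\C[\GG/\HH]$-$\GG$-submodule. $\GG$-stability gives $\ev_g(M) \sub \ev_e(M)$ for every $g$ (via $\ev_g(f) = \ev_e(g^{-1}\cdot f)$), so if $\ev_e(M)$ vanished, every function in $M$ would vanish identically, contradicting $M \neq 0$. Hence $\ev_e(M) \neq 0$, and by simplicity of $V$ we conclude $\ev_e(M) = V$, i.e.\ $M + \mathfrak{m}_e \Ind_\HH^\GG V = \Ind_\HH^\GG V$. Applying Nakayama's lemma to the finitely generated $\C[\GG/\HH]$-module $\Ind_\HH^\GG V / M$ (after localizing at $\mathfrak{m}_e$) forces $M$ to equal $\Ind_\HH^\GG V$ in a neighborhood of the base point; propagating by $\GG_0$-translates (which act transitively on $\GG_0/\HH_0$) yields $M = \Ind_\HH^\GG V$ globally.

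The ``in particular'' claim follows at once by specializing to $V = \C$: the argument shows $\C[\GG/\HH] = \Ind_\HH^\GG \C$ has no proper nonzero $\C[\GG/\HH]$-$\GG$-submodule, and any $\GG$-stable ideal of $\C[\GG/\HH]$ is manifestly such a submodule, so $\C[\GG/\HH]$ is $\GG$-simple. I anticipate the most delicate step to be the super-geometric input that $\GG \to \GG/\HH$ is an $\HH$-principal bundle, which underlies both the local freeness of $\Ind_\HH^\GG V$ over $\C[\GG/\HH]$ and the identification of the fiber at $e\HH$ with $V$ itself; once that is granted, the Nakayama reduction and $\GG_0$-transitivity argument are formal, and the whole proof parallels the classical reductive-group case.
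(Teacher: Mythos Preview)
Your approach is sound and genuinely different from the paper's. The paper dispatches the lemma in one line via Barr--Beck monadicity: since $\Ind_{\HH}^{\GG}$ is exact (both groups being quasireductive) and conservative, the adjunction $\operatorname{Res}\dashv\Ind$ is monadic, and the monad $\Ind\circ\operatorname{Res}\cong(-)\otimes\C[\GG/\HH]$ on $\Rep\GG$ identifies $\Rep\HH$ with the category of $\C[\GG/\HH]$--$\GG$--modules. Simplicity then transfers tautologically. Your argument instead unpacks the geometric content by hand---fiber at the basepoint, Nakayama, and propagation by $\GG_0$-translates---which is more elementary and makes the mechanism visible, at the cost of some bookkeeping.

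There is one point that deserves more care in the super setting. Your step ``if $\ev_e(M)$ vanished, every function in $M$ would vanish identically'' implicitly treats elements of $\Ind_{\HH}^{\GG}V\subseteq V\otimes\C[\GG]$ as functions determined by their values at closed points $g\in\GG_0(\C)$. On a supervariety this fails: a section can vanish at every closed point yet be nonzero (any odd coordinate does so). The remedy is to use the full $\GG$-coaction rather than just $\C$-points. Concretely, if $\rho:\Ind_{\HH}^{\GG}V\to\Ind_{\HH}^{\GG}V\otimes\C[\GG]$ is the comodule structure, then $(\ev_e\otimes\id)\circ\rho$ equals the inclusion $\Ind_{\HH}^{\GG}V\hookrightarrow V\otimes\C[\GG]$; since $M$ being $\GG$-stable means $\rho(M)\subseteq M\otimes\C[\GG]$, one gets $M\subseteq\ev_e(M)\otimes\C[\GG]$, whence $\ev_e(M)=0$ forces $M=0$. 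Equivalently, run your identity $\ev_g(f)=\ev_e(g^{-1}\cdot f)$ for all $R$-points $g\in\GG(R)$, not just $\C$-points. With this adjustment the remainder of your argument (Nakayama on the finitely generated quotient, then $\GG_0$-transitivity on the maximal ideals of $\C[\GG/\HH]$) goes through as written.
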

	\begin{proof}
		This follows immediately from the fact that $\Ind_{\HH}^{\GG}$ is an exact functor and Barr-Beck monadicity.
		
		% Suppose that $J\sub I_\HH^\GG(V)$ is a nontrivial $\GG$-stable submodule. Then by the simplicity of $V$ over $\HH$, there exists a closed point $x\in (\GG/\HH)(\C)$ such that $J\sub\m_xI_\HH^\GG(V)$.  Choose the maximal $k$ such that $J\sub\m_x^kI_\HH^\GG(V)$.  Then there exists $m\in J$ such that $m\in\m_x^kI_\HH^\GG(V)$, but $m\notin\m_x^{k+1}I_\HH^\GG(V)$.  Because $\GG/\HH$ is homogeneous, there exists $u\in\g$ such that $u(m)\in\m_x^{k-1}I_\HH^\GG(V)\setminus \m_x^{k}I_\HH^\GG(V)$, a contradiction.  Therefore no such $J$ exists, and we are done.
	\end{proof}
	
	\subsection{Algebraic integrals}
	\begin{definition}
		Let $\HH\sub\GG$ be quasireductive supergroups.  An algebraic integral on $\GG/\HH$ is a $\GG$-equivariant map $\iota:\C[\GG/\HH]\to\C$, which may be either even or odd.  
	\end{definition} 
	Given an algebraic integral $\iota$ on $\GG/\HH$, we obtain a symmetric bilinear form $(-,-)_{\iota}$ on $\C[\GG/\HH]$ given by $(f,g)_{\iota}=\iota(fg)$, and satisfying $(fg,h)_{\iota}=(f,gh)_{\iota}$.
	
	\begin{lemma}\label{lemma nondeg form}
		If $\iota\neq0$ then $(-,-)_{\iota}$ is nondegenerate, i.e.~its kernel is trivial as a bilinear form.
	\end{lemma}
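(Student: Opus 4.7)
The plan is to leverage Lemma \ref{lemma simple module}, which asserts that $\C[\GG/\HH]$ is simple as a $\GG$-algebra, meaning its only $\GG$-stable (two-sided) ideals are $0$ and the whole algebra. The strategy is to show that the radical $K := \{f \in \C[\GG/\HH] : (f,g)_\iota = 0 \text{ for all } g\}$ is a $\GG$-stable ideal, and then rule out $K = \C[\GG/\HH]$ using the hypothesis $\iota \neq 0$.

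First I would verify that $K$ is a two-sided ideal of $\C[\GG/\HH]$. This is immediate from the associativity relation $(fh,g)_\iota = (f,hg)_\iota$ already recorded: if $f \in K$ and $h \in \C[\GG/\HH]$, then for every $g$ we have $(fh,g)_\iota = (f,hg)_\iota = 0$, so $fh \in K$. Commutativity of $\C[\GG/\HH]$ (up to the usual sign) takes care of the other side.

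Next I would check that $K$ is $\GG$-stable. Since both $\iota$ and the multiplication $m:\C[\GG/\HH] \otimes \C[\GG/\HH] \to \C[\GG/\HH]$ are morphisms of $\GG$-modules, the bilinear form $(-,-)_\iota = \iota \circ m$ is $\GG$-invariant; equivalently, for any $f \in K$ and any translate $g \cdot f$, one computes $(g\cdot f, h)_\iota = \pm (f, g^{-1} \cdot h)_\iota = 0$ for all $h$, so $g \cdot f \in K$. (Working at the level of the $\C[\GG]$-comodule structure makes this coordinate-free.)

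Finally, since $\iota \neq 0$ by hypothesis, there is some $f_0$ with $\iota(f_0) \neq 0$, and then $(1,f_0)_\iota = \iota(f_0) \neq 0$ shows $1 \notin K$. Hence $K$ is a proper $\GG$-stable ideal, and Lemma \ref{lemma simple module} forces $K = 0$, which is exactly nondegeneracy. The only step with any content is the $\GG$-stability of $K$, and even that is a formal consequence of $\GG$-equivariance of $\iota$ and $m$; I do not anticipate a serious obstacle.
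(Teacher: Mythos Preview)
Your proposal is correct and follows essentially the same approach as the paper's proof: show that the radical $K$ is a $\GG$-stable ideal and invoke Lemma~\ref{lemma simple module}. You are simply more explicit than the paper about why $K$ is proper (via $1\notin K$), whereas the paper leaves this step implicit in the phrase ``this contradicts'' simplicity.
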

	
	\begin{proof}
		Suppose that $(-,-)_{\iota}$ has a nontrivial kernel, which we call $K$.  Clearly $K$ is $\GG$-stable, and if $g\in\C[\GG/\HH]$, $f\in K$, then $(fg,h)_{\iota}=(f,gh)_{\iota}=0$. Thus $fg\in K$, meaning $K$ is a $\GG$-stable ideal.  This contradicts Corollary \ref{lemma simple module}, so we are done.
	\end{proof}
	
	Notice that the form $(-,-)_{\iota}$ induces a $\GG$-equivariant map $\Phi_{\iota}:\C[\GG/\HH]\to\C[\GG/\HH]^*$.
	
	\begin{cor}\label{cor iso from form}
		If $\iota\neq0$ then the induced map $\Phi_{\iota}:\C[\GG/\HH]\to\C[\GG/\HH]^*$ is an isomorphism onto the $\GG$-finite vectors $\Gamma_\GG(\C[\GG/\HH]^*)$ of $\C[\GG/\HH]^*$.
	\end{cor}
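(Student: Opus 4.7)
The injectivity of $\Phi_\iota$ is immediate from Lemma~\ref{lemma nondeg form} (the form $(-,-)_\iota$ is nondegenerate). That the image lies in $\Gamma_\GG(\C[\GG/\HH]^*)$ is also straightforward: $\Phi_\iota$ is $\GG$-equivariant and $\C[\GG/\HH]$ is locally finite as a $\GG$-module, so the image of each $f$ lies inside the image of its (finite-dimensional) $\GG$-span, which is a finite-dimensional $\GG$-submodule of $\C[\GG/\HH]^*$. Thus $\Phi_\iota$ factors through an injection $M := \C[\GG/\HH] \hookrightarrow M^\vee := \Gamma_\GG(M^*)$, and the substantive task is to prove surjectivity.

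My approach is a Yoneda-type reduction. Both $M$ and $M^\vee$ are locally finite rational $\GG$-modules, and an equivariant map $\phi: N \to N'$ between such modules is an isomorphism if and only if the induced map $\phi_*: \Hom_\GG(V, N) \to \Hom_\GG(V, N')$ is an isomorphism for every finite-dimensional $V \in \Rep\GG$. This is verified by testing $\phi$ against the $\GG$-submodule generated by a single vector: a nonzero kernel vector $v$ yields a nonzero element (its inclusion) of $\ker\phi_*$ with $V$ the $\GG$-span of $v$, while any element of $N'$ is hit by lifting its $\GG$-span to $N$ via surjectivity of $\phi_*$. So it suffices to show $(\Phi_\iota)_*$ is an isomorphism for each finite-dimensional $V$; injectivity is automatic from injectivity of $\Phi_\iota$, and both source and target are finite-dimensional, leaving only a dimension count.

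Frobenius reciprocity applied to $M = \Ind_\HH^\GG \C$ gives $\Hom_\GG(V, M) \cong \Hom_\HH(V, \C) = (V^*)^\HH$. On the other side, $\Hom_\GG(V, M^\vee) = \Hom_\GG(V, M^*) \cong \Hom_\GG(V \otimes M, \C) \cong \Hom_\GG(M, V^*)$ (using finite-dimensionality of $V$). To identify this last space with $(V^*)^\HH$, I would invoke that the existence of a nonzero $\iota \in \Hom_\GG(M, \C)$ forces $\Ber(\g/\h)$ to be trivial as an $\HH$-module, which in turn is equivalent to the identification $\Ind_\HH^\GG \C \cong \Coind_\HH^\GG \C$; then $\Hom_\GG(M, V^*) = \Hom_\GG(\Coind_\HH^\GG \C, V^*) = \Hom_\HH(\C, V^*) = (V^*)^\HH$. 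With dimensions matched, $(\Phi_\iota)_*$ is an injection of equidimensional finite-dimensional spaces, hence an isomorphism, so $\Phi_\iota$ is an isomorphism onto $M^\vee = \Gamma_\GG(M^*)$.

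The main obstacle is the $\Ind \cong \Coind$ identification in the super setting, which depends on the theory of Berezin integration developed in the paper's later sections. Granted this, the rest of the argument is a clean Yoneda–adjunction chase.
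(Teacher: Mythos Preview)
Your argument is logically circular within the paper's development. The step ``the existence of a nonzero $\iota$ forces $\Ber(\g/\h)$ to be trivial'' is precisely one direction of Theorem~\ref{thm existence integral}, and if you look at that proof you will see it invokes the isomorphism $\Ind_{\HH}^{\GG}\Ber(\g/\h)\cong\Gamma_\GG(\C[\GG/\HH]^*)$ ``as in Corollary~\ref{cor iso from form}'' --- i.e.\ it relies on the very statement you are trying to prove. You acknowledge this dependence yourself in the final paragraph, but it is not a technicality that can be granted: the whole point of placing this corollary in Section~3 is that it must be established \emph{before} any Berezin integration enters, so that Theorem~\ref{thm existence integral} can then be deduced from it. A second, smaller issue: in the algebraic supergroup setting of the paper, $\Ind_\HH^\GG$ is the \emph{right} adjoint to restriction (coinduction in some conventions), and the existence and identification of a left adjoint satisfying the twist formula you want is not something the paper has set up.

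The paper's own argument avoids all of this by working purely at the level of $\GG_0$-isotypic decompositions. One first checks (via Frobenius reciprocity) that each simple $\GG_0$-module $L$ occurs in $\C[\GG/\HH]$ with \emph{finite} multiplicity $n_L$; then $\Gamma_\GG(\C[\GG/\HH]^*)=\bigoplus_L (L^*)^{\oplus n_L}$, and the injection $\Phi_\iota$ forces $n_{L^*}\leq n_L$ for every $L$, hence $n_L=n_{L^*}$ and $\Phi_\iota$ is onto. This is elementary, needs nothing from later sections, and is what you should aim for here.
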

	
	\begin{proof}
		By Lemma \ref{lemma nondeg form}, $\Phi_{\iota}$ is injective. For surjectivity, first observe that for any simple $\GG_0$-module $L$, we have 
		\[
		[\operatorname {Res}_{\GG_0}^{\GG}\C[\GG/\HH]:L]<\infty.
		\]
		Indeed, we have:
		\begin{eqnarray*}
			\Hom_{\GG_0}(L,\operatorname{Res}_{\HH}^{\GG}\C[\GG/\HH])& \cong & \Hom_{\GG}(\UU\g\otimes_{\UU\g_{\ol{0}}}L,\Ind_{\HH}^\GG\C)\\
			& \cong &\Hom_{\HH}(\operatorname{Res}_{\HH}^{\GG}(\UU\g\otimes_{\UU\g_{\ol{0}}}L),\C),    
		\end{eqnarray*}
		and the latter is clearly finite.		Thus we may write
		\[
		\C[\GG/\HH]\cong\bigoplus\limits_{L}L^{\oplus n_{L}},
		\]
		where the isomorphism is of $\GG_0$-modules, and $L$ runs over all simple $\GG_0$-modules up to parity.  It follows that 
		\[
		\C[\GG/\HH]^*\cong \prod\limits_{L}(L^*)^{\oplus n_{L}},
		\]
		It is straightforward to see that
		\[
		\Gamma_\GG(\prod\limits_{L}(L^*)^{\oplus n_{L}})=\bigoplus\limits_{L}(L^*)^{\oplus n_{L}}.
		\]
		It follows that $\Phi_{\iota}$ defines an injective map
		\[
		\C[\GG/\HH]\cong \bigoplus\limits_{L}L^{\oplus n_{L}}\hookrightarrow \bigoplus\limits_{L}(L^*)^{\oplus n_{L}}.
		\]
		Thus we obtain $n_{L^*}\leq n_L$ for all $L$, implying that $n_L=n_{L^*}$ for all $L$, meaning that $\Phi_{\iota}$ is an isomorphism.
	\end{proof}

	\begin{lemma}\label{lemma integral unique}
		Suppose that $\GG/\HH$ admits a nonzero algebraic integral $\iota$.  Then any other algebraic integral on $\GG/\HH$ is a scalar multiple of $\iota$.
	\end{lemma}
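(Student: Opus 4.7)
The plan is to identify the space of algebraic integrals with the $\GG$-invariants in $\C[\GG/\HH]^*$, and then use Corollary \ref{cor iso from form} to transport the problem to computing the $\GG$-invariants of $\C[\GG/\HH]$ itself, which will be readily seen to be one-dimensional.

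First, I observe that the space of all algebraic integrals is by definition $\Hom_\GG(\C[\GG/\HH],\C)=(\C[\GG/\HH]^*)^\GG$, interpreted internally in super vector spaces so that both even and odd integrals are included. Any $\GG$-invariant element of $\C[\GG/\HH]^*$ is trivially $\GG$-finite and therefore lies in $\Gamma_\GG(\C[\GG/\HH]^*)$. By Corollary \ref{cor iso from form}, $\Phi_\iota$ is a $\GG$-equivariant isomorphism $\C[\GG/\HH]\xrightarrow{\sim}\Gamma_\GG(\C[\GG/\HH]^*)$ of parity $|\iota|$, so taking $\GG$-invariants on both sides yields an isomorphism (up to parity shift by $|\iota|$) between $(\C[\GG/\HH])^\GG$ and $(\C[\GG/\HH]^*)^\GG$.

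Next, it remains to show $(\C[\GG/\HH])^\GG=\C$. Writing $\C[\GG/\HH]=\C[\GG]^\HH$, such an invariant sits inside $\C[\GG]^\GG$ for the left regular coaction; applying the counit to the invariance equation $\Delta(f)=f\otimes 1$ yields $f=\epsilon(f)\cdot 1$, so this coaction has only scalar invariants. Since scalars are automatically $\HH$-invariant, I conclude $(\C[\GG/\HH])^\GG=\C$. Transporting back, $(\C[\GG/\HH]^*)^\GG$ is one-dimensional and spanned by $\Phi_\iota(1)=\iota$, so every algebraic integral is a scalar multiple of $\iota$.

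The argument is essentially immediate once Corollary \ref{cor iso from form} is in hand together with the straightforward identification $(\C[\GG/\HH])^\GG=\C$; there is no serious obstacle. The one mild subtlety worth noting is parity tracking: if $\iota$ happens to be odd then $\Phi_\iota$ is an odd isomorphism, which forces the entire one-dimensional space of integrals to lie in odd parity, so in particular any second integral automatically has the same parity as $\iota$.
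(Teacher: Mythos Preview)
Your proof is correct and follows essentially the same route as the paper: identify algebraic integrals with $(\C[\GG/\HH]^*)^{\GG}$, use Corollary~\ref{cor iso from form} to transport this to $\C[\GG/\HH]^{\GG}=\C\langle 1\rangle$, and observe $\Phi_\iota(1)=\iota$. You have simply fleshed out the verification that $\C[\GG/\HH]^{\GG}=\C$ and added a useful remark on parity, but the argument is the same.
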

	
	\begin{proof}
		Indeed, we see that $\Phi_{\iota}(1)=\iota$, thus the image of $1$ determines the integral.  However the existence of $\iota\neq0$ implies that  $\Gamma_\GG(\C[\GG/\HH]^*)\cong\C[\GG/\HH]$ by Corollary \ref{cor iso from form}.  Thus 
		\[
		(\C[\GG/\HH]^*)^{\GG}=\Gamma_\GG(\C[\GG/\HH]^*)^\GG\cong \C[\GG/\HH]^{\GG}=\C\langle1\rangle.
		\]
	\end{proof}
	
	\section{CS manifolds, Berezin forms, and Berezin integration}
	
	For further details on CS manifolds, Berezins, and Berezin integration, we refer to \cite{Manin}, \cite{R}, and \cite{W}.  
	
	\subsection{Integration on CS manifolds} Recall that a CS supermanifold is a locally ringed space that is locally isomorphic to 
	\[
	\R_{CS}^{m|n}:=(\R^m,C(\R^m)\otimes{\bigwedge}^\bullet(\C^{n})),
	\]
	for some $m,n\in\N$.  Here $C(\R^m)$ denotes the algebra of smooth, complex-valued functions on $\R^m$.  
	
	Let $\mathcal{X}$ be either a CS supermanifold or a smooth complex algebraic supervariety.  Then we have the usual notion of the cotangent bundle $\Omega_\XX$ on $\XX$.  Thus we may consider $\Ber_\XX:=\Ber(\Omega_\XX)$, the sheaf of Berezin forms on $\XX$. To be precise, for a vector bundle $\mathcal{V}$ of dimension $(r|s)$, $\Ber(\mathcal{V})$ is defined to be the line bundle with local generator given by a local, ordered, homogeneous basis $(a_1,\dots,a_r,b_1,\dots,b_s)$ of $\mathcal{V}$, and such that $(a_1,\dots,a_r,b_1,\dots,b_s)=\Ber(A)(c_1,\dots,c_r,d_1,\dots,d_s)$ if the second basis is transformed into the first basis via $A$, a local automorphism of $\mathcal{V}$. 
	
	We now restrict to the CS manifold setting to define Berezin integration.  Given homogeneous coordinates $x_1,\dots,x_m,\xi_1,\dots,\xi_n$ on $\R^{m|n}_{CS}$, we write $\frac{dx_1\cdots dx_m}{d\xi_1\cdots d\xi_n}$ for the generator of $\Ber(\Omega_{\R^{m|n}_{CS}})$ given by $(dx_1,\dots,dx_m,d\xi_1,\dots,d\xi_n)$.
	
	Given a compactly supported Berezin form $\omega$ on $\R^{m|n}_{CS}$ with coordinates $x_1,\dots,x_m,\xi_1,\dots,\xi_n$, we may present it as
	\[
	\omega=\sum\limits_{I}f_I\xi_I\frac{dx_1\cdots dx_m}{d\xi_1\cdots d\xi_n},       
	\]
	where the $f_I\in C(\R^m)$ are compactly supported, and $I$ runs over all subsets of $\{1,\dots,n\}$.  Then we define 
	\[
	\int_{\R^{m|n}_{CS}}\omega=\int_{\R^m}f_{\{1,\dots,n\}}dx_1\cdots dx_m.
	\]
	That this expression is independent of coordinate changes is proven in Theorem 11.3.2 of \cite{R}.
	
	Now let $\omega\in\Ber_\XX$ be an arbitrary compactly supported section, and let $(\UU_i,\varphi_i)$ be a partition of unity of $\XX$.  We define
	\[
	\int_{\XX}\omega=\sum\limits_i\int_{\UU_i}\varphi_i\omega.
	\]
	By the independence of the choice of coordinates on $\R^{m|n}_{CS}$ in the local setting, we find this is again well-defined.
	
	\subsection{Integration along  odd fibers} We define an integration along odd fibres morphism, which we emphasize works in both the algebraic and CS settings.  At the end we show they are compatible in the setting of interest to us.
	
	Let $\XX$ be a smooth supervariety \textbf{or} a CS manifold, and suppose that we have a splitting $\pi:\XX\to \XX_0$.  Define the integration along odd fibres map 
	\[
	\pi_*:\Ber_\XX\to\Omega_{\XX_0}^{top}
	\]
	to be the following map of sheaves: first suppose that $\XX=\Spec A$ is affine if $\XX$ is a supervariety, and if $\XX$ is a CS manifold then suppose that $\XX=\R^{m|n}_{CS}$, and set $A=\Gamma(\XX,\OO_\XX)$.  In both cases write $\ol{A}=A/(A_{\ol{1}})$.
	
	Our splitting $\pi$ corresponds to a splitting $\pi^*:\ol{A}\to A$ of the projection $A\to\ol{A}$.   Let us further assume that $A$ has global coordinates by shrinking down our affine neighborhood further.  If $x_1,\dots,x_m$ are global coordinates for $\ol{A}\sub A$, and $\xi_1,\dots,\xi_n$ are global odd coordinates for $A$, then $x_1,\dots,x_m,\xi_1,\dots,\xi_n$ will be global coordinates for $A$, and we have an algebra isomorphism 
	\[
	A\cong \ol{A}[\xi_1,\dots,\xi_n].
	\]
	Since $\frac{dx_1\cdots dx_m}{d\xi_1\cdots d\xi_n}$ trivializes $\Ber_\XX$, we have an isomorphism of $\ol{A}$-modules:
	\[
	\Ber_\XX\cong \bigoplus\limits_{I}\ol{A}\xi_I\frac{dx_1\cdots dx_m}{d\xi_1\cdots d\xi_n},
	\] 
	where $I$ runs over all subsets of $\{1,\dots,n\}$.  It therefore makes sense to define
	\[
	\pi_*\left(\sum\limits_{I}f_I\xi_I \frac{dx_1\cdots dx_m}{d\xi_1\cdots d\xi_n}\right)=f_{\{1,\dots,n\}}dx_1\cdots dx_m.
	\]
	That this is independent of coordinates is given for instance in Section 6 of Chapter 4 of \cite{Manin}.  To extend to all of $\XX$, we patch together the local constructions.  The following lemmas are now clear.
	
	\begin{lemma}\label{lemma surj int fibres}
		Let $\XX$ be either a smooth algebraic supervariety or a CS manifold.  If we have a splitting $\pi:\XX\to \XX_0$, then $\pi_*:\Ber_{\XX}\to\Omega_{\XX_0}^{top}$ is a surjective map of sheaves.  In particular, if $\XX_0$ is an affine variety or a manifold, then it defines a surjective map on global sections.  
	\end{lemma}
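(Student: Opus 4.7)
My plan is to establish the surjectivity of $\pi_*$ as a morphism of sheaves first, which is a purely local statement, and then bootstrap up to global sections in the two settings separately.

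First, since $\Ber_\XX$ and $\Omega_{\XX_0}^{top}$ live on the same underlying topological space as $\XX_0$, surjectivity as sheaves can be checked on a neighborhood of an arbitrary point of $\XX_0$. Shrinking down to an affine open (in the algebraic case) or a coordinate chart (in the CS case) on which $\pi$ admits global coordinates $x_1,\dots,x_m,\xi_1,\dots,\xi_n$, the explicit formula defining $\pi_*$ shows that any local section $\omega_0=g\,dx_1\cdots dx_m$ of $\Omega_{\XX_0}^{top}$ is the image of
\[
\omega=g\,\xi_1\cdots\xi_n\,\frac{dx_1\cdots dx_m}{d\xi_1\cdots d\xi_n}\in\Ber_\XX.
\]
This produces a preimage at every stalk and settles sheaf surjectivity.

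For the statement about global sections, I treat the two cases. In the algebraic setting, if $\XX_0$ is an affine variety then $\XX$ is also affine: its underlying topological space agrees with that of $\XX_0$, and because of the splitting the structure sheaf of $\XX$ is, as an $\OO_{\XX_0}$-algebra, isomorphic to the exterior algebra on the odd conormal bundle, hence coherent over $\OO_{\XX_0}$. Both $\Ber_\XX$ and $\Omega_{\XX_0}^{top}$ are quasi-coherent, and since higher cohomology of quasi-coherent sheaves vanishes on an affine scheme, the sheaf surjection above immediately yields surjectivity on global sections.

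In the CS setting, $\XX_0$ is an ordinary smooth manifold, so it admits smooth partitions of unity. Given $\omega_0\in\Gamma(\XX_0,\Omega_{\XX_0}^{top})$, cover $\XX_0$ by coordinate charts $U_i$ on which the splitting is coordinatized as above, let $\{\varphi_i\}$ be a subordinate partition of unity, and by the local argument choose Berezins $\omega_i\in\Gamma(\pi^{-1}(U_i),\Ber_\XX)$ with $\pi_*(\omega_i)=\omega_0|_{U_i}$. Because $\varphi_i$ has compact support in $U_i$, the sum $\omega=\sum_i\pi^*(\varphi_i)\,\omega_i$ extends by zero to a global Berezin on $\XX$; and since multiplication by the pullback of an even function passes through $\pi_*$ (this is immediate from the coordinate definition), we conclude $\pi_*(\omega)=\sum_i\varphi_i\,\omega_0|_{U_i}=\omega_0$. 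There is no serious obstacle in this argument: the only care needed is verifying that $\pi_*$ commutes with multiplication by $\pi^*$ of even functions so that the partition of unity patching is well-defined, and this is clear from the explicit local formula.
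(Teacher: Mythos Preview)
Your argument is correct and is exactly the kind of routine verification the paper has in mind: the paper gives no proof at all, stating only that ``the following lemmas are now clear'' immediately after the explicit coordinate formula for $\pi_*$. Your local preimage $g\,\xi_1\cdots\xi_n\,\frac{dx_1\cdots dx_m}{d\xi_1\cdots d\xi_n}$, the quasi-coherent cohomology vanishing in the affine case, and the partition-of-unity patching in the CS case are precisely the standard details one would supply.
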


	\begin{lemma}\label{lemma integral fibres}
		If $\XX$ is a CS manifold with a splitting $\pi:\XX\to \XX_0$, then for a compactly supported $\omega\in\Gamma(\XX,\Ber_\XX)$ we have
		\[
		\int_{\XX}\omega=\int_{\XX_0}\pi_*(\omega).
		\]
	\end{lemma}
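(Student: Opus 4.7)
The plan is to unwind both sides down to the explicit formula on the local model $\R^{m|n}_{CS}$, where both the Berezin integral and the integration-along-odd-fibres map were defined in coordinates. Since the two sides are built from the same coefficient $f_{\{1,\dots,n\}}$, the identity will then be visible. The only real content lies in choosing a partition of unity that is compatible with the splitting $\pi$, so that the local pieces simultaneously trivialize $\Ber_\XX$ and match the global definition of $\pi_*$.

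First, I would choose an atlas $(\UU_i)$ on $\XX_0$ such that each $\UU_i$ admits coordinates $x_1^{(i)},\dots,x_m^{(i)}$, and transport these to $\XX$ via $\pi^*$; combined with odd coordinates $\xi_1^{(i)},\dots,\xi_n^{(i)}$, this trivializes $\XX$ over $\UU_i$ as $\UU_i\times \R^{0|n}_{CS}$, making $\pi|_{\pi^{-1}(\UU_i)}$ the obvious projection. Pick a smooth partition of unity $(\varphi_i)$ on $\XX_0$ subordinate to $(\UU_i)$, and view each $\varphi_i$ as an even function on $\XX$ via $\pi^*$.

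Next, by compact support we may write $\omega = \sum_i \varphi_i\omega$, a finite sum after restricting to $\supp(\omega)$, and each $\varphi_i\omega$ is compactly supported in a single coordinate chart. In that chart, expand
\[
\varphi_i\omega = \sum_{I\sub\{1,\dots,n\}} f_{I}^{(i)}\,\xi_I^{(i)}\,\frac{dx_1^{(i)}\cdots dx_m^{(i)}}{d\xi_1^{(i)}\cdots d\xi_n^{(i)}}.
\]
By the definitions of $\int_\XX$ and $\pi_*$ recalled just above the lemma, both $\int_{\UU_i}\varphi_i\omega$ and $\int_{(\UU_i)_0}\pi_*(\varphi_i\omega)$ equal $\int_{\R^m} f_{\{1,\dots,n\}}^{(i)}\,dx_1^{(i)}\cdots dx_m^{(i)}$. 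Summing over $i$ and using the $\OO_{\XX_0}$-linearity of $\pi_*$ (so that $\pi_*(\varphi_i\omega)=\varphi_i\,\pi_*(\omega)$) gives the desired identity.

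The only step requiring slight care is checking that the compatible atlas exists globally: once $\pi$ is fixed, any coordinate chart on $\XX_0$ pulls back to an honest product chart of the form $\R^{m|n}_{CS}$ for $\XX$, because locally $\OO_\XX\cong \pi^*\OO_{\XX_0}\otimes\bigwedge^\bullet(\C^n)$ with $\pi^*$ acting as the inclusion of the even part. This is precisely what it means for $\pi$ to be a splitting, so no further argument is needed, and the lemma follows from comparing the two formulas term by term.
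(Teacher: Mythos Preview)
Your proof is correct and is essentially a fully expanded version of the paper's own argument, which consists of the single sentence ``This follows directly from the definition of Berezin integration and $\pi_*$.'' You have simply made explicit the choice of a $\pi$-compatible atlas and partition of unity and the termwise comparison of the top coefficient $f_{\{1,\dots,n\}}$ that the paper leaves implicit.
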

	\begin{proof}
		This follows directly from the definition of Berezin integration and $\pi_*$.
	\end{proof}

	\subsection{Real forms} 
	Suppose that $\XX$ is a smooth algebraic supervariety such that $\XX_0$ has a real form $(\XX_0)_{\R}\sub \XX_0$.  Then by \cite{V}, $\XX_{\R}:=((\XX_0)_{\R},\OO_{\XX}^{an}|_{(\XX_0)_{\R}})$ has the structure of a CS manifold.  Write $i:\XX_{\R}\to \XX$ for the natural map of locally ringed spaces, and let $i_0:(\XX_0)_{\R}\to \XX_0$ be the embedding of underlying spaces.
	
	Then we have a natural map $i^*:\Ber_\XX\to\Ber_{\XX_\R}$, where to emphasize, $\Ber_{\XX}$ denotes the Berezin sheaf of the complex algebraic cotangent sheaf $\Omega_\XX$, and $\Ber_{\XX_{\R}}$ is the Berezin sheaf of the CS cotangent sheaf $\Omega_{\XX_{\R}}$.  
	
	\begin{lemma}\label{lemma int fibres commutes with real form}
		A splitting of algebraic supervarieties $\pi:\XX\to \XX_0$ determines a  unique, compatible splitting $\pi_{\R}:\XX_{\R}\to(\XX_0)_{\R}$, and we have for $\omega\in\Ber_\XX$:
		\[
		i_0^*(\pi_*(\omega))=(\pi_{\R})_{*}(i^*(\omega)).
		\]
	\end{lemma}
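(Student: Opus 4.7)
The plan is to reduce everything to a local coordinate computation after first establishing the existence and uniqueness of the compatible CS splitting $\pi_\R$. Since both maps $\pi_*$ and $(\pi_\R)_*$ are defined by patching together local models, and $i^*$ and $i_0^*$ likewise commute with passage to an open cover, the statement is local on $(\XX_0)_\R$, so I may assume that $\XX = \Spec A$ is affine with global even and odd coordinates, adapted to the splitting.

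First I would construct $\pi_\R$. The algebraic splitting $\pi$ corresponds to an algebra splitting $\pi^*:\ol{A}\to A$ of $A\to\ol{A}$. Analytifying and restricting sections to $(\XX_0)_\R\subseteq \XX_0$ gives an injection $\pi_\R^*:C((\XX_0)_\R)\hookrightarrow \Gamma(\XX_\R, \OO_{\XX_\R})$ of the underlying smooth functions into the CS structure sheaf, which is a section of the reduction map. This is the desired $\pi_\R$. Uniqueness of $\pi_\R$ as a compatible splitting is immediate, since its pullback on sections is forced on the image of $\pi^*$ after restriction, and the image together with the odd coordinates generates $\Gamma(\XX_\R,\OO_{\XX_\R})$.

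Now working locally, choose even coordinates $x_1,\dots,x_m$ on $\ol{A}$ adapted to the real form $(\XX_0)_\R$ (so the $x_i$ restrict to real-analytic, hence smooth, coordinates on $(\XX_0)_\R$) and odd coordinates $\xi_1,\dots,\xi_n$ for $A$ over $\pi^*(\ol{A})$. Under $i^*$, the algebraic generator $\frac{dx_1\cdots dx_m}{d\xi_1\cdots d\xi_n}$ of $\Ber_\XX$ is sent to the corresponding CS generator of $\Ber_{\XX_\R}$ (same symbol, now interpreted smoothly), because the cotangent sheaf pulls back coordinate-wise. Writing $\omega=\sum_I f_I\xi_I\frac{dx_1\cdots dx_m}{d\xi_1\cdots d\xi_n}$ with $f_I\in\ol{A}$, applying $\pi_*$ then $i_0^*$ yields
\[
i_0^*(\pi_*(\omega)) = i_0^*\!\left(f_{\{1,\dots,n\}}\,dx_1\cdots dx_m\right) = \bigl(i_0^* f_{\{1,\dots,n\}}\bigr)\,dx_1\cdots dx_m.
\]
Conversely, $i^*\omega = \sum_I (i_0^*f_I)\xi_I\frac{dx_1\cdots dx_m}{d\xi_1\cdots d\xi_n}$ in the CS setting, and then $(\pi_\R)_*$ extracts the top coefficient, giving the same result.

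The only real obstacle is verifying that the coordinate-based definition of $\pi_*$ on the algebraic side and on the CS side transform compatibly under $i^*$ — in other words, that the local expression for $\pi_*$ is stable under the algebraic-to-smooth pullback and does not pick up a Jacobian correction. This is guaranteed because the $x_i$ are chosen to be real-valued on $(\XX_0)_\R$ and the $\xi_j$ are fixed algebraic odd coordinates used verbatim in the CS trivialization; hence the Berezinian of the transition from algebraic to CS generators is the identity. Patching local identities via a partition of unity on $(\XX_0)_\R$ (which may be lifted along $\pi_\R$) then completes the proof globally.
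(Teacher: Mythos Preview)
Your proof is correct and follows essentially the same approach as the paper: reduce to local coordinates adapted to the splitting, observe that algebraic coordinates on $\XX$ restrict to CS coordinates on $\XX_{\R}$, and then read off the identity from the explicit formula for integration along odd fibres. The paper's argument is simply a terser version of yours, omitting the explicit expansion of $\omega$ and the coefficient extraction that you spell out.
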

	
	\begin{proof}
		The fact that $\pi$ determines a unique splitting $\pi_{\R}$ is immediate.  To obtain the equality, we note that if $x_1,\dots,x_m,\xi_1,\dots,\xi_n$ are local coordinates on $\XX$, then they restrict to local coordinates on $\XX_{\R}$.  The rest follows from the formula for integration along odd fibers.
	\end{proof}
	
	\section{Volumes of homogeneous superspaces}
	\subsection{Existence of compact forms}  In the following, we recall that maximal compact subgroups of a complex algebraic group whose identity component is reductive are unique up to conjugacy, see \cite{Borelcompact}.
	
	Let $\GG$ be a quasireductive supergroup with quasireductive subgroup $\HH$.  Let $\HH_{0,c}$ denote a maximal compact subgroup of $\HH_0$, and let $\GG_{0,c}$ be a maximal compact subgroup of $\GG_0$ containing $\HH_{0,c}$.  These give compact real forms of $\GG_0,\HH_0$ respectively, and so we obtain compact real CS forms $\HH_c\sub \GG_c$ of $\HH$ and $\GG$, respectively.
	
	Further, $\GG_{0,c}/\HH_{0,c}\sub\GG_0/\HH_0$ is a real form, and so we may consider the compact CS manifold given by 
	\[ 
	\GG_c/\HH_c:=(\GG_{0,c}/\HH_{0,c},\OO_{\GG/\HH}^{an}|_{\GG_{0,c}/\HH_{0,c}}).
	\]
	
	\begin{lemma}\label{lemma berezin form trivializes}
		Let $\HH\sub\GG$ be quasireductive supergroups with Lie superalgebras $\h\sub\g$.  Then we have a natural isomorphism of super vector spaces
		\[
		\Ber(\g/\h)^{\HH}\xto{\sim}\Gamma(\GG/\HH,\Ber_{\GG/\HH})^{\GG}.
		\]
		In particular, $\dim\Gamma(\GG/\HH,\Ber_{\GG/\HH})^{\GG}\leq 1$, and further if $\omega\in\Gamma(\GG/\HH,\Ber_{\GG/\HH})^{\GG}$ is non-zero, then it trivializes $\Ber_{\GG/\HH}$.
	\end{lemma}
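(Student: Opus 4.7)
The plan is to use the standard identification of a $\GG$-equivariant line bundle on $\GG/\HH$ with the bundle associated to its fiber at the identity coset. Concretely, I would first recall that under the natural $\GG$-action on $\GG/\HH$, the cotangent sheaf $\Omega_{\GG/\HH}$ is the $\GG$-equivariant vector bundle associated to the $\HH$-representation $(\g/\h)^*$ (with the coadjoint action); symbolically, $\Omega_{\GG/\HH}\cong \GG\times^{\HH}(\g/\h)^*$. Since $\Ber$ of a vector bundle is formed pointwise and commutes with the associated bundle construction, and since $\Ber(V^*)\cong\Ber(V)^*$ canonically for any finite-dimensional super vector space $V$, this yields an identification
\[
\Ber_{\GG/\HH}\;\cong\;\GG\times^{\HH}\Ber(\g/\h)
\]
as $\GG$-equivariant line bundles on $\GG/\HH$ (after invoking the canonical isomorphism between a one-dimensional super vector space and its double dual, and fixing the convention that $\HH$ acts on $\Ber(\g/\h)$ through $\Ber$ of the adjoint representation).

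Next I would apply the general Frobenius-type principle that for any finite-dimensional $\HH$-module $V$, global $\GG$-invariant sections of $\GG\times^{\HH}V$ correspond bijectively to $\HH$-invariants $V^{\HH}$: a $\HH$-invariant $v\in V$ determines the $\GG$-invariant section $g\HH\mapsto [g,v]$, and this map is clearly bijective with inverse given by evaluation at $e\HH$. Taking $V=\Ber(\g/\h)$, this produces the claimed natural isomorphism
\[
\Ber(\g/\h)^{\HH}\;\xto{\sim}\;\Gamma(\GG/\HH,\Ber_{\GG/\HH})^{\GG}.
\]
Because $\Ber(\g/\h)$ is one-dimensional as a super vector space, the dimension bound $\dim \Gamma(\GG/\HH,\Ber_{\GG/\HH})^{\GG}\le 1$ is immediate.

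For the final trivialization statement, I would observe that under the associated bundle description, a nonzero $\HH$-invariant $v\in\Ber(\g/\h)$ trivializes the one-dimensional super vector space $\Ber(\g/\h)$, hence trivializes each fiber of $\GG\times^{\HH}\Ber(\g/\h)$; translating back, the corresponding global section $\omega$ is nonvanishing, and therefore trivializes $\Ber_{\GG/\HH}$ as a line bundle. The main technical point on which I would need to be careful is the identification of fibers: tracking the Jacobian conventions so that $\Ber(\Omega_{\GG/\HH})$ at $e\HH$ is identified with $\Ber(\g/\h)$ with the adjoint (rather than coadjoint) $\HH$-action, so that the natural isomorphism is genuinely on the space $\Ber(\g/\h)^{\HH}$ as stated. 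Once this bookkeeping is done, the rest is formal.
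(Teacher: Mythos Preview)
Your argument is correct and is essentially the same as the paper's, which simply writes $\Gamma(\GG/\HH,\Ber_{\GG/\HH})^{\GG}={}^{\GG}(\C[\GG]\otimes\Ber(\g/\h))^{\HH}=\C\langle 1\rangle\otimes\Ber(\g/\h)^{\HH}$, i.e.\ uses the identification $\Gamma(\GG/\HH,\Ber_{\GG/\HH})\cong\Ind_{\HH}^{\GG}\Ber(\g/\h)$ and then takes $\GG$-invariants. Your associated-bundle description and ``Frobenius-type principle'' are exactly this, just phrased geometrically; the extra care you take with the adjoint/coadjoint conventions is fine but not needed for the result.
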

	
	\begin{proof}
		Indeed, 
		\[
		\Gamma(\GG/\HH,\Ber_{\GG/\HH})^{\GG}={^\GG}(\C[\GG]\otimes\Ber(\g/\h))^{\HH}=\C\langle1\rangle\otimes\Ber(\g/\h)^{\HH}.
		\]
	\end{proof} 
	\begin{prop}\label{prop integral nonzero}
		Let $\HH\sub\GG$ be quasireductive  supergroups, set $\XX:=\GG/\HH$, and suppose that $\omega_{\XX}\in\Gamma(\XX,\Ber_{\XX})$ is non-zero and $\GG$-invariant.  Let $\XX_c=\GG_c/\HH_c$ with inclusion $i:\XX_c\hookrightarrow\XX$. 
		Then the morphism $\iota_{\omega_\XX}:\Gamma(X,\OO_\XX)\to\C$ given by
		\[
		\iota_{\omega_{\XX}}: f\mapsto \int_{\XX_c}i^*(f\omega_\XX),
		\]
		is non-zero and $\GG$-equivariant.  In particular, $\iota_{\omega_{\XX}}$ defines an algebraic integral on $\GG/\HH$.
	\end{prop}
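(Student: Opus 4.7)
The plan is to verify the two claims about $\iota_{\omega_\XX}$ in sequence: $\GG$-equivariance first, then non-vanishing.

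For $\GG$-equivariance, I would argue at the Lie superalgebra level and then lift via the Harish-Chandra-pair description of $\GG$. Given $X\in\g$, the action of $\GG$ on $\XX$ yields a fundamental vector field $\tilde X$ which restricts to a vector field on the compact CS form $\XX_c$. Since $\omega_\XX$ is $\GG$-invariant, $L_{\tilde X}\omega_\XX=0$, so
\[
L_{\tilde X}\bigl(i^*(f\omega_\XX)\bigr) = i^*\bigl((X.f)\omega_\XX\bigr).
\]
The super-analogue of Stokes' theorem for Berezin integration on the compact CS manifold $\XX_c$ (see \cite{R}) then forces $\int_{\XX_c}L_{\tilde X}(i^*(f\omega_\XX))=0$, i.e., $\iota_{\omega_\XX}(X.f)=0$. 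Combined with $\GG_{0,c}$-equivariance---manifest from change of variables in Berezin integration and extended to $\GG_0$-equivariance by Zariski density of $\GG_{0,c}$ in $\GG_0$---this yields full $\GG$-equivariance through the Harish-Chandra-pair formalism.

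For non-vanishing, I would first exhibit a \emph{smooth} function with nonzero integral and then pass to an algebraic one via density. Since $\omega_\XX$ trivialises $\Ber_\XX$ (Lemma \ref{lemma berezin form trivializes}), in any local chart of $\XX_c$ with coordinates $x_1,\dots,x_m,\xi_1,\dots,\xi_n$ we may write $\omega_\XX = g\cdot \tfrac{dx_1\cdots dx_m}{d\xi_1\cdots d\xi_n}$ with $g$ a unit; in particular, the reduction $g|_{\xi=0}$ is nowhere vanishing on the chart. For a real smooth bump function $h(x)\ge 0$ of sufficiently small support near a point $x_0$, setting $f = h(x)\,\xi_1\cdots\xi_n$ gives
\[
\int_{\XX_c} i^*(f\omega_\XX) = \int h(x)\, g|_{\xi=0}(x)\, dx_1\cdots dx_m \approx g|_{\xi=0}(x_0)\int h\, dx \neq 0,
\]
exhibiting a smooth $f$ for which the analogous functional is nonzero.

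Upgrading this smooth $f$ to an algebraic one is the main obstacle, and is resolved by a density argument. Using any splitting $\pi: \XX\to\XX_0$ from Section 4, $\C[\XX]$ decomposes degreewise in the odd coordinates as a free $\C[\XX_0]$-module, so it suffices to approximate in the top odd-degree component; this reduces to density of $\C[\XX_0]$ in $C(\XX_{0,c})$, which follows from Peter--Weyl applied to the reductive group $\GG_0$ together with Stone--Weierstrass on the compact homogeneous base $\XX_{0,c}=\GG_{0,c}/\HH_{0,c}$. Since the integration functional is continuous in the $C^0$ topology, a sufficiently close algebraic approximant $\tilde f\in\C[\XX]$ of the smooth $f$ above preserves non-vanishing of the integral, yielding $\iota_{\omega_\XX}(\tilde f)\ne 0$ as required.
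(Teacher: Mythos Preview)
Your argument is essentially correct, but the non-vanishing step takes a genuinely different route from the paper's.

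For $\GG$-equivariance, your Lie-derivative/Stokes sketch is exactly the content of the reference the paper invokes (Thm.~16 of \cite{SV}), so there is no real difference here.

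For non-vanishing, the paper avoids any approximation argument entirely. It observes that since $\XX$ is smooth affine it admits an algebraic splitting $\pi:\XX\to\XX_0$, and then uses the surjectivity of the integration-along-odd-fibres map $\pi_*:\Ber_\XX\to\Omega^{top}_{\XX_0}$ (Lemma~\ref{lemma surj int fibres}) to produce directly an \emph{algebraic} $f\in\C[\XX]$ with $\pi_*(f\omega_\XX)$ equal to a $\GG_0$-invariant trivialising top form on $\XX_0$. Restricting to the compact form and applying Lemmas~\ref{lemma integral fibres} and~\ref{lemma int fibres commutes with real form} then gives $\iota_{\omega_\XX}(f)=\int_{(\XX_c)_0}i_0^*\pi_*(f\omega_\XX)\neq 0$, since the integral of a nowhere-vanishing invariant top form over a compact homogeneous space is nonzero. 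This is cleaner: it stays in the algebraic category throughout and uses only the sheaf-theoretic surjectivity already established in Section~4.

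Your route---construct a smooth bump witness, then approximate by algebraic functions---works, but the justification you give for the density step is slightly glib. The reduction ``it suffices to approximate in the top odd-degree component; this reduces to density of $\C[\XX_0]$ in $C(\XX_{0,c})$'' implicitly assumes global odd coordinates, i.e.\ that the odd conormal bundle is trivial, which need not hold. The honest statement is that $\C[\XX]|_{\XX_c}$ consists precisely of the $\GG_{0,c}$-finite vectors in $C^\infty(\XX_c)$, and these are dense by Peter--Weyl applied to sections of the equivariant bundle $\Lambda^\bullet\mathcal{E}$; this is true but is a bit more than Stone--Weierstrass on the base. Once that is in place, continuity of Berezin integration in the Fr\'echet topology completes your argument.
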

	\begin{proof}
		We first note that by Lemma \ref{lemma berezin form trivializes}, $\omega_{\XX}$ trivializes $\Ber_{\XX}$.  Further, since $\XX_0$ is the quotient of a reductive group by a reductive subgroup, $\Omega^{top}_{\XX_0}$ admits a $\GG_0$-invariant trivializing form.
		
		Since $\XX$ is smooth and affine, it is split; thus choose a splitting $\pi:\XX\to \XX_0$, and write $\pi_c:\XX_c\to(\XX_c)_0$ for the induced splitting of $\XX_c$ (see Lemma \ref{lemma int fibres commutes with real form}).  By Lemma \ref{lemma surj int fibres}, $\pi_*:\Ber_\XX\to\Omega^{top}_{\XX_0}$ is surjective, so there exists $f\in\Gamma(\XX,\OO_\XX)$ such that $\pi_*(f\omega_\XX)$ is $\GG_0$-invariant and trivializes $\Omega_{\XX_0}^{top}$. Thus if $i_0:(\XX_c)_0\hookrightarrow \XX_0$ is the natural inclusion, we have that $i_0^*(\pi_*(f\omega_\XX))$ is a trivializing top differential form on $(\XX_c)_0$, implying its integral over $(\XX_c)_0$ is nonzero. 
		Thus by Lemmas \ref{lemma integral fibres} and \ref{lemma int fibres commutes with real form},
		\begin{eqnarray*}
			\iota_{\omega_\XX}(f)& = &\int\limits_{\XX_c}i^*(f\omega_\XX)\\
			& = &\int\limits_{(\XX_c)_0}(\pi_c)_*(i^*(f\omega_\XX))\\
			& = &\int\limits_{(\XX_c)_0}i_0^*(\pi_*(f\omega_X))\neq0.
		\end{eqnarray*}
		We refer to Thm.~16 of \cite{SV} for the proof that $\iota_{\omega_\XX}$ is $\GG$-equivariant. 
	\end{proof}
	
	\subsection{Existence of integrals}  
	
	\begin{thm}\label{thm existence integral}
		Let $\HH\sub\GG$ be quasireductive.  Then there exists a nontrivial algebraic integral $\iota:\C[\GG/\HH]\to\C$ if and only if $\Ber(\g/\h)$ is trivial as an $\HH$-module (i.e.~is isomorphic to either $\C$ or $\Pi\C$.)  
	\end{thm}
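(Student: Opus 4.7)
The $(\Leftarrow)$ direction is immediate from the preceding results: if $\Ber(\g/\h)^\HH \neq 0$, then Lemma \ref{lemma berezin form trivializes} produces a nonzero $\GG$-invariant Berezin form $\omega_\XX \in \Gamma(\GG/\HH, \Ber_{\GG/\HH})^\GG$, and Proposition \ref{prop integral nonzero} then yields the nonzero algebraic integral $\iota_{\omega_\XX}: \C[\GG/\HH] \to \C$ by integration over the compact form $\GG_c/\HH_c$.

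For the $(\Rightarrow)$ direction, suppose $\iota: \C[\GG/\HH] \to \C$ is a nonzero algebraic integral. Since $\iota$ is in particular $\g$-equivariant and $\C$ carries the trivial $\g$-action, $\iota$ annihilates $\g \cdot \C[\GG/\HH]$ and therefore factors through the Lie-algebra coinvariants
\[
H_0(\g, \C[\GG/\HH]) \;=\; \C[\GG/\HH] \big/ \g\cdot\C[\GG/\HH].
\]
The plan reduces to computing this coinvariant space and showing that it is nonzero exactly when $\Ber(\g/\h)$ is trivial as an $\HH$-module.

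The key step would invoke a Serre-duality-type identification
\[
\C[\GG/\HH] \;=\; \Ind_\HH^\GG \C \;\cong\; \Coind_\HH^\GG\bigl(\Ber(\g/\h)^{-1}\bigr)
\]
(with a parity shift when $\dim(\g_{\ol{1}}/\h_{\ol{1}})$ is odd), where $\Coind_\HH^\GG \cong U\g \otimes_{U\h}(-)$ is the Lie-algebraic left adjoint to restriction. Combined with the homological Shapiro lemma $H_0(\g, \Coind_\HH^\GG W) \cong W_\h$, this gives
\[
H_0(\g, \C[\GG/\HH]) \;\cong\; \Ber(\g/\h)^{-1}\big/\h\cdot\Ber(\g/\h)^{-1},
\]
a quotient of a one-dimensional super vector space that is nonzero precisely when $\h$ acts trivially on $\Ber(\g/\h)^{-1}$, equivalently on $\Ber(\g/\h)$. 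For connected $\HH$ this already coincides with $\Ber(\g/\h)$ being trivial as an $\HH$-module; in the disconnected case a short argument using the action of $\pi_0(\HH)$ on the line $\Ber(\g/\h)$ by characters, combined with the $\GG$-invariance of $\iota$ under a representative of each component, completes the equivalence.

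The main obstacle will be to cleanly establish the Serre-duality identification between $\Ind_\HH^\GG$ and $\Ber$-twisted $\Coind_\HH^\GG$ within the algebraic-supergroup framework of Section \ref{section induction functors}; this is standard in the Lie superalgebra literature but needs some care to adapt to the quasireductive supergroup setting. As a more self-contained alternative, one can argue analytically: restrict $\iota$ to the compact form $\GG_c/\HH_c$, use density of $\C[\GG/\HH]$ in smooth sections to extend it to a $\GG_c$-invariant functional on $C^\infty$, and invoke the uniqueness of invariant integration on compact CS homogeneous spaces to recover a $\GG_c$-invariant Berezin form on $\GG_c/\HH_c$; existence of such a form forces $\Ber(\g/\h)^{\HH_c} = \Ber(\g/\h)^\HH \neq 0$.
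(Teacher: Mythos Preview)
Your $(\Leftarrow)$ direction is correct and matches the paper.

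For $(\Rightarrow)$ there is a genuine gap. The identification you call ``Serre duality,'' namely $\Ind_\HH^\GG\C \cong U\g\otimes_{U\h}\Ber(\g/\h)^{-1}$, conflates algebraic supergroup induction with Lie-superalgebra coinduction. The paper's $\Ind_\HH^\GG V = (V\otimes\C[\GG])^\HH$ is a group-level construction; when $\g_{\ol 0}/\h_{\ol 0}\neq 0$ it is \emph{not} isomorphic to any $U\g\otimes_{U\h}(-)$, which as a vector space looks like $S(\g_{\ol 0}/\h_{\ol 0})\otimes\Lambda(\g_{\ol 1}/\h_{\ol 1})\otimes(-)$ and carries no algebraicity constraint from $\GG_0$. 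The Bernstein--Gelfand duality you have in mind relates $U\g\otimes_{U\h}(-)$ to $\Hom_{U\h}(U\g,-)$, and neither side is $\C[\GG/\HH]$. So the Shapiro computation of $H_0(\g,\C[\GG/\HH])$ does not go through as written; the ``main obstacle'' you flag is not a matter of bookkeeping but of the underlying module identification being false.

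The paper takes a different route and in effect \emph{proves} the duality you wanted to assume. It defines a $\GG$-equivariant pairing
\[
\C[\GG/\HH]\otimes\Ind_\HH^\GG\Ber(\g/\h)\longrightarrow\C,\qquad f\otimes\omega\longmapsto\int_{\GG_c/\HH_c} i^*(f\omega),
\]
shows it is nonzero via integration along odd fibres (Lemmas~\ref{lemma surj int fibres} and~\ref{lemma integral fibres}), and then observes that any nonzero such pairing is automatically perfect: a kernel would be a proper $\GG$-stable ideal of $\C[\GG/\HH]$ or a proper $\C[\GG/\HH]$--$\GG$ submodule of $\Ind_\HH^\GG\Ber(\g/\h)$, contradicting Lemma~\ref{lemma simple module}. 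This yields $\Gamma_\GG(\C[\GG/\HH]^*)\cong\Ind_\HH^\GG\Ber(\g/\h)$ unconditionally, so an algebraic integral (a $\GG$-invariant on the left) exists iff $\Ber(\g/\h)^\HH\neq 0$. Your analytic alternative is closer in spirit but has a circularity: extending $\iota$ by density to $C^\infty(\GG_c/\HH_c)$ and then invoking uniqueness of invariant Berezin integration presupposes precisely the analytic statement (from \cite{AH}) that the paper is reproving here by algebraic means.
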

	
	Note that Theorem \ref{thm existence integral} was proven in the analytic setting in \cite{AH}. 
	
	\begin{proof}
		Observe that $\Ber_{\GG/\HH}\cong\Ind_{\HH}^{\GG}\Ber(\g/\h)$.  We always have a $\GG$-equivariant natural pairing 
		\[
		\C[\GG/\HH]\otimes \Ber_{\GG/\HH}\to\C
		\]
		given by
		\[
		f\otimes \omega\mapsto \int\limits_{\GG_c/\HH_c}i^*(f\omega)
		\]
		as in Proposition \ref{prop integral nonzero}.  We claim this defines a perfect pairing.  To see this, notice that if it is not zero it is automatically perfect because otherwise a kernel would define nontrivial $\GG$-stable ideal in $\C[\GG/\HH]$ or a $\GG$-stable submodule of $\Ind_{\HH}^{\GG}\Ber(\g/\h)$, contradicting Lemma \ref{lemma simple module}.  On the other hand it is not zero by Lemmas \ref{lemma surj int fibres} and \ref{lemma integral fibres}, using the existence of a nonvanishing top form on $\GG_{0,c}/\HH_{0,c}$ whose integral is 0.
		
		Thus we obtain in every case an isomorphism $\Ind_{\HH}^{\GG}\Ber(\g/\h)\cong \Gamma_\GG\C[\GG/\HH]^*$ as in Corollary \ref{cor iso from form}. Therefore, if $\GG/\HH$ has an algebraic integral, we obtain via this isomorphism a $\GG$-invariant Berezin form, implying that $\Ber(\g/\h)$ is trivial. 
		Conversely, if $\Ber(\g/\h)$ is a trivial $\HH$-module, then a global $\GG$-invariant Berezin form exists, call it $\omega$. Then via Proposition \ref{prop integral nonzero} we obtain a non-zero algebraic integral $\iota_{\omega}$ as desired.
	\end{proof}
	
	\subsection{Volumes of homogeneous superspaces}
	
	\begin{definition}\label{definition volume}
		Suppose that $\XX$ is a compact cs manifold and let $\omega\in\Ber_{\XX}$ be a trivialization.  Then we define the volume of $\XX$ with respect to $\omega$ to be:
		\[
		\vol(\omega)=\vol_{\omega}(\XX):=\int\limits_{\XX}\omega.
		\]
		If $\HH\sub\GG$ are quasireductive supergroups, with $\Ber(\g/\h)$ a trivial $\HH$-module, we define
		\[
		\vol(\GG/\HH):=\vol(\omega_{\GG/\HH}|_{\GG_c/\HH_c}),   
		\]
		where $\omega_{\GG/\HH}$ is a non-zero $\GG$-invariant Berezin form on $\GG/\HH$, and we have chose compact forms $\HH_c\sub\GG_c$. Observe that $\vol(\GG/\HH)$ is only well-defined up to non-zero scalar; as a result it is only well-defined to state that $\vol(\GG/\HH)$ is either zero or non-zero.
	\end{definition}
	
	We remark that for $\vol(\GG/\HH)$ to be well-defined in Definition \ref{definition volume}, we need it to be independent of the choices $\HH_c\sub\GG_c$.  For this we use Cor.~1.3 of Chpt.~7 in \cite{Borelcompact}.

	For the following proposition, let $\pi:\EE\to\BB$ be a locally trivial fibration of compact CS manifolds with fibre $\FF$.  Then we have a short exact sequence of vector bundles on $\EE$:
	\[
	0\to \pi^*\Omega_{\BB}\to \Omega_{\EE}\to\Omega_{\EE/\BB}\to 0.
	\]
	This implies that $\Ber(\Omega_{\EE})\cong\Ber(\Omega_{\EE/\BB})\otimes\Ber(\pi^*\Omega_{\BB})$.
	
	\begin{prop}\label{prop integral bundle}
		Suppose that $\omega_\EE$, $\omega_\BB$, and $\omega_\FF$ are global Berezin forms on $\EE,\BB,$ and $\FF$ respectively.  Suppose further that $\Ber(\Omega_{\EE/\BB})$ is trivialized by a section $\omega_{\EE/\BB}$ satisfying that $\omega_{\EE/\BB}|_\FF=\omega_{\FF}$ along each fibre $\FF$, and that $\omega_\EE=\pi^*\omega_\BB\cdot \omega_{\EE/\BB}$.  Then,
		\[
		\vol(\omega_\EE)=\vol(\omega_\BB)\vol(\omega_\FF).
		\]
	\end{prop}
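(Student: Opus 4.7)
The plan is to prove this Fubini-type statement by reducing to local trivializations and then invoking classical Fubini on the body after extracting the top odd component of the relevant Berezin forms.

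First I would choose a locally finite open cover $\{U_\alpha\}$ of $\BB$ over which the fibration $\pi:\EE\to\BB$ admits CS trivializations $\varphi_\alpha: \pi^{-1}(U_\alpha)\xrightarrow{\sim} U_\alpha\times \FF$, together with a partition of unity $\{\rho_\alpha\}$ on $\BB$ subordinate to $\{U_\alpha\}$. Pulling back via $\pi$ we obtain a partition of unity $\{\pi^*\rho_\alpha\}$ on $\EE$ subordinate to $\{\pi^{-1}(U_\alpha)\}$. By bilinearity of Berezin integration and the projection formula $\int_{\BB}\rho_\alpha\,\omega_\BB\cdot \vol(\omega_\FF) = \int_{\BB}\rho_\alpha\,\omega_\BB\cdot\vol(\omega_\FF)$, the claim reduces to verifying, for each $\alpha$, the identity
\[
\int\limits_{\pi^{-1}(U_\alpha)} \pi^*\rho_\alpha\cdot \omega_{\EE} \;=\; \Bigl(\int\limits_{U_\alpha}\rho_\alpha\,\omega_{\BB}\Bigr)\cdot \vol(\omega_{\FF}).
\]

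Next I would transport the equation to the product model via $\varphi_\alpha$. The hypothesis $\omega_{\EE/\BB}|_{\FF}=\omega_{\FF}$ forces the restriction of $\omega_{\EE/\BB}$ to $\pi^{-1}(U_\alpha)$ to coincide, under $\varphi_\alpha$, with the pullback of $\omega_\FF$ along the second projection $U_\alpha\times\FF\to\FF$; indeed both sides are global trivializations of $\Ber(\Omega_{\EE/\BB})$ that agree on every fibre, so they coincide. Combined with the hypothesis $\omega_{\EE}=\pi^*\omega_{\BB}\cdot\omega_{\EE/\BB}$, this means that on $U_\alpha\times\FF$ the form $(\varphi_\alpha)_*\omega_{\EE}$ is exactly the external product $\omega_{\BB}|_{U_\alpha}\boxtimes \omega_{\FF}$.

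It therefore suffices to prove a local Fubini theorem on $\R^{m|n}_{CS}\times\R^{m'|n'}_{CS}$: for compactly supported $f\in C(\R^{m+m'})\otimes\bigwedge^\bullet(\C^{n+n'})$,
\[
\int\limits_{\R^{m+m'|n+n'}_{CS}} f\,\tfrac{dx\,dy}{d\xi\,d\eta} \;=\; \int\limits_{\R^{m|n}_{CS}} \Bigl(\int\limits_{\R^{m'|n'}_{CS}} f\,\tfrac{dy}{d\eta}\Bigr)\tfrac{dx}{d\xi}.
\]
Unraveling the definition, both sides equal the ordinary Lebesgue integral over $\R^{m+m'}$ of the coefficient of $\xi_1\cdots\xi_n\eta_1\cdots\eta_{n'}$ in $f$; so this reduces to the classical Fubini theorem, since the compact support of $f$ on $\R^{m+m'}$ justifies interchange of integrals.

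The main obstacle is not a single hard step but rather the careful bookkeeping of the three Berezin forms under local trivializations: in particular one must check that the identification of $\omega_{\EE/\BB}$ with $\omega_{\FF}$ across fibres, together with the product decomposition $\omega_{\EE}=\pi^*\omega_{\BB}\cdot \omega_{\EE/\BB}$, really does yield an external product on each trivializing chart. Once that is in place, summing over $\alpha$ and invoking Lemma \ref{lemma integral fibres} (or directly the definition of Berezin integration) delivers the stated product formula.
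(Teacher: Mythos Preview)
Your argument is correct and follows essentially the same route as the paper's proof: take a partition of unity on $\BB$ subordinate to trivializing opens, pull it back to $\EE$, identify $\omega_{\EE/\BB}$ with $\operatorname{pr}_{\FF}^*\omega_{\FF}$ on each chart, and apply Fubini on the product. The paper compresses all of this into a single displayed chain of equalities and does not spell out the local Fubini step you wrote down, but the content is identical.
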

	
	\begin{proof}	
		Let $(\UU_i,\varphi)$ be a partition of unity of $\BB$ on which our fibre bundle is trivial, i.e.~$\pi^{-1}(\UU_i)\cong \UU_i\times \FF$.  We have 
		\begin{eqnarray*}
			\int_{\EE}\omega_\EE& = &\sum\limits_{i}\int_{\pi^{-1}(\UU_i)}\pi^*(\varphi_i)\pi^*(\omega_\BB)\omega_{\EE/\BB}\\
			& = & \sum\limits_{i}\int_{\UU_i\times \FF}\varphi_i\omega_\BB\omega_{\FF}\\
			& = &\sum\limits_i\int_\FF\left(\int_{\UU_i}\varphi_i\omega_\BB\right)\omega_{\FF}\\
			& = &\vol(\omega_\FF)\sum\limits_i\int_{\UU_i}\varphi_i\omega_\BB=\vol(\omega_\FF)\vol(\omega_\BB).
		\end{eqnarray*}
	\end{proof}

	\section{Sylow subgroups and $0$-groups}
	
	\subsection{Splitting subgroups} We recall the following definition from \cite{SSh}.
	
	\begin{definition}\label{definition splitting}
		Let $\HH\sub\GG$ be quasireductive supergroups.  Then we say that $\HH$ is splitting in $\GG$ if any of the following equivalent conditions hold:
		\begin{enumerate}
			\item $\C\langle 1\rangle$ splits off $\C[\GG/\HH]$ as a $\GG$-module.
			\item For every $\GG$-module $M$, the natural map of $\GG$-modules $M\to\Ind_{\HH}^{\GG}\operatorname{Res}^{\GG}_{\HH}(M)$ splits.
			\item The natural map $\Ext_{\GG}^i(M,N)\to\Ext_{\HH}^i(M,N)$ is injective for all $i$ and for all $\GG$-modules $M,N$.
			\item The natural map $\Ext_{\GG}^1(M,N)\to\Ext_{\HH}^1(M,N)$ is injective for all $\GG$-modules $M,N$.
			\item Every $\GG$-module $M$ is relatively projective over $\HH$.
		\end{enumerate}
	\end{definition}
	
	\begin{remark}
		Applying our definition of splitting to a finite group $G$ over a field of characteristic $p$, one sees that a subgroup $K$ of $G$ is splitting if and only if it contains a Sylow $p$-subgroup, or equivalently $\vol(G/K)=|G/K|\neq0$ (mod $p$).
		
		In particular, a Sylow $p$-subgroup is exactly a minimal splitting subgroup in characteristic $p$.
	\end{remark}
	
	The next lemma and the corollary following it are the main payoffs of our work on Berezin integration on CS manifolds.
	
	\begin{lemma}\label{lemma nonzero}
		If $\HH\sub\GG$ is quasireductive, then $\HH$ is splitting in $\GG$ if and only if $\Ber(\g/\h)$ is trivial and $\vol(\GG/\HH)\neq0$.
	\end{lemma}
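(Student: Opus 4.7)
The plan is to recognize that condition (1) of Definition \ref{definition splitting}---that $\C\langle 1\rangle$ is a $\GG$-module direct summand of $\C[\GG/\HH]$---is equivalent to the existence of an algebraic integral $\iota:\C[\GG/\HH]\to\C$ with $\iota(1)\neq 0$, and then to read off this second condition directly from the volume using Proposition \ref{prop integral nonzero}. The key observation is that, since $\C\langle 1\rangle$ is a simple $\GG$-submodule of $\C[\GG/\HH]$ (sitting inside as constants), a $\GG$-equivariant retraction of the inclusion $\C\langle 1\rangle\hookrightarrow \C[\GG/\HH]$ is the same data as a $\GG$-equivariant map $\iota:\C[\GG/\HH]\to\C$ with $\iota(1)\neq 0$, i.e.\ a nonzero even algebraic integral that is nonzero on the constants.

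For the forward direction, I would assume $\HH$ is splitting and pick such an $\iota$. Its mere existence, together with Theorem \ref{thm existence integral}, immediately forces $\Ber(\g/\h)$ to be trivial as an $\HH$-module. Using Lemma \ref{lemma berezin form trivializes} I would then choose a nonzero $\GG$-invariant trivializing Berezin form $\omega_{\GG/\HH}$ and form the associated integral $\iota_{\omega_{\GG/\HH}}$ as in Proposition \ref{prop integral nonzero}. By unravelling Definition \ref{definition volume} on the element $f=1$, one sees that $\iota_{\omega_{\GG/\HH}}(1)=\vol(\GG/\HH)$. Uniqueness of integrals up to scalar (Lemma \ref{lemma integral unique}) forces $\iota=\lambda\,\iota_{\omega_{\GG/\HH}}$ for some $\lambda\in\C^\times$, so evaluating at $1$ gives $0\neq\iota(1)=\lambda\vol(\GG/\HH)$, and hence $\vol(\GG/\HH)\neq 0$.

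For the reverse direction, assume $\Ber(\g/\h)$ is trivial and $\vol(\GG/\HH)\neq 0$. Lemma \ref{lemma berezin form trivializes} supplies a nonzero $\GG$-invariant trivializing Berezin form $\omega_{\GG/\HH}$, necessarily of even parity since $\vol(\GG/\HH)\in\C$ is nonzero (an odd invariant form would force $\Ber(\g/\h)\cong\Pi\C$ and $\vol(\GG/\HH)=0$). Proposition \ref{prop integral nonzero} then produces an even algebraic integral $\iota_{\omega_{\GG/\HH}}$ with $\iota_{\omega_{\GG/\HH}}(1)=\vol(\GG/\HH)\neq 0$, which after rescaling provides the desired $\GG$-equivariant retraction $\C[\GG/\HH]\to\C\langle 1\rangle$.

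There is essentially no hard step here: everything reduces to the identity $\iota_{\omega_{\GG/\HH}}(1)=\vol(\GG/\HH)$, which is immediate from the definitions, together with the bookkeeping results of Sections 3--5 that already relate invariant Berezin forms, triviality of $\Ber(\g/\h)$, and algebraic integrals. The one point requiring mild care is the parity matching---making sure that a splitting retraction (which must be even) is genuinely produced by the volume condition---and this is handled by noting that $\vol(\GG/\HH)\neq 0$ precisely rules out the odd-parity case $\Ber(\g/\h)\cong\Pi\C$.
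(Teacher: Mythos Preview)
Your proof is correct and follows essentially the same route as the paper's: both directions hinge on the identity $\iota_{\omega_{\GG/\HH}}(1)=\vol(\GG/\HH)$, Theorem~\ref{thm existence integral} to pass between triviality of $\Ber(\g/\h)$ and the existence of an algebraic integral, and Lemma~\ref{lemma integral unique} to force the splitting retraction and the Berezin-theoretic integral to agree up to scalar. Your explicit treatment of the parity issue is a welcome addition the paper leaves implicit, though note it is almost automatic: any algebraic integral $\iota$ with $\iota(1)\neq 0$ is necessarily even (an odd map sends the even element $1$ to zero), so $\vol(\GG/\HH)\neq 0$ already forces $\Ber(\g/\h)\cong\C$ rather than $\Pi\C$.
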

	See Definition \ref{definition volume} for the meaning of the volume of $\GG/\HH$ being zero or not.
	\begin{proof}
		By Theorem \ref{thm existence integral}, if $\Ber(\g/\h)$ is trivial then we obtain a non-zero algebraic integral $\iota:\C[\GG/\HH]\to\C$.  By definition, $\vol(\GG/\HH)=\iota(1)$, so it is clear that if $\vol(\GG/\HH)\neq0$ we have that $\HH\sub\GG$ is splitting.
		
		Conversely, if $\HH\sub\GG$ is splitting we obtain by (1) of Definition \ref{definition splitting} a non-zero algebraic integral $\iota$ on $\GG/\HH$.  Once again by Theorem \ref{thm existence integral}, this implies that $\Ber(\g/\h)$ is trivial, thus giving rise to a non-zero algebraic integral from Berezin integration.  However by Lemma \ref{lemma integral unique}, all algebraic integrals are scalar multiples of $\iota$, meaning that \linebreak $\vol(\GG/\HH)=\iota(1)\neq0$. 
	\end{proof}
	
	\begin{cor}\label{cor  reflective transitivity}
		If $\KK\sub\HH\sub\GG$ are quasireductive supergroups, then $\KK$ is splitting in $\GG$ if and only if $\KK$ is splitting in $\HH$ and $\HH$ is splitting in $\GG$.  
	\end{cor}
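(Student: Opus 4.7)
The plan is to reduce both implications to \cref{lemma nonzero}: $\KK\sub\GG$ is splitting iff $\Ber(\g/\k)$ is a trivial $\KK$-module and $\vol(\GG/\KK)\neq 0$. Two supporting facts do the work. First, from the $\KK$-equivariant short exact sequence
\[
0\to \h/\k\to \g/\k\to \g/\h\to 0
\]
and multiplicativity of Berezin modules along short exact sequences, one obtains a $\KK$-equivariant isomorphism $\Ber(\g/\k)\cong \Ber(\h/\k)\otimes\operatorname{Res}^\HH_\KK\Ber(\g/\h)$; thus $\KK$-triviality of any two of these three modules forces $\KK$-triviality of the third. Second, on the locally trivial CS fibration $\GG_c/\KK_c\to\GG_c/\HH_c$ with fiber $\HH_c/\KK_c$, \cref{prop integral bundle} yields (as established in the final paragraph) $\vol(\GG/\KK) = c\cdot\vol(\GG/\HH)\cdot\vol(\HH/\KK)$ for some $c\in\C^\times$, so one side vanishes iff the other does.

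For the ``if'' direction, suppose $\KK\sub\HH$ and $\HH\sub\GG$ are both splitting. By \cref{lemma nonzero}, $\Ber(\h/\k)$ is a trivial $\KK$-module, $\Ber(\g/\h)$ is a trivial $\HH$-module (hence trivial as a $\KK$-module by restriction), and both $\vol(\HH/\KK)$ and $\vol(\GG/\HH)$ are nonzero. The tensor formula then forces $\Ber(\g/\k)$ to be a trivial $\KK$-module, and the volume formula gives $\vol(\GG/\KK)\neq 0$. A second application of \cref{lemma nonzero} shows that $\KK\sub\GG$ is splitting.

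For the ``only if'' direction, suppose $\KK\sub\GG$ is splitting. I first observe that $\HH\sub\GG$ is also splitting: the surjection $\GG/\KK\twoheadrightarrow\GG/\HH$ induces a $\GG$-equivariant inclusion $\C[\GG/\HH]\hookrightarrow\C[\GG/\KK]$, and any $\GG$-equivariant retraction $\C[\GG/\KK]\to\C\langle 1\rangle$ (which exists by condition (1) of \cref{definition splitting}) restricts to a retraction on $\C[\GG/\HH]$, verifying the same condition for $\HH\sub\GG$. Now \cref{lemma nonzero} yields $\HH$-triviality of $\Ber(\g/\h)$, which restricts to $\KK$-triviality; combined with the hypothesized $\KK$-triviality of $\Ber(\g/\k)$ and the tensor formula, $\Ber(\h/\k)$ is a trivial $\KK$-module. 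The volume formula then forces $\vol(\HH/\KK)\neq 0$, and \cref{lemma nonzero} concludes that $\KK\sub\HH$ is splitting.

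The main technical point is establishing the volume formula used above. Choose nonzero $\GG$-invariant Berezin forms $\omega_\EE$ on $\EE=\GG_c/\KK_c$ and $\omega_\BB$ on $\BB=\GG_c/\HH_c$, which exist and are unique up to scalar by \cref{lemma berezin form trivializes}. Using the natural identification $\Ber_\EE\cong \pi^*\Ber_\BB\otimes\Ber_{\EE/\BB}$, the ratio $\omega_{\EE/\BB}:=\omega_\EE/\pi^*\omega_\BB$ is a $\GG$-invariant trivialization of $\Ber_{\EE/\BB}$. Its restriction to the fiber $\FF=\HH_c/\KK_c$ over the base point is an $\HH$-invariant trivialization of $\Ber_\FF$, hence equals $c\omega_\FF$ for some $c\in\C^\times$, where $\omega_\FF$ is a fixed $\HH$-invariant Berezin form on $\FF$. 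Applying \cref{prop integral bundle} to the triple $(\omega_\EE,\omega_\BB,c\omega_\FF)$ yields $\vol(\omega_\EE) = c\cdot\vol(\omega_\BB)\cdot\vol(\omega_\FF)$, as required.
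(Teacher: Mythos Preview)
Your proof is correct and follows essentially the same route as the paper's. The paper outsources the easy reductions (that $\KK$ splitting in $\GG$ forces $\HH$ splitting in $\GG$, and the Berezin-triviality bookkeeping) to Cor.~2.7 of \cite{SSh}, whereas you supply these directly via the inclusion $\C[\GG/\HH]\hookrightarrow\C[\GG/\KK]$ and the tensor identity $\Ber(\g/\k)\cong\Ber(\h/\k)\otimes\Ber(\g/\h)$; after that, both arguments set up the fibration $\GG_c/\KK_c\to\GG_c/\HH_c$ and invoke \cref{prop integral bundle} in the same way. One small point worth making explicit in your final paragraph: \cref{prop integral bundle} requires $\omega_{\EE/\BB}|_\FF=\omega_\FF$ on \emph{every} fiber, not just the base fiber, and this follows from the $\GG$-invariance of $\omega_{\EE/\BB}$ together with transitivity of the $\GG_c$-action on $\BB$ (the paper notes this as ``by equivariance'').
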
 
	
	\begin{proof}
		Using Cor.~2.7 of \cite{SSh}, we may assume that $\Ber(\g/\k)$, $\Ber(\g/\h)$, and $\Ber(\h/\k)$ are all trivial. 
		
		Let $\KK_c\sub\HH_c\sub\GG_c$ be compact CS forms of $\KK\sub\HH\sub\GG$.
		We set $\EE=\GG_c/\KK_c$, $\BB=\GG_c/\HH_c$, and $\FF=\HH_c/\KK_c$, so that we have a fibration $\EE\to \BB$ with fibre $\FF$.  Choose nonzero elements
		\[
		b_{\g/\h}\in\Ber(\g/\h), \ \ \ 
		b_{\h/\k}\in\Ber(\h/k), \ \ \ b_{\g/\k}:=b_{\g/\h}\cdot b_{\h/\k}\in\Ber(\g/\k). \]
		Here we have used that $\Ber(\g/\k)\cong\Ber(\g/\h)\otimes\Ber(\h/\k)$.
		
		From each $b_{\g/\k}$, $b_{\h/\k}$, and $b_{\g/\h}$ we obtain global Berezin forms $\omega_{\EE}$, $\omega_{\BB}$, and $\omega_{\FF}$ which trivialize their respective Berezin sheaves.  
		
		Now $\Ber(\Omega_{\EE/\BB})$ will be a $\GG$-equivariant bundle on $\GG/\KK$ with fibre naturally isomorphic to $\Ber(\h/\k)$, so we may trivialize it using $b_{\h/\k}$ to give a global trivialization $\omega_{\EE/\BB}$.  By equivariance, it is clear that $\omega_{\EE/\BB}|_{\FF}=\omega_\FF$ for any fibre, and thus we obtain $\omega_\EE=\pi^*(\omega_\BB)\omega_{\EE/\BB}$ as required to apply Proposition \ref{prop integral bundle}.  From this the corollary is clear.
	\end{proof}

	\begin{cor}\label{cor splitting product groups}
		Suppose that $\GG\cong\GG_1\times \GG_2$, and $\KK\sub \GG$ is splitting, where all supergroups are quasireductive.  Then $\KK\cap \GG_i\sub \GG_i$ is splitting.  
	\end{cor}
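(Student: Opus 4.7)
The plan is to reduce the claim to the transitivity corollary (Corollary \ref{cor  reflective transitivity}) and the Berezin--volume characterization of splitting (Lemma \ref{lemma nonzero}) by interposing a carefully chosen subgroup between $\KK$ and $\GG$. By symmetry it suffices to treat $i=1$.

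The key intermediate object I would use is $\HH := \GG_1 \cdot \KK = \GG_1 \times \pi_2(\KK)$, where $\pi_2 \colon \GG \to \GG_2$ denotes the projection. This $\HH$ is a quasireductive subgroup of $\GG$ (the image $\pi_2(\KK)$ is quasireductive as the image of a quasireductive supergroup) and it contains $\KK$. Applying Corollary \ref{cor  reflective transitivity} to the chain $\KK \subseteq \HH \subseteq \GG$ then gives that $\KK$ is splitting in $\HH$.

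The crux is to identify $\HH/\KK$ with $\GG_1/(\KK \cap \GG_1)$. More precisely, I would establish a natural $\GG_1$-equivariant isomorphism of homogeneous superspaces
\[
\GG_1/(\KK \cap \GG_1) \xrightarrow{\sim} \HH/\KK, \qquad g \mapsto (g,e)\KK.
\]
On the underlying algebraic groups, surjectivity follows from the factorization $(g_1, g_2) = (g_1 k_1^{-1}, e) \cdot (k_1, g_2)$ valid for any $k = (k_1, g_2) \in \KK$ with $\pi_2(k) = g_2$, and the stabilizer of the identity coset is visibly $\KK \cap \GG_1$. A direct tangent-space calculation gives the $\KK$-module isomorphism $\h/\k \cong \g_1/(\k \cap \g_1)$, and the $\KK$-action on $\g_1/(\k \cap \g_1)$ factors through $\pi_1(\KK)$ since $[\g_2, \g_1] = 0$.

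With this isomorphism in hand, Lemma \ref{lemma nonzero} applied to $\KK \subseteq \HH$ yields that $\Ber(\g_1/(\k \cap \g_1))$ is trivial as a $\KK$-module, hence as a $\pi_1(\KK)$-module, and by restriction as a $(\KK \cap \GG_1)$-module; moreover $\vol(\GG_1/(\KK \cap \GG_1)) = \vol(\HH/\KK) \neq 0$. Reapplying Lemma \ref{lemma nonzero} in the reverse direction to $\KK \cap \GG_1 \subseteq \GG_1$ then concludes that $\KK \cap \GG_1$ is splitting in $\GG_1$. I expect the main technical obstacle to be justifying the supervariety isomorphism $\GG_1/(\KK \cap \GG_1) \cong \HH/\KK$ and the corresponding compatibility of Berezin sheaves and compact forms; this should reduce to a standard Harish-Chandra-pair verification matching both the underlying algebraic homogeneous space and the tangent data at the identity coset.
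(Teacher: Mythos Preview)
Your argument is essentially the paper's own proof: interpose $\HH=\GG_1\times\pi_2(\KK)$, use the reflective transitivity corollary to get $\KK$ splitting in $\HH$, and then invoke the $\GG_1$-equivariant isomorphism $\HH/\KK\cong\GG_1/(\KK\cap\GG_1)$. The only difference is your final step: you pass through Lemma~\ref{lemma nonzero} (Berezin triviality plus nonzero volume) and then reapply it in reverse, whereas the paper simply observes that the $\GG_1$-equivariant isomorphism of coordinate rings transports condition~(1) of Definition~\ref{definition splitting} directly---if $\C\langle 1\rangle$ splits off $\C[\HH/\KK]$ as an $\HH$-module, it does so a fortiori as a $\GG_1$-module, hence $\C\langle 1\rangle$ splits off $\C[\GG_1/(\KK\cap\GG_1)]$ as a $\GG_1$-module. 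This shortcut avoids your compact-form and Berezin-sheaf compatibility check entirely.
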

	
	\begin{proof}
		We show it for $i=1$.  First observe that $\KK\cap \GG_1$ is indeed quasireductive being the kernel of $\KK\to\GG_2$.
		
		Using Corollary \ref{cor  reflective transitivity}, $\KK$ is splitting in $\GG_1\times \pi_2(\KK)$. 
		We use that the quotient of a quasireductive group is quasireductive. 
		On the other hand, we have an isomorphism of quasireductive supergroups $$\GG_1\times \pi_2(\KK)/\KK\cong \GG_1/\KK\cap \GG_1,$$ 
		and since this isomorphism is $\GG_1$-equivariant, we learn that $\KK\cap \GG_1$ is splitting in $\GG_1$.
	\end{proof}

	\subsection{Sylow subgroups and $0$-groups} In analogy with finite groups, we make the following definition.
	
	\begin{definition}
		A Sylow subgroup of a quasireductive supergroup $\GG$ is a minimal splitting subgroup of $\GG$.
	\end{definition}
	
	Note that the existence of Sylow subgroups follows from the Zariski topology being Noetherian.
	
	\begin{remark}\label{remark reductive}
		For a quasireductive supergroup $\GG$, $\Rep\GG$ is semisimple if and only if the trivial subgroup is a Sylow subgroup.
	\end{remark}
	
	Recall that in the theory of finite-groups, a $p$-group $P$ is one for which $\vol(P/H)=|P/H|=0$ in $\F_{p}$ for all nontrivial subgroups $H$ in $P$.  Equivalently, $P$ contains no nontrivial splitting subgroups over characteristic $p$.
	
	\begin{definition}
		We say that a quasireductive supergroup $\OO$ is a 0-group if it satisfies any of the following equivalent conditions:
		\begin{enumerate}
			\item $\OO$ is its own Sylow subgroup.
			\item $\OO$ contains no nontrivial splitting subgroups.
			\item For every quasireductive subgroup $\KK\sub\OO$ such that $\Ber(\o/\k)$ is a trivial $\KK$-module, we have $\vol(\OO/\KK)=0$.
		\end{enumerate}
	\end{definition}

	\begin{remark}
		If $\OO$ is a $0$-group, then by Lem.~2.11 of \cite{SSh}, $\OO$ must be oddly generated (see Definition \ref{defn oddly generated}).  
	\end{remark}
	
	\begin{cor}
		Given quasireductive subgroups $\OO\sub\GG$, $\OO$ is a Sylow subgroup if and only if it is a splitting $0$-group.
	\end{cor}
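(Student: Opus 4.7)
The plan is to prove the corollary by combining the definitions of Sylow subgroup and $0$-group with the transitivity of splitting established in Corollary \ref{cor  reflective transitivity}. Both directions reduce to applying transitivity to a hypothetical proper splitting subgroup and deriving a contradiction with minimality.

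For the forward direction, suppose $\OO$ is a Sylow subgroup of $\GG$. By definition $\OO$ is splitting in $\GG$, so I only need to verify that $\OO$ is a $0$-group. Suppose for contradiction that $\OO$ contains a proper, nontrivial quasireductive subgroup $\KK \subsetneq \OO$ which is splitting in $\OO$. Then by Corollary \ref{cor  reflective transitivity}, since $\KK$ is splitting in $\OO$ and $\OO$ is splitting in $\GG$, we conclude that $\KK$ is splitting in $\GG$. This contradicts the minimality of $\OO$ as a splitting subgroup of $\GG$. Hence no such $\KK$ exists and $\OO$ is a $0$-group.

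For the converse, suppose $\OO \sub \GG$ is a splitting $0$-subgroup. To see that $\OO$ is Sylow, I must show that no proper quasireductive subgroup $\KK \subsetneq \OO$ is splitting in $\GG$. Assume for contradiction that such a $\KK$ exists. Applying Corollary \ref{cor  reflective transitivity} in the other direction: since $\KK$ is splitting in $\GG$ (by assumption) and $\OO$ is splitting in $\GG$ (by hypothesis), the corollary gives that $\KK$ is splitting in $\OO$. But this contradicts $\OO$ being a $0$-group, which by definition admits no proper, nontrivial splitting subgroups. Hence $\OO$ is minimal among splitting subgroups of $\GG$, i.e.\ Sylow.

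There is no real obstacle here beyond correctly invoking Corollary \ref{cor  reflective transitivity}; the key conceptual point is that transitivity of the splitting relation turns the local condition (being a $0$-group inside itself) into the global condition (minimality inside $\GG$), and vice-versa. The corollary is essentially a formal consequence of this transitivity packaged with the definitions.
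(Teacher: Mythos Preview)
Your proof is correct and follows exactly the approach implicit in the paper, which states the corollary without proof as an immediate consequence of the definitions together with Corollary~\ref{cor  reflective transitivity}. One small redundancy: in the converse direction, you do not need to separately invoke that $\OO$ is splitting in $\GG$; the biconditional in Corollary~\ref{cor  reflective transitivity} already yields both $\KK$ splitting in $\OO$ and $\OO$ splitting in $\GG$ from the single hypothesis that $\KK$ is splitting in $\GG$.
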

	
	\begin{remark}
		One point of divergence from the theory of finite groups is that it is not true that every $0$-subgroup of a quasireductive supergroup $\GG$ will lie in a Sylow subgroup.  Indeed, if $\GG=\GG\LL(m|n)$, then the lie superalgebra has a $\Z$-grading $\g=\g_{-1}\oplus\g_0\oplus\g_1$, and if $mn>1$ then $\g_1$ will not Lie in any Sylow subgroup.
		
		The question of classifying all maximal 0-subgroups will be taken up in future work.
	\end{remark}
	
	\subsection{Splitting subalgebras}  In the following, we write $\Ext^i_{(\g,\g_{\ol{0}})}(-,-)$ for the $\Ext$ groups in the category $\Rep_{\g_{\ol{0}}}\g$ (see Definition \ref{defn qred algebra}).
	
	\begin{lemma}\label{lemma splitting subalgebras}
		Let $\g$ be quasireductive and $\k\sub \g$ a quasireductive subalgebra of $\g$.  Then the following are equivalent:
		\begin{enumerate}
			\item there exists global forms $\KK\sub\GG$ of $\k\sub\g$ such that $\KK$ is splitting in $\GG$;
			\item if $\KK\sub\GG$ are any global forms of $\k\sub\g$, then $\KK$ is splitting in $\GG$.
			\item $\Ext^i_{(\g,\g_{\ol{0}})}(M,N)\to \Ext^i_{(\k,\k_{\ol{0}})}(M,N)$ is injective for all $i$ and for all $M,N$ in $\Rep_{\g_{\ol{0}}}\g$;
			\item $\Ext^1_{(\g,\g_{\ol{0}})}(M,N)\to \Ext^1_{(\k,\k_{\ol{0}})}(M,N)$ is injective for all $M,N$ in $\Rep_{\g_{\ol{0}}}\g$.
		\end{enumerate}
	\end{lemma}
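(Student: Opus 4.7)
The plan is to establish the cycle $(1) \Leftrightarrow (2) \Leftrightarrow (3) \Leftrightarrow (4)$, noting that $(3) \Rightarrow (4)$ and $(2) \Rightarrow (1)$ are tautological. The bridge between the global-form conditions (1), (2) and the categorical conditions (3), (4) is Lemma~\ref{lemma full subcat}: for any global form $\GG$ of $\g$ and any $M, N \in \Rep \GG$ one has $\Ext^i_\GG(M, N) = \Ext^i_{(\g, \g_{\ol{0}})}(M, N)$, and analogously for $\KK$-modules via restriction. From this, $(4) \Rightarrow (2)$ follows immediately: given global forms $\KK \subset \GG$ of $\k \subset \g$ (existing compatibly and uniquely by Lemma~\ref{lemma global forms properties}) and any $M, N \in \Rep \GG$, condition (4) yields $\Ext^1_\GG(M, N) \hookrightarrow \Ext^1_\KK(M, N)$, which is condition (4) of Definition~\ref{definition splitting}, hence splitting.

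For $(2) \Rightarrow (3)$, I would first handle finite-dimensional $M, N \in \Rep_{\g_{\ol{0}}}\g$ by constructing a global form of $\g$ that contains them: pick a finite-dimensional faithful $\g$-module $V \supseteq M \oplus N$ (possible by Ado's theorem for Lie superalgebras), and let $\GG'$ be the global form of $\g$ inside $\GG\LL(V)$ produced by Lemma~\ref{lemma global forms properties}. Then $M, N \in \Rep \GG'$, so (2) applied to $\KK' \subset \GG'$ together with Definition~\ref{definition splitting}(3) gives $\Ext^i_{\GG'}(M, N) \hookrightarrow \Ext^i_{\KK'}(M, N)$ for all $i$, which by Lemma~\ref{lemma full subcat} transfers to the required injectivity in $\Rep_{\g_{\ol{0}}}\g$. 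The infinite-dimensional case I would reduce to the finite-dimensional one by exploiting that the relative Chevalley--Eilenberg cochain complex $\Hom_{\g_{\ol{0}}}(\Lambda^\bullet(\g_{\ol{1}}), -)$ computing these Ext groups has finite-dimensional exterior entries and is functorial in its coefficient module, permitting a filtered-colimit reduction.

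The crux is $(1) \Rightarrow (2)$, the independence of splitting from the choice of global form. I would invoke Lemma~\ref{lemma nonzero}: splitting of $\KK \subset \GG$ is equivalent to the conjunction of (a) triviality of $\Ber(\g/\k)$ as a $\KK$-module, and (b) $\vol(\GG/\KK) \neq 0$. Since every global form is connected (Lemma~\ref{lemma global forms properties}), condition (a) is governed by the $\k$-action on $\Ber(\g/\k)$ alone, hence depends only on the pair $(\k, \g)$. For (b), any two global forms $\GG_1, \GG_2$ of $\g$ admit a common refinement: a global form $\tilde{\GG}$ equipped with central isogenies $\tilde{\GG} \to \GG_i$ with finite kernels $F_i \subset \tilde{\GG}_{\ol{0}}$, constructed by taking a simultaneous cover of the reductive algebraic groups $\GG_{i, \ol{0}}$ and lifting through the super Harish--Chandra pair construction. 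The unique global form $\tilde{\KK} \subset \tilde{\GG}$ of $\k$ surjects onto each $\KK_i$ with finite kernel $F_i \cap \tilde{\KK}$, and the induced map $\tilde{\GG}/\tilde{\KK} \to \GG_i/\KK_i$ is a finite covering of compact CS manifolds of degree $|F_i/(F_i \cap \tilde{\KK})|$; invariant Berezin forms pull back compatibly and the integral rescales by this degree, so non-vanishing is preserved across the two global forms.

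The main obstacle is this volume-independence step in $(1) \Rightarrow (2)$: checking that any two global forms are linked by such a common refinement, and that Berezin integrals of invariant forms genuinely transform by the covering degree under the induced finite cover of homogeneous superspaces. This interweaves the classification of reductive algebraic group structures on a fixed reductive Lie algebra with the behaviour of Berezin integration under finite coverings in the compact CS-manifold setting.
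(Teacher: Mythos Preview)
Your cycle has a genuine gap in the step $(1)\Rightarrow(2)$. The common-refinement argument presumes that any two global forms $\GG_1,\GG_2$ of $\g$ are linked by central isogenies from a third global form. This is false under the paper's definition: a global form only satisfies $\g\subset\operatorname{Lie}\GG$ together with minimality, so $\operatorname{Lie}\GG$ may strictly contain $\g$ and two global forms may even have different dimensions. Concretely, let $\g$ be a one-dimensional even abelian Lie algebra; then $\G_m$ (with $\g=\operatorname{Lie}\G_m$) and $\G_m^2$ (with $\g$ embedded along an irrational slope $(1,\alpha)$) are both global forms of $\g$, and no finite isogeny relates them. The same phenomenon arises for the even centre of genuinely super examples such as $\s\l(1|1)$. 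Your comparison of Berezin volumes via covering degree therefore does not apply, and this implication does not go through as written.

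The paper sidesteps this issue entirely by proving $(1)\Rightarrow(4)$ directly, so that independence of the global form falls out of the categorical statement rather than a geometric comparison. The trick is to reduce to $N$ finite-dimensional, tensor to make $N=\C$, and then split $M=M'\oplus M''$ along the principal block of $\Rep_{\g_{\ol{0}}}\g$; only $M'$ contributes to $\Ext^\bullet(-,\C)$, and by Lemma~\ref{lemma full subcat} the principal block already sits inside $\Rep\GG$ for \emph{any} global form $\GG$. Thus the splitting hypothesis for one particular pair $\KK\subset\GG$ forces injectivity of the restriction map on $(\g,\g_{\ol{0}})$-Ext groups with no need to compare different global forms. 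Your alternative route $(2)\Rightarrow(3)$ via an ad hoc global form containing $M\oplus N$ is workable but heavier than this principal-block reduction; note too that Lemma~\ref{lemma full subcat} only gives a Serre subcategory, so identifying higher $\Ext$ groups still requires the additional remark that induced modules $\Ind_{\GG_0}^{\GG}V$ remain injective in $\Rep_{\g_{\ol{0}}}\g$.
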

	\begin{proof}
		$(2)\Rightarrow(1)$ and $(4)\Rightarrow(3)$ are clear, as is $(3)\Rightarrow(2)$ by Lemma \ref{lemma full subcat}. 
		For $(1)\Rightarrow(4)$, it suffices to show that $\Ext^\bullet_{(\g,\g_{\ol{0}})}(M,N)\to \Ext^\bullet_{(\k,\k_{\ol{0}})}(M,N)$ is injective when $N$ is a finite-dimensional $\g$-module, and thus we may assume that $N=\C$ by tensoring $M$ by $N^*$.
		
		We may write $M=M'\oplus M''$, where $M'$ lies in the principal block of $(\g,\g_{\ol{0}})$, and $M''$ has no simple constituents lying in the principal block. Thus \linebreak $\Ext^i_{(\g,\g_{\ol{0}})}(M,\C)=\Ext^i_{(\g,\g_{\ol{0}})}(M',\C)$, and so we may assume $M$ lies in the principal block.  But the principal block of $\Rep_{\g_{\ol{0}}}\g$ lies in $\Rep\GG$ by Lemma \ref{lemma full subcat}, so we are done.
	\end{proof}
	
	\begin{definition}
		If $\k\sub\g$ are quasireductive, then we say that $\k$ is splitting in $\g$ 
		if any of the equivalent conditions of Lemma \ref{lemma splitting subalgebras} holds.
	\end{definition}
	
	The following is a consequence of Corollary \ref{cor  reflective transitivity} and Corollary \ref{cor splitting product groups}.
	\begin{cor}\label{cor product}
		\begin{enumerate}
			\item If $\k\sub\h\sub\g$ are quasireductive, then $\k$ is splitting in $\g$ if and only if $\k$ is splitting in $\h$ and $\h$ is splitting in $\g$.
			\item If $\h\sub\g_1\times\g_2$ is splitting, then $\h\cap\g_i$ is splitting in $\g_i$. 
		\end{enumerate} 
	\end{cor}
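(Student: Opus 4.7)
The strategy for both assertions is to lift the statement from Lie superalgebras to supergroups by passing to global forms, using Lemma \ref{lemma splitting subalgebras} to bridge the two settings, and then to invoke the corresponding supergroup-level result (Corollary \ref{cor  reflective transitivity} for part (1), Corollary \ref{cor splitting product groups} for part (2)).

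For part (1), I would fix a global form $\GG$ of $\g$ via Lemma \ref{lemma global forms properties}(1), and then apply Lemma \ref{lemma global forms properties}(3) twice---first to the inclusion $\h\sub\g$ inside $\GG$ to obtain a global form $\HH\sub\GG$ of $\h$, then to the inclusion $\k\sub\h$ inside $\HH$ to obtain a global form $\KK\sub\HH$ of $\k$. By Lemma \ref{lemma splitting subalgebras}, the three splitting relations at the subalgebra level for the pairs $\k\sub\g$, $\k\sub\h$, and $\h\sub\g$ are equivalent to the corresponding splitting relations at the supergroup level for the chain $\KK\sub\HH\sub\GG$, and Corollary \ref{cor  reflective transitivity} then finishes.

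For part (2), I would mimic the proof of Corollary \ref{cor splitting product groups} at the subalgebra level, using part (1) as input. Set $\h_2:=\pi_2(\h)\sub\g_2$; this is a quasireductive subalgebra because $\pi_2|_{\h}:\h\twoheadrightarrow\h_2$ is a surjective Lie superalgebra homomorphism. Then $\h\sub\g_1\oplus\h_2\sub\g_1\oplus\g_2$, and part (1) forces $\h$ to be splitting in $\g_1\oplus\h_2$. Passing to global forms by fixing a global form $\GG$ of $\g_1\oplus\h_2$ and realizing inside it the global forms $\GG_1$ of $\g_1$ and $\HH$ of $\h$, minimality forces $\GG=\GG_1\cdot\HH$ (since $\operatorname{Lie}(\GG_1\cdot\HH)\supseteq\g_1+\h=\g_1\oplus\h_2$). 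This yields a $\GG_1$-equivariant isomorphism of homogeneous superspaces $\GG/\HH\cong\GG_1/(\GG_1\cap\HH)$, so that the splitting of $\HH$ in $\GG$ (given by Lemma \ref{lemma splitting subalgebras}) descends to the splitting of $\GG_1\cap\HH$ in $\GG_1$, just as in the proof of Corollary \ref{cor splitting product groups}.

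The main obstacle is the last step: identifying the splitting of $\GG_1\cap\HH$ in $\GG_1$ with the splitting of $\h\cap\g_1$ in $\g_1$ required by Lemma \ref{lemma splitting subalgebras}. A Harish-Chandra pair computation shows that $(\operatorname{Lie}(\GG_1\cap\HH))_{\ol{1}}$ and $(\h\cap\g_1)_{\ol{1}}$ both equal $\h_{\ol{1}}\cap(\g_1)_{\ol{1}}$, but their even parts can differ. Writing $\KK$ for the global form of $\h\cap\g_1$ inside $\GG_1$ produced by Lemma \ref{lemma global forms properties}(3), one has $\KK\sub\GG_1\cap\HH$, and the quotient supermanifold $(\GG_1\cap\HH)/\KK$ is purely even (since the odd parts of the two Lie superalgebras already coincide), hence reduces to an algebraic quotient of quasireductive algebraic groups, on which splitting is automatic in characteristic zero. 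Corollary \ref{cor  reflective transitivity} applied to the chain $\KK\sub\GG_1\cap\HH\sub\GG_1$ then yields the splitting of $\KK$ in $\GG_1$, and Lemma \ref{lemma splitting subalgebras} completes the proof.
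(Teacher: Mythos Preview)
Your argument for part (1) is correct and is exactly what the paper intends: pass to global forms via Lemma \ref{lemma global forms properties}, translate splitting using Lemma \ref{lemma splitting subalgebras}, and invoke Corollary \ref{cor  reflective transitivity}.

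For part (2) your argument is essentially correct, and in fact more careful than the paper's one-line deduction, but two remarks are in order.

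First, the step ``minimality forces $\GG=\GG_1\cdot\HH$'' presupposes that $\GG_1\cdot\HH$ is a closed subgroup, which you do not justify.  This is true because $\GG_1$ is normal in $\GG$: since $\GG$ is connected (Lemma \ref{lemma global forms properties}(1)) and $\g_1$ is an ideal of $\g_1\oplus\h_2$, the closed subgroup of $\GG$ stabilizing $\g_1$ under $\Ad$ has Lie superalgebra containing $\g_1\oplus\h_2$, so by minimality equals $\GG$; uniqueness in Lemma \ref{lemma global forms properties}(3) then shows $\GG_1$ is preserved under conjugation.  Once $\GG_1$ is normal, $\GG_1\cap\HH$ is the kernel of $\HH\to\GG/\GG_1$ and hence quasireductive, which you also need for the final transitivity step.

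Second, your detour through $\g_1\oplus\h_2$ can be avoided.  If $\GG_i$ is a global form of $\g_i$, then $\GG_1\times\GG_2$ is already a global form of $\g_1\times\g_2$: any closed subgroup $\KK\sub\GG_1\times\GG_2$ with $\g_1\times\g_2\sub\operatorname{Lie}\KK$ satisfies $\g_i\sub\operatorname{Lie}(\KK\cap\GG_i)$, whence $\GG_i\sub\KK$ by minimality of $\GG_i$.  One can then apply Corollary \ref{cor splitting product groups} directly to obtain that $\HH\cap\GG_1$ is splitting in $\GG_1$, and finish with your purely-even-discrepancy argument.  The paper's terse proof has this route in mind; your observation that $\HH\cap\GG_1$ need not literally be the global form of $\h\cap\g_1$, and that the defect is harmless because it is purely even, is a genuine detail the paper elides.
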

	
	\subsection{Sylow subalgebras and 0-superalgebras}  
	\begin{definition}
		Let $\g$ be quasireductive.  We say that a quasireductive subalgebra $\o\sub\g$ is a Sylow subalgebra if $\o$ is splitting in $\g$, and is minimal with this property.
	\end{definition}
	\begin{definition}
		We say that a Lie superalgebra $\o$ is a $0$-superalgebra if it is a Sylow subalgebra of itself.
	\end{definition}
	
	\begin{remark}
		If $\o$ is a $0$-superalgebra, then it must be oddly generated (see Definition \ref{defn oddly generated}). 
	\end{remark}
	\begin{lemma}\label{lemma 0-algebra 0-group}
		If $\o$ is a 0-superalgebra and $\OO$ is a connected global form of $\o$, then $\OO$ is a 0-group.
	\end{lemma}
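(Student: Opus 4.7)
My plan is to argue by contradiction, using the bijection of Lemma \ref{lemma bijection oddly generated} between oddly generated subalgebras of $\o$ and oddly generated subgroups of $\OO$ to transfer a hypothetical proper splitting subgroup of $\OO$ down to a proper splitting subalgebra of $\o$. The case $\o=0$ is trivial (then $\OO$ is the trivial group), so I assume $\o\neq 0$ and suppose there is a proper splitting subgroup $\KK\subsetneq\OO$.

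The first step is to replace $\KK$ by an oddly generated splitting subgroup. Since Sylow subgroups exist (Noetherianity), choose a Sylow subgroup $\KK_S\sub\KK$. Then $\KK_S$ is splitting in $\KK$, and hence splitting in $\OO$ by the transitivity Corollary \ref{cor  reflective transitivity}. Moreover, a second application of the same transitivity shows that $\KK_S$ has no proper splitting subgroup of its own (any such would contradict the minimality of $\KK_S$ in $\KK$), so $\KK_S$ is a $0$-group, and therefore oddly generated by the remark following the definition of $0$-groups (Lem.~2.11 of \cite{SSh}).

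Next, because $\OO$ is a global form of $\o$, Lemma \ref{lemma bijection oddly generated} gives a unique oddly generated subalgebra $\k_S\sub\o$ with $\KK_S=\mathfrak{G}(\k_S)$; by construction $\KK_S$ is then a global form of $\k_S$. The implication (1)$\Rightarrow$(2) of Lemma \ref{lemma splitting subalgebras} applied in reverse (the equivalence goes both ways) tells us that since $\KK_S\sub\OO$ is splitting, so is $\k_S\sub\o$. But $\o$ is a $0$-superalgebra, meaning its only splitting subalgebra is $\o$ itself, so $\k_S=\o$. Finally, $\mathfrak{G}(\o)$ is by construction a subgroup of $\OO$ whose Lie algebra contains $\o$, so the minimality clause in the definition of a global form forces $\mathfrak{G}(\o)=\OO$. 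Combining these identifications, $\KK_S=\mathfrak{G}(\k_S)=\mathfrak{G}(\o)=\OO$, contradicting $\KK_S\sub\KK\subsetneq\OO$.

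The step I expect to be the main obstacle is the passage to an oddly generated splitting subgroup: the argument rests on the cited fact that $0$-groups are always oddly generated, without which the bijection $\mathfrak{G}$ cannot be invoked. The connectedness hypothesis on $\OO$ is used implicitly in identifying $\mathfrak{G}(\o)$ with $\OO$, since otherwise the bijection of Lemma \ref{lemma bijection oddly generated} would not reach non-identity components.
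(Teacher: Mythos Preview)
Your proof is correct and follows essentially the same approach as the paper's: both pass from a Sylow subgroup of $\OO$ to its associated oddly generated subalgebra inside $\o$, then use Lemma~\ref{lemma splitting subalgebras} and the $0$-superalgebra hypothesis to force that subalgebra to equal $\o$. The paper does this slightly more directly, starting with a Sylow subgroup of $\OO$ itself (rather than of a hypothetical proper splitting $\KK$) and writing the oddly generated subalgebra $\k_{\ol{1}}+[\k_{\ol{1}},\k_{\ol{1}}]$ explicitly rather than invoking the bijection $\mathfrak{G}$.
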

	\begin{proof}
		Let $\KK\sub\OO$ be a Sylow subgroup.  By Lemma \ref{lemma splitting subalgebras}, this implies that $\k=\operatorname{Lie}\KK$ is splitting in $\operatorname{Lie}\OO$.  On the other hand, since $\k_{\ol{1}}+[\k_{\ol{1}},\k_{\ol{1}}]$ is splitting in $\k$ and is contained in $\o$, it must also be splitting inside $\o$.  Since $\o$ is a $0$-superalgebra, we obtain that 
		$\k=\o$, which in turn implies $\KK=\OO$.
	\end{proof}
	
	The following Lemma follows immediately from \ref{lemma splitting subalgebras} and \ref{lemma 0-algebra 0-group}.
	\begin{lemma}\label{lemma sylow subalg gives sylow subgp}
		Suppose that $\o\sub\g$ are quasireductive Lie superalgebras such that $\o$ is Sylow in $\g$.  If $\OO\sub\GG$ are global forms of $\o$ and $\g$, then $\OO$ is Sylow in $\GG$.
	\end{lemma}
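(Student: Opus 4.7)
The plan is essentially to paste together the two immediately preceding results together with the corollary characterizing Sylow subgroups as splitting $0$-groups. Once we have (a) that $\OO$ is splitting in $\GG$ and (b) that $\OO$ is a $0$-group, we are done by that corollary, so the whole task reduces to verifying (a) and (b).

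For (a), we use that $\o$ being Sylow in $\g$ implies, by definition, that $\o$ is splitting in $\g$. Condition (2) of Lemma \ref{lemma splitting subalgebras} then gives, uniformly for any global forms $\OO\sub\GG$ of $\o\sub\g$, that $\OO$ is splitting in $\GG$. This is immediate and carries no real content beyond citing the equivalence.

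For (b), we argue that $\o$ is a $0$-superalgebra. Indeed, if $\k\sub\o$ were a proper splitting subalgebra, then by part (1) of Corollary \ref{cor product}, $\k$ would also be splitting in $\g$, contradicting the minimality of $\o$ as a splitting subalgebra of $\g$. Hence $\o$ is Sylow in itself, i.e.~a $0$-superalgebra. Since every global form is connected by Lemma \ref{lemma global forms properties}(1), $\OO$ is a connected global form of $\o$, and Lemma \ref{lemma 0-algebra 0-group} then gives that $\OO$ is a $0$-group.

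Combining (a) and (b), $\OO\sub\GG$ is a splitting $0$-subgroup, so by the corollary characterizing Sylow subgroups, $\OO$ is Sylow in $\GG$. There is no real obstacle in this argument since each ingredient has already been packaged as a lemma; the only subtlety worth flagging is the observation in (b) that $\o$ being Sylow inside the larger $\g$ forces $\o$ to have no proper splitting subalgebras of its own, which relies on the transitivity statement in Corollary \ref{cor product}(1).
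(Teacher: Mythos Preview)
Your proof is correct and follows the same route as the paper, which simply records that the lemma ``follows immediately from \ref{lemma splitting subalgebras} and \ref{lemma 0-algebra 0-group}.'' You have merely made explicit the two details the paper leaves implicit: that $\o$ Sylow in $\g$ forces $\o$ to be a $0$-superalgebra via transitivity (Corollary \ref{cor product}(1)), and that one concludes using the earlier corollary characterizing Sylow subgroups as splitting $0$-subgroups.
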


	\begin{cor}
		If $\g$ is quasireductive, then a quasireductive subalgebra $\o\sub\g$ is Sylow if and only if it is a splitting $0$-subalgebra.
	\end{cor}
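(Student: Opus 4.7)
This is essentially a formal consequence of the transitivity of the splitting property established in Corollary \ref{cor product}(1), together with the definitions: a Sylow subalgebra is a splitting subalgebra that is minimal among splitting subalgebras, and a $0$-superalgebra is by definition one that is its own Sylow. There is no serious technical obstacle; the main point is to organize the two implications correctly so that transitivity applies in the right direction each time.

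For the forward direction, I would assume $\o\sub\g$ is Sylow and observe that splitting-ness is immediate from the definition. To show $\o$ is a $0$-subalgebra, I need to show that the only quasireductive subalgebra $\k\sub\o$ that is splitting in $\o$ is $\o$ itself. Given such a $\k$, the chain $\k\sub\o\sub\g$ is a chain of quasireductive subalgebras in which $\k$ is splitting in $\o$ and $\o$ is splitting in $\g$; by Corollary \ref{cor product}(1), $\k$ is splitting in $\g$. Minimality of $\o$ among splitting subalgebras of $\g$ then forces $\k=\o$, so $\o$ is Sylow in itself, i.e., a $0$-superalgebra.

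For the converse, I would assume $\o$ is a splitting $0$-subalgebra and verify minimality. If $\k\sub\o$ is any quasireductive subalgebra that is splitting in $\g$, then since $\o$ is also splitting in $\g$, Corollary \ref{cor product}(1) applied to $\k\sub\o\sub\g$ gives that $\k$ is splitting in $\o$. But $\o$ is a $0$-superalgebra, so the only quasireductive subalgebra of $\o$ that is splitting in $\o$ is $\o$ itself; hence $\k=\o$. This establishes that $\o$ is minimal among splitting subalgebras of $\g$, i.e., Sylow, completing the proof.
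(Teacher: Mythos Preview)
Your argument is correct and is exactly the intended one: the paper states this corollary without proof, treating it as immediate from the transitivity property in Corollary~\ref{cor product}(1) together with the definitions of Sylow subalgebra and $0$-superalgebra, which is precisely what you have written out.
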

	
	\subsection{Bijective correspondence of 0-subalgebras and 0-subgroups}
	
	\begin{lemma}\label{lemma bijection 0 subalgs and subgps}
		Let $\g$ be a Lie superalgebra with global form $\GG$. 
		\begin{enumerate}
			\item The bijection $\mathfrak{G}$ of Lemma \ref{lemma bijection oddly generated} defines a $\GG_0$-equivariant bijection between the 0-subalgebras of $\g$ and the 0-subgroups of $\GG$.  
			\item The bijection $\mathfrak{G}$ restricts to a $\GG_0$-equivariant bijection between the Sylow subalgebras of $\g$ and the Sylow subgroups of $\GG$.
			\item Further, $\mathfrak{G}$ restricts to $\GG_0$-equivariant bijection between the maximal $0$-subalgebras of $\g$ and the maximal $0$-subgroups of $\GG$.
		\end{enumerate}
	\end{lemma}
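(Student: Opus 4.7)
The plan is to leverage the bijection $\mathfrak{G}$ from Lemma~\ref{lemma bijection oddly generated} together with the splitting-transfer results of Lemmas~\ref{lemma splitting subalgebras} and~\ref{lemma 0-algebra 0-group}. First I observe that $\mathfrak{G}$ is order preserving in both directions: if $\h_1\sub\h_2$ are oddly generated subalgebras of $\g$, then both $\mathfrak{G}(\h_i)$ are defined as global forms inside $\GG$, and uniqueness of global forms inside $\GG$ forces $\mathfrak{G}(\h_1)\sub\mathfrak{G}(\h_2)$; the reverse monotonicity is immediate from the explicit formula for $\mathfrak{G}^{-1}$. Since $0$-subalgebras and $0$-subgroups are automatically oddly generated (by the remarks following their definitions), the task reduces to verifying that each of the three properties is preserved by $\mathfrak{G}$.

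For (1), the forward direction is immediate from Lemma~\ref{lemma 0-algebra 0-group}: if $\o$ is a $0$-superalgebra then $\mathfrak{G}(\o)$ is an oddly generated, hence connected, global form of $\o$, so it is a $0$-group. For the converse, let $\HH\sub\GG$ be a $0$-subgroup, set $\h=\mathfrak{G}^{-1}(\HH)$, and assume toward a contradiction that $\k\subsetneq\h$ is quasireductive and splitting in $\h$. The key move is to replace $\k$ by its oddly generated part $\k^{og}=\k_{\ol{1}}+[\k_{\ol{1}},\k_{\ol{1}}]$; by Lem.~2.11 of \cite{SSh}, $\k^{og}$ is splitting in $\k$, and transitivity of splitting (Corollary~\ref{cor product}(1)) then gives that $\k^{og}$ is splitting in $\h$. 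Since $\k^{og}\sub\k\subsetneq\h$ forces $\k^{og}\neq\h$, Lemma~\ref{lemma splitting subalgebras} combined with the order preservation of $\mathfrak{G}$ produces a proper splitting subgroup $\mathfrak{G}(\k^{og})\subsetneq\HH$, contradicting that $\HH$ is a $0$-group.

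Once (1) is in hand, (2) follows by combining it with Lemma~\ref{lemma splitting subalgebras}: a Sylow subalgebra (resp.~Sylow subgroup) is precisely a splitting $0$-subalgebra (resp.~splitting $0$-subgroup) by the corollaries preceding Subsection~6.5, and both splittingness and the $0$-property transfer through $\mathfrak{G}$. For (3), the order preservation of $\mathfrak{G}$ together with (1) shows that a strict enlargement of a $0$-subalgebra in $\g$ corresponds to a strict enlargement of the associated $0$-subgroup in $\GG$, so maximality is preserved in both directions. The $\GG_0$-equivariance of each restricted bijection is inherited from Lemma~\ref{lemma bijection oddly generated}, since splitting, Sylow, and maximal $0$ are all conjugation-invariant conditions. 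The main obstacle is the converse of (1): a splitting subalgebra of $\h$ need not itself be oddly generated, so one cannot apply $\mathfrak{G}$ to it directly. This is precisely the reason Lem.~2.11 of \cite{SSh} and the transitivity of splitting are both needed, to reduce to an oddly generated proper splitting subalgebra to which $\mathfrak{G}$ can then be applied.
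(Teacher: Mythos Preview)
Your argument is correct and follows essentially the same approach as the paper, which gives only a one-line proof citing Lemmas~\ref{lemma 0-algebra 0-group}, \ref{lemma sylow subalg gives sylow subgp}, and \ref{lemma full subcat}. You have simply unpacked what those citations entail: the oddly-generated-part trick you use for the converse of (1) is exactly the maneuver already carried out inside the proof of Lemma~\ref{lemma 0-algebra 0-group}, and your explicit verification of order preservation for $\mathfrak{G}$ makes transparent why (3) follows.
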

	\begin{proof}
		This is a straightforward consequence of Lemmas \ref{lemma 0-algebra 0-group}, \ref{lemma sylow subalg gives sylow subgp} and \ref{lemma full subcat}.
	\end{proof}
	\subsection{Neat and homological elements} Let $\GG$ be quasireductive.  We recall from \cite{ES} the following definition.

	\begin{definition}
		An element $x\in\g_{\ol{1}}$ is called neat if either $x=0$ or if there exists a subalgebra $\o\s\p(1|2)\cong\k\sub\g$ with $x\in\k_{\ol{1}}$.
	\end{definition}
	
	Note that the definition of neat elements in \cite{ES} is different than the one given here, although is shown to be equivalent in \textit{loc.~cit.}
	
	\begin{definition}
		The homological elements of $\g_{\ol{1}}$ are defined as
		\[
		\g_{\ol{1}}^{hom}=\{x\in\g_{\ol{1}}:[x,x]\text{ is semisimple in }\g_{\ol{0}}\}.
		\]
	\end{definition} 
	
	By Thm.~2.15 of \cite{SSh}, if $\KK\sub\GG$ is splitting then we must have 
	\begin{equation}\label{equation splitting homological}
		\GG_0\cdot\k_{\ol{1}}^{hom}=\g_{\ol{1}}^{hom},
	\end{equation}
	where $\cdot$ stands for the adjoint action. 
	In particular, if $\Rep\GG$ is semisimple, then necessarily $\g_{\ol{1}}^{hom}=\{0\}$.  We now show the converse, which can be viewed as a version of Cauchy's lemma for supergroups.

	\begin{prop}\label{prop semisimple criterion} The following conditions on a quasireductive Lie superalgebra
		$\g$ are equivalent:
		\begin{enumerate}
			\item $\g_{\ol{1}}^{hom}=\{0\}$;
			\item Every odd element of $\g$ is neat;
			\item $\Rep_{\g_{\ol{0}}}\g$ is semisimple.    
		\end{enumerate}
	\end{prop}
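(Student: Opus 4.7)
I would prove the cycle $(3)\Rightarrow(1)\Rightarrow(2)\Rightarrow(3)$.

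The easy implications $(3)\Rightarrow(1)$ and $(2)\Rightarrow(1)$ are immediate from the preceding material: for the first, if $\Rep_{\g_{\ol 0}}\g$ is semisimple then the trivial subalgebra is splitting (Remark~\ref{remark reductive} and Lemma~\ref{lemma splitting subalgebras}), so~\eqref{equation splitting homological} applied to $\k=0$ gives $\g_{\ol 1}^{hom}=\{0\}$; for the second, in $\o\s\p(1|2)$ the $\s\l(2)$-equivariant bracket map $S^2(\text{std})\to\s\l(2)$ sends each rank-one symmetric tensor $v\otimes v$ to a rank-one, hence nonzero nilpotent, element of $\s\l(2)$, so for any neat nonzero $x\in\k_{\ol 1}\sub\g_{\ol 1}$ the bracket $[x,x]$ is nonzero nilpotent in $\g_{\ol 0}$ (by intrinsicality of Jordan decomposition), hence not semisimple.

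The main substantive step is $(1)\Rightarrow(2)$. Given nonzero $x\in\g_{\ol 1}$, write the Jordan decomposition $[x,x]=s+n$ in $\g_{\ol 0}$ and decompose $x=\sum_\lambda x_\lambda$ by $\ad s$-eigenspaces on $\g$. The super-Jacobi identity $[x,[x,x]]=0$ together with $[s,n]=0$ yields $\ad n(x_\lambda)=-\lambda x_\lambda$, so nilpotence of $\ad n$ forces $x=x_0\in\g^s$, and then $[x,n]=0$ as well. Pick a Jacobson-Morozov $\s\l(2)$-triple $(E,H,F)\sub\g^s_{\ol 0}$ with $E=n/2$ and decompose $x=\sum_{k\geq 0}x^{(k)}$ into $\ad H$-eigenvectors, each annihilated by $\ad E$ (so each $x^{(k)}$ is a highest-weight vector of weight $k$). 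The weight-$0$ component satisfies $[x^{(0)},x^{(0)}]=s$, so $x^{(0)}\in\g_{\ol 1}^{hom}$ and hypothesis~(1) forces $x^{(0)}=0$, whence $s=0$ and $[x,x]=n$ is nilpotent; a further application of~(1) to each higher-weight component rules out $x^{(k)}$ for $k\geq 2$, so $x=x^{(1)}$. The $\s\l(2)$-subrepresentation generated by $x$ is then $2$-dimensional with $[x,x]=2E$, and setting $y:=\ad F(x)$, super-Jacobi computations yield $[x,y]=H$ and $[y,y]=\pm 2F$, exhibiting an $\o\s\p(1|2)$-triple containing $x$.

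For $(2)\Rightarrow(3)$ (which, with $(2)\Rightarrow(1)$, reduces to the Cauchy's-lemma content $(1)\Rightarrow(3)$): hypothesis~(1) translates, via the correspondence between semisimplicity of $[x,x]$ and closedness of the $\GG_0$-orbit of $x$ in $\g_{\ol 1}$, into the statement that $\{0\}$ is the only closed $\GG_0$-orbit in $\g_{\ol 1}$. By Hilbert--Mumford, $(\Sym^{\bullet}\g_{\ol 1}^*)^{\GG_0}$ is concentrated in degree zero; since the relative Chevalley--Eilenberg complex for $(\g,\g_{\ol 0})$ has trivial differential on trivial coefficients (because $[\g_{\ol 1},\g_{\ol 1}]\sub\g_{\ol 0}$), this yields $H^i(\g,\g_{\ol 0};\C)=0$ for $i\geq 1$, which extends to vanishing of $\Ext^i_{(\g,\g_{\ol 0})}(V,W)$ for all $V,W$ by tensoring with finite-dimensional modules and invoking the block decomposition of $\Rep_{\g_{\ol 0}}\g$.

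The principal obstacle is $(1)\Rightarrow(2)$: the combined $\ad s$- and $\ad H$-weight analysis reducing $x$ to a single highest-weight-$1$ vector and the subsequent explicit construction of the $\o\s\p(1|2)$-partner $y$ require a delicate iterated application of hypothesis~(1) to various components. A secondary difficulty is establishing the precise correspondence between $\g_{\ol 1}^{hom}$ and closed $\GG_0$-orbits in $(1)\Rightarrow(3)$, which underpins the invariant-theoretic translation.
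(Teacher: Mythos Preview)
Your arguments for $(3)\Rightarrow(1)$ and $(1)\Rightarrow(2)$ are essentially the paper's own, and in fact your $(1)\Rightarrow(2)$ is slightly more careful: you explicitly justify $x\in\g^{s}$ via the $\ad s$-eigendecomposition and nilpotence of $\ad n$, a step the paper passes over.

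For $(2)\Rightarrow(3)$ the paper takes a completely different route: it argues that condition~(2) is inherited by ideals and quotients (since $\o\s\p(1|2)$ is simple), that condition~(3) passes up along short exact sequences of ideals (since the ideal is then splitting), and thus reduces by induction on $\dim\g$ to the case of simple~$\g$, where one invokes Kac's classification to observe that only $\o\s\p(1|2n)$ has $\g_{\ol 1}^{neat}=\g_{\ol 1}$.

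Your invariant-theoretic attack on $(1)\Rightarrow(3)$ is an attractive idea, and the first three steps are fine: the implication ``closed $\GG_0$-orbit $\Rightarrow[x,x]$ semisimple'' can indeed be extracted from your own $(1)\Rightarrow(2)$ argument (though the full ``correspondence'' you invoke fails---e.g.\ $e_{12}\in\g\l(1|1)_{\ol 1}$ is homological with non-closed orbit), and the vanishing of the relative Chevalley--Eilenberg differential on trivial coefficients is correct.  The gap is your final step.  You obtain $\Ext^i_{(\g,\g_{\ol 0})}(\C,\C)=H^i(\g,\g_{\ol 0};\C)=0$ for $i\geq 1$, but this does \emph{not} imply $\Ext^i_{(\g,\g_{\ol 0})}(V,W)=0$ for arbitrary $V,W$.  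Tensoring gives $\Ext^i(V,W)\cong H^i(\g,\g_{\ol 0};V^*\otimes W)$, whose cochain groups are $(\Sym^\bullet\g_{\ol 1}^*\otimes V^*\otimes W)^{\g_{\ol 0}}$; these need not vanish just because $(\Sym^{>0}\g_{\ol 1}^*)^{\g_{\ol 0}}=0$, and the differential is no longer zero once the coefficients are nontrivial.  Neither ``tensoring with finite-dimensional modules'' nor ``block decomposition'' bridges this: knowing that $\C$ has no self-extensions says nothing about extensions between $\C$ and other simples in the principal block.  Without an additional argument (or, as the paper does, the classification of simples), the deduction of semisimplicity is incomplete.
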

	\begin{proof} Let us show first that (1) implies (2). Let $x\in\g_{\ol{1}}$, $x\neq 0$ and $[x,x]=y_s+y_n$
		be the Jordan decomposition of $y=[x,x]$. By our assumption $y_n\neq 0$. Therefore by the Jacobson-Morozov theorem
		there exists an $\s\l(2)$-triple $\{e,h,f\}\subset\g^{y_s}$ such that $e=y_n$. Since $[e,x]=0$ we can write
		$x=\sum_{k=0}^lx_k$ such that $[h,x_k]=kx_k$. We first note that $[x_0,x_0]=y_s$ and hence by (1)
		$x_0=y_s=0$. The relations $[h,e]=2e$ and $[x,x]=e$ imply $[x_k,x_k]=0$ for $k>1$ and hence by (1) we get
		$x=x_1$. Now one can easily see that $\{e,h,f,x,[f,x]\}$ form a basis of the $\o\s\p(1|2)$ subalgebra in $\g$.
		
		Let us now show that (2) implies (3). Since $\o\s\p(1|2)$ is simple we have that if (2) holds for $\g$
		and $\r\subset\g$ is an ideal, then (2) holds for $\r$ and $\g/\r$. On the other hand, if (3) holds for
		$\r$ and $\g/\r$ then $\r$ is splitting for $\g$ and therefore (3) holds for $\g$. Now we can proceed by induction on dimension of $\g$ and we have to check the statement only for simple Lie superalgebra $\g$.  Now the statement follows from the classification of simple Lie superalgebras (\cite{K}), which shows that only $\g=\o\s\p(1|2n)$ has that $\g_{\ol{1}}^{neat}=\g_{\ol{1}}$.
		
		Finally, (3) implies (1) by the argument before the proposition.
	\end{proof}
	
	\begin{cor}
		If $\GG$ is a quasireductive supergroup such that $\g_{\ol{1}}^{hom}=\{0\}$, then $\Rep\GG$ is semisimple.
	\end{cor}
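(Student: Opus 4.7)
The plan is to reduce the statement to its purely Lie-theoretic analogue already established in Proposition~\ref{prop semisimple criterion}, and then descend along the canonical passage between supergroups and their Lie superalgebras.

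First, apply Proposition~\ref{prop semisimple criterion} to $\g=\operatorname{Lie}\GG$: since $\GG$ is quasireductive, $\g_{\ol{0}}$ is reductive and acts $\ad$-semisimply on $\g$, so $\g$ itself is quasireductive, and the hypothesis $\g_{\ol{1}}^{hom}=\{0\}$ therefore yields that $\Rep_{\g_{\ol{0}}}\g$ is semisimple. Next, let $\GG^{\circ}$ be the identity component of $\GG$; since $\operatorname{Lie}\GG^{\circ}=\g$ and any closed subgroup of $\GG^{\circ}$ whose Lie algebra contains $\g$ must equal $\GG^{\circ}$, we see that $\GG^{\circ}$ is a global form of $\g$. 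Then Lemma~\ref{lemma full subcat} identifies $\Rep\GG^{\circ}$ with a full Serre subcategory of $\Rep_{\g_{\ol{0}}}\g$, whence $\Rep\GG^{\circ}$ is also semisimple.

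To go from $\GG^{\circ}$ to $\GG$, observe that the component group $F:=\GG/\GG^{\circ}=\GG_0/\GG_0^{\circ}$ is a finite group, since $\GG_0$ is an algebraic group and so has finitely many connected components. Given any short exact sequence $0\to M'\to M\to M''\to 0$ in $\Rep\GG$, its restriction to $\GG^{\circ}$ splits. Choosing a $\GG^{\circ}$-equivariant splitting and averaging over $F$ (which is permissible in characteristic zero) produces a $\GG$-equivariant splitting, so $\Rep\GG$ is semisimple.

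The bulk of the work is already done by Proposition~\ref{prop semisimple criterion} and Lemma~\ref{lemma full subcat}, so there is no serious obstacle. The only mildly delicate point is the passage from $\GG^{\circ}$ to $\GG$, where one should confirm that the averaging procedure applies to infinite-dimensional modules as well; this is routine because each extension class is determined by its behavior on finitely generated submodules, on which the finite-group averaging is unproblematic.
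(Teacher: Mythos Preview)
Your argument is correct and is exactly the route the paper has in mind: the corollary is stated immediately after Proposition~\ref{prop semisimple criterion} with no separate proof, because the paper regards it as the evident translation of condition~(3) of that proposition from $\Rep_{\g_{\ol 0}}\g$ to $\Rep\GG$ via Lemma~\ref{lemma full subcat}. Your write-up simply makes explicit the two steps the paper suppresses, namely that $\GG^{\circ}$ is a global form of $\g$ so that Lemma~\ref{lemma full subcat} applies, and that one then passes from $\GG^{\circ}$ to $\GG$ by averaging over the finite component group $\GG_0/\GG_0^{\circ}$.

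One minor remark: in your final paragraph you justify the averaging for infinite-dimensional modules by reducing to finitely generated submodules, but this detour is unnecessary. Given a $\GG^{\circ}$-equivariant section $s:M''\to M$, the average $\tfrac{1}{|F|}\sum_{g} g\, s\, g^{-1}$ over coset representatives $g\in\GG_0$ is a $\GG$-equivariant section outright, regardless of the size of $M$; normality of $\GG^{\circ}$ in $\GG$ ensures each summand is still $\GG^{\circ}$-equivariant, and the sum is visibly invariant under the full group. So the passage from $\GG^{\circ}$ to $\GG$ is entirely straightforward.
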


	\subsection{A priori results on Sylow subgroups and 0-groups}
	\begin{lemma}\label{lemma 0 group properties} Let $\OO$ be a $0$-group with Lie superalgebra $\o$.
		\begin{enumerate}
			\item Any quotient of $\OO$ is again a $0$-group.
			\item $\OO$ does not admit a simple quotient.
			\item $\OO$ does not admit a nontrivial, purely even quotient.
			\item $\o$ does not have simple ideal.
			\item The product of two $0$-groups is again a $0$-group.
		\end{enumerate}
	\end{lemma}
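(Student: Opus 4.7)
The plan is to prove the parts in the order (5), (1), (3), (2), (4), with each building on the earlier ones.

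Part (5) is immediate from Corollary \ref{cor splitting product groups}: any splitting $\KK\sub\OO_1\times\OO_2$ has $\KK\cap\OO_i$ splitting in $\OO_i$, which equals $\OO_i$ by the $0$-group hypothesis, so $\KK=\OO_1\times\OO_2$.

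For (1), suppose $\NN$ is a normal subgroup of $\OO$ and $\KK'\sub\OO/\NN$ is splitting; lift to $\KK=\pi^{-1}(\KK')\sub\OO$. Then $\KK$ is quasireductive because an extension of reductive groups is reductive (by the standard unipotent-radical argument applied to the identity components of $\NN_0$ and $\KK'_0$). Since $\NN\sub\KK$, we have an $\OO$-equivariant identification $\C[\OO/\KK]\cong\C[(\OO/\NN)/\KK']$, so $\C\langle 1\rangle$ splits off $\C[\OO/\KK]$ as it does off the right-hand side. Hence $\KK$ is splitting in $\OO$, and the $0$-group property forces $\KK=\OO$ and $\KK'=\OO/\NN$. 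Part (3) then follows at once: any purely even quasireductive quotient is a reductive algebraic group, whose representation category is semisimple over $\C$, so its trivial subgroup is splitting; being a $0$-group by (1), it must equal its trivial Sylow. Part (2) combines (1) and (3): a simple quotient is a $0$-group by (1); if purely even it is trivial by (3) (contradicting simplicity), and otherwise it is a nontrivial simple quasireductive supergroup with odd part, which admits a proper splitting subgroup by the constructions in \cite{SSh} and \cite{SV}.

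Part (4) is the subtlest. Suppose $\mathfrak{i}\sub\o$ is a simple ideal and set $\c=\c_\o(\mathfrak{i})$. A simple Lie superalgebra has trivial center, so $\mathfrak{i}\cap\c=0$, and the image of $\ad\colon\o\to\operatorname{Der}(\mathfrak{i})$ is $\o/\c$, containing $\ad(\mathfrak{i})\cong\mathfrak{i}$ as an ideal. By (1) transferred to Lie superalgebras through Lemma \ref{lemma bijection 0 subalgs and subgps}, $\o/\c$ is a $0$-superalgebra, and the outer quotient $(\o/\c)/\mathfrak{i}$ embeds into $\operatorname{Out}(\mathfrak{i}):=\operatorname{Der}(\mathfrak{i})/\ad(\mathfrak{i})$. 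The key input is that $\operatorname{Out}(\mathfrak{i})$ is purely even for every simple Lie superalgebra $\mathfrak{i}$ in Kac's classification \cite{K}; granting this, $(\o/\c)/\mathfrak{i}$ is purely even and (3) forces it to vanish, giving $\o/\c\cong\mathfrak{i}$, a simple $0$-superalgebra, in violation of (2). The main obstacle is this classification-dependent step, particularly in the $\p\s\l(n|n)$ and Cartan-type cases, where one must verify that odd outer derivations do not contribute to $(\o/\c)/\mathfrak{i}$; an alternative route is to produce a proper splitting subalgebra of $\mathfrak{i}$ from \cite{SSh}, \cite{SV} and lift it through the extension $0\to\c\to\o\to\o/\c\to 0$ using Corollary \ref{cor  reflective transitivity}.
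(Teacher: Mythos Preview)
Your arguments for (1), (2), (3), and (5) are correct and match the paper's approach (the paper phrases (1) via volumes rather than via $\C\langle 1\rangle$ splitting off, but the content is identical once one uses the isomorphism $\OO/\pi^{-1}(\KK')\cong (\OO/\NN)/\KK'$).

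Part (4) has a genuine gap. Your key input---that $\operatorname{Out}(\mathfrak{i})$ is purely even for every simple quasireductive Lie superalgebra $\mathfrak{i}$---is \emph{false}. The exception is $\mathfrak{i}=\p\s\q(n)$ for $n\geq 3$: here $\operatorname{Der}(\p\s\q(n))=\p\q(n)$, and $\p\q(n)/\p\s\q(n)=\C\langle H\rangle$ is one-dimensional and \emph{odd} (this is exactly the content of Lemma~\ref{lemma derivations simples}(2)(d)). You misidentify the potential trouble spots: for $\p\s\l(n|n)$ the outer derivations are purely even (a grading operator, or $\s\l(2)$ when $n=2$), and Cartan-type superalgebras are not quasireductive so are irrelevant. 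Thus your argument leaves open precisely the case where $\o/\c$ sits strictly between $\p\s\q(n)$ and $\p\q(n)$, and since $\p\q(n)$ is oddly generated you cannot use (3) to rule it out.

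The paper closes this gap by a separate ad hoc step: in the $\p\s\q(n)$ case the quotient is forced to be $\p\q(n)$, and one then invokes \cite{SSh} directly to show $\p\q(n)$ admits a proper splitting subalgebra, so is not a $0$-superalgebra. Your proposed ``alternative route'' does not repair this either: producing a proper splitting subalgebra of $\mathfrak{i}$ does not obviously give one in $\o/\c$ (Corollary~\ref{cor  reflective transitivity} concerns chains of subgroups, not passage through quotients in this direction), and in any case the difficulty is exactly when $\o/\c$ is strictly larger than $\mathfrak{i}$.
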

	
	\begin{proof}
		For (1), if $\OO$ is a $0$-group and $\pi:\OO\to\OO'$ is a quotient of $\OO$ with nontrivial subgroup $\KK\sub\OO'$, then 
		\[
		\vol(\OO'/\KK)\cong\vol(\OO/\pi^{-1}(\KK))=0,
		\]
		which implies that $\OO'$ is a $0$-group.
		
		For (2), we use (1) and the results of \cite{SSh} and \cite{SV}, whose main theorems show that any simple quasireductive supergroup admits a nontrivial splitting subgroup.  
		
		Once again (3) follows from (1) because a nontrivial reductive group is never a $0$-group.
		
		For (4) assume $\s$ is a simple ideal in $\o$. Let $\mathfrak{i}$ be a maximal ideal in $\o$ which intersects 
		$\s$ by zero and set $\bar\o:=\o/\mathfrak{i}$. Note that $\bar \o$ is the Lie superalgebra of a 0-group. On the other hand,
		$\o$ is a subalgebra of $\operatorname{der}\s$. By \cite{S} (see also Lemma \ref{lemma derivations simples}, $\operatorname{der}\s/\s$ is a purely even algebra
		unless $\s=\p\s\q(n)$ for $n\geq 3$ and $\operatorname{der}\s=\p\q(n)$. Therefore $\bar\o=\s$ unless 
		$\bar \o=\p\q(n)$.
		The former case is impossible by (2) and the latter case is also impossible, see \cite{SSh}.
		
		Finally, (5) follows from Corollary \ref{cor splitting product groups}.
	\end{proof}
	
	\section{Classification and structure of $0$-groups}
	
	In this section, we classify $0$-groups and give several equivalent characterizations. 
	
	\subsection{Takiff superalgebras and supergroups}\label{section takiff}
	
	\begin{definition}
		We call a Lie superalgebra $\s$ a  Takiff superalgebra,  if there exist simple (even) Lie algebras $\s_1,\dots,\s_k$ such that
		\[
		\s\cong\bigoplus\limits_{i}\s_i\otimes\C[\xi_i],
		\]
		where each $\mathbb C[\xi_i]$ is a supercommutative algebra with one odd generator $\xi_i$.
	\end{definition}
	
	Observe that a Takiff superalgebra is never a 0-superalgebra, as it is not oddly generated.
	
	\begin{definition} A Lie superalgebra $\g$ is called a Takiff 0-superalgebra if $\g$ is semisimple
		(has trivial radical), $\g$ is oddly generated (i.e.~$[\g_{\ol{1}},\g_{\ol{1}}]=\g_{\ol{0}}$),
		and $[\g,\g]$ is a Takiff superalgebra. It is equivalent to the condition that
		\[
		\bigoplus\limits_{i}\s_i\otimes\C[\xi_i]\sub\g\sub\bigoplus\limits_{i}\s_i\otimes\C[\xi_i]\rtimes\C\langle\d_{\xi_i}\rangle,
		\]
		and that $[\g_{\ol{1}},\g_{\ol{1}}]=\g_{\ol{0}}$.  Clearly, we may write $\g=\bigoplus\limits_{i}\s_i\otimes\C[\xi_i]\rtimes\mathfrak{d}$, where $\mathfrak{d}\sub\C\langle\d_{\xi_1},\dots,\d_{\xi_k}\rangle$.  Then the condition $[\g_{\ol{1}},\g_{\ol{1}}]=\g_{\ol{0}}$ is equivalent to asking that the natural projections $\mathfrak{d}\to\C\langle\d_{\xi_i}\rangle$ are surjective for all $i$.
	\end{definition}
	
	\begin{definition}
		A supergroup $\mathcal{S}$ is called a Takiff supergroup if $\s=\operatorname{Lie}\mathcal{S}$ is a Takiff superalgebra. 
		Similarly, a supergroup $\GG$ is called a Takiff 0-supergroup if $\g=\operatorname{Lie}\GG$ is a Takiff 0-superalgebra.
	\end{definition}
	
	\begin{example}
		Examples of Takiff 0-superalgebras include:
		\[
		(\s\l_2\otimes\C[\xi_1]\oplus \s\l_3\otimes\C[\xi_2])\rtimes\C\langle\d_{\xi_1}-\d_{\xi_2}\rangle.
		\]
		Another example is 
		\[
		\p\q(2)\cong\s\mathfrak{pe}(2)\cong(\s\l(2)\otimes\C[\xi])\rtimes\C\langle\d_{\xi}\rangle,
		\]
		where $\s\mathfrak{pe}(2)$ is the commutator subalgebra of the periplectic Lie superalgebra $\mathfrak{pe}(2)$, and $\p\q(2)$ is the quotient of the queer Lie superalgebra $\q(2)$ by its centre.
		
		A non-example of a Takiff 0-superalgebra is
		\[
		(\s\l_2\otimes\C[\xi_1]\oplus \s\l_3\otimes\C[\xi_2])\rtimes\C\langle\d_{\xi_1}\rangle.        \]
	\end{example}
	
	\begin{remark}\label{remark derivations Takiff}
		For a Takiff superalgebra $\s=\bigoplus\limits_{i=1}^k\s_i\otimes\C[\xi]$, we have:
		\[
		\operatorname{Der}(\s)/\s=\langle\d_{\xi_1},\dots,\d_{\xi_k},\xi_1\d_{\xi_1},\dots,\xi_{k}\d_{\xi_k}\rangle.
		\]
		See, for instance,  Lem.~6.6 of \cite{S}.  
	\end{remark}
	
	\subsection{Classification theorem} 
	In the following, a central extension may be of arbitrary size.
	
	\begin{thm}\label{thm classification 0 groups}
		A quasireductive supergroup $\OO$ is a 0-group if and only if $\OO$ is oddly generated and isomorphic to a central extension of $\TT\times \mathcal{V}$, where $\mathcal{V}$ is an odd abelian supergroup and $\TT$ is a Takiff 0-supergroup.
	\end{thm}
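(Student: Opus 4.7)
The plan is to prove both directions of the biconditional separately; the reverse implication reduces to a collection of building-block verifications, while the forward implication carries the substantive structural content.

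For the reverse direction I would verify, in order: first, that an odd abelian supergroup $\VV$ is a $0$-group, by a direct Berezin integration computation showing $\vol(\VV/\KK) = 0$ for every proper quasireductive $\KK \subsetneq \VV$ with trivial $\Ber(\mathfrak{v}/\mathfrak{k})$; second, that a Takiff $0$-supergroup is a $0$-group, tested against the volume criterion of Lemma \ref{lemma nonzero} using the explicit form $\bigoplus_i \mathfrak{s}_i \otimes \C[\xi_i] \rtimes \mathfrak{d}$; third, that $\TT \times \VV$ is a $0$-group via Lemma \ref{lemma 0 group properties}(5); and finally that any oddly generated central extension $1 \to Z \to \OO \to \TT \times \VV \to 1$ inherits the $0$-group property, because a proper splitting $\KK \subsetneq \OO$ could be enlarged to $\KK \cdot Z$ while remaining splitting by Corollary \ref{cor  reflective transitivity}, and would then descend to a proper splitting subgroup of $\TT \times \VV$, contradicting that the latter is a $0$-group.

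For the forward direction, let $\OO$ be a $0$-group with $\o = \operatorname{Lie} \OO$ (necessarily oddly generated) and $\mathfrak{r} \subset \o$ the solvable radical. Then $\o/\mathfrak{r}$ is semisimple and, via Lemmas \ref{lemma 0 group properties}(1) and \ref{lemma bijection 0 subalgs and subgps}, is itself a $0$-superalgebra; by Lemma \ref{lemma 0 group properties}(4) it has no simple ideals. Combining this with oddly-generatedness and invoking the classification of semisimple Lie superalgebras — with quasireductivity forcing the Grassmann factors to be $\C[\xi_i]$ and the outer derivations to lie in $\langle \d_{\xi_i} \rangle$ — identifies $\o/\mathfrak{r}$ with a Takiff $0$-superalgebra $\mathfrak{t}$. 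A super-Levi decomposition, valid because $\mathfrak{t}$ acts ad-semisimply on $\mathfrak{r}$, then yields $\o = \mathfrak{t} \ltimes \mathfrak{r}$.

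The remaining core step is to show that, modulo $Z(\o)$, this semidirect product becomes a direct sum $\mathfrak{t} \oplus \mathfrak{v}$ with $\mathfrak{v}$ odd abelian. Quasireductivity forces $\mathfrak{r}_{\ol{0}}$ to be an abelian torus acting ad-semisimply on $\mathfrak{r}_{\ol{1}}$; what must be ruled out are any nontrivial simple $\mathfrak{t}$-submodule inside $\mathfrak{r}_{\ol{1}}$ and any noncentral element of $[\mathfrak{r}_{\ol{1}}, \mathfrak{r}_{\ol{1}}]$. In each case the strategy is to assume such data exists and extract from it a proper quasireductive subalgebra of $\o$, then apply Proposition \ref{prop integral bundle} to express its Berezin volume as a fibred integral with a nonvanishing compact factor; by Lemma \ref{lemma nonzero} this produces a proper splitting subgroup and contradicts the $0$-group hypothesis. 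This volume-theoretic obstruction analysis — carried out systematically across every possible $\mathfrak{t}$-isotypic component and bracket configuration in $\mathfrak{r}$ — is the main obstacle of the proof, as the earlier reductions follow cleanly from the already-established classification and splitting transitivity.
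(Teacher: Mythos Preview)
Your reverse direction is workable but more laborious than the paper's. Rather than verifying each building block separately, the paper observes that after killing the center one can rewrite $\TT\times\VV$ as $\mathcal{S}\ltimes\mathcal{D}$ with $\mathcal{S}$ a Takiff supergroup and $\mathcal{D}$ odd abelian commuting with $\OO_0$; then Lemma~\ref{lemma semidirect product} forces any splitting subgroup to contain $\mathcal{D}$, and since $\mathfrak{s}_{\ol 1}$ is an odd abelian ideal it too lies in every splitting subalgebra, so the splitting subgroup has full odd part and hence equals $\OO$. This avoids all direct volume computations.

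Your forward direction, however, has a genuine gap. You pass to the solvable radical $\r$, identify $\o/\r$ as a Takiff $0$-superalgebra, invoke a super-Levi splitting $\o=\mathfrak{t}\ltimes\r$, and then propose a ``volume-theoretic obstruction analysis'' of $\r$ which you yourself flag as the main obstacle. Two problems: first, the Levi splitting is not justified---your claim that $\mathfrak{t}$ acts ad-semisimply on $\r$ does not by itself force $H^2(\mathfrak{t},\r)=0$, and Levi's theorem for Lie superalgebras is delicate. Second, and more seriously, the obstruction analysis is never carried out: you do not say which proper subalgebra a nontrivial $\mathfrak{t}$-isotypic piece of $\r_{\ol 1}$ would produce, nor why its Berezin is trivial and its volume nonzero. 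The paper bypasses all of this. Once Lemma~\ref{lemma 0 group properties}(4) gives that $\o/\mathfrak{z}(\o)_{\ol 0}$ has no simple ideal, Lemma~\ref{lemma no simple ideals} finishes by pure structure theory from \cite{S}: the product $\c(\o)$ of minimal ideals is Takiff $\times$ odd abelian, the quotient $\o/\c(\o)$ is forced to be odd abelian by the oddly-generated hypothesis, and the remaining outer action is pinned down by the known derivations of a Takiff superalgebra (Remark~\ref{remark derivations Takiff}). No volume computation enters the forward direction beyond what is already packaged in Lemma~\ref{lemma 0 group properties}(4).
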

	We note that by Lemma \ref{lemma bijection 0 subalgs and subgps}, Theorem \ref{thm classification 0 groups} is equivalent to the statement that a quasireductive Lie superalgebra $\o$ is a $0$-superalgebra if and only if it is oddly generated and isomorphic to a central extension of $\mathfrak{t}\times\mathfrak{v}$, where $\mathfrak{t}$ is a Takiff 0-superalgebra, and $\mathfrak{v}$ is an odd Lie abelian superalgebra.
	
	For the backwards direction of Theorem \ref{thm classification 0 groups}, we will need the following lemma:
	
	\begin{lemma}\label{lemma semidirect product}
		Let $\GG=\GG'\ltimes\mathcal{D}$ where $\GG,\GG'$ are quasireductive, and $\mathcal{D}$ is an odd abelian supergroup that commutes with $\GG_0$.  Then $\mathcal{D}$ is contained in every splitting subgroup of $\GG$.  Further, $\KK=\KK'\ltimes \mathcal{D}$ is splitting in $\GG$ if and only if $\KK'$ is splitting in $\GG'$.
	\end{lemma}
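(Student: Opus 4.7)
The plan proceeds in three steps.

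First, I would observe that $\mathfrak{d} := \operatorname{Lie}\mathcal{D}$ is central in $\g$. Indeed, $\mathcal{D}_0$ being trivial forces $\GG_0 = \GG'_0$, so the hypothesis gives $[\g_{\ol{0}}, \mathfrak{d}_{\ol{1}}] = 0$; and since $\mathfrak{d}$ is a purely odd ideal of $\g$, $[\g'_{\ol{1}}, \mathfrak{d}_{\ol{1}}] \subset \mathfrak{d}_{\ol{0}} = 0$; combined with $[\mathfrak{d}_{\ol{1}}, \mathfrak{d}_{\ol{1}}] = 0$ from abelianness, this yields $[\g, \mathfrak{d}] = 0$. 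Separately, $\mathcal{D}$ is normal in $\GG$ by the semidirect product structure.

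For the first assertion, let $\KK \subset \GG$ be splitting. Normality of $\mathcal{D}$ allows us to form the quasireductive subgroup $\KK \cdot \mathcal{D}$, and Corollary \ref{cor  reflective transitivity} implies that $\KK$ is splitting in $\KK \cdot \mathcal{D}$. The plan is to derive a contradiction from Lemma \ref{lemma nonzero} unless $\mathcal{D} \subset \KK$. Because $\mathfrak{d}$ is central in $\g$, the $\KK$-module $\operatorname{Lie}(\KK \cdot \mathcal{D})/\k \cong \mathfrak{d}/(\k \cap \mathfrak{d})$ carries the trivial action, so the Berezin module condition in Lemma \ref{lemma nonzero} is automatic. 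The compact homogeneous superspace $(\KK \cdot \mathcal{D})_c/\KK_c$ is canonically isomorphic to the odd abelian supergroup $\mathcal{D}/(\mathcal{D} \cap \KK)$, whose underlying manifold is a single point. The key computation, and the main obstacle, will be to verify that a nontrivial odd abelian supergroup has zero volume: a left-invariant Berezin form on it has numerator equal to the constant $1$, and Berezin integration extracts the coefficient of $\xi_1 \cdots \xi_m$ from that constant, yielding $0$ as soon as $m \geq 1$. Thus if $\mathcal{D} \not\subset \KK$ we would obtain $\vol((\KK \cdot \mathcal{D})/\KK) = 0$, contradicting the splitting of $\KK$ in $\KK \cdot \mathcal{D}$ via Lemma \ref{lemma nonzero}.

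For the second assertion, assume $\mathcal{D} \subset \KK$ and write $\KK = \KK' \ltimes \mathcal{D}$. Since $\mathcal{D}$ is normal in $\GG$ and contained in $\KK$, the quotient map $\GG \to \GG' = \GG/\mathcal{D}$ descends to a $\GG$-equivariant isomorphism of supervarieties $\GG/\KK \cong \GG'/\KK'$, on which $\mathcal{D}$ acts trivially. Hence $\g/\k \cong \g'/\k'$ as $\KK$-modules (factoring through $\KK'$), and $\GG_c/\KK_c \cong \GG'_c/\KK'_c$ as compact CS manifolds. Both conditions of Lemma \ref{lemma nonzero} --- triviality of $\Ber(\g/\k)$ and nonvanishing of $\vol(\GG/\KK)$ --- therefore match their primed analogues, and the equivalence follows.
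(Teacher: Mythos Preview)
Your argument is correct. For the second assertion you proceed exactly as the paper does: identify $\GG/\KK$ with $\GG'/\KK'$ and transfer both the triviality of the Berezin module and the nonvanishing of the volume across this isomorphism.

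For the first assertion your route is genuinely different from the paper's. The paper simply observes that each $d\in\mathfrak{d}$ is homological (since $[d,d]=0$) and $\GG_0$-fixed, and then invokes the criterion \eqref{equation splitting homological} (i.e.\ Thm.~2.15 of \cite{SSh}): if $\KK$ is splitting then $\GG_0\cdot\k_{\ol{1}}^{hom}=\g_{\ol{1}}^{hom}$, so a $\GG_0$-fixed homological element must already lie in $\k$. This is a one-line appeal to an external result. Your approach instead stays internal to the volume machinery of the present paper: you form $\KK\cdot\mathcal{D}$, use reflective transitivity to see $\KK$ is splitting there, and then compute directly that a nontrivial purely odd homogeneous space $\mathcal{D}/(\mathcal{D}\cap\KK)$ has zero Berezin volume, contradicting Lemma~\ref{lemma nonzero}. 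Your argument is more self-contained (it does not import the homological-elements criterion from \cite{SSh}) and in fact illustrates the volume philosophy of the paper rather nicely; the paper's argument is shorter but relies on a deeper cited fact. Both are valid.

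One small point worth making explicit in your write-up: to conclude that $\KK$ is normal in $\KK\cdot\mathcal{D}$ (so that the quotient is identified with $\mathcal{D}/(\mathcal{D}\cap\KK)$ as a supergroup), you should note that your centrality computation for $\mathfrak{d}$, together with $\mathcal{D}_0=1$ and the hypothesis that $\mathcal{D}$ commutes with $\GG_0$, shows via Harish-Chandra pairs that $\mathcal{D}$ is central in $\GG$ as a supergroup, not merely at the Lie-algebra level.
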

	
	\begin{proof}
		Since $\operatorname{Lie}\DD$ consists of homological, $\GG_0$-fixed vectors, the first claim follows from \ref{equation splitting homological}.
		
		For the second statement, observe that the natural map $\GG'/\KK'\to \GG/\KK$ is an isomorphism of $\GG'$-varieties.  We clealy have a natural isomorphism $\Ber(\g/\k)\cong\Ber(\g'/\k')$, and since $\KK_0'=\KK_0$, $\Ber(\g/\k)$ is trivial as a $\KK$-module if and only $\Ber(\g'/\k')$ is trivial as a $\KK'$-module.
		
		Thus let us assume these Berezin modules are trivial, meaning both $\GG/\KK$ and $\GG'/\KK'$ admit global, invariant, Berezin forms $\omega_{\GG/\KK}$ and $\omega_{\GG'/\KK'}$.  Since $\omega_{\GG/\KK}$ pulls back to $\omega_{\GG'/\KK'}$ under the isomorphism $\GG'/\KK'\cong\GG/\KK$, it is clear that $\vol(\GG/\KK)\neq0$ if and only if $\vol(\GG'/\KK')\neq0$, so we may conclude by Lemma \ref{lemma nonzero}.
	\end{proof}
	
	\begin{proof}[Proof of Theorem \ref{thm classification 0 groups}]
		Let $\OO$ be oddly generated and a central extension of $\TT\times\VV$, where $\TT$ is a Takiff 0-supergroup and $\VV$ is odd abelian.  To show that $\OO$ is a 0-group, we may quotient by the centre, and thus assume that $\OO\cong\TT\times\VV$.  However this implies $\OO\cong\mathcal{S}\ltimes\mathcal{D}$, where $\mathcal{S}$ is a Takiff supergroup and $\mathcal{D}$ is an odd abelian subgroup that commutes with $\OO_0$.  Thus we may apply Lemma \ref{lemma semidirect product}, which implies that any splitting subgroup $\KK$ of $\OO$ must be of the form $\KK'\ltimes\mathcal{D}$, where $\KK'\sub\mathcal{S}$ is splitting. However, $\s_{\ol{1}}$ is an odd abelian ideal of $\s$, implying that $\s_{\ol{1}}\sub\k'$.  Thus $\mathfrak{o}_{\ol{1}}=\k_{\ol{1}}$, implying $\KK=\OO$.
		
		For the converse direction, we apply Lemma \ref{lemma 0 group properties} and the following Lemma \ref{lemma no simple ideals}.
	\end{proof}
	
	\begin{lemma}\label{lemma no simple ideals}
		Let $\OO$ be an oddly generated quasireductive supergroup with Lie superalgebra $\o$ such that $\o/\mathfrak{z}(\o)_{\ol{0}}$ contains no simple ideal.  Then $\OO$ is isomorphic to a central extension of $\TT\times\VV$, where $\TT$ is a Takiff 0-supergroup and $\VV$ is odd abelian.
	\end{lemma}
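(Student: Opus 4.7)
The plan is first to pass to $\bar\o := \o/\mathfrak{z}(\o)_{\ol 0}$. Since a central extension of a central extension is itself a central extension, it suffices to produce a decomposition $\bar\o \cong \mathfrak{t} \times \mathfrak{v}$ with $\mathfrak{t}$ a Takiff $0$-superalgebra and $\mathfrak{v}$ odd abelian. After this reduction $\bar\o$ is oddly generated, quasireductive, has no simple ideal, and has trivial even center; thus $\bar\o_{\ol 0}$ is semisimple and decomposes as $\bar\o_{\ol 0} = \s_1 \oplus \cdots \oplus \s_k$ with each $\s_i$ simple.

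Next, decompose $V := \bar\o_{\ol 1}$ into $\bar\o_{\ol 0}$-isotypic components. Let $V_0 := V^{\bar\o_{\ol 0}}$ be the invariants; since $[V_0, V_0] \subset \bar\o_{\ol 0}$ is $\bar\o_{\ol 0}$-invariant, it lies in $\mathfrak{z}(\bar\o_{\ol 0}) = 0$, so $V_0$ is odd abelian and commutes with $\bar\o_{\ol 0}$, providing the candidate $\mathfrak{v}$. For each $i$ I would identify $V_i \subset V$ as the sum of $\bar\o_{\ol 0}$-isotypic components on which $\s_i$ acts nontrivially and the other $\s_j$ act trivially; the assumption $[V, V] = \bar\o_{\ol 0}$ together with $\bar\o_{\ol 0}$-equivariance of the bracket forces some such piece to pair into each $\s_i$, eventually giving $\Sym^2 V_i \twoheadrightarrow \s_i$ after ruling out ``cross'' isotypic contributions as below.

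The heart of the argument is to show that $V_i \cong (\s_i \otimes \xi_i) \oplus \C\langle\d_{\xi_i}\rangle$ as $\s_i$-modules with the Takiff bracket. Suppose instead that a nontrivial simple $\s_i$-module summand $U$ of $V_i$, not isomorphic to the adjoint, appears with $\Sym^2 U \supset \s_i$ (for instance, $U$ the standard representation of $\s\l_2$, producing $\o\s\p(1|2)$). Then $\mathfrak{h} := \s_i \oplus U$ is a simple Lie super subalgebra of $\bar\o$ by a direct ideal argument using simplicity of $U$ and $[U, U] = \s_i$. By Lemma \ref{lemma derivations simples}, the outer derivations of such a simple Lie superalgebra are purely even outside the case $\p\s\q(n)$, and that exceptional case can be excluded by the argument in the proof of Lemma \ref{lemma 0 group properties}(4). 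Therefore $[\bar\o_{\ol 1}, \mathfrak{h}] \subset \mathfrak{h}$, making $\mathfrak{h}$ a simple ideal of $\bar\o$ and contradicting the hypothesis. A parallel analysis constrains the multiplicity of the adjoint piece and rules out cross-isotypic $\bar\o_{\ol 0}$-components, leaving only the Takiff configuration for each slot.

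Assembling, the isotypic separation makes the subalgebras $\mathfrak{j}_i := \s_i \oplus V_i$ pairwise commute, and together with $V_0$ they exhaust $\bar\o$, giving $\bar\o \cong \mathfrak{t} \times \mathfrak{v}$ with $\mathfrak{t} := \bigoplus_i \mathfrak{j}_i$ a Takiff $0$-superalgebra and $\mathfrak{v} := V_0$. Lifting back to $\OO$ finishes the proof. The hardest step is the third paragraph: controlling precisely which simple Lie super subalgebras can occupy each $\s_i$-slot and ruling out cross-isotypic pieces simultaneously involving several $\s_i$. This genuinely requires Kac's classification together with Lemma \ref{lemma derivations simples}, since the statement amounts to saying that the no-simple-ideal hypothesis rules out every Kac-classified oddly generated extension of a semisimple Lie algebra by an odd module except the Takiff one.
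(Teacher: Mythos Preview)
Your approach differs from the paper's, which invokes the structure theory from \cite{S}: the product $\c(\bar\o)$ of minimal ideals of $\bar\o$ is (by Lem.~5.6 and Thm.~6.9 of \cite{S}, given the no-simple-ideal hypothesis) a Takiff superalgebra $\s$ times an odd abelian ideal $\mathfrak{v}$, and $\bar\o/\c(\bar\o)$ is an odd abelian algebra $\l$ acting by outer derivations on $\s$; Remark~\ref{remark derivations Takiff} then identifies $\l$ with a subspace of $\C\langle\d_{\xi_1},\dots,\d_{\xi_k}\rangle$, and odd-generation forces each projection to be surjective. Your direct isotypic-decomposition route is plausible in spirit but has two real gaps.

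First, your identification $\mathfrak{v}=V_0:=\bar\o_{\ol 1}^{\bar\o_{\ol 0}}$ is wrong. In the target structure $\mathfrak{t}\times\mathfrak{v}$ with $\mathfrak{t}=\bigoplus_i\s_i\otimes\C[\xi_i]\rtimes\mathfrak{d}$, the derivation operators $\mathfrak{d}\subset\C\langle\d_{\xi_1},\dots,\d_{\xi_k}\rangle$ are themselves $\bar\o_{\ol 0}$-invariant, so in fact $V_0=\mathfrak{d}\oplus\mathfrak{v}$. Since $[\mathfrak{d},\s_i\xi_i]=\s_i\neq 0$, the space $V_0$ does \emph{not} commute with the other $V_i$, and your final assembly $\bar\o=(\bigoplus_i\mathfrak{j}_i)\times V_0$ collapses. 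Relatedly, your claim $V_i\cong(\s_i\otimes\xi_i)\oplus\C\langle\d_{\xi_i}\rangle$ contradicts your own definition of $V_i$ as the $\s_i$-nontrivial isotypic piece, and a Takiff $0$-superalgebra need not split over the index $i$ at all (take $\mathfrak{d}=\C\langle\d_{\xi_1}+\d_{\xi_2}\rangle$).

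Second, the step ``$\h=\s_i\oplus U$ is a simple \emph{ideal}'' via Lemma~\ref{lemma derivations simples} is circular: that lemma classifies derivations of $\h$ into itself, but for $x\in\bar\o_{\ol 1}$ the map $\ad(x)|_{\h}$ lands in $\bar\o$, not a priori in $\h$, so it is not a derivation of $\h$ until you already know $[\bar\o,\h]\subset\h$. The standard fix---pass first to a minimal ideal of $\bar\o$ and \emph{then} apply the derivation classification---is precisely the route the paper (and the proof of Lemma~\ref{lemma 0 group properties}(4)) takes.
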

	
	\begin{proof}
		Let $\OO$ be a such a supergroup, and assume without loss of generality that $\OO$ has trivial centre. Thus we want to show that $\OO=\TT\times\mathcal{V}$, where $\TT$ is a Takiff 0-supergroup and $\VV$ is odd abelian.  
		
		By Lem.~5.6 of \cite{S}, the minimal ideals of $\o=\operatorname{Lie}\OO$ are either simple, odd abelian, or a Takiff superalgebra.  Let $\c(\o)$ be the product of minimal ideals of $\o$. Since by assumption $\o$ does not have simple ideals,  by Thm.~6.9 of \cite{S}, $\mathfrak{c}(\o)$ is isomorphic to $\mathfrak{s}\times\mathfrak{v}$ for a Takiff superalgebra $\s$ and an odd abelian ideal $\mathfrak{v}$.  Further, $\o/\c(\o)=\r\ltimes\mathfrak{l}$ for a reductive Lie algebra $\r$ and an odd abelian ideal $\mathfrak{l}$. However $[\o_{\ol{1}},\o_{\ol{1}}]\sub\c(\o)$, so we must have $\r=0$.  
		
		Thus we have $\o=(\s\times\mathfrak{v})\rtimes \mathfrak{l}$. Hence: 
		\[
		\l\sub\operatorname{Der}(\s\times\mathfrak{v})_{\ol{1}}.
		\]
		Clearly $[\l,\mathfrak{v}]=0$.  By Remark \ref{remark derivations Takiff}, we are done.
	\end{proof}
	
	\subsection{Characterization via neat elements and the nil-cone}\label{section char 0 gps}  Let $\GG$ be quasireductive with Lie superalgebra $\g$.  We now recall three definitions, the first from \cite{ES} and the third from \cite{GHSS}.

	\begin{definition}
		We define the nil-cone of $N(\g_{\ol{1}})\sub \g_{\ol{1}}$ to be the set of $x\in\g_{\ol{1}}$ such that 0 lies in the closure of $\GG_0\cdot x$ in $\g_{\ol{1}}$.
	\end{definition}
	Observe that if $\KK\sub\GG$ are quasireductive supergroups, then $N(\k_{\ol{1}})\sub N(\g_{\ol{1}})$.  
	
	\begin{definition}
		For a Lie superalgebra $\g$, set $X_{\g}:=\{x\in\g_{\ol{1}}:[x,x]=0\}$.  This is the cone of odd self-commuting elements of $\g$ (see \cite{GHSS}).
	\end{definition} 
	Observe that $\g_{\ol{1}}^{hom}\cap N(\g_{\ol{1}})\sub X_{\g}$.

	\begin{prop}\label{prop 0 characterization}
		Let $\OO$ be an oddly generated quasireductive supergroup.  Then the following are equivalent:
		\begin{enumerate}
			\item $\OO$ is a $0$-group;
			\item $N(\o_{\ol{1}})\sub X_{\o}$.
			\item $N(\o_{\ol{1}})\sub\o_{\ol{1}}^{hom}$;
			\item $\o_{\ol{1}}^{neat}=\{0\}$.
		\end{enumerate}
	\end{prop}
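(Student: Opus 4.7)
My plan is to prove the chain $(1) \Rightarrow (2) \Leftrightarrow (3) \Rightarrow (4) \Rightarrow (1)$, with the two ends of the chain involving Theorem \ref{thm classification 0 groups}.

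The equivalence $(2) \Leftrightarrow (3)$ is immediate: $X_\o \subseteq \o_{\ol{1}}^{hom}$ because $0$ is semisimple, and conversely any $x \in N(\o_{\ol{1}}) \cap \o_{\ol{1}}^{hom}$ has $[x,x]$ simultaneously semisimple and nilpotent (the nilpotency because the bracket is $\OO_0$-equivariant, so the same one-parameter subgroup witnessing $x \in N$ also carries $[x,x]$ to $0$), and hence $[x,x] = 0$. For $(3) \Rightarrow (4)$: suppose $x \in \o_{\ol{1}}^{neat}$ is nonzero, lying in some $\k \cong \o\s\p(1|2) \subseteq \o$. Writing $x = aX_+ + bX_-$ in the standard odd basis, a direct computation gives $[x,x] = 2(a^2 E + abH - b^2 F)$, a trace-zero element of $\s\l(2)$ whose determinant in the defining representation vanishes, hence nilpotent. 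On the other hand, $\k_{\ol{1}}$ is the two-dimensional standard $\s\l(2)$-representation, so every nonzero element lies in $N(\k_{\ol{1}}) \subseteq N(\o_{\ol{1}})$; under (3), $[x,x]$ must also be semisimple, and therefore zero, forcing $a = b = 0$, a contradiction.

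For $(1) \Rightarrow (2)$, I would invoke Theorem \ref{thm classification 0 groups} to write $\o$ as a central extension of $\mathfrak{t} \times \mathfrak{v}$ with $\mathfrak{t}$ a Takiff $0$-superalgebra and $\mathfrak{v}$ odd abelian. Using the presentation $\mathfrak{t} \subseteq \bigoplus_i \s_i \otimes \C[\xi_i] \rtimes \mathfrak{d}$, the nilcone $N(\mathfrak{t}_{\ol{1}})$ consists of elements $\sum_i n_i \otimes \xi_i$ with each $n_i \in N(\s_i)$ (the $\mathfrak{d}$-components are $\OO_0$-fixed, so must vanish in the nilcone), and each such element self-brackets to zero because $\xi_i^2 = 0$ and distinct simple summands commute; together with $N(\mathfrak{v}_{\ol{1}}) = \{0\}$, we get $N((\mathfrak{t} \times \mathfrak{v})_{\ol{1}}) \subseteq X_{\mathfrak{t} \times \mathfrak{v}}$. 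Lifting back, for any $x \in N(\o_{\ol{1}})$, $[x,x]$ lies in the kernel of the projection $\o \to \mathfrak{t} \times \mathfrak{v}$, hence in $\z(\o)_{\ol{0}}$; the latter consists of semisimple elements of the reductive $\o_{\ol{0}}$, and combined with $[x,x]$ being nilpotent, we conclude $[x,x] = 0$.

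The main obstacle is $(4) \Rightarrow (1)$. My plan is to show that (4) forces $\o/\z(\o)_{\ol{0}}$ to contain no simple ideal, and then to invoke Lemma \ref{lemma no simple ideals} together with the reverse direction of Theorem \ref{thm classification 0 groups}. So suppose $\bar\s \subseteq \o/\z(\o)_{\ol{0}}$ is such an ideal. If $\bar\s$ is a simple Lie superalgebra with nontrivial odd part, then either $\bar\s \cong \o\s\p(1|2n)$, which contains $\o\s\p(1|2)$ directly, or $\bar\s_{\ol{1}}^{hom} \neq 0$ and the construction in the proof of Proposition \ref{prop semisimple criterion} produces an $\o\s\p(1|2)$ inside $\bar\s$. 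If $\bar\s$ is a simple Lie algebra, then since $\OO$ is oddly generated, $\bar\s$ must act nontrivially on the odd part of $\o/\z(\o)_{\ol{0}}$; a case analysis (choosing an appropriate $\s\l(2) \subseteq \bar\s$ relative to a nontrivial $\bar\s$-irreducible constituent $V$ of the odd part, and arranging that $[V,V] \cap \bar\s \neq 0$) then produces an $\o\s\p(1|2)$. In every case the resulting copy of $\o\s\p(1|2)$ in the quotient lifts to $\o$ (since $H^2(\o\s\p(1|2), \C) = 0$), yielding a nonzero neat element and contradicting (4). The technical heart lies in the simple-Lie-algebra subcase, where one must carefully use the $\bar\s$-equivariance of the bracket on $V \otimes V$ to ensure the desired $\o\s\p(1|2)$-relations hold.
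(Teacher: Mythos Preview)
Your chain of implications and your use of Theorem \ref{thm classification 0 groups} for $(1)\Rightarrow(2)$ and of Lemma \ref{lemma no simple ideals} for $(4)\Rightarrow(1)$ match the paper exactly; your arguments for $(2)\Leftrightarrow(3)$ and $(3)\Rightarrow(4)$ are correct and in fact more explicit than the paper's.

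There is, however, a genuine gap in your $(4)\Rightarrow(1)$. For a simple ideal $\bar\s\subseteq\o/\z(\o)_{\ol 0}$ with $\bar\s_{\ol 1}\neq 0$, your dichotomy ``$\bar\s\cong\o\s\p(1|2n)$ or $\bar\s_{\ol 1}^{hom}\neq 0$'' is valid, but the second branch fails: the Jacobson--Morozov-type construction in the proof of Proposition \ref{prop semisimple criterion} requires precisely the hypothesis $\g_{\ol 1}^{hom}=\{0\}$ (it is used twice, to force $x_0=0$ and then $x_k=0$ for $k>1$), whereas you invoke it under the \emph{opposite} hypothesis $\bar\s_{\ol 1}^{hom}\neq 0$, where it yields nothing. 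The paper does not attempt an abstract argument here; it simply records that, by inspection of Kac's classification, every simple quasireductive Lie superalgebra contains a copy of $\o\s\p(1|2)$ and hence a nonzero neat element, which then lifts to $\o$ as you correctly observe via $H^2(\o\s\p(1|2),\C)=0$.

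Your separate case ``$\bar\s$ a simple Lie algebra'' rests on a false premise: if $\bar\s$ is a purely even \emph{ideal} then $[\bar\s,\bar\o_{\ol 1}]\subseteq\bar\s_{\ol 1}=0$, so $\bar\s$ acts \emph{trivially} on the odd part, not nontrivially, and your proposed case analysis on an $\s\l(2)$-submodule $V$ never gets off the ground. The correct (and much shorter) argument is immediate from this observation: $[\o_{\ol 1},\o_{\ol 1}]$ lies in the kernel of the resulting projection $\o\to\bar\s$, which contradicts $\OO_0$ being a global form of $[\o_{\ol 1},\o_{\ol 1}]$ unless $\bar\s=0$. No $\o\s\p(1|2)$ needs to be manufactured in this case.
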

	\begin{proof}
		(1)$\Rightarrow$(2) Let $\OO$ be a $0$-group.  Then after quotienting by the center, $\mathfrak{o}=\mathfrak{s}\rtimes\mathfrak{d}$, where $\mathfrak{s}$ is a Takiff superalgebra and $\mathfrak{d}$ is an odd abelian subalgebra fixed by $\OO_0$.  Because $\mathfrak{d}$ is fixed by $\OO_0$, we see that $N(\o_{\ol{1}})\sub\s_{\ol{1}}$.  However $\s_{\ol{1}}\sub X_{\o}$, as desired. (2)$\Rightarrow(3)$ is obvious

		For $(3)\Rightarrow (4)$, if $\g_{\ol{1}}^{neat}\neq\{0\}$ then there exists an embedding $\o\s\p(1|2)\sub\g$.  But $N(\o\s\p(1|2)_{\ol{1}})=\o\s\p(1|2)_{\ol{1}}$, while $\o\s\p(1|2)^{hom}_{\ol{1}}=\{0\}$, giving a contradiction.

		(4)$\Rightarrow$(1) Suppose that $\o_{\ol{1}}^{neat}=\{0\}$.  By a check, one can show that any simple quasireductive Lie superalgebra admits non-zero neat elements.  In particular, $\o$ does not contain any simple ideal. We conclude by Lemma \ref{lemma no simple ideals}. 
	\end{proof}
	
	\begin{cor}\label{cor sub of 0 is 0}
		Let $\o$ be a $0$-superalgebra, and let $\o'\sub\o$ be an oddly generated, quasireductive subalgebra.  Then $\o'$ is again a $0$-superalgebra.
	\end{cor}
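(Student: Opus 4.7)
The plan is to reduce the statement immediately to the neat-element characterization (4) of Proposition \ref{prop 0 characterization}. Since $\o$ is a $0$-superalgebra and (by hypothesis) oddly generated and quasireductive, Proposition \ref{prop 0 characterization} gives $\o_{\ol{1}}^{neat}=\{0\}$. Our task is to deduce the same equality with $\o$ replaced by $\o'$, after which the converse direction of the same proposition (using that $\o'$ is itself oddly generated and quasireductive by assumption) finishes the argument.

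The key step is a short observation: neatness is monotone under inclusions of Lie superalgebras. More precisely, if $x\in\o'_{\ol{1}}$ is neat in $\o'$, then either $x=0$ or there exists a subalgebra $\o\s\p(1|2)\cong\k\sub\o'$ with $x\in\k_{\ol{1}}$; but then $\k\sub\o'\sub\o$ witnesses that $x$ is also neat in $\o$. Hence $(\o')_{\ol{1}}^{neat}\sub\o_{\ol{1}}^{neat}=\{0\}$, so $(\o')_{\ol{1}}^{neat}=\{0\}$.

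Finally, I would invoke the implication $(4)\Rightarrow(1)$ of Proposition \ref{prop 0 characterization} applied to $\o'$: the hypotheses that $\o'$ is oddly generated and quasireductive are in place by assumption, and we have just established the vanishing of $(\o')_{\ol{1}}^{neat}$. The conclusion is that $\o'$ is a $0$-superalgebra, as desired.

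There is essentially no serious obstacle here: all the substantive work has been absorbed into Proposition \ref{prop 0 characterization}, and the only ingredient beyond that proposition is the trivial fact that an $\o\s\p(1|2)$ witnessing neatness of an element of $\o'$ automatically lives in the larger algebra $\o$. The one point to take care with is simply verifying that the hypotheses of Proposition \ref{prop 0 characterization} (oddly generated, quasireductive) transfer to $\o'$, and these are both given in the statement.
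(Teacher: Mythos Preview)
Your proposal is correct and follows exactly the same approach as the paper's proof, which reads in its entirety: ``This follows easily from $(\o_{\ol{1}}')^{neat}\sub\o_{\ol{1}}^{neat}=\{0\}$ and part (4) of Proposition \ref{prop 0 characterization}.'' You have simply spelled out the monotonicity of neatness under inclusion, which is precisely the content behind the inclusion $(\o_{\ol{1}}')^{neat}\sub\o_{\ol{1}}^{neat}$.
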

	\begin{proof}
		This follows easily from $(\o_{\ol{1}}')^{neat}\sub\o_{\ol{1}}^{neat}=\{0\}$ and part (4) of Proposition \ref{prop 0 characterization}.
	\end{proof}
	\subsection{Root decomposition for $0$-groups}
	
	\begin{lemma}\label{lem rootzero} Let $\o$ be a $0$-superalgebra. Then there exists $x\in\o_{\bar 1}^{hom}$ such that:
		\begin{enumerate}
			\item $\h:=\c_{\o}(x^2)$ is a Cartan subalgebra of $\o$.
			\item $[\h_{\bar 1},\h_{\bar 1}]=\h_{\bar 0}$.
			\item The root system $\Delta\sub\h_{\ol{0}}^*$ coincides with the root system of $\o_{\bar{0}}$ and hence is a classical reduced root system.
			\item For any root $\alpha\in\Delta$, the subalgebra $\o^{\alpha}$ generated by root spaces $\o_{\pm\alpha}$ is isomorphic to
			$\p\s\q(2)$ or $\s\q(2)$.  
			\item For any root $\alpha\in\Delta$, $\o_{\alpha}$ is an irreducible $\h$-module.
		\end{enumerate}
	\end{lemma}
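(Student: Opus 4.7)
The plan is to invoke Theorem \ref{thm classification 0 groups} to write $\o$ (modulo its centre $\z$) as $\mathfrak{t}\oplus\mathfrak{v}$, where $\mathfrak{t}=\bigoplus_{i=1}^k\s_i\otimes\C[\xi_i]\rtimes\mathfrak{d}$ is a Takiff $0$-superalgebra (each $\s_i$ a simple Lie algebra, and each projection $\mathfrak{d}\to\C\langle\d_{\xi_i}\rangle$ surjective) and $\mathfrak{v}$ is odd abelian. The whole of $\o$ is then a central extension of this by $\z$.

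First I would construct $x$ as follows. For each $i$, pick a regular semisimple $h_i\in\s_i$ so that $\c_{\s_i}(h_i)=:\t_i$ is a Cartan subalgebra of $\s_i$. Using the surjectivity of the projections $\mathfrak{d}\to\C\langle\d_{\xi_i}\rangle$ together with the finiteness of $k$, pick a generic element $D\in\mathfrak{d}$ whose projection to the $i$-th summand is $c_i\d_{\xi_i}$ with $c_i\neq 0$ for every $i$. Set $x:=\sum_i h_i\otimes\xi_i + D$. A direct computation using $[a\otimes\xi_i,b\otimes\xi_i]=0$, $[\mathfrak{d},\mathfrak{d}]=0$, and mutual commutativity of distinct Takiff factors leaves only the contribution $[\d_{\xi_i},h_i\otimes\xi_i]=h_i$, yielding $[x,x]=2\sum_i c_i h_i$. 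Hence $x^2=\sum_i c_i h_i\in\o_{\bar 0}$ is regular semisimple in $\bigoplus_i \s_i$, so $x\in\o_{\bar 1}^{hom}$, and the centralizer is
\[
\h=\bigoplus_{i=1}^k \t_i\otimes\C[\xi_i] \oplus \mathfrak{d} \oplus \mathfrak{v} \oplus \z,
\]
whose even part is a Cartan of $\o_{\bar 0}$, establishing (1).

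For (2), the brackets $[D,t\otimes\xi_i]=c_i t$ for $t\in\t_i$ recover $\bigoplus_i\t_i$; the remaining odd-odd brackets in $\h_{\bar 1}$ either vanish or produce the central contributions discussed for (4), so $[\h_{\bar 1},\h_{\bar 1}]=\h_{\bar 0}$. For (3), since $\h_{\bar 0}$ acts trivially on each $\xi_i$, $\mathfrak{d}$, $\mathfrak{v}$, and $\z$, the non-zero $\h_{\bar 0}$-weights on $\o$ are exactly the roots of $\bigoplus_i\s_i$, and each root space is $\o_\alpha = \s_{i,\alpha}\oplus\s_{i,\alpha}\xi_i$ of dimension $1|1$. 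For (4), $\o^\alpha$ contains $e_\alpha,f_\alpha,h_\alpha,e_\alpha\xi_i,f_\alpha\xi_i,h_\alpha\xi_i$, giving $\s\l(2)\otimes\C[\xi_i]\cong\p\s\q(2)$ via the identity $BC+CB=\tr(BC)\,I$ for trace-free $2\times 2$ matrices; if the central extension restricts to a non-zero $\s\l(2)$-invariant symmetric cocycle (necessarily a scalar multiple of the trace form) on the $i$-th slot, so that $[e_\alpha\xi_i,f_\alpha\xi_i]\in\z_{\bar 0}$ is non-zero, then $\o^\alpha\cong\s\q(2)$, otherwise $\o^\alpha\cong\p\s\q(2)$. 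For (5), $D$ acts on $\o_\alpha$ by $e_\alpha\xi_i\mapsto c_i e_\alpha$ while $h_\alpha\xi_i$ sends $e_\alpha\mapsto 2\,e_\alpha\xi_i$, linking the two one-dimensional weight spaces and forcing $\h$-irreducibility.

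The main obstacle is the careful bookkeeping around the central extension. One must verify that every even central element lying in $\h_{\bar 0}$ is realised as an odd-odd bracket inside $\h_{\bar 1}$, which is what (2) demands, and one must pin down when the restriction of the extension cocycle to an $\s\l(2)\otimes\C[\xi_i]$ slot is non-trivial, which is exactly the dichotomy between $\s\q(2)$ and $\p\s\q(2)$ in (4). Both points ultimately reduce to $\s\l(2)$-invariance of the cocycle combined with the explicit form of odd-odd brackets inside each slot.
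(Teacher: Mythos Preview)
Your approach is exactly the paper's: invoke the classification Theorem \ref{thm classification 0 groups} to reduce to Takiff $0$-superalgebras (up to central extension and an odd abelian summand), and then verify (1)--(5) by the explicit construction you give; the paper's own proof is a single sentence deferring precisely this verification. The only spot that merits one more line is (2): to see that all of the even centre lies in $[\h_{\bar 1},\h_{\bar 1}]$, note that the zero-weight part of $[\o_{\bar 1},\o_{\bar 1}]=\o_{\bar 0}$ is $[\h_{\bar 1},\h_{\bar 1}]+\sum_\alpha[(\o_\alpha)_{\bar 1},(\o_{-\alpha})_{\bar 1}]$, and each summand $[(\o_\alpha)_{\bar 1},(\o_{-\alpha})_{\bar 1}]$ is, by the $\s_i$-invariance of the cocycle you invoke, proportional to $[h_\alpha\xi_i,h_\alpha\xi_i]\in[\h_{\bar 1},\h_{\bar 1}]$.
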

	\begin{proof} This follows directly from the description of $0$-groups in Theorem \ref{thm classification 0 groups}, and the fact that the same is true for Takiff 0-superalgebras.
	\end{proof}
	
	\begin{remark} We remark that in the language of \cite{ShSi}, Lemma \ref{lem rootzero} shows that $0$-superalgebras are all queer Kac-Moody.  In fact, it was shown in \textit{loc. cit.} that all finite type queer Kac-Moody superalgebras are products of Takiff 0-superalgebras and the queer Lie superalgebra $\q(n)$ (up to central extensions).
	\end{remark}

	\begin{lemma}\label{lem simplezero} Let $\o$ be a $0$-superalgebra and let $L\in\Rep_{\o_{\ol{0}}}\o$ be a simple module. Let $\PP(L)$ denote the set of weights of $L$. 
		\begin{enumerate}
			\item If the center of $\o$ annihilates $L$, then $L\not\cong \Pi L$, and $\operatorname{Res}_{\o_{\ol{0}}}L$ is a direct sum of several copies of one simple
			$\o_{\bar 0}$-module.
			\item If $L$ is nontrivial then the superdimension of any weight space is zero.
			\item If $0\notin \PP(L)$ then $\Ext^1_{(\o,\o_{\ol{0}})}(\mathbb C,L)=0$.
			\item If $L,L'$ are simple modules such that $\PP(L)\cap\PP'(L)=\emptyset$ then
			$\Ext_{(\o,\o_{\ol{0}})}^1(L,L')=0$.
			\item If  the multiplicity of any weight space of $L$ equals $(1|1)$, then $L$ is not isomorphic to
			$L^*$.
		\end{enumerate}
	\end{lemma}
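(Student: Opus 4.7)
My approach would proceed part by part, heavily leveraging the root decomposition of Lemma \ref{lem rootzero} (giving a Cartan $\h$ with $[\h_{\bar 1}, \h_{\bar 1}] = \h_{\bar 0}$ and queer-type root subalgebras $\o^\alpha \cong \s\q(2)$ or $\p\s\q(2)$) and the classification Theorem \ref{thm classification 0 groups}.

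For (1), since the center annihilates $L$, the action factors through $\o/\mathfrak{z}(\o)$, which by Theorem \ref{thm classification 0 groups} is a Takiff $0$-superalgebra $\mathfrak{t} = \bigoplus_i \s_i \otimes \C[\xi_i] \rtimes \mathfrak{d}$. I would analyze the $\o_{\bar 0}$-isotypic decomposition of $L$: the derivation subalgebra $\mathfrak{d}$ centralizes $\o_{\bar 0}$ and thus preserves each isotypic component, while the odd ideal $\mathfrak{a} = \bigoplus \s_i \otimes \xi_i$ connects components via the decomposition of $\operatorname{ad} \otimes V$ as an $\o_{\bar 0}$-module. Simplicity of $L$ combined with the abelian-ideal structure of $\mathfrak{a}$ would force only one $\o_{\bar 0}$-isotype to appear. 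The non-isomorphism $L \not\cong \Pi L$ then follows from super-Schur applied to $\End_\o(L)$: an odd self-endomorphism would have to square to a nonzero scalar while also commuting with the $\mathfrak{d}$-action, which I expect gives a contradiction via a central-character argument on the isotypic component.

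For (2), on a weight space $L_\lambda$ with $\lambda \neq 0$, I would choose $h \in \h_{\bar 0}$ with $\lambda(h) \neq 0$ and write $h = [e,f]$ for $e, f \in \h_{\bar 1}$; then $ef + fe = \lambda(h) \neq 0$ on $L_\lambda$, and a suitable scalar combination $e + \mu f$ produces an odd operator whose square is a nonzero scalar, yielding an odd automorphism of $L_\lambda$ and hence $\operatorname{sdim}(L_\lambda) = 0$. For $\lambda = 0$, first rule out $\PP(L) = \{0\}$: if this held, every $\o_\alpha$ would annihilate $L$, reducing it to a $\Lambda(\h_{\bar 1})$-module whose only simple representation is trivial, contradicting nontriviality. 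Then picking $\alpha \in \PP(L) \setminus \{0\}$, restrict to $\o^\alpha$ on the $\alpha$-string $M = \bigoplus_n L_{n\alpha}$, use the previous case to kill $\operatorname{sdim}$ of $L_{n\alpha}$ for $n \neq 0$, and identify $\operatorname{sdim}(L_0) = \operatorname{sdim}(M)$ with contributions from trivial $\o^\alpha$-subquotients; balancing across multiple roots via Weyl-group symmetry should give $\operatorname{sdim}(L_0) = 0$.

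For (3), compute $\Ext^1_{(\o, \o_{\bar 0})}(\C, L)$ via cocycles: $\o_{\bar 0}$-equivariant maps $\phi: \o_{\bar 1} \to L$ with $\phi([x,y]) = x\phi(y) + y\phi(x)$. Equivariance forces $\phi((\o_{\bar 1})_\mu) \subseteq L_\mu$, and $0 \notin \PP(L)$ immediately kills $\phi|_{\h_{\bar 1}}$; combining the cocycle condition on pairs from $\o_{\pm\alpha}$ with the $\o^\alpha$-structure then shows the remaining $\phi_\alpha$ are coboundaries or must vanish. Part (4) reduces to (3) via $\Ext^1_{(\o, \o_{\bar 0})}(L, L') \cong \Ext^1_{(\o, \o_{\bar 0})}(\C, L' \otimes L^*)$, since $\PP(L' \otimes L^*) = \PP(L') - \PP(L)$ avoids $0$ under the disjointness hypothesis. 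For (5), an isomorphism $L \cong L^*$ would give an $\o$-invariant nondegenerate bilinear form of definite parity on $L$, inducing pairings $L_\lambda \otimes L_{-\lambda} \to \C$ between $(1|1)$-spaces; tracking the parity through the odd $\h_{\bar 1}$-action (which by $[\h_{\bar 1}, \h_{\bar 1}] = \h_{\bar 0}$ swaps even and odd basis vectors in each $L_\lambda$) should produce a parity inconsistency.

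I expect the trickiest parts to be the isotypic uniqueness in (1) and the $\lambda = 0$ case of (2), both of which require engaging concretely with the Takiff structure rather than purely formal arguments.
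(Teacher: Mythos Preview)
Your overall plan is reasonable, but it diverges from the paper's argument in a way that leaves real gaps, chiefly because you do not exploit part (1) as the engine for (2) and (5).

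\textbf{Part (2), zero weight.} This is the main gap. The paper does not use root strings or any ``Weyl-group balancing''. Instead, once the center acts trivially (the other case being immediate by $[\h_{\bar 1},\h_{\bar 1}]=\h_{\bar 0}$), part (1) says $\operatorname{Res}_{\o_{\bar 0}}L$ is isotypic; hence every weight space of $L$ has the \emph{same} graded dimension. So it suffices to check the highest weight space, which is nonzero (as $L$ is nontrivial) and therefore has superdimension zero by the $\lambda\neq 0$ case. Your proposed argument via $\o^\alpha$-strings would require knowing the superdimensions of simple $\s\q(2)$/$\p\s\q(2)$-modules and then some unspecified symmetry argument to pin down $\operatorname{sdim}L_0$; as written this is not a proof.

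\textbf{Part (5).} Again the paper routes through (1). If $L\cong L^*$ then the center acts trivially, so by (1) and the $(1|1)$-multiplicity hypothesis $L_{\bar 0}\cong L_{\bar 1}$ is a single simple $\o_{\bar 0}$-module; also $L\not\cong\Pi L$, so any invariant form on $L$ is even. An even (super)symmetric or (super)skew form then restricts to a symmetric form on $L_{\bar 0}$ and a skew form on $L_{\bar 1}$ (or vice versa), forcing the same simple $\o_{\bar 0}$-module to carry both, which is impossible. Your ``track parity through $\h_{\bar 1}$'' sketch does not reach this contradiction.

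\textbf{Part (3).} The paper avoids cocycles entirely: since the root system of $\o$ coincides with that of $\o_{\bar 0}$ (Lemma~\ref{lem rootzero}), $\Rep_{\o_{\bar 0}}\o$ decomposes by cosets of the root lattice, and $0\notin\PP(L)$ places $L$ in a different block from $\mathbb C$. Your cocycle computation could be made to work, but it is longer and you have not explained how to kill the components $\phi_\alpha$ beyond saying it ``should'' follow.

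\textbf{Part (1).} For $L\not\cong\Pi L$, the paper uses a concrete feature of the center-free case: the odd Cartan splits as $\h_{\bar 1}=\h_1\oplus\h_2$ with $[\h_1,\h_1]=[\h_2,\h_2]=0$ (the Takiff piece $\bigoplus \xi_i\mathfrak t_i$ versus the derivations $\mathfrak d$), so simple $\h$-modules are never isomorphic to their parity shift, and highest-weight theory transfers this to $L$. For the isotypic claim, the paper takes the odd abelian ideal $\m\subset\o_{\bar 1}$ complementary to $\o_{\bar 1}^{\o_{\bar 0}}$, notes $L^{\m}\neq 0$ is $\o_{\bar 0}$-stable, and realizes $L$ as a quotient of $U(\o)\otimes_{U(\o_{\bar 0}+\m)}M_0\cong \Lambda(\o_{\bar 1}^{\o_{\bar 0}})\otimes M_0$, which is visibly $\o_{\bar 0}$-isotypic. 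Your isotypic sketch is close to this in spirit, but your argument for $L\not\cong\Pi L$ via an ``odd endomorphism contradiction'' is not substantiated.

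In short: the missing idea is that (1) already tells you all weight spaces of $L$ have the same graded dimension and that $L_{\bar 0}\cong L_{\bar 1}$ as $\o_{\bar 0}$-modules (in the relevant cases), which immediately dispatches the hard parts of (2) and (5).
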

	\begin{proof} To prove (1) we may assume without loss of generality that the center of $\o$ is trivial.  Then if $\h\sub\o$ denotes a Cartan subalgebra, we have a decomposition $\h=\h_1\oplus\h_2$, where $[\h_1,\h_1]=[\h_2,\h_2]=0$.  Thus any simple $\h$-module will not be isomorphic to its parity shift, implying the same for any simple $\o$-module by highest weight theory.
		
		Now write $\o_{\bar 1}=\m\oplus \o_{\bar 1}^{\o_{\bar 0}}$. Then $\m$ is a purely odd abelian Lie superalgebra, and $L^{\m}\neq 0$ is $\o_{\bar 0}$-invariant. Let $M_0$ be a simple $\o_{\bar 0}$-submodule of
		$L^{\m}$. We have that $L$ is a quotient of $U(\o)\otimes_{U(\o_{\bar 0}+\m)}M_0$. Since all simple
		$\o_{\bar 0}$-components of the latter module are isomorphic, this completes the proof of (1).
		
		Let us prove (2). For any nonzero weight,  the statement follows from Lemma \ref{lem rootzero} (2).
		Therefore (2) holds if $L$ is not trivial over the center of $\o$.  If the center acts trivially on $L$ consider the highest weight
		space of $L$. If the highest weight is not zero, then the highest weight space has superdimension $0$ and the statement follows from (1).
		
		For (3), by Lemma \ref{lem rootzero} the root lattice of $\o$ is equal to the root lattice of $\o_{\ol{0}}$, implying a corresponding decomposition of $\Rep_{\ol{0}}\o$ into modules with weights lying in a given coset of the weight lattice modulo the root lattice.  This implies the result.
		
		%Let us prove (3). The statement is trivial if $L$ has a non-trivial action of the center. 
		%Assume that $\Ext^1_K(\mathbb C,L)\neq 0$. If the action of
		%the center on $L$ is trivial but  $0\notin \PP(L)$ then the extension splits over Cartan subalgebra and
		%hence $\PP(L)$ contains some root of $\k_0$ but then it must contain a zero weight by (1). A contradiction.
		
		(4) follows from (3) by using $\Ext_{(\o,\o_{\ol{0}})}^1(L,L')=\Ext^1_{(\o,\o_{\ol{0}})}(\mathbb C,L^*\otimes L')$. 
		
		(5) If $L\simeq L^*$ then the center acts trivially on $L$. The condition on weight multiplicity and
		(1) ensures that $L_{\bar 0}\simeq L_{\bar 1}$ is a simple $\o_{\bar 0}$-module. On the other hand, since $L\not\cong\Pi L$, $L$ must admit an invariant even symmetric or skew symmetric form, which would mean that $L_{\bar 0}$
		admits both a symmetric and skew-symmetric form, which is impossible.
	\end{proof}

	\section{Uniqueness of Sylow subgroups up to conjugacy}
	
	\begin{thm}\label{theorem uniqueness conjugacy}
		Let $\GG$ be a quasireductive supergroup.  Then any two Sylow subgroups $\OO,\OO'$ of $\GG$ are conjugate under $\GG_0$.
	\end{thm}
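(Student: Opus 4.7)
My plan is to reduce the statement to Lie superalgebras, arrange for the two Sylows to share a common generic homological element, and then induct using the Duflo--Serganova functor.

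By Lemma \ref{lemma bijection 0 subalgs and subgps}(2) the $\GG_0$-equivariant bijection $\mathfrak{G}$ identifies Sylow subalgebras of $\g$ with Sylow subgroups of $\GG$, so it suffices to prove that any two Sylow subalgebras $\o, \o' \sub \g$ are $\GG_0$-conjugate. I would proceed by induction on $\dim\g_{\ol{1}}$. The base case is $\g_{\ol{1}}^{hom}=\{0\}$: by Proposition \ref{prop semisimple criterion}, $\Rep_{\g_{\ol{0}}}\g$ is semisimple and the trivial subalgebra is the unique Sylow.

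For the inductive step, first produce a common generic homological element. By Lemma \ref{lem rootzero}(1), choose $x\in\o_{\ol{1}}^{hom}$ so that $\c_\o(x^2)$ is a Cartan subalgebra of $\o$, taking $x$ generic enough that $[x,x]$ is regular in $\o_{\ol{0}}$. Since $\o'$ is splitting, \eqref{equation splitting homological} provides $g\in\GG_0$ with $g\cdot x\in\o'_{\ol{1}}$; after replacing $\o$ by $g\cdot\o$ we may assume $x\in\o\cap\o'$. Then pass to the Duflo--Serganova functor at $x$: the target $\g_x:=\ker(\ad x)/\im(\ad x)$ is a quasireductive Lie superalgebra of strictly smaller dimension, and $DS_x(\o), DS_x(\o')$ sit inside $\g_x$ as subalgebras.

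The main obstacle, and the key technical claim, is that $DS_x$ sends Sylow subalgebras of $\g$ through $x$ to Sylow subalgebras of $\g_x$. This splits into two parts: (i) $DS_x$ preserves splitting, which I would check via Lemma \ref{lemma splitting subalgebras}(4) using the exactness and $\Ext$-compatibility of $DS_x$ on the principal block of $\Rep_{\g_{\ol{0}}}\g$; and (ii) $DS_x$ preserves the $0$-property, which by Proposition \ref{prop 0 characterization}(4) together with Theorem \ref{thm classification 0 groups} reduces to checking that $DS_x$ applied to a central extension of a Takiff $0$-superalgebra by an odd abelian superalgebra again has this form. Both steps require a careful analysis of how $DS_x$ interacts with induction and with the neat-element characterization.

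Granting the claim, the inductive hypothesis applied in $\g_x$ yields $\bar g\in(\GG_x)_0$ conjugating $DS_x(\o)$ to $DS_x(\o')$. Lifting $\bar g$ to an element of the centralizer $C_{\GG_0}(x)\sub\GG_0$ and replacing $\o$ accordingly, we may assume $DS_x(\o)=DS_x(\o')$. The residual difference between $\o$ and $\o'$ then lies in $\im(\ad x)$, and is pinned down by the local structure around the common Cartan $\c_\o(x^2)$ dictated by Theorem \ref{thm classification 0 groups}; a final conjugation by a suitable element of $C_{\GG_0}(x)$ absorbs it, giving $\o=\o'$.
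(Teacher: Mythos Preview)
Your strategy is genuinely different from the paper's, which proceeds by first proving conjugacy case-by-case for simple quasireductive $\g$ (analyzing the restriction of the standard module to a Sylow subalgebra via Lemma~\ref{lem simplezero} and Lemmas~\ref{lem irred}--\ref{lem periplectic}), and then reducing the general case to the simple one through the structure decomposition $\g=\c(\g)\rtimes(\l\rtimes\r)$ together with Lemma~\ref{lemma cent extn even deriv} and Lemma~\ref{lemma bijection sylows c(g) g}.  No Duflo--Serganova reduction or dimension induction is used.

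Your proposal, however, has real gaps at its two load-bearing steps.

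\textbf{(1) The ``key technical claim'' is not established.}  You argue that $DS_x$ preserves the splitting property by invoking ``exactness and $\Ext$-compatibility of $DS_x$ on the principal block''.  But $DS_x$ is \emph{not} exact on the principal block of $\Rep_{\g_{\ol 0}}\g$ in general (already for $\g=\g\l(1|1)$ it fails), and there is no known mechanism by which the injectivity of $\Ext^1_{(\g,\g_{\ol 0})}\to\Ext^1_{(\o,\o_{\ol 0})}$ transports to $\Ext^1_{(\g_x,(\g_x)_{\ol 0})}\to\Ext^1_{(\o_x,(\o_x)_{\ol 0})}$.  Even more basically, for $x\in\o$ the natural map $\ker(\ad x|_{\o})/[x,\o]\to\g_x$ need not be injective, since an element of $\ker(\ad x|_{\o})$ may lie in $[x,\g]$ without lying in $[x,\o]$; so ``$DS_x(\o)$ sits inside $\g_x$ as a subalgebra'' already requires justification.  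The preservation of the $0$-property is more plausible given Theorem~\ref{thm classification 0 groups}, but by itself is not enough.

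\textbf{(2) The lifting and absorption steps are not arguments.}  Even granting that $DS_x(\o)$ and $DS_x(\o')$ are Sylow in $\g_x$ and are conjugate there, you need $(\GG_x)_0$-conjugacy to lift to $C_{\GG_0}(x)$-conjugacy.  Since $(\g_x)_{\ol 0}=\c_{\g_{\ol 0}}(x)/\big([x,\g_{\ol 1}]\cap\c_{\g_{\ol 0}}(x)\big)$, surjectivity of $C_{\GG_0}(x)\to(\GG_x)_0$ is not automatic, and lifting the conjugating element is a genuine problem.  The final sentence---that the remaining discrepancy ``lies in $\im(\ad x)$'' and is ``absorbed'' by a further conjugation in $C_{\GG_0}(x)$---is a hope, not a proof: two Sylows with the same image in $\g_x$ can differ by more than something in $[x,\g]$, and nothing in Theorem~\ref{thm classification 0 groups} tells you how to conjugate one onto the other inside $\g$.

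In short, the inductive skeleton is appealing, but the two black boxes you need (splitting is preserved by $DS_x$; conjugacy in $\g_x$ lifts and the residue is conjugated away) are exactly where the difficulty lives, and your sketch does not address them.  The paper avoids these issues by working directly with the standard representations of the simple superalgebras.
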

	
	The proof of Theorem \ref{theorem uniqueness conjugacy} will be given in steps, and will occupy the rest of the section.  Write $\g=\operatorname{Lie}\GG$.  
	
	Using Lemma \ref{lemma bijection 0 subalgs and subgps}, we easily obtain that Theorem \ref{theorem uniqueness conjugacy} holds if and only all Sylow subalgebras of $\g$ are conjugate.  Thus we sometimes work with $\g$ and sometime with $\GG$, as is convenient.
	
	We start with a useful lemma.
	
	\begin{lemma}\label{lemma cent extn even deriv}
		Let $\GG$ be a quasireductive supergroup, and suppose that $\tilde{\GG}=\GG_c\rtimes\DD$, where $\GG_c$ is an even central extension of $\GG$ and $\DD=\DD_0$ is even reductive.  Then the Sylow subgroups of $\GG$ are in natural bijection with the Sylow subgroups of $\tilde{\GG}$, and this bijection respects conjugation by $\GG_0$.
	\end{lemma}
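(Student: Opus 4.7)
The plan is to prove the lemma in two stages via the intermediate supergroup $\GG_c$: first identifying the Sylow subgroups of $\tilde{\GG}=\GG_c\rtimes\DD$ literally with those of $\GG_c$ (viewed as subgroups of $\tilde{\GG}$), and then establishing a $\GG_0$-equivariant bijection between the Sylow subgroups of $\GG_c$ and those of $\GG$ via the projection $\pi:\GG_c\to\GG$ with central even kernel $\ZZ:=\ker\pi$.

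For the first stage, I would begin by showing $\GG_c$ is splitting in $\tilde{\GG}$: since $\mathfrak{d}:=\operatorname{Lie}\DD$ is purely even and $\GG_c$ acts trivially on it in the semidirect product, $\Ber(\tilde{\g}/\g_c)\cong\det(\mathfrak{d})$ is trivial as a $\GG_c$-module, while $\vol(\tilde{\GG}/\GG_c)=\vol(\DD)\neq 0$ since $\DD$ is even reductive; Lemma \ref{lemma nonzero} then gives the claim. Since every Sylow is a $0$-group, hence oddly generated and connected, and $\tilde{\g}_{\ol{1}}=(\g_c)_{\ol{1}}$ (as $\mathfrak{d}$ is purely even), every Sylow of $\tilde{\GG}$ has Lie algebra in $\g_c$ and therefore lies in $\GG_c$. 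Corollary \ref{cor  reflective transitivity} together with the intrinsic nature of the $0$-group property then shows that the Sylow subgroups of $\tilde{\GG}$ and of $\GG_c$ coincide as sets of subgroups.

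For the second stage, given an oddly generated $\OO_c\sub\GG_c$ set $\OO:=\pi(\OO_c)$; conversely, given an oddly generated $\OO\sub\GG$, let $\OO_c$ be the unique oddly generated subgroup of $\GG_c$ with $(\o_c)_{\ol{1}}=\o_{\ol{1}}$ under the canonical identification $(\g_c)_{\ol{1}}=\g_{\ol{1}}$, which exists by Lemma \ref{lemma bijection oddly generated} applied inside the connected component of $\GG_c$. I would then verify this bijection between oddly generated subgroups preserves being Sylow. For the $0$-group property, I would use Proposition \ref{prop 0 characterization}(4): the neat elements of $\o_{\ol{1}}$ and of $(\o_c)_{\ol{1}}$ coincide under the identification, since an $\osp(1|2)\sub\o_c$ projects injectively into $\o$ (its kernel being an ideal of $\osp(1|2)$ contained in the purely even $\operatorname{Lie}\ZZ$, hence zero), and conversely an $\osp(1|2)\sub\o$ lifts to $\o_c$ because $\osp(1|2)$ admits no nontrivial central extension. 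For the splitting condition, I would consider the chain $\OO_c\sub\pi^{-1}(\OO)=\OO_c\cdot\ZZ\sub\GG_c$: the first inclusion has even quasireductive quotient $\ZZ/(\ZZ\cap\OO_c)$, so by the same Berezin/volume argument as in Stage 1, $\OO_c$ is splitting in $\pi^{-1}(\OO)$; and $\GG_c/\pi^{-1}(\OO)\cong\GG/\OO$ as $\GG$-superschemes (with the $\GG_c$-action factoring through $\pi$), so by Lemma \ref{lemma nonzero} $\pi^{-1}(\OO)$ is splitting in $\GG_c$ iff $\OO$ is splitting in $\GG$. Corollary \ref{cor  reflective transitivity} then chains these equivalences.

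Finally, $\GG_0$-equivariance is automatic since the bijection is determined by odd parts and $\ZZ$ acts trivially by conjugation, so the conjugation action of $\GG_{c,0}$ factors through $\GG_0$. The main obstacle will be the lifting of $\osp(1|2)$ subalgebras through the even central extension $\g_c\to\g$, required for the preservation of the $0$-group condition in the upward direction; this reduces to the (standard) vanishing of $H^2(\osp(1|2),\C)$, and could alternatively be bypassed via Lemma \ref{lemma no simple ideals} by verifying that $\o_c/\mathfrak{z}(\o_c)_{\ol{0}}$ inherits the absence of simple ideals from $\o/\mathfrak{z}(\o)_{\ol{0}}$, since any simple ideal in the former must project to a simple ideal in the latter or else be abelian, giving a contradiction.
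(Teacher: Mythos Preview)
Your two-stage strategy matches the paper's: reduce via the semidirect product with $\DD$ and via the even central extension $\pi:\GG_c\to\GG$. The paper does these in the opposite order, but that is immaterial. Your semidirect-product stage is essentially the paper's argument spelled out in more detail (the paper simply says ``because $\DD$ is reductive, every Sylow of $\tilde{\GG}$ lies in $\GG$; conversely $\GG$ is splitting in $\tilde{\GG}$'').

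The genuine difference is in the central-extension stage. The paper disposes of it in one line by invoking Lem.~2.9 of \cite{SSh}, which directly gives that $\OO\sub\GG_c$ is Sylow iff $\pi(\OO)\sub\GG$ is Sylow. You instead build a self-contained argument internal to the paper: the bijection on oddly generated subgroups via matching odd parts (Lemma~\ref{lemma bijection oddly generated}), preservation of the $0$-group property via $\o_{\ol{1}}^{neat}$ (Proposition~\ref{prop 0 characterization}(4)) together with the vanishing of $H^2(\o\s\p(1|2),\C)$, and preservation of splitting via the chain $\OO_c\sub\OO_c\cdot\ZZ\sub\GG_c$ and Corollary~\ref{cor  reflective transitivity}. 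This is a legitimate alternative and has the advantage of not leaving the paper; the cost is that you must verify several compatibilities (e.g.\ that $\pi$ carries the global form of $[(\o_c)_{\ol{1}},(\o_c)_{\ol{1}}]_{\g_c}$ in $\GG_{c,0}$ to the global form of $[\o_{\ol{1}},\o_{\ol{1}}]_{\g}$ in $\GG_0$, and that $\ZZ$ and $\pi^{-1}(\OO)$ are quasireductive), which you gloss over but which are routine. Your alternative route through Lemma~\ref{lemma no simple ideals} for the upward $0$-group implication is also sound and avoids the cohomology input.
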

	
	\begin{proof}
		Write $\pi:\GG_c\to\GG$ for the natural quotient.  Using Lem.~2.9 of \cite{SSh}, it is easy to check that a subgroup $\OO\sub\GG_c$ is Sylow if and only if $\pi(\OO)$ is Sylow in $\GG$, and this correspondence clearly respects conjugation.  Therefore we may assume $\GG_c=\GG$, so that $\tilde{\GG}=\GG\rtimes\DD$.  
		
		Because $\DD$ is reductive, it is clear that every Sylow subgroup of $\tilde{\GG}$ is contained, and thus also a Sylow subgroup, of $\GG$.  Conversely, since $\GG$ is splitting in $\tilde{\GG}$, a Sylow subgroup in $\GG$ is also a Sylow subgroup in $\tilde{\GG}$, giving our conjugation-preserving bijection.
	\end{proof}
	
	\subsection{Uniqueness up to conjugacy when $\g$ is simple} The simple Lie superalgebras were classified in \cite{K}.  All quasireductive simple Lie superalgebras appear in the following table.  The work in \cite{SSh} and \cite{SV} implies that we have the following table of Sylow subalgebras of the (almost) simple superalgebras:

	\renewcommand{\arraystretch}{1.5}	
	\begin{center}
		\begin{tabular}{|c|c|}
			\hline 
			$\g$ & Sylow subalgebra $\o$ \\
			\hline   
			$\g$ Kac Moody of defect $d$ & $\s\l(1|1)^d$\\
			\hline
			$\p\s\l(d|d)$ & $\p(\s\l(1|1)^d)$\\ 
			\hline
			$\mathfrak{pe}(2n+1)$ ,$\s\p\mathfrak{e}(2n+1)$ & $\s\p\mathfrak{e}(2)^n\times\s\p\mathfrak{e}(1)$ \\
			\hline
			$\mathfrak{pe}(2n),\s\p\mathfrak{e}(2n)$ & $\s\p\mathfrak{e}(2)^n$  \\
			\hline
			$\p\s\q(2n+1)$ & $\p\s(\q(2)^n\times\q(1))$\\
			\hline 
			$\p\s\q(2n)$ & $\p\s(\q(2)^n)$\\
			\hline 
		\end{tabular}
	\end{center}
	
	Some explanations: 
	\begin{enumerate}
		\item we write $\mathfrak{pe}(n)$ for the  periplectic Lie superalgebra, and $\mathfrak{spe}(n)$ for its derived subalgebra;
		\item $\p(\s\l(1|1)^d)$ denotes the quotient of $\s\l(1|1)^d$ by the one-dimensional central ideal spanned by $(h,\dots,h)$, where $h\in\s\l(1|1)$ is a non-zero central element.
		\item $\s\q(n)$ denotes the commutator subalgebra of $\q(n)$, and $\p\s\q(n)$ is the quotient of $\s\q(n)$ by its one-dimensional centre.
		\item $\s(\q(2)^n(\times\q(1)))$ denotes the odd codimension-one subalgebra of $\q(2)^n(\times\q(1))$ given by the kernel of the superalgebra homomorphism $\phi:\q(2)^n\times\q(1)\to\C^{0|1}$, where $\phi$ the sum of the odd trace maps from each factor. 
		\item $\p\s(\q(2)^n(\times\q(1)))$ is the quotient of $\s(\q(2)^n(\times\q(1)))$ by the subalgebra spanned by $(c,\dots,c)$, where $c$ is a nonzero central element in each factor $\q(2)$ or $\q(1)$, respectively.
	\end{enumerate}

	\begin{remark}
		The roots of the Sylow subalgebras of simple Lie superalgebras in the above table always form an iso-set, as defined in \cite{G}.
	\end{remark}
	
	The following lemma is proven in, for instance, Section 6 of \cite{S}.  
	\begin{lemma}\label{lemma derivations simples}
		Let $\g$ be a quasireductive, simple Lie superalgebra, and let $\operatorname{Out}(\g)=\operatorname{Der}(\g)/\g$.  
		
		\begin{enumerate}
			\item We have a splitting $\operatorname{Out}(\g)\sub\operatorname{Der}(\g)^{\g_{\ol{0}}}$, and $\operatorname{Out}(\g)=\operatorname{Der}(\g)^{\g_{\ol{0}}}$ unless $\g=\s\l(m|n)$ for $m\neq n$, $mn>1$, or $\g=\o\s\p(2|2n)$ for $n>0$.
			\item Further, we have:
			\begin{enumerate}
				\item $\operatorname{Der}(\g)^{\g_{\ol{0}}}=0$ when $\g=\o\s\p(m|2n)$ for $m\neq 2$, and for $\g=\mathfrak{d}(2|1;a)$, $\a\g(1|2)$, $\a\b(1|3)$;
				\item $\operatorname{Der}(\g)^{\g_{\ol{0}}}=\C\langle h\rangle$ for $h$ an even grading operator, when $\g=\o\s\p(2|2n)$ for $n>0$, $\g=\s\l(m|n)$ for $m\neq n$, $mn>1$, $\g=\p\s\l(n|n)$ for $n\geq 3$, and $\g=\mathfrak{spe}(n)$ for $n\geq 3$;
				\item $\operatorname{Der}(\p\s\l(2|2))^{\p\s\l(2|2)_{\ol{0}}}\cong \s\l(2)$;
				\item $\operatorname{Der}(\p\s\q(n))^{\p\s\q(n)_{\ol{0}}}=\C\langle H\rangle$, where  $H$ is odd, and $\ad(H):\p\s\q(n)_{\ol{1}}\to\p\s\q(n)_{\ol{0}}$ is an isomorphism.
			\end{enumerate}
		\end{enumerate}  
	\end{lemma}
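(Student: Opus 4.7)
The plan is to proceed by Kac's classification of simple quasireductive Lie superalgebras, dealing with each family on the list in turn. For each $\g$ one must (a) compute $\operatorname{Der}(\g)$, (b) identify the $\g_{\ol{0}}$-invariant part, and (c) produce a splitting of $\operatorname{Out}(\g) \hookrightarrow \operatorname{Der}(\g)$ through the invariants. Since $\g_{\ol{0}}$ is reductive and acts locally finitely on $\operatorname{Der}(\g)$, the inclusion $\g \hookrightarrow \operatorname{Der}(\g)$ admits a $\g_{\ol{0}}$-equivariant complement. The content of (1) is that this complement lies in $\operatorname{Der}(\g)^{\g_{\ol{0}}}$, which amounts to showing that all $\g_{\ol{0}}$-isotypic components of $\operatorname{Out}(\g)$ are trivial except in the exceptional cases $\s\l(m|n)$, $m\neq n$, $mn>1$, and $\o\s\p(2|2n)$ (where a non-trivial one-dimensional character appears).

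First I would handle the basic classical series $\s\l(m|n)$ and $\o\s\p(m|2n)$. For each one computes $\operatorname{Der}(\g)$ as a $\g_{\ol{0}}$-module using the standard embedding into $\g\l$; the $\g_{\ol{0}}$-invariants are controlled by invariants of the tensor product of the two defining representations and its dual. For $\s\l(m|n)$ with $m\neq n$ one recovers the grading derivation $h$ (the block-diagonal $(n,-m)$-grading operator modulo the identity); for $\o\s\p(2|2n)$ with $n>0$ a similar grading element appears; in the remaining orthosymplectic and exceptional cases $\mathfrak{d}(2|1;a)$, $\a\g(1|2)$, $\a\b(1|3)$, rigidity of the root system together with the fact that $\g_{\ol{0}}$ contains no outer scalar forces $\operatorname{Der}(\g)^{\g_{\ol{0}}}=0$. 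Then I would treat $\p\s\l(n|n)$ for $n\geq 3$ and $\mathfrak{spe}(n)$ for $n\geq 3$: in each case the central extension that was removed reappears as a one-dimensional outer derivation by the grading operator.

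Next, for the strange series, one uses the explicit realization of $\p\s\q(n)$ as $\s\q(n)/\text{centre}$ sitting inside $\q(n)$. The odd trace $\operatorname{otr}: \q(n) \to \C^{0|1}$ restricts to give the odd outer derivation $H$, which intertwines the isomorphism $\p\s\q(n)_{\ol 1}\xrightarrow{\sim}\p\s\q(n)_{\ol{0}}$. That no further $\g_{\ol{0}}$-invariant derivation exists is verified by a direct weight computation in the $\p\s\q(n)_{\ol{0}}$-decomposition of $\operatorname{End}(\p\s\q(n))$.

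The main obstacle — and the only genuinely interesting case — is $\p\s\l(2|2)$, where $\operatorname{Der}(\g)^{\g_{\ol{0}}} \cong \s\l(2)$ is non-abelian. Here $\p\s\l(2|2)_{\ol 1} \cong V \otimes V'$, where $V,V'$ are the two defining $\s\l(2)$-factors of $\g_{\ol{0}}$, and there is an additional $\s\l(2)$-action commuting with $\g_{\ol{0}}$: it rotates $\p\s\l(2|2)_{\ol 1}$ after identifying $V\otimes V'$ with $V \otimes V'$ in two different ways (this is the classical ``triality" acting on the defining representation of $\o(4)\cong \s\l(2)\times\s\l(2)$, giving the exceptional outer automorphism of $\p\s\l(2|2)$). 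One verifies that this $\s\l(2)$ acts by derivations and exhausts $\operatorname{Der}(\p\s\l(2|2))^{\p\s\l(2|2)_{\ol 0}}$ by comparing $\g_{\ol{0}}$-isotypic components of $\operatorname{End}(\p\s\l(2|2))$ against the adjoint representation. With these ingredients assembled the lemma follows directly from the classification.
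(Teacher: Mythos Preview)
The paper does not actually prove this lemma; it is stated with a citation to Section~6 of \cite{S}. Your case-by-case strategy via Kac's classification is the standard route and is presumably what \cite{S} carries out, so in that sense your proposal is aligned with the source the paper relies on.

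That said, there are some inaccuracies in your reading of the statement that would derail the execution. For the exceptional cases in part~(1), namely $\g=\s\l(m|n)$ with $m\neq n$, $mn>1$, and $\g=\o\s\p(2|2n)$, one in fact has $\operatorname{Out}(\g)=0$ (all derivations are inner), while $\operatorname{Der}(\g)^{\g_{\ol 0}}=\g^{\g_{\ol 0}}=\z(\g_{\ol 0})=\C\langle h\rangle$; so the failure of the equality $\operatorname{Out}(\g)=\operatorname{Der}(\g)^{\g_{\ol 0}}$ comes from $\g$ itself having nonzero $\g_{\ol 0}$-invariants, not from $\operatorname{Out}(\g)$ carrying a nontrivial character as you suggest. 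The splitting $\operatorname{Out}(\g)\hookrightarrow\operatorname{Der}(\g)^{\g_{\ol 0}}$ holds in all cases; it is only surjectivity that fails. For $\p\s\q(n)$, the odd derivation $H$ is not the odd trace; it is the image in $\p\q(n)$ of the odd identity matrix, acting by $\ad(H):(0,B)\mapsto(2B,0)$. For $\p\s\l(2|2)$, the reference to ``triality on $\o(4)$'' is misplaced (triality is a $D_4$ phenomenon); the correct picture is $\p\s\l(2|2)_{\ol 1}=\g_{-1}\oplus\g_{+1}\cong(V\otimes V')\otimes\C^2$, with the outer $\s\l(2)$ acting on the auxiliary $\C^2$ factor. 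With these corrections your outline is sound.
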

	
	\begin{prop}\label{prop simple conj unique}
		Sylow subalgebras of simple quasireductive Lie superalgebras are unique up to conjugacy and stable under $\operatorname{Der}(\g)^{\g_{\ol{0}}}$.  In particular, they are stable under $\operatorname{Out}(\g)$.
	\end{prop}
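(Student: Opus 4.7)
The plan is to proceed case by case through the classification of quasireductive simple Lie superalgebras from the table preceding the statement, establishing in each case both uniqueness of the Sylow subalgebra up to $\Ad(\g_{\ol{0}})$-conjugacy and its stability under $\operatorname{Der}(\g)^{\g_{\ol{0}}}$.

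For uniqueness, I would invoke Lemma \ref{lem rootzero}: any Sylow $\o\subseteq\g$ admits a Cartan $\h$ whose root system coincides with that of $\o_{\ol{0}}$ and, once $\h_{\ol{0}}$ is extended to a maximal torus $\mathfrak{t}\supseteq\h_{\ol{0}}$ of $\g_{\ol{0}}$, embeds into $\Delta(\g,\mathfrak{t})$ as an iso-set of cardinality equal to the defect $d$ of $\g$. After conjugating by $\g_{\ol{0}}$ I may fix $\mathfrak{t}$ once and for all. Uniqueness then reduces to: (i) all maximal iso-sets in $\Delta(\g,\mathfrak{t})$ are conjugate under $W(\g_{\ol{0}})$; and (ii) the Sylow $\o$ is determined by such an iso-set together with $\h$, because by Lemma \ref{lem rootzero}(5) each root space $\o_\alpha$ is irreducible over $\h$ and hence equals the full root space $\g_\alpha$ once $\h$ is chosen. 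Step (i) is a classical root-combinatorial statement (cf.\ \cite{G} for the Kac--Moody cases). For the centreless/commutator forms $\p\s\l(d|d)$, $\p\s\q(n)$, $\s\p\mathfrak{e}(n)$, and $\s\q(n)$, uniqueness transfers along central covers by Lemma \ref{lemma cent extn even deriv}.

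For stability under $\operatorname{Der}(\g)^{\g_{\ol{0}}}$ I split according to Lemma \ref{lemma derivations simples}. In case (a) there is nothing to do. In case (b), $\operatorname{Der}(\g)^{\g_{\ol{0}}}=\C\langle h\rangle$ for an even grading operator $h$ commuting with $\g_{\ol{0}}$; then $h$ acts by a scalar on each $\g_\alpha$, and since $\o$ is a sum of full root spaces it is automatically $h$-stable. Case (c) is $\p\s\l(2|2)$ with outer $\s\l(2)$; I would use the explicit realisation of $\p(\s\l(1|1)^2)\subseteq\p\s\l(2|2)$ as the fixed-point subalgebra of a compatible involution to check that the entire outer $\s\l(2)$ preserves it. Case (d) is $\p\s\q(n)$ with an odd outer derivation $H$; using the block decomposition of the Sylow into $\q(2)$-summands (plus possibly $\q(1)$) one verifies directly that $H$ preserves each block and hence preserves the Sylow.

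The main obstacle is the conjugacy statement (i) in the periplectic and queer families, where there is no clean Kac--Moody-style combinatorial model for the root system; verifying transitivity of $W(\g_{\ol{0}})$ on maximal iso-sets amounts to a transitivity statement for the even orthosymplectic, respectively general linear, action on the relevant flag- or block-type data in the defining representation, and I expect this to require the kind of explicit linear-algebra arguments already used in \cite{SSh} and \cite{SV}. The stability check for $\p\s\l(2|2)$ in case (c) is the second delicate point, since the extra outer $\s\l(2)$ is exceptional to that algebra and forces one to work in a concrete model rather than with a general abstract argument.
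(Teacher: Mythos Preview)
Your plan has a genuine gap at the step where you extend $\h_{\ol{0}}$ to a maximal torus $\mathfrak{t}\supseteq\h_{\ol{0}}$ of $\g_{\ol{0}}$ and assert that the roots of $\o$ embed into $\Delta(\g,\mathfrak{t})$ as an iso-set. For this to make sense you need $\o$ to be $\mathfrak{t}$-stable, equivalently $\mathfrak{t}\subseteq\n_{\g_{\ol{0}}}(\o)$. For the \emph{specific} Sylow listed in the table this holds by construction, but for an \emph{arbitrary} Sylow $\o$ there is no a~priori reason the centralizer of $\h_{\ol{0}}$ in $\g_{\ol{0}}$ normalizes $\o$. That statement is exactly Corollary~\ref{cor cent in norm}, which in the paper is derived \emph{from} Proposition~\ref{prop simple conj unique} via the explicit normalizer tables; invoking it here is circular. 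Note that the extension is genuinely nontrivial: for $\o=\s\l(1|1)^d\subseteq\g\l(m|n)$ one has $\h_{\ol{0}}=\o_{\ol{0}}$ of dimension $d$, far from the rank $m+n$ of $\g_{\ol{0}}$. There is also a confusion in (ii): Lemma~\ref{lem rootzero} concerns the root decomposition of $\o$ relative to $\h_{\ol{0}}$, whose root system (being that of $\o_{\ol{0}}$) is empty when $\o=\s\l(1|1)^d$; the iso-set in the paper's remark consists of $\mathfrak{t}$-weights of $\o_{\ol{1}}$ inside $\g$, which only exists once $\mathfrak{t}$-stability is known. And even then, ``$\o_\alpha$ is $\h$-irreducible'' does not by itself force $\o_\alpha=\g_\alpha$.

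The paper bypasses root combinatorics entirely for the large classical families $\g\l(m|n)$, $\o\s\p(m|2n)$, $\mathfrak{pe}(n)$. Given an arbitrary Sylow $\k$, it restricts the standard representation $V$ to $\k$, places a generic homological $x\in\k_{\ol{1}}$ using \eqref{equation splitting homological}, and then uses the representation theory of $0$-superalgebras (Lemma~\ref{lem simplezero}) to show $V|_{\k}$ is semisimple with simple constituents of dimension $(1|1)$, respectively $(2|2)$ in the periplectic case (Lemmas~\ref{lem irred}, \ref{lem periplecticirred}). This forces $\k$ into an explicit product subalgebra $\prod\g\l(L_i)$ or $\prod\mathfrak{pe}(S_j)$, and Corollary~\ref{cor product} finishes the identification up to conjugacy. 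The defect-one Kac--Moody and queer cases are reduced to results already in \cite{SSh}. Stability under $\operatorname{Der}(\g)^{\g_{\ol{0}}}$ is then, as you say, a direct check on the explicit Sylow using Lemma~\ref{lemma derivations simples}.
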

	
	We now prove Proposition \ref{prop simple conj unique} by showing that in every case listed in our table above, the Sylow subalgebra presented is unique up to conjugacy.  Note that one can check directly that each such subalgebra is stable under $\operatorname{Der}(\g)^{\g_{\ol{0}}}$, implying the second half of Proposition \ref{prop simple conj unique}.
	
	It will sometimes be convenient to work not with the simple Lie superalgebra $\g$, but rather a larger Lie superalgebra $\tilde{\g}$ which is obtained from $\g$ via an even central extension and adding even derivations, as in Lemma \ref{lemma cent extn even deriv}.  From this we use that Sylow subalgebras are stable under the even outer derivations, so that the bijection from Lemma \ref{lemma cent extn even deriv} will descend to a bijection on conjugacy classes.

	\subsubsection{Queer superalgebra $\p\s\q(n)$}   We have that $\p\q(n)=\p\s\q(n)\rtimes\C\langle H\rangle$, where $H\in\p\q(n)_{\ol{1}}$ commutes with the even part and has that $[H,H]=0$.        Thus by \ref{equation splitting homological}, $H$ must lie in every splitting subalgebra $\o$ of $\p\q(n)$, so we may write $\o=\o'\rtimes\C\langle H\rangle$, where $\o'$ is splitting in $\p\s\q(n)$ by Lemma \ref{lemma semidirect product}.
	
	Let $\h$ be a Cartan subalgebra of $\p\s\q(n)$.  Then using the same arguments as in the proof of Prop.~3.13 of \cite{SSh}, we obtain that, up to conjugacy, any splitting subalgebra $\o'\sub\p\s\q(n)$ contains $\h$, and therefore is a root subalgebra of $\p\s\q(n)$.  In particular it must be stable under $H$, so that $\o'\rtimes\C\langle H\rangle$ will be a splitting subalgebra of $\p\q(n)$.  It further follows that every splitting subalgebra of $\p\q(n)$ is of this form.
	
	Therefore we obtain a bijection between the splitting subalgebras of $\p\q(n)$ and $\p\s\q(n)$ which respects inclusion and conjugation, implying the same holds for Sylow subalgebras.  We may now conclude by Prop.~3.13 of \cite{SSh}.

	\subsubsection{Defect one Kac-Moody superalgebras} The case when $\g$ is a defect one Kac-Moody superalgebra is covered by Prop.~3.12 of \cite{SSh}.
	
	\subsubsection{$\g\l(m|n)$, and $\o\s\p(m|2n)$} 
	\begin{lemma}\label{lem irred} Let $L$ be a faithful irreducible representation of
		a 0-superalgebra $\o$ such that $\o$ is a splitting subgroup in $GL(L)$. Then
		$\dim L=(1|1)$ and $\o=\s\l(1|1)$.
	\end{lemma}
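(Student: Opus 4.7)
The plan is to combine the weight-space analysis of Lemmas \ref{lem rootzero} and \ref{lem simplezero} with the splitting hypothesis in order to pin down both $L$ and $\o$. First, since $\o\neq 0$ acts faithfully on the simple module $L$, the module $L$ is nontrivial; Lemma \ref{lem simplezero}(2) then gives $\operatorname{sdim} L_\lambda=0$ for every $\h_{\bar 0}$-weight $\lambda$, and summing yields $\operatorname{sdim} L=0$, i.e.\ $\dim L=(m|m)$ for some $m\geq 1$. The goal then reduces to forcing $m=1$ and $\o\cong\s\l(1|1)$.

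Next I would extract structural information from the splitting hypothesis via equation \eqref{equation splitting homological}: the $GL(L)_0$-orbit of $\o_{\bar 1}^{hom}$ exhausts $\g\l(L)_{\bar 1}^{hom}$, so I can choose $x\in\o_{\bar 1}^{hom}$ whose image in $\g\l(m|m)$ is generic in the sense that $h:=x^2$ acts invertibly on $L$ with $m$ distinct nonzero eigenvalues on each of $L_{\bar 0}$ and $L_{\bar 1}$. By Lemma \ref{lem rootzero}(1), $\h:=\c_\o(h)$ is then a Cartan subalgebra of $\o$ whose $\bar 0$-weight spaces coincide with the $h$-eigenspaces $L^i$, each of dimension $(1|1)$. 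Since $\c_{\g\l(L)}(h)=\g\l(1|1)^m$ (the block-diagonal centraliser), we obtain $\h\sub\g\l(1|1)^m$. Combining this with $[\h_{\bar 1},\h_{\bar 1}]=\h_{\bar 0}$ from Lemma \ref{lem rootzero}(2), and noting that $[\g\l(1|1)^m_{\bar 1},\g\l(1|1)^m_{\bar 1}]$ lands in the block centres $\s\l(1|1)^m_{\bar 0}$, we conclude $\h\sub\s\l(1|1)^m$.

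The crux is to rule out $m\geq 2$. Using Theorem \ref{thm classification 0 groups} I would write $\o$, modulo its centre, as $\TT\times\VV$ with $\TT$ a Takiff $0$-supergroup and $\VV$ odd abelian. Schur's lemma applied to the simple faithful module $L$ gives $\dim Z(\o)\leq(1|1)$, so in particular $\dim\VV\leq 1$. If $\TT=0$, the remaining constraints (using that $\o$ is oddly generated and faithful) force $\o\cong\s\l(1|1)$ and $\dim L=(1|1)$, giving the conclusion. If instead $\TT\neq 0$, then after reducing via Lemma \ref{lemma cent extn even deriv} to the case where the centre acts trivially on $L$, Lemma \ref{lem simplezero}(1) shows $L|_{\o_{\bar 0}}=M\oplus M$ for a single simple $\o_{\bar 0}$-module $M=\bigotimes_iM_i$ with each $M_i$ a nontrivial simple $\s_i$-module by faithfulness, so that $m=\prod_i\dim M_i\geq 2$.

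The main obstacle will be closing this last case: showing that no $\o$ with non-trivial Takiff part can be splitting in $\g\l(L)$ while acting irreducibly on $L$. My intended route is a volume calculation via Proposition \ref{prop integral bundle}, factoring through the intermediate subgroup $\c_{\g\l(L)}(h)=\g\l(1|1)^m$: one expects $\vol(\g\l(1|1)^m/\h)$ to vanish whenever $\h$ is strictly contained in $\s\l(1|1)^m$ (which is precisely what occurs for $m\geq 2$ and any $\o$ with a non-trivial Takiff factor), and the product formula then forces $\vol(\g\l(L)/\o)=0$, contradicting Lemma \ref{lemma nonzero} together with the splitting hypothesis.
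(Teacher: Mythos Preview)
Your opening moves agree with the paper's (superdimension zero via Lemma~\ref{lem simplezero}(2), then a generic $x\in\o_{\bar 1}^{hom}$ via \eqref{equation splitting homological} whose square $h$ has $n$ distinct eigenvalues each of multiplicity $(1|1)$), but the endgame in the $\TT\neq 0$ case has a genuine gap. The volume you propose to compute, $\vol(\g\l(1|1)^m/\h)$, involves the \emph{Cartan} $\h$ of $\o$, not $\o$ itself. The chain $\h\subset\g\l(1|1)^m\subset\g\l(L)$ would at best yield $\vol(\g\l(L)/\h)=0$, but the splitting hypothesis concerns $\o$, and there is no chain through $\g\l(1|1)^m$ containing $\o$: when $\TT\neq 0$ the even part $\o_{\bar 0}$ has a nontrivial semisimple summand, so $\o\not\subset\c_{\g\l(L)}(h)=\g\l(1|1)^m$. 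The only chain involving $\o$ is $\h\subset\o\subset\g\l(L)$, and there $\h$ is a proper quasireductive subalgebra of the $0$-superalgebra $\o$, hence not splitting in $\o$; the product formula then reads $0=\vol(\g\l(L)/\o)\cdot 0$ (or fails outright if the Berezinian is nontrivial), which says nothing about $\vol(\g\l(L)/\o)$. Separately, the implication ``$\dim Z(\o)\leq(1|1)$, so $\dim\VV\leq 1$'' is a non-sequitur, since $\VV$ is a factor of the quotient $\o/Z(\o)$, not a subspace of $Z(\o)$; your $\TT=0$ case can be salvaged without it (if $\TT=0$ then $\o_{\bar 0}$ is central, hence $h$ acts by a single scalar, forcing $m=1$), but the $\TT\neq 0$ case needs a different idea.

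The paper bypasses the classification theorem entirely and argues more directly. From Lemma~\ref{lem simplezero}(1) together with the one-dimensionality of the $h$-eigenspaces it concludes that $L_{\bar 0}$ and $L_{\bar 1}$ are isomorphic simple $\o_{\bar 0}$-modules of dimension equal to $\rk\o_{\bar 0}$, which is only possible if $\o_{\bar 0}\cong\g\l(n)$ embedded diagonally in $\g\l(L)_{\bar 0}$. Then \eqref{equation splitting homological} is applied a second time, now to the self-commuting rank-$(1|0)$ and rank-$(0|1)$ elements of $\g\l(L)_{\bar 1}$: their $GL(L)_0$-orbits must meet $\o_{\bar 1}$, and together with $\o_{\bar 0}=\g\l(n)$ this forces $\o\cong\s\l(n|n)$, which is a $0$-superalgebra only for $n=1$.
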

	\begin{proof} By Lemma \ref {lem simplezero}, part (2), we know that
		$L$ has zero superdimension. Suppose $\dim L=(n|n)$. Let $x\in\g\l(L)$ be a generic homological element
		such that the semisimple element $h=[x,x]$ is diagonal with eigenvalues
		$a_1,\dots a_n$ linearly independent over $\mathbb Q$. By \ref{equation splitting homological}, we may assume without loss of generality that $x\in \o$. There exist a basis $\{v_1,\dots,v_n\}$ of
		$L_{\bar 0}$ and a basis $\{w_1,\dots,w_n\}$ of
		$L_{\bar 1}$ such that $hv_i=a_iv_i$, $hw_i=a_iw_i$.
		By Lemma \ref{lem simplezero}(1) we obtain that $L_{\bar 0}$ and $L_{\bar 1}$
		are isomorphic irreducible representation of $\o_{\bar 0}$ with dimension equal
		the rank of $\o_{\bar 0}$. The latter is only possible if 
		$\o_{\bar 0}\simeq\g\l(n)$ is embedded diagonally into $\g\l(L)_{\bar 0}$.
		On the other hand, $\o_{\bar 1}$ must contain self-commuting elements
		of rank $(0|1)$ and $(1|0)$. This forces $\o\simeq \s\l(n|n)$ and the latter is a zero-superalgebra only for $n=1$.
	\end{proof}    
	\begin{lemma}\label{lem classical} Let $\g=\g\l(m|n)$ or $\g=\o\s\p(m|2n)$.
		Let $d$ be the defect of $\g$ and let $\{\alpha_1,\dots,\alpha_d\}$ be a set of mutually orthogonal linearly independent isotropic roots. Let $\o$ be the Sylow subalgebra of $\g$ generated by the roots spaces
		$\g_{\pm\alpha_i}$ for $i=1,\dots,d$. Then every Sylow subalgebra of $\g$ is conjugate to $\o$.
	\end{lemma}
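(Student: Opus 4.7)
My plan is to analyze any Sylow subalgebra $\o'\subset\g$ via its action on the standard representation $V$ of $\g$ --- so $V=\C^{m|n}$ in the $\g\l$ case, and $V=\C^{m|2n}$ with its invariant bilinear form in the $\o\s\p$ case. The overall goal is to force $\o'$ to act block-diagonally on a decomposition $V = W \oplus \bigoplus_{i=1}^{d} V_i$ with $\dim V_i = (1|1)$ and $W$ a trivial summand, so that $\o'\cong \s\l(1|1)^d$ embedded diagonally; conjugacy to the standard $\o$ will then follow from $GL(V)$- (respectively orthosymplectic) transitivity on such decompositions.

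First I would decompose $V=W\oplus\bigoplus_{i=1}^r V_i$ into simple $\o'$-modules with $W$ the trivial part, and let $\o'_i$ denote the image of $\o'$ in $\g\l(V_i)$. Each $\o'_i$ is a $0$-superalgebra acting faithfully and irreducibly on $V_i$ by Lemma~\ref{lemma 0 group properties}(1). Lemma~\ref{lem irred} then yields $\dim V_i=(1|1)$ and $\o'_i\cong\s\l(1|1)$, \emph{provided} that $\o'_i$ is splitting in $\g\l(V_i)$. Establishing this splitting property is the crux of the proof, and is the step I expect to be the main obstacle, since the block Levi $\prod_i\g\l(V_i)\subset\g\l(V)$ is not splitting in general. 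I would try to extract it by picking a generic homological $x\in\o'^{hom}_{\ol{1}}$, using Lemma~\ref{lem rootzero}(2) to see that $h=[x,x]$ (together with its $\o'_0$-orbit) generates $\h_{\ol{0}}$, and Lemma~\ref{lem simplezero}(2) to see that each nonzero $h$-weight space of a nontrivial simple $\o'$-summand of $V$ has superdimension zero. Combined with the defect constraint $r\leq d$ (forced because $\o'$ is Sylow in $\g$ whose defect is $d$), this should pin down $r=d$ and $\dim V_i=(1|1)$.

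Once the structure is established, $\o'=\s\l(1|1)^d$ acts block-diagonally on $V=\bigoplus_{i=1}^d V_i\oplus W$, with $W$ of dimension constrained by the defect count (purely odd or purely even depending on the sign of $m-n$). Any two such block decompositions of $V$ are visibly $GL(V)$-conjugate, carrying $\o'$ to the standard Sylow $\o$ generated by $\{\g_{\pm\alpha_i}\}$. In the $\o\s\p$ case the same analysis applies, except that Lemma~\ref{lem simplezero}(5) forces each $V_i\not\cong V_i^*$, so the invariant form pairs the $V_i$ into hyperbolic $(1|1)+(1|1)$ blocks orthogonal to the trivial summand $W$; conjugacy then follows from transitivity of the orthosymplectic group on such orthogonal decompositions. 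If the weight-space/defect argument for the splitting of $\o'_i$ turns out to be too coarse, I would fall back on a direct volume computation on the variety parameterizing such decompositions of $V$, using the machinery of Section~5, together with the product classification for $0$-groups (Theorem~\ref{thm classification 0 groups}) to replace $\o'$ by a product matching the decomposition of $V$.
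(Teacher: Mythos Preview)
Your overall strategy --- analyze the action of a Sylow subalgebra $\o'$ on the standard module $V$, force it into $(1|1)$-blocks via Lemma~\ref{lem irred}, then conjugate --- is the same as the paper's. But you have inverted where the real work lies.

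The genuine gap is your opening move: you \emph{assume} that $V$ decomposes as a direct sum of simple $\o'$-modules. Since $\o'$ is a $0$-superalgebra, its representation category is far from semisimple, and there is no reason a priori for $V$ to be semisimple over $\o'$. The paper earns this by first choosing a generic homological $x\in\o'_{\ol 1}$ with $h=[x,x]$ having eigenvalues $a_1,\dots,a_n$ linearly independent over $\Q$ (respectively $\pm a_i$ in the $\o\s\p$ case), then using Lemma~\ref{lem simplezero}(2),(5) to see that each nontrivial simple constituent $L_i$ has all weight multiplicities $(1|1)$ and is not self-dual, deducing $0\notin\PP(L_i)$ and $\PP(L_i)\cap\PP(L_j)=\emptyset$ for $i\neq j$, and finally invoking Lemma~\ref{lem simplezero}(3),(4) to kill all $\Ext^1$'s between constituents. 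Only then does one have $\o'\subset\prod_i\g\l(L_i)\subset\g$. Your proposal never addresses this, and your vague ``defect constraint $r\le d$'' does not substitute for it.

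Conversely, the step you flag as the main obstacle --- that $\o'_i$ be splitting in $\g\l(V_i)$ --- is essentially free once semisimplicity is in hand. Your worry that the block Levi need not be splitting in $\g$ is beside the point: Corollary~\ref{cor  reflective transitivity} says that if $\o'\subset\g'\subset\g$ with $\o'$ splitting in $\g$, then $\o'$ is automatically splitting in $\g'$ (and incidentally $\g'$ is splitting in $\g$). Applying this with $\g'=\prod_i\g\l(V_i)$, then Corollary~\ref{cor splitting product groups}, gives $\o'\cap\g\l(V_i)$ splitting in $\g\l(V_i)$; one more use of transitivity with $\o'\cap\g\l(V_i)\subset\o'_i\subset\g\l(V_i)$ gives the splitting of your image $\o'_i$. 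So the crux is exactly the semisimplicity of $V$, not the splitting in the blocks.
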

	\begin{proof} Let $\k\sub\g$ be a Sylow subalgebra of $\g$, and let $V$ be the standard representation of $\g$
		Let us choose a generic homological $x\in\k_{\bar 1}$ which also a generic homological element in $\g$. If $\g=\g\l(V)$, then $h$ has
		purely even or purely odd kernel and eigenvalues
		$a_1,\dots, a_n$ linearly
		independent over $\mathbb Q$. If $\g=\o\s\p(V)$, the kernel of $h$
		is purely even, purely odd, or has dimension $(1|2k)$ and eigenvalues are
		$\pm a_1,\dots,\pm a_n$ with $a_1,\dots, a_n$ linearly independent over
		$\mathbb Q$. 
		
		Let $L_1,\dots,L_k$ denote the simple non-trivial $\k$-constituents in the restriction of $V$ to $\k$. Note that each $L_i$ satisfies condition (5) of
		Lemma \ref{lem simplezero}. In particular every $L_i$ is not self dual.
		Next we notice that $0\notin\PP(L_i)$. Indeed, the zero weight space of
		$L_i$ must have superdimension zero. If $\ker h$ is purely odd or purely even,
		then clearly $0\notin\PP(L_i)$. If $\dim\ker h=(1|2k)$ and $0\in\PP(L_i)$, then using the self-duality of $V$, there exists $j\neq i$ with $L_i^*\simeq L_j$, and so $0\in\PP(L_j)$. This again is impossible by dimension constraints.
		Finally, from weight multiplicities we obtain that $\PP(L_i)\cap\PP(L_j)=\emptyset$ if $i\neq j$. Using Lemma \ref{lem simplezero} (4) 
		we obtain that $V$ is semisimple as a $\k$-module when $\ker h$ is purely even or purely odd.  If $\g=\o\s\p(V)$ with $\dim\ker h=(1|2k)$, any nontrivial extension between $\C$ and $\Pi\C$ will violate self-duality of $V$, so once again we find that $V$ is semisimple.
		
		Set
		$$\g'= \prod _{i=1}^k \g\l(L_i)\subset\g$$
		for $\g=\g\l(V)$.
		If $\g=\o\s\p(V)$ then we have $k=2l$ with $L_{i}\simeq L_{l+i}^*$, and we set
		$$\g'= \prod _{i=1}^l \g\l(L_i)\subset \g.$$
		
		Since $\k\subset\g'\subset\g$, $\k$ is splitting in $\g'$. That means
		$\k\cap\g\l(L_i)$ is splitting in $\g\l(L_i)$ by Corollary \ref{cor splitting product groups}. Now Lemma \ref{lem irred} implies $\dim L_i=(1|1)$.
		Thus, $\k$ is isomorphic the product of $SL(1|1)^d$. After conjugation, we may assume 
		$x\in \o$, and that forces $\o=\k$, as desired.
	\end{proof}
	
	\subsubsection{$\mathfrak{pe}(n)$}
	
	\begin{lemma}\label{lem periplecticirred} Let $L$ be a faithful irreducible representation of
		a 0-superalgebra $\o$ such that $L\simeq \Pi L^*$ and $\o$ is a splitting subalgebra in $\mathfrak{pe}(L)$. Then
		$\dim L=(2|2)$ and $\o\cong\mathfrak{spe}(2)$.
	\end{lemma}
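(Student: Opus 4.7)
The plan is to mirror the proof of Lemma \ref{lem irred}, adapting it to the periplectic setting by exploiting both the odd nondegenerate form that $\mathfrak{pe}(L)$ preserves and the resulting duality $L_{\ol{1}} \simeq L_{\ol{0}}^{*}$ as $\mathfrak{pe}(L)_{\ol{0}}$-modules.

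First I would note that $L \simeq \Pi L^{*}$ forces $\operatorname{sdim} L = 0$, so $\dim L = (n|n)$ for some $n \geq 1$. By \ref{equation splitting homological}, after $GL(L_{\ol{0}})$-conjugation I may take a generic homological $x \in \o_{\ol{1}}$ such that $h := [x,x]$ is diagonal in a weight basis of $L_{\ol{0}}$ with eigenvalues $a_{1},\dots,a_{n}$ linearly independent over $\Q$; on $L_{\ol{1}} \simeq L_{\ol{0}}^{*}$ the element $h$ then acts with eigenvalues $-a_{1},\dots,-a_{n}$. After reducing to the case where the center of $\o$ acts trivially on $L$ (the alternative, $L \simeq \Pi L$, is easily ruled out by combining Lemma \ref{lem simplezero}(5) with the hypothesis $L \simeq \Pi L^{*}$), Lemma \ref{lem simplezero}(1) gives that $L_{\ol{0}}$ and $L_{\ol{1}}$ are each a sum of copies of a single simple $\o_{\ol{0}}$-module $M$; the $n$ distinct $h$-eigenvalues force multiplicity one, so $L_{\ol{0}} \simeq M \simeq L_{\ol{1}}$ with $M$ of dimension $n$ and with $n$ distinct weights, and the duality $L_{\ol{1}} \simeq L_{\ol{0}}^{*}$ further yields $M \simeq M^{*}$ as $\o_{\ol{0}}$-modules. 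In particular the weight set $\{a_{i}\}$ is stable under negation, so $n = 2m$ with weights $\pm b_{1},\dots,\pm b_{m}$.

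Next I would apply Theorem \ref{thm classification 0 groups}: modulo its center, $\o$ is a Takiff $0$-superalgebra $\bigoplus_{i}\s_{i}\otimes\C[\xi_{i}]\rtimes\mathfrak{d}$ with the $\s_{i}$ simple. A direct computation in the semidirect product shows that $h=[x,x]$ lies in $\bigoplus_{i}\s_{i}$ and equals $2\sum_{i}\alpha_{i}v_{i}$, where $v_{i}\in\s_{i}$ is the $\xi_{i}$-component of $x$ and $\alpha_{i}\in\C$ the $\partial_{\xi_{i}}$-component (all cross-terms and $\xi_{i}^{2}$-terms vanish). For $h$ to be regular with distinct eigenvalues on $M = \boxtimes_{i} M_{i}$ each $v_{i}$ must be regular semisimple and every weight of each $M_{i}$ must occur with multiplicity one; each $M_{i}$ must also be self-dual.

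The main obstacle, and the step I expect to require the most care, is to pin down $n = 2$ and $\o \cong \mathfrak{spe}(2)$ from these data. I plan to decompose $x = x^{+} + x^{-}$ with $x^{+}\in S^{2}(L_{\ol{0}})$ and $x^{-}\in\Lambda^{2}(L_{\ol{0}}^{*})$ according to $\mathfrak{pe}(L)_{\ol{1}} = S^{2}(L_{\ol{0}})\oplus\Lambda^{2}(L_{\ol{0}}^{*})$, and use the super-Jacobi identity $[h,x]=0$ together with $h = 2\,x^{+}\circ x^{-}$ to locate $x^{\pm}$ inside the supercentralizer of $h$ in $\mathfrak{pe}(L)_{\ol{1}}$. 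In a weight basis this centralizer is spanned by elements pairing weights $a_{i}$ with $-a_{\sigma(i)}$ for an involution $\sigma$, with the symmetric/antisymmetric constraint on $x^{+}/x^{-}$ giving exactly $m$ parameters on each side. Combined with the fact that the adjoint $\o_{\ol{0}}$-representation on $\o_{\ol{1}}$, modulo its odd abelian central factor, must embed into $S^{2} M \oplus \Lambda^{2} M^{*}$ as an $\o_{\ol{0}}$-submodule, a case analysis over the classification of self-dual simple representations of reductive Lie algebras with all weights distinct and paired under negation forces a single factor $\s_{1}=\s\l(2)$ acting on $M_{1}=\C^{2}$. Thus $n=2$, $\o_{\ol{0}}=\s\l(2)$, and the Takiff $0$-structure together with the triviality of the center of $\mathfrak{pe}(2)$ gives $\o \cong \s\l(2)\otimes\C[\xi]\rtimes\C\langle\partial_{\xi}\rangle = \mathfrak{spe}(2)$, as desired.
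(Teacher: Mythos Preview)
Your opening move already misfires: in $\mathfrak{pe}(L)$ an odd element has $[x,x]|_{L_{\ol{0}}}=2BC$ with $B$ symmetric and $C$ antisymmetric, and since $(BC)^{T}=-CB$ while $BC$ and $CB$ share a characteristic polynomial, the eigenvalues of $[x,x]$ on $L_{\ol{0}}$ are automatically of the form $\pm a_{1},\dots,\pm a_{k}$ (together with $0$ when $n=2k+1$). They are \emph{never} $\Q$-linearly independent for $n>1$, so the $\pm$-pairing you later ``derive'' from $M\simeq M^{*}$ is forced from the outset, and your deduction that $n$ is even overlooks the odd case where $0$ occurs as a weight. More seriously, the heart of your argument --- the ``case analysis over the classification of self-dual simple representations of reductive Lie algebras with all weights distinct and paired under negation'' --- is asserted rather than carried out, and it is not clear it succeeds as stated: for instance $\s\p(2m)$ acting on its standard module $\C^{2m}$ is self-dual with multiplicity-free weights $\pm e_{i}$, and its adjoint representation is exactly $S^{2}\C^{2m}$, so nothing you have written excludes it.

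The paper's proof takes a different route that sidesteps this open-ended search. After narrowing $\o_{\ol{0}}$ to one of $\s\l(L_{\ol{0}})$, $\s\o(L_{\ol{0}})$, $\s\p(L_{\ol{0}})$ via the (quasi-)minuscule constraint and the bound $\dim L_{\ol{0}}\le 2\rk\o_{\ol{0}}+1$, it does \emph{not} attempt to determine $\o$ directly. Instead, for each of the three possibilities it decomposes $\mathfrak{pe}(L)_{\ol{1}}=S^{2}L_{\ol{0}}\oplus\Lambda^{2}L_{\ol{0}}^{*}$ into at most three $\o_{\ol{0}}$-irreducibles and uses \ref{equation splitting homological} to force $\o_{\ol{1}}$ to meet each nontrivial component; since those components generate $\mathfrak{spe}(L)$, one obtains $\mathfrak{spe}(L)\subset\o$. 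Then Corollary~\ref{cor sub of 0 is 0} makes $\mathfrak{spe}(L)$ itself a $0$-superalgebra, which forces $n\le 2$, and $n=1$ is excluded by faithfulness. This ``show $\mathfrak{spe}(L)\subset\o$, then invoke Corollary~\ref{cor sub of 0 is 0}'' is the missing idea that replaces your unperformed case analysis with a short three-case check.
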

	\begin{proof}By Lemma \ref {lem simplezero}, part (2), we know that
		$L$ has zero superdimension. Suppose $\dim L=(n|n)$. Let $x\in\mathfrak{pe}(L)$ be a generic homological element.
		By \ref{equation splitting homological}, we may assume without loss of generality that $x\in\o$.
		
		If $n=2k$ then the eigenvalues of the semisimple element $h=[x,x]$ are    
		$\pm a_1,\dots,\pm a_k$ such that $a_1,\dots,a_k$ are linearly independent over $\mathbb Q$.
		Let $\mathcal T$ denote the global form of $\mathbb C\langle h\rangle$ and $\mathfrak{t}=\operatorname{Lie}\mathcal T$.
		If $n=2k+1$, zero is also an eigenvalue. The dimension of every eigenspace of $h$ equals $(1|1)$
		and $h$ is a generic semisimple element in $\s\l(n)$. Since $L\cong\Pi L^*$, the center of $\o$ must act trivially, so Lemma \ref {lem simplezero}, part (1), implies
		$L_{\bar 0}$ and $\Pi L_{\bar 1}$ are isomorphic, simple $\o_{\bar 0}$-modules. Furthermore, all $\mathfrak t$-weights of $L_{\bar 0}$ have multiplicity $1$ and all non-zero weights are highest 
		weights with respect to some Borel subalgebra of $\o_{\bar 0}$. Thus, $L_{\bar 0}$ is minuscule for
		$n=2k$ and quasi-minuscule for $n=2k+1$. On the other hand, $\dim L_{\ol{0}}\leq 2\rk\o_{\ol{0}}+1$. 
		By comparing with the list of quasi-minuscule representations we see only the following possibilities
		for $\o_{\bar 0}$: $\s\l(L_{\bar 0})$, $\s\o(L_{\bar 0})$ and $\s\p(L_{\bar 0})$ (for even $n$ only).
		
		Now $\mathfrak{pe}(L)_{\bar 1}=S^2L_{\bar 0}\oplus \Lambda^2{L^*_{\bar 0}}$ decomposes into the sum of
		at most three irreducible components
		with respect to the adjoint action of $\o_{\bar 0}$. 
		
		If $\o_{\bar 1}=s\l(L_{\bar 0})$ then 
		$S^2L_{\bar 0}$ and $\Lambda^2L^*_{\bar 0}$ are irreducible and by \ref{equation splitting homological} $\o_{\bar 1}$ must contain an element from both components. Then $\mathfrak{pe}(L)_{\bar 1}=\o_{\bar 1}$
		and hence $\mathfrak{spe}(L)\subset\o$
		
		If $\o_{\bar 1}=\s\o(L_{\bar 0})$ then 
		$S^2L_{\bar 0}=R\oplus\mathbb C$ where $R$ is irreducible, and $\Lambda^2L^*_{\bar 0}$ is irreducible. By \ref{equation splitting homological} $\o_{\bar 1}$ must contain elements from both nontrivial components. 
		It is easy to see that $R$ and $\Lambda^2L^*_{\bar 0}$ generate $\mathfrak{spe}(L)$ and hence $\mathfrak{spe}(L)\subset\o$.
		
		For $\o_{\bar 1}=\s\p(L_{\bar 0})$ we can show that  $\mathfrak{spe}(L)\subset\o$ by the same argument as in the previous case. The only difference is that 
		$S^2L_{\bar 0}$ remains irreducible while $\Lambda^2L^*_{\bar 0}=R\oplus \mathbb C$.
		
		Since  $\o$ is a $0$-superalgebra and $\mathfrak{spe}(L)\subset\o$ is oddly generated, Corollary \ref{cor sub of 0 is 0} implies that $\mathfrak{spe}(L)$ is a $0$-superalgebra.   This implies that $n=1$ or $n=2$. But the case $n=1$ must be excluded as $\mathfrak{spe}(1)$ has no faithful irreducible representation.	
	\end{proof}

	\begin{lemma}\label{lem periplectic} Let $\g=\mathfrak{pe}(n)$. Then every Sylow subalgebra is conjugate to
		$\mathfrak{spe}(2)^k$ if $n=2k$ and $\mathfrak{spe}(2)^k\times \mathfrak{spe}(1)$ if $n=2k+1$.
	\end{lemma}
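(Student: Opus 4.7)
The plan is to mimic the proof of Lemma \ref{lem classical}, now adapted to the periplectic setting where the invariant form on the standard representation is odd symmetric rather than even. Let $\k\subset\g=\mathfrak{pe}(n)$ be a Sylow subalgebra, let $V=\C^{n|n}$ be the standard module equipped with its nondegenerate odd symmetric form $B$, and (using \eqref{equation splitting homological}) pick a homological $x\in\k_{\bar 1}$ that is generic in both $\k$ and $\g$; set $h=[x,x]\in\mathfrak{pe}(n)_{\bar 0}$. Because $h$ preserves $B$, its eigenvalues on $V_{\bar 0}$ and $V_{\bar 1}$ are negatives of one another, which will drive the subsequent weight analysis.

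I would then invoke Lemma \ref{lem simplezero} to write $V=T\oplus L_1\oplus\cdots\oplus L_m$ as a $\k$-module, with $T$ a sum of trivial constituents and the $L_i$'s pairwise nonisomorphic nontrivial simples whose weight spaces have superdimension zero and whose weight sets are mutually disjoint (semisimplicity coming from part (4), exactly as in Lemma \ref{lem classical}). The form $B$ induces a $\k$-invariant pairing, which in turn sets up an involution on $\{L_i\}$ sending $L_i\mapsto\Pi L_i^*$. Each $L_i$ is therefore either \emph{self-dual} ($L_i\cong\Pi L_i^*$) or \emph{pair-dual} ($L_i\cong\Pi L_j^*$ with $j\neq i$). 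Grouping constituents accordingly yields an orthogonal decomposition $V=T\oplus W_{\mathrm{sd}}\oplus W_{\mathrm{pd}}$ with $\k\subset\mathfrak{pe}(T)\oplus\mathfrak{pe}(W_{\mathrm{sd}})\oplus\mathfrak{pe}(W_{\mathrm{pd}})$, and Corollary \ref{cor product} then shows that $\k$ intersects each summand in a splitting subalgebra.

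For each self-dual $L_i$, the intersection $\k\cap\mathfrak{pe}(L_i)$ is a splitting $0$-subalgebra of $\mathfrak{pe}(L_i)$ (using Corollary \ref{cor sub of 0 is 0}), so Lemma \ref{lem periplecticirred} forces $\dim L_i=(2|2)$ and $\k\cap\mathfrak{pe}(L_i)=\mathfrak{spe}(2)$. For each pair-dual $(L_i,L_j)$, the subalgebra of $\mathfrak{pe}(L_i\oplus L_j)$ preserving both summands is naturally $\g\l(L_i)$, and Lemma \ref{lem irred} together with irreducibility and faithfulness of $L_i$ would force $\dim L_i=(1|1)$ and the corresponding block of $\k$ to be $\s\l(1|1)\subset\g\l(1|1)\subset\mathfrak{pe}(2)$. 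I expect the main obstacle to be ruling this pair-dual case out: concretely, one must show that $\s\l(1|1)$ is not splitting in $\mathfrak{pe}(2)$. I would handle this by combining the fact that $\mathfrak{spe}(2)$ is a splitting $0$-subalgebra of $\mathfrak{pe}(2)$ (established in \cite{SSh,SV}), hence a Sylow subalgebra by the characterization of Sylow subalgebras as splitting $0$-subalgebras, with the dimension bound $\dim\mathfrak{spe}(2)=7>4=\dim\g\l(1|1)$: if $\s\l(1|1)$ were splitting in $\mathfrak{pe}(2)$, then by Corollary \ref{cor  reflective transitivity} $\g\l(1|1)$ would also be splitting, hence would contain a conjugate of $\mathfrak{spe}(2)$, which is impossible by the dimension count.

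Excluding the pair-dual case leaves $W_{\mathrm{pd}}=0$. The trivial summand $T$ is $(0|0)$-dimensional for $n$ even and $(1|1)$-dimensional for $n$ odd, on which $\k$ acts trivially (accounting for the $\mathfrak{spe}(1)=0$ factor in the odd case). Hence $\k\cong\mathfrak{spe}(2)^k$ for $n=2k$ and $\k\cong\mathfrak{spe}(2)^k\times\mathfrak{spe}(1)$ for $n=2k+1$. For uniqueness up to conjugacy, any two such decompositions of $V$ into the prescribed $(2|2)$-blocks (plus possibly a $(1|1)$-trivial block) are related by an element of $\GG\LL(n)$ acting as the even group of $\mathfrak{pe}(n)$, so conjugation by a suitable such element brings any Sylow $\k$ into the standard form.
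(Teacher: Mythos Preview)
Your overall route is the paper's: decompose $V$ over the Sylow $\k$, split the nontrivial simples into self-dual ($L_i\cong\Pi L_i^*$) and pair-dual constituents, apply Lemmas~\ref{lem periplecticirred} and~\ref{lem irred}, and rule out the pair-dual case. Two steps need repair.

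First, your exclusion of the pair-dual case is circular. You argue that if $\g\l(1|1)$ were splitting in $\mathfrak{pe}(2)$ it would contain a conjugate of $\mathfrak{spe}(2)$; but this presupposes that every Sylow of $\mathfrak{pe}(2)$ is conjugate to $\mathfrak{spe}(2)$, which is exactly the $n=2$ instance of the lemma you are proving. A splitting subalgebra contains \emph{some} Sylow, not a priori one of dimension $7$. The paper simply asserts that $\g\l(L_i)$ is not splitting in $\mathfrak{pe}(L_i\oplus\Pi L_i^*)$; one direct verification is via \eqref{equation splitting homological}: in the embedding $\g\l(1|1)\subset\mathfrak{pe}(2)$ (stabilizer of a pair of complementary Lagrangians) the symmetric $B$-block of any odd element has rank $0$ or $2$, and rank is $GL(2)$-invariant, so no conjugate reaches the homological element with $C=0$ and $B$ of rank $1$.

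Second, your treatment of odd $n$ is incorrect. The algebra $\mathfrak{spe}(1)$ is not zero: $\mathfrak{pe}(1)$ has dimension $(1|1)$ and its derived subalgebra is the odd line, of dimension $(0|1)$. Accordingly the zero-weight $(1|1)$-piece is not a trivial $\k$-module---the $\mathfrak{spe}(1)$ factor acts on it as a nontrivial extension of $\C$ by $\Pi\C$---and Lemma~\ref{lem simplezero}(4) does not give full semisimplicity of $V$, since it says nothing about extensions among trivial constituents. The paper handles this by writing $V=V'\oplus W$ with $V'$ semisimple and $W$ a possibly non-semisimple $(1|1)$-dimensional $\k$-submodule satisfying $W\cong\Pi W^*$; then splitting of $\k$ in $\mathfrak{pe}(V')\times\mathfrak{pe}(W)$ forces $\k\cap\mathfrak{pe}(W)=\mathfrak{spe}(W)=\mathfrak{spe}(1)$, and the even case finishes the argument on $V'$.
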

	\begin{proof} Let $\o$ be a Sylow subalgebra.  We choose a generic homological $x$ as in the proof of Lemma \ref{lem periplecticirred}, and assume without loss of generality that $x\in \o$. 
		
		First, we consider the case when $n$ is even. As in the proof of Lemma \ref{lem classical}
		we can conclude that the restriction of the standard $\g$-module $V$ to $\o$ is semisimple.
		Write $V$ as a direct sum of simple $\mathfrak o$-modules
		\[
		V=\bigoplus_{i=1}^m (L_i\oplus\Pi L_i^*)\oplus\bigoplus_{j=1}^l S_j,
		\]
		where $S_j\simeq\Pi S_j^*$. Now $\o$ must be splitting in
		$\prod \g\l(L_i)\times\prod \mathfrak{pe}(S_j)$. This implies $\o\cap \g\l(L_i)$ is splitting in $\g\l(L_i)$,
		hence $\dim L_i=(1|1)$, and $\o\cap\mathfrak{pe}(S_j)$ is splitting in $\mathfrak{pe}(S_j)$ hence
		$\dim S_j=(2|2)$. However, $\g\l(L_i)$ is not a splitting subgroup in $\mathfrak{pe}(L_i\oplus\Pi L_i^*)$ and therefore $m=0$. Now by Lemma \ref{lem periplectic} we obtain $\o=\prod \mathfrak{spe}(S_j)$
		which implies the statement for even $n$.
		
		The case of odd $n$ is more subtle because of the zero weight space of dimension $(1|1)$.
		In this case we have $V=V'\oplus W$ where $W$ is a $(1|1)$-dimensional $\o$-module and $V'$ is a semisimple $\o$-module. The condition
		$W\simeq \Pi W^*$ implies $\o\cap \mathfrak{pe}(W)=\mathfrak{spe}(W)$ and the proof can be finished as in the first case.
	\end{proof}
	
	This now completes the proof of Proposition \ref{prop simple conj unique}.

	\subsection{Step 2: the case when $\z(\g)_{\ol{0}}=0$} Let us now prove Theorem \ref{theorem uniqueness conjugacy} in the case when $\z(\g)_{\ol{0}}=0$.  In this case, by Thm.~6.9 of \cite{S}, we have that $\g=\c(\g)\rtimes(\r\ltimes\l)$, where $\c(\g)$ is the product of minimal ideals of $\g$, $\r$ is a reductive Lie algebra, and $\l$ is an odd abelian subalgebra with trivial action of $[\r_{\ol{0}},\r_{\ol{0}}]$.  Further, $\r\times\l$ acts faithfully by outer derivations on $\c(\g)$, i.e.~we have a natural embedding $\r\times\l\sub\operatorname{Out}(\c(\g))$.
	
	\begin{lemma}\label{lemma bijection sylows c(g) g}
		We have a bijection between Sylow subalgebras of $\c(\g)$ and Sylow subalgebras of $\g$ given by 
		\[
		\c(\g)\supseteq\o\mapsto (\o+[\l,\o_{\ol{1}}])\rtimes\l.
		\]
		This bijection preserves conjugacy classes, and so all Sylow subalgebras of $\g$ are conjugate if and only if the same is true for $\c(\g)$.
	\end{lemma}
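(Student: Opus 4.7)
First, I would reduce to the case $\r = 0$. Writing $\g = (\c(\g) \rtimes \l) \rtimes \r$ with $\r$ even reductive, Lemma \ref{lemma cent extn even deriv} applied with $\DD = \r$ yields a conjugacy-preserving bijection between Sylow subalgebras of $\g$ and of $\c(\g) \rtimes \l$. Hence I may assume $\g = \c(\g) \rtimes \l$, so in particular $\g_{\ol{0}} = \c(\g)_{\ol{0}}$ and $\g_{\ol{1}} = \c(\g)_{\ol{1}} \oplus \l$.

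Next, I would verify that the forward map $\Phi(\o) := (\o + [\l, \o_{\ol{1}}]) \rtimes \l$ produces a Sylow subalgebra of $\g$ whenever $\o$ is Sylow in $\c(\g)$. Checking that $\o + [\l, \o_{\ol{1}}]$ is a subalgebra of $\c(\g)$ stable under $\ad(\l)$ is a super-Jacobi bookkeeping exercise that uses $\o_{\ol{0}} = [\o_{\ol{1}}, \o_{\ol{1}}]$: for instance, $[\l, \o_{\ol{0}}] = [\l, [\o_{\ol{1}}, \o_{\ol{1}}]] \subset [[\l, \o_{\ol{1}}], \o_{\ol{1}}]$, and closure then follows iteratively. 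For splitness of $\Phi(\o)$ in $\g$, I would verify the criterion \eqref{equation splitting homological} using that $\c(\g)_{\ol{0}} \cdot \o_{\ol{1}}^{hom}$ exhausts $\c(\g)_{\ol{1}}^{hom}$ by hypothesis on $\o$ and that $\l \subset \Phi(\o)_{\ol{1}}^{hom}$, then confirm the volume is nonzero via Theorem \ref{thm existence integral} and Proposition \ref{prop integral bundle} applied to the fibration $\Phi(\o)\backslash\g \to \o\backslash\c(\g)$. Finally, the $0$-superalgebra property of $\Phi(\o)$ follows from Proposition \ref{prop 0 characterization}(4): $\Phi(\o)_{\ol{1}}^{neat} = 0$, since $\o_{\ol{1}}^{neat} = 0$ and nonzero elements of $\l$ have $[y,y]=0$ and so cannot lie in any $\o\s\p(1|2)$-subalgebra.

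For the inverse direction, given a Sylow subalgebra $\k \subset \g$, I would set $A := \k_{\ol{1}} \cap \c(\g)_{\ol{1}}$ and $\o := A + [A,A] \subset \c(\g)$. The key step, which I expect to be the main obstacle, is a straightening: applying \eqref{equation splitting homological} to $\l \subset \g_{\ol{1}}^{hom}$ yields $\pi_\l(\k_{\ol{1}}) = \l$, which provides a section $s : \l \to \k_{\ol{1}}$ of the projection that in general differs from the canonical inclusion of $\l \hookrightarrow \g$ by a class in $\Hom(\l, \c(\g)_{\ol{1}})$. One must then kill this class by conjugating $\k$ by an appropriate element of $\c(\g)_{\ol{0}}$, using the abelianness of $\l$ together with a cohomological vanishing argument on the $\c(\g)_{\ol{0}}$-action on $\Hom(\l, \c(\g)_{\ol{1}})$. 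Once $\l \subset \k$ is arranged, one reads off $\k = \Phi(\o)$ directly, and Corollary \ref{cor sub of 0 is 0} combined with Corollary \ref{cor  reflective transitivity} shows that $\o$ is Sylow in $\c(\g)$. The $\c(\g)_{\ol{0}}$-equivariance of the entire construction gives conjugacy-preservation, and the final uniqueness equivalence follows immediately from the bijection.
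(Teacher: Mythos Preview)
Your proposal has two genuine gaps, both stemming from an attempt to avoid the structure theory of minimal ideals and the case-by-case result Proposition~\ref{prop simple conj unique}.

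\textbf{Forward map.} The Jacobi bookkeeping does not close up. With $\m:=\o+[\l,\o_{\ol{1}}]$ you need $[\m_{\ol 0},\o_{\ol 1}]\subset\o_{\ol 1}$, and expanding $[[\l,\o_{\ol 1}],\o_{\ol 1}]$ by Jacobi produces a term $[\l,\o_{\ol 0}]$, which lies in $\c(\g)_{\ol 1}$ but has no a~priori reason to land in $\o_{\ol 1}$. The paper gets this by writing $\c(\g)=\prod_i\k_i$ as a product of minimal ideals and invoking Proposition~\ref{prop simple conj unique}: for each simple factor $\k_i$ the Sylow $\o_i$ is stable under $\operatorname{Der}(\k_i)^{(\k_i)_{\ol 0}}\supset\l$, so $[\l,\o_i]\subset\o_i$; for Takiff factors $\o_i+[\l,\o_i]=\k_i$; for odd abelian factors $\o_i=\k_i$. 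This structural input is what makes $\Phi(\o)$ a subalgebra, and the paper then checks that $\Phi(\o)$ is a $0$-superalgebra by direct inspection against Theorem~\ref{thm classification 0 groups}, and that it is splitting via Lemma~\ref{lemma semidirect product} rather than a volume computation. Your neat-element argument is also incomplete: you treat $\o_{\ol 1}$ and $\l$ separately but not mixed elements $a+y\in\o_{\ol 1}\oplus\l$.

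\textbf{Inverse map.} The straightening step is unnecessary and the cohomological vanishing you allude to is not established. The key fact you are missing is that $\l$, being contained in $\operatorname{Out}(\c(\g))_{\ol 1}$, actually commutes with $\c(\g)_{\ol 0}$ (this follows from Lemma~\ref{lemma derivations simples} and Remark~\ref{remark derivations Takiff}); since $[\r,\r]$ also acts trivially on $\l$, the group $\GG_0$ acts on each line $\C y\subset\l$ by a character. Hence \eqref{equation splitting homological} applied to $y\in\l\subset\g_{\ol 1}^{hom}$ gives $y\in\k_{\ol 1}$ directly, with no conjugation needed. Once $\l\subset\k$, Lemma~\ref{lemma semidirect product} shows $\k\cap\c(\g)$ is splitting in $\c(\g)$, and one finishes factor by factor.
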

	
	\begin{proof}
		Write $\c(\g)=\k_1\times\cdots\k_l$ as a product of minimal ideals.  By Corollary \ref{cor product}, the Sylow subalgebras of $\c(\g)$ are of the form $\o=\o_1\times\cdots\o_l$, where $\o_i\sub\k_i$ is a Sylow subalgebra.  
		
		By Lem.~5.6 of \cite{S}, $\k_i$ is either simple, odd abelian, or isomorphic to $\s\otimes\C[\xi]$ for a simple Lie algebra $\s$.  If $\k_i$ is simple, then by Proposition \ref{prop simple conj unique}, $\o_i$ is stable under $\r\ltimes\l$.  If $\k_i$ is odd abelian, then $\k_i=\o_i$ by Lem.~2.12 of \cite{SSh}, and again $\o_i$ is stable under $\r\ltimes\l$.
		
		Finally, if $\k_i\cong\s\otimes\C[\xi]$ for a simple Lie algebra $\s$, then necessarily $\o_i=\xi\s$.  However, because $\k_i$ is a minimal ideal, $\o_i+[\l,\o_i]=\k_i$.  Therefore we obtain that
		\[
		(\o+[\o,\l])\rtimes\l
		\]
		is a quasireductive subalgebra of $\g$.  We claim that it is a Sylow subalgebra.  Indeed, it is a $0$-superalgebra by direct inspection, using Theorem \ref{thm classification 0 groups}.  On the other hand it is splitting by Lemma \ref{lemma semidirect product}.  
		
		Conversely, given a Sylow subalgebra $\o'\sub\g$, it must contain $\l$ by \ref{equation splitting homological}, since $\l\sub\g_{\ol{1}}^{hom}$ and $\GG_0$ acts on $\l$ by characters.  Since $[\g_{\ol{1}},\g_{\ol{1}}]\sub\c(\g)$, we may write $\o'=\o\rtimes\l$. By Lemma \ref{lemma semidirect product}, $\o$ will be a splitting subalgebra of 
		\[
		\c(\g)\cong\k_1\times\cdots\k_l.
		\] 
		By Corollary \ref{cor product}, $\o_i:=\o\cap\k_i$ is splitting in $\k_i$, and is clearly $\l$-stable.  From this we see that $\o_i\sub\k_i$ is Sylow whenever $\k_i$ is simple or odd abelian, while $\o_i=\k_i$ when $\k_i\cong\s\otimes\C[\xi]$.  Therefore our correspondence is indeed a bijection of Sylow subalgebras.
		
		That our bijection preserves conjugacy classes is straightforward, completing the proof of the lemma.
	\end{proof}
	
	Now using step 1, it is straightforward to see that Sylow subalgebras are unique up to conjugacy in $\c(\g)$.  Therefore by Lemma \ref{lemma bijection sylows c(g) g}, Sylows are unique up to conjugacy in $\g$, completing step 2.
	
	\subsection{Final (third) step: $\g$ arbitrary} For this step we simply apply Lemma \ref{lemma cent extn even deriv} to reduce to step 2.  This completes the proof of Theorem \ref{theorem uniqueness conjugacy}.
	
	\begin{remark}
		The proof of Theorem \ref{theorem uniqueness conjugacy} in fact shows that a maximal $0$-subgroup $\KK\sub\GG$ satisfying \ref{equation splitting homological} must be Sylow.
	\end{remark}
	
	\section{Normalizers of Sylow subgroups} 
	In the final section, we give a precise description of the normalizer of a Sylow subalgebra. Afterward, we give a version of the third Sylow theorem.
	
	\subsection{Normalizer of a Sylow subalgebra}
	Let $\g$ be quasireductive with Sylow subalgebra $\g$.  We would like to describe $\n_{\g}(\o)$, the normalizer of $\o$ in $\g$.  
	
	First, it is clear that $\n_{\g}(\o)$ will contain the centre of $\g$, so we may assume that $\z(\g)_{\ol{0}}=0$.  Thus using Thm.~6.9 of \cite{S}, we may write
	\[
	\g=\c(\g)\rtimes (\l\rtimes \r),
	\]
	where $\c(\g)=\k_1\times\dots\times\k_\ell$ is the product of minimal ideals $\k_i$, $\l$ is odd abelian, and $\r=\r_{\ol{0}}$ is reductive.  As we showed in the proof of Theorem \ref{theorem uniqueness conjugacy}, we may write
	\[
	\o=\o'\rtimes \l,
	\]
	where $\o'\sub\c(\g)$ is a splitting subalgebra.  Writing $\o'_i=\k_i\cap \o$, we have 
	\[
	\o'=\o'_1\times\dots\times\o'_\ell,
	\]
	and $\o'_i=\k_i$ if $\k_i$ is either Takiff or odd abelian, while $\o'_i\sub\k_i$ is a Sylow subalgebra if $\k_i$ is simple. In the below table, we give $\n_{\g}(\o)$ where $\o\sub\g$ is a Sylow subalgebra of a simple Lie superalgebra $\g$.
	
	\renewcommand{\arraystretch}{1.5}	
	
	\begin{center}
		\begin{tabular}{|c|c|}
			\hline 
			$(\g,\o)$ & $\n_{\g}(\o)$ \\
			\hline
			$m<n$: $(\s\l(m|n),\s\l(1|1)^m)$ & $\s(\g\l(1|1)^m\times\g\l(n-m))$\\
			\hline         $(\p\s\l(n|n),\p(\s\l(1|1)^n))$ & $\p\s(\g\l(1|1)^n)$ \\
			\hline 
			$m> 2n$: $(\o\s\p(m|2n),\s\l(1|1)^{n})$ & $\g\l(1|1)^{n}\times \s\o(m-2n)$ \\
			\hline 
			$2m\leq 2n$: $(\o\s\p(2m|2n),\s\l(1|1)^{m})$ & $\g\l(1|1)^{m}\times \s\p(2(n-m))$ \\
			\hline 
			$2m+1\leq 2n$: $(\o\s\p(2m+1|2n),\s\l(1|1)^{m})$ & $\g\l(1|1)^{m}\times \o\s\p(1|2(n-m))$ \\
			\hline 
			$(\mathfrak{d}(2,1;\alpha),\s\l(1|1))$ & $\g\l(1|1)\times\C$ \\ 
			\hline 
			$(\mathfrak{ag}(1|2),\s\l(1|1))$ & $\g\l(1|1)\times \s\l(2)$ \\
			\hline 
			$(\mathfrak{ab}(1|3),\s\l(1|1))$ & $\g\l(1|1)\times\s\l(3)$ \\
			\hline 
			$(\p\s\q(2n),\p\s(\q(2)^n))$ & $\p\s(\q(2)^n)$ \\
			\hline
			$(\p\s\q(2n+1),\p\s(\q(2)^n\times\q(1)))$ & $\p\s(\q(2)^n\times\q(1))$ \\
			\hline
			$(\s\mathfrak{pe}(2n),\s\mathfrak{pe}(2)^n)$ & $\s(\mathfrak{pe}(2)^n)$  \\
			\hline
			$(\s\mathfrak{pe}(2n+1),\s\mathfrak{pe}(2)^n\times\s\mathfrak{pe}(1))$ & $\s(\mathfrak{pe}(2)^n\times\mathfrak{pe}(1))$ \\
			\hline         
		\end{tabular}
	\end{center}
	
	\begin{lemma}
		$\n_{\c(\g)}(\o')$ is quasireductive and stable under $\l\rtimes\r$.
	\end{lemma}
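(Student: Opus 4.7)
The plan is to reduce to a factor-by-factor analysis using the decomposition $\c(\g) = \k_1 \times \cdots \times \k_\ell$ into minimal ideals (each simple, odd abelian, or Takiff by Thm.~6.9 of \cite{S}), with the corresponding factorization $\o' = \o'_1 \times \cdots \times \o'_\ell$ from the proof of Theorem \ref{theorem uniqueness conjugacy}. Since each $\k_i$ is a minimal ideal of $\g$, it is preserved by the adjoint action of $\l\rtimes\r$, yielding $\n_{\c(\g)}(\o') = \prod_i \n_{\k_i}(\o'_i)$.

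Quasireductivity then follows factor by factor: if $\k_i$ is Takiff or odd abelian then $\o'_i = \k_i$ and $\n_{\k_i}(\o'_i) = \k_i$ is quasireductive, while if $\k_i$ is simple then $\n_{\k_i}(\o'_i)$ is the corresponding entry of the table above, and direct inspection shows each such entry is quasireductive.

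For $\l \rtimes \r$-stability, a short Jacobi identity computation shows that it suffices to prove $\o'$ itself is $\l \rtimes \r$-stable, since if $x$ normalizes $\o'$ then $x$ normalizes $\n_{\c(\g)}(\o')$. The $\l$-part is immediate: $\l \subset \o$ by \ref{equation splitting homological} and $\c(\g)$ is a $\g$-ideal, so $[\l,\o'] \subset \o \cap \c(\g) = \o'$. For $\r$-stability on each simple $\k_i$, Proposition \ref{prop simple conj unique} combined with the splitting of Lemma \ref{lemma derivations simples} shows that $\o'_i$ is preserved by $\r$ modulo inner derivations of $\k_i$; the Takiff and odd abelian cases are trivial.

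The hard part will be handling the inner-derivation discrepancy for simple $\k_i$: the actual $\r$-action on $\k_i$ differs from its canonical $\operatorname{Der}(\k_i)^{(\k_i)_{\ol{0}}}$-lift by an inner derivation, so only the lifted action literally fixes $\o'_i$. I would overcome this by showing that $\o'$ may be chosen within its $\mathcal{C}_0$-conjugacy class so that the honest $\r$-action preserves each $\o'_i$, via a fixed-point argument for the reductive group $R$ (with $\operatorname{Lie} R = \r$) acting on the variety $\mathcal{C}_0 / N_{\mathcal{C}_0}(\o')$ of Sylow subalgebras of $\c(\g)$: since the induced $R$-action on this conjugacy class factors through inner automorphisms of $\c(\g)$, standard fixed-point arguments for reductive group actions on homogeneous spaces of algebraic groups produce an $R$-fixed Sylow, giving the desired compatible choice.
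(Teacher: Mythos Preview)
Your quasireductivity argument (factor by factor, via the table) and the Jacobi reduction to stability of $\o'$ itself match the paper's approach. Your $\l$-stability argument via $\l\subset\o$ and $[\l,\o']\subset\o\cap\c(\g)=\o'$ is correct and arguably sharper than the paper's uniform citation.

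The divergence is for $\r$-stability, and here your worry about an inner-derivation discrepancy is unnecessary. In the decomposition $\g=\c(\g)\rtimes(\l\rtimes\r)$ coming from Thm.~6.9 of \cite{S}, the complement $\l\rtimes\r$ is taken to centralize $\c(\g)_{\ol{0}}$; equivalently, it acts on each simple factor $\k_i$ through $\operatorname{Der}(\k_i)^{(\k_i)_{\ol{0}}}$, which is precisely the splitting recorded in Lemma \ref{lemma derivations simples}. (Even if one did not know this about \cite{S}: since $\c(\g)_{\ol{0}}$ acts semisimply on $\g$ and trivially on $\g/\c(\g)$, a $\c(\g)_{\ol{0}}$-fixed complement always exists, so the decomposition can be so arranged.) Proposition \ref{prop simple conj unique} then gives $[\r,\o'_i]\subset\o'_i$ directly, with no discrepancy to resolve --- this is the paper's one-line argument. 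Your fixed-point detour is therefore unneeded, and it is also incompletely justified: there is no general fixed-point theorem for reductive groups acting on affine homogeneous spaces, and the claim that the $R$-action on the Sylow variety ``factors through inner automorphisms'' does not furnish a group homomorphism $R\to\operatorname{Inn}(\c(\g))$, since the projection $\operatorname{Inn}\rtimes\operatorname{Out}\to\operatorname{Inn}$ is not one. Moreover, even granting an $R$-fixed conjugate $\tilde\o'$, conjugating back by $g\in\mathcal{C}_0$ converts $\r$-stability of $\tilde\o'$ into $\Ad(g)(\r)$-stability of the given $\o'$, which is not what the lemma asserts.
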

	\begin{proof}
		The fact that $\n_{\c(\g)}(\o')$ is quasireductive is immediate from the above table.  By Lemma \ref{lemma derivations simples}, $\o'$ is stable under $\l\times \r$, and thus the same must be true of $\n_{\c(\g)}(\o')$.  
	\end{proof}

	\begin{prop}\label{prop normalizer description}
		Let $\g$ be quasireductive with $\o\sub\g$ a Sylow subalgebra.  Then $\z(\g)_{\ol{0}}\sub\n_{\g}(\o)$, and we have
		\[
		\n_{\g}(\o)/\z(\g)_{\ol{0}}=\n_{\c(\g)}(\o')\rtimes(\l\rtimes\r).
		\]
		where $\o'=\left(\o/(\z(\g)_{\ol{0}}\cap\o)\right)\cap\c(\g)$.
		
	\end{prop}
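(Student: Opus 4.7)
The plan is to first reduce to $\z(\g)_{\ol{0}} = 0$ and then match the two sides by direct inclusion. The containment $\z(\g)_{\ol{0}} \sub \n_\g(\o)$ is immediate from centrality, so I may pass to $\g/\z(\g)_{\ol{0}}$ and assume the even centre is trivial. By Thm.~6.9 of \cite{S}, we then have $\g = \c(\g) \rtimes (\l \rtimes \r)$, and by the analysis preceding the proposition (see Lemma \ref{lemma bijection sylows c(g) g}) the Sylow subalgebra decomposes as $\o = \o' + \l$ with $\o' \sub \c(\g)$.

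For the containment $\n_{\c(\g)}(\o') \rtimes (\l \rtimes \r) \sub \n_\g(\o)$, I would handle each summand in turn. The inclusion $\l \sub \n_\g(\o)$ is trivial from $\l \sub \o$. For $\r$: it acts on $\c(\g)$ by even outer derivations, which preserve each $\o'_i$ by Proposition \ref{prop simple conj unique} when $\k_i$ is simple and trivially when $\k_i$ is Takiff or odd abelian (since then $\o'_i = \k_i$); combined with $[\r, \l] \sub \l$ from the semidirect structure, this gives $\r \sub \n_\g(\o)$. The more delicate assertion is $\n_{\c(\g)}(\o') \sub \n_\g(\o)$: for $c \in \n_{\c(\g)}(\o')$, the condition $[c, \o] \sub \o$ reduces, using $[c, \o'] \sub \o'$ (by definition) and $[c, \l] \sub \c(\g)$ (by the ideal property), to checking $[c, \l] \sub \o \cap \c(\g) = \o'$. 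Decomposing $\n_{\c(\g)}(\o')$ over the minimal ideals, this becomes the claim that the restriction $\l|_{\k_i}$ stabilizes $\o'_i$ for each $i$. For Takiff or odd abelian $\k_i$ this is trivial. For $\k_i$ simple, $\l|_{\k_i}$ lies in the odd outer derivations of $\k_i$, and by Lemma \ref{lemma derivations simples} only $\k_i = \p\s\q(n)$ admits any, in which case $\l|_{\k_i} \sub \C\langle H \rangle$ and a direct calculation shows that the adjoint action of $H$ preserves the block-diagonal subalgebra $\o'_i = \p\s(\q(2)^{\lfloor n/2 \rfloor}(\times \q(1)))$.

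I expect this last case-by-case check for $\k_i = \p\s\q(n)$ to be the main technical step, although it is essentially a matrix computation once one identifies an explicit representative of $H$ in the block-diagonal subalgebra of $\p\q(n)$. For the reverse inclusion, any $g \in \n_\g(\o)$ can be written as $g = c + d$ with $c \in \c(\g)$ and $d \in \l + \r$. The forward containment just established gives $d \in \n_\g(\o)$, so $c \in \n_\g(\o) \cap \c(\g)$. Then $[c, \o] \sub \o \cap \c(\g) = \o'$, which in particular gives $[c, \o'] \sub \o'$, hence $c \in \n_{\c(\g)}(\o')$, completing the argument.
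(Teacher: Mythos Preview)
Your strategy matches the paper's: reduce to trivial even centre, use $\g = \c(\g) \rtimes (\l \rtimes \r)$, and verify both inclusions. The paper's own argument is one line (``clear from the previous lemma''), that lemma asserting $\n_{\c(\g)}(\o')$ is $(\l \rtimes \r)$-stable; you are essentially unpacking this.

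There is a slip in your forward inclusion. After correctly reducing to $[c,\l] \sub \o'$ for $c \in \n_{\c(\g)}(\o')$, you say this becomes ``$\l|_{\k_i}$ stabilizes $\o'_i$''. But $c$ ranges over $\n_{\k_i}(\o'_i)$, not $\o'_i$, so what is actually needed is $[\l,\n_{\k_i}(\o'_i)] \sub \o'_i$, which is strictly stronger when $\n_{\k_i}(\o'_i) \supsetneq \o'_i$ (e.g.\ $\k_i = \s\l(m|n)$). Relatedly, for simple $\k_i \neq \p\s\q(n)$ you conclude $\l|_{\k_i}$ vanishes because $\operatorname{Out}(\k_i)_{\ol{1}} = 0$; but the image of $\l$ in $\operatorname{Out}(\k_i)$ being zero only says $\l$ acts on $\k_i$ by \emph{inner} odd derivations, not that it acts trivially. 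Both points are easily repaired using the normalizer table just before the proposition: since $\l \sub \o$ and $\o'_i = \o \cap \k_i$, one has $[\l,\o'_i] \sub \o'_i$ automatically, so any inner part $\ad(x)$ of $\l|_{\k_i}$ has $x \in \n_{\k_i}(\o'_i)_{\ol{1}}$; inspection of the table shows $\n_{\k_i}(\o'_i)_{\ol{1}} = (\o'_i)_{\ol{1}}$ for every simple $\k_i$, whence $x \in \o'_i$ and $[x,c] \in \o'_i$ for all $c \in \n_{\k_i}(\o'_i)$. For $\k_i = \p\s\q(n)$ the table gives $\n_{\k_i}(\o'_i) = \o'_i$ outright, so your check that $H$ preserves $\o'_i$ already suffices there. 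The reverse inclusion is fine as written.
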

	\begin{proof}
		This is clear from the previous lemma.
	\end{proof}
	
	\begin{cor}\label{cor normalizer qred hom elts}
		Let $\o\sub\g$ be a Sylow subalgebra, and write $\n_{\g}(\o)$ for its normalizer in $\g$. Then we have:
		\begin{enumerate}
			\item $\n_{\g}(\o)$ is quasireductive in $\g$;
			\item if $\k=\n_{\g}(\o)/\o$, then $\Rep_{\k_{\ol{0}}}\k$ is semisimple;
			\item $\n_{\g}(\o)^{hom}_{\ol{1}}=\o_{\ol{1}}^{hom}$.    
		\end{enumerate} 
	\end{cor}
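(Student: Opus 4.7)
The plan is to deduce all three claims directly from Proposition \ref{prop normalizer description}. After reducing to the case $\z(\g)_{\ol{0}}=0$ (the centre is purely even, lies in both $\n_{\g}(\o)$ and $\o$, and is therefore harmless for each claim), the proposition gives
\[
\n_{\g}(\o)=\n_{\c(\g)}(\o')\rtimes(\l\rtimes\r),\qquad \n_{\c(\g)}(\o')=\prod_i\n_{\k_i}(\o'_i),
\]
where the $\k_i$ are the minimal ideals of $\c(\g)$ and $\o'_i=\o\cap\k_i$.

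For (1), I would check quasireductivity factor by factor. When $\k_i$ is Takiff or odd abelian one has $\o'_i=\k_i$, so $\n_{\k_i}(\o'_i)=\k_i$ is quasireductive. When $\k_i$ is simple, I would read $\n_{\k_i}(\o'_i)$ off the table of normalizers and note that every entry is manifestly quasireductive (a product of copies of $\g\l(1|1)$, $\p\s(\q(2)^n)$, $\s(\mathfrak{pe}(2)^n)$, and classical reductive Lie algebras). The extension by the reductive $\r$ and the odd abelian $\l\sub\g_{\ol{1}}^{hom}$ preserves quasireductivity inside $\g$.

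The heart of the argument is the identity $\n_{\g}(\o)_{\ol{1}}=\o_{\ol{1}}$, which immediately yields (3) and forces $\k:=\n_{\g}(\o)/\o$ to be purely even. I would prove it by scanning the table row by row: in every simple case the normalizer enlarges the Sylow only by an even reductive summand. For instance, $\g\l(1|1)$ adjoins a central torus to $\s\l(1|1)$ without introducing any new odd elements; the factors $\s\o(m-2n)$, $\s\p(2(n-m))$, $\s\l(2)$, $\s\l(3)$, and the $\C$-factors in the exceptional rows are purely even; and in the $\p\s\q$ and $\s\mathfrak{pe}$ rows the normalizer and the Sylow differ only by an even abelian piece such as $\mathfrak{pe}(n)/\s\mathfrak{pe}(n)$. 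Since $\l\sub\o_{\ol{1}}$ and $\r$ is even, extending from $\n_{\c(\g)}(\o')$ to $\n_{\g}(\o)$ contributes no further odd vectors.

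Once (3) is in hand, (2) follows because $\k$ decomposes as $\r$ together with the purely even reductive quotients $(\n_{\k_i}(\o'_i)/\o'_i)_{\ol{0}}$, hence is itself reductive, and so $\Rep_{\k_{\ol{0}}}\k=\Rep\k_{\ol{0}}$ is semisimple. The main obstacle is the row-by-row bookkeeping of the parity-matching claim; the corollary itself is a structural consequence of Proposition \ref{prop normalizer description} and the table, with no additional representation-theoretic machinery required.
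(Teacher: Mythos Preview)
Your argument for (1) matches the paper's, but your route to (2) and (3) rests on the claim that $\n_{\g}(\o)_{\ol 1}=\o_{\ol 1}$, and this is false. Look at the row $\o\s\p(2m+1\mid 2n)$ with $2m+1\le 2n$ and $n>m$: the Sylow is $\s\l(1|1)^m$, whose odd part has dimension $2m$, while the normalizer is $\g\l(1|1)^m\times\o\s\p(1|2(n-m))$, whose odd part has dimension $2m+2(n-m)=2n$. So $\k=\n_\g(\o)/\o$ is \emph{not} purely even in this case; it contains the factor $\o\s\p(1|2(n-m))$. Your deduction of (2) from ``$\k$ is even reductive'' therefore breaks down, and with it your deduction of (3).

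The paper avoids the table for (2) and (3) entirely. For (2) it argues abstractly: $\o$ is splitting in $\g$, hence splitting in $\n_\g(\o)$ by Corollary~\ref{cor reflective transitivity}; passing to the quotient $\k=\n_\g(\o)/\o$, the image of $\o$ is the trivial subalgebra, and a volume comparison as in Lemma~\ref{lemma 0 group properties}(1) shows that the trivial subalgebra is splitting in $\k$, so $\Rep_{\k_{\ol 0}}\k$ is semisimple. Then (3) follows from (2) via Proposition~\ref{prop semisimple criterion}: semisimplicity of $\Rep_{\k_{\ol 0}}\k$ forces $\k_{\ol 1}^{hom}=\{0\}$, so any $x\in\n_\g(\o)_{\ol 1}^{hom}$ has image zero in $\k_{\ol 1}$, i.e.\ $x\in\o_{\ol 1}$, and since semisimplicity is intrinsic in quasireductive subalgebras one gets $x\in\o_{\ol 1}^{hom}$. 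This conceptual argument handles the $\o\s\p(1|2k)$ factor automatically (its odd elements are all neat, hence none are homological), whereas your row-by-row parity check would need a separate treatment of exactly this case.
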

	
	\begin{proof}
		Part (1) follows from Proposition \ref{prop normalizer description}. Part (2) is because the quotient will have the trivial superalgebra as a splitting subalgebra.  Part (3) then follows from Proposition \ref{prop semisimple criterion}.
	\end{proof}
	
	For completeness and future use, we also record here the table which describes, for $\g$ simple, the algebraic group of inner automorphisms of $\g$ which stabilize a Sylow subalgebra $\o\sub\g$.  We write this algebraic group as $\NN_0$.

	\renewcommand{\arraystretch}{1.5}	
	
	\begin{center}
		\begin{tabular}{|c|c|}
			\hline 
			$(\g,\o)$ & $\NN_{0}$\\
			\hline 
			$m<n$: & $S(GL(1|1)^m_0\times GL(n-m))\rtimes S_m$\\
			$(\s\l(m|n),\s\l(1|1)^m)$ & \\
			\hline 
			$(\p\s\l(n|n),\p(\s\l(1|1)^n))$  & $PS(GL(1|1)^n_0)\rtimes S_n$\\
			\hline  
			$m> 2n$: & $\left[GL(1|1)^{n}_0\times SO(m-2n)\right]\rtimes((\Z/2\Z)^n\rtimes S_n)$\\
			$(\o\s\p(m|2n),\s\l(1|1)^n)$ & \\
			\hline 
			$2m\leq 2n$: & $\left[GL(1|1)^m_0\times Sp(2(n-m)\right]\rtimes ((\Z/2\Z)^{m-1}\rtimes S_m)$\\
			$(\o\s\p(2m|2n),\s\l(1|1)^{m})$ & \\
			\hline 
			$2m+1\leq 2n$:  & $\left[GL(1|1)^{m}_0\times Sp(2(n-m)\right]\rtimes((\Z/2\Z)^m\rtimes S_m)$\\
			$(\o\s\p(2m+1|2n),\s\l(1|1)^{m})$  & \\
			\hline 
			$(\mathfrak{d}(2,1;\alpha),\s\l(1|1))$ & $\G_m^3\rtimes \Z/2\Z$\\ 
			\hline 
			$(\mathfrak{ag}(1|2),\s\l(1|1))$  & $\left[\G_m^2\times SL(2)\right]\rtimes \Z/2\Z$\\
			\hline 
			$(\mathfrak{ab}(1|3),\s\l(1|1))$ & $\left[\G_m^2\times SL(3)\right]\rtimes \Z/2\Z$\\
			\hline 
			$(\p\s\q(2n),\p\s(\q(2)^n))$ & $PS(GL(2)^n)\rtimes S_n$\\
			\hline
			$(\p\s\q(2n+1),\p\s(\q(2)^n\times\q(1)))$ & $PS(GL(2)^n\times GL(1))\rtimes S_n$\\
			\hline
			$(\s\mathfrak{pe}(2n),\s\mathfrak{pe}(2)^n)$ & $S(GL(2)^n)\rtimes S_n$ \\
			\hline
			$(\s\mathfrak{pe}(2n+1),\s\mathfrak{pe}(2)^n\times\s\mathfrak{pe}(1))$ & $S(GL(2)^n\times GL(1))\rtimes S_n$\\
			\hline         
		\end{tabular}
	\end{center}
	
	\

	\begin{cor}
		If $\GG$ is quasireductive with Sylow subgroup $\OO\sub\GG$, then $\NN_{\GG}(\OO)$ is once again quasireductive, and $\Rep\NN_{\GG}(\OO)/\OO$ is semisimple.
	\end{cor}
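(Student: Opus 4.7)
The plan is to deduce this corollary directly from Corollary \ref{cor normalizer qred hom elts} via the standard passage between supergroups and their Lie superalgebras. First I would identify $\operatorname{Lie}\NN_{\GG}(\OO)$ with $\n_{\g}(\o)$. This is a routine super Harish--Chandra pair computation: a subgroup $\HH\sub\GG$ normalizes $\OO$ if and only if it preserves the pair $(\OO_0,\o)$ under conjugation, and differentiating this condition identifies the Lie superalgebra of $\NN_{\GG}(\OO)$ with $\{x\in\g:[x,\o]\sub\o\}=\n_{\g}(\o)$.

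With this identification, part (1) of Corollary \ref{cor normalizer qred hom elts} gives that $\n_{\g}(\o)_{\ol{0}}$ is reductive. Since $\n_{\g}(\o)_{\ol{0}}$ is the Lie algebra of the identity component of $\NN_{\GG}(\OO)_0$, that identity component is reductive as an algebraic group, so $\NN_{\GG}(\OO)$ is quasireductive by definition.

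For the second statement, I would form the quotient supergroup $\NN_{\GG}(\OO)/\OO$. This is well-defined because $\OO$ is a closed normal subgroup of $\NN_{\GG}(\OO)$, and $\OO$ itself is quasireductive (as a Sylow subgroup; its structure is moreover given by Theorem \ref{thm classification 0 groups}), so the quotient is again quasireductive. Its Lie superalgebra is $\k:=\n_{\g}(\o)/\o$, and by part (2) of Corollary \ref{cor normalizer qred hom elts} the category $\Rep_{\k_{\ol{0}}}\k$ is semisimple. Applying Lemma \ref{lemma full subcat} to the global form $\NN_{\GG}(\OO)/\OO$ of (a subalgebra of) $\k$ embeds $\Rep(\NN_{\GG}(\OO)/\OO)$ as a full, topologizing Serre subcategory of $\Rep_{\k_{\ol{0}}}\k$, and any such subcategory of a semisimple category is semisimple.

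I do not foresee a serious mathematical obstacle; the content of the corollary is already carried by Corollary \ref{cor normalizer qred hom elts}. The only points that require care are the bookkeeping ones: verifying that $\operatorname{Lie}\NN_{\GG}(\OO)$ is exactly $\n_{\g}(\o)$ (not merely contained in or containing it), and that forming the quotient $\NN_{\GG}(\OO)/\OO$ as an algebraic supergroup is compatible with forming $\n_{\g}(\o)/\o$ on the Lie algebra side. Both are standard consequences of the super Harish--Chandra pair formalism recalled in \cite{CF} and used throughout Section 2.
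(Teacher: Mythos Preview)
Your plan is correct and matches the paper's intent: the corollary is stated without proof as the supergroup translation of Corollary~\ref{cor normalizer qred hom elts}. Two bookkeeping points need slightly more care than you give them, though neither is a serious obstacle.

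First, Corollary~\ref{cor normalizer qred hom elts} is stated for a Sylow \emph{subalgebra}, and $\o:=\operatorname{Lie}\OO$ need not be one. By the remark following Definition~\ref{defn oddly generated}, an oddly generated supergroup may have a Lie superalgebra that is not oddly generated, so $\o$ can fail to be a $0$-superalgebra and hence fail to be Sylow in $\g$. The Sylow subalgebra attached to $\OO$ via Lemma~\ref{lemma bijection 0 subalgs and subgps} is rather $\o':=\mathfrak G^{-1}(\OO)=[\o_{\ol 1},\o_{\ol 1}]+\o_{\ol 1}\sub\o$. The $\GG_0$-equivariance of $\mathfrak G$ in Lemma~\ref{lemma bijection oddly generated} gives $\NN_{\GG_0}(\OO)=\NN_{\GG_0}(\o')$, whose Lie algebra is $\n_\g(\o')_{\ol 0}$, which is reductive by Corollary~\ref{cor normalizer qred hom elts}(1); this yields quasireductivity of $\NN_\GG(\OO)$. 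Second, Lemma~\ref{lemma full subcat} requires a global form, hence a connected supergroup, and $\NN_\GG(\OO)/\OO$ need not be connected. You can pass to the identity component first and then invoke finiteness of the component group, or bypass Lemma~\ref{lemma full subcat} altogether: once $\NN_\GG(\OO)$ is quasireductive, $\OO$ is splitting in it by Corollary~\ref{cor  reflective transitivity}, and since $\OO$ is normal the $\NN_\GG(\OO)$-module $\C[\NN_\GG(\OO)/\OO]$ coincides with the regular $\NN_\GG(\OO)/\OO$-module, so the trivial subgroup is splitting in the quotient and semisimplicity follows from Remark~\ref{remark reductive}.
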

	
	The following result is extremely useful.  Ideally, one could find a simple proof which doesn't rely on the explicit description of Sylow subalgebras.
	\begin{cor}\label{cor cent in norm}
		Let $\g$ be quasireductive with Sylow subalgebra $\o\sub\g$. Let $\mathfrak{t}\sub\o_{\ol{0}}$ be a maximal torus.  Then $\c_{\g}(\mathfrak{t})\sub\n_{\g}(\o)$.  
	\end{cor}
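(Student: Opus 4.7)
The plan is to reduce, via the structure theorem for quasireductive Lie superalgebras with trivial even centre, to the case where the ambient minimal ideal is simple, and then to verify the inclusion for each entry of the classification using the explicit tables from Sections 8.1 and 9.1.

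First, I would observe that $\z(\g)_{\ol{0}}$ lies in both $\c_\g(\mathfrak{t})$ and $\n_\g(\o)$, so one may quotient by it and assume $\z(\g)_{\ol{0}}=0$. Thm.~6.9 of \cite{S} then gives $\g=\c(\g)\rtimes(\l\rtimes\r)$ with $\c(\g)=\k_1\times\cdots\times\k_\ell$ a product of minimal ideals, and by the proof of Theorem \ref{theorem uniqueness conjugacy} one has $\o=\o'\rtimes\l$ with $\o'=\prod_i\o'_i\sub\c(\g)$ and $\o'_i:=\o'\cap\k_i$. By Proposition \ref{prop normalizer description}, $\n_\g(\o)=\n_{\c(\g)}(\o')\rtimes(\l\rtimes\r)$, so automatically $\l\oplus\r\sub\n_\g(\o)$.

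Next, since $\mathfrak{t}\sub\c(\g)$ and $\c(\g)$ is an ideal, $\mathfrak{t}$ acts trivially on $\g/\c(\g)\cong\l\oplus\r$. Using the $\mathfrak{t}$-semisimplicity of $\g$ to split this filtration, any $x\in\c_\g(\mathfrak{t})$ can be written as $x=x'+y$ with $x'\in\c_{\c(\g)}(\mathfrak{t})$ and $y\in\l\oplus\r$. It therefore suffices to prove $\c_{\c(\g)}(\mathfrak{t})\sub\n_{\c(\g)}(\o')$, and the product decompositions $\c(\g)=\prod\k_i$, $\mathfrak{t}=\prod\mathfrak{t}_i$, $\o'=\prod\o'_i$ reduce this further to $\c_{\k_i}(\mathfrak{t}_i)\sub\n_{\k_i}(\o'_i)$ for each minimal ideal. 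When $\k_i$ is odd abelian or a Takiff superalgebra one has $\o'_i=\k_i$, so the inclusion is trivial.

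The main obstacle, and the only remaining case, is when $\k_i$ is simple: here I would verify the inclusion (in fact equality) by direct inspection of the two tables in Sections 8.1 and 9.1. For instance, for $(\k_i,\o'_i)=(\s\l(m|n),\s\l(1|1)^m)$ with $m<n$, a computation of $\mathfrak{t}_i$-weight spaces shows that $\c_{\s\l(m|n)}(\mathfrak{t}_i)=\s(\g\l(1|1)^m\times\g\l(n-m))$, which is exactly the normalizer listed in the table; analogous elementary but slightly tedious checks in each remaining row (the defect-one Kac-Moody families, the orthosymplectic cases, the queer, and the periplectic families) complete the proof. This case-by-case verification is the main obstacle, and as the authors themselves remark just before the statement, a conceptual argument bypassing the explicit description of Sylow subalgebras would be preferable.
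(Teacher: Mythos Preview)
Your approach is essentially the same as the paper's: the authors' proof is the single line ``This follows from our explicit description of $\n_{\g}(\o)$ given in Proposition \ref{prop normalizer description},'' and your argument is precisely an unpacking of that sentence---reduce modulo $\z(\g)_{\ol{0}}$, invoke Proposition \ref{prop normalizer description} to peel off $\l\rtimes\r$, and then check the simple factors against the tables in Sections 8.1 and 9.1. One small point to tighten: in your decomposition $x=x'+y$ you assert $y\in\l\oplus\r$ \emph{and} $x'\in\c_{\c(\g)}(\mathfrak{t})$, which implicitly uses $[\mathfrak{t},\l\oplus\r]=0$; this does hold (the outer derivations in Lemma \ref{lemma derivations simples} and Remark \ref{remark derivations Takiff} all commute with the even part), but it is worth saying so rather than appealing only to $\mathfrak{t}$-semisimplicity.
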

	\begin{proof}
		This follows from our explicit description of $\n_{\g}(\o)$ given in Proposition \ref{prop normalizer description}.
	\end{proof}
	
	\begin{cor}\label{cor gen elt}
		Let $\GG$ be quasireductive with Sylow subgroup $\OO\sub\GG$.  Write $\TT\sub\OO_0$ for a maximal torus of $\OO_0$.  Then there exists $x\in\o_{\ol{1}}^{hom}$ such that $\CC_{\GG}(x^2)=\CC_{\GG}(\TT).$  In particular $\CC_{\GG}(x^2)\sub\NN_{\GG}(\OO)$.
	\end{cor}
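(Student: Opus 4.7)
My plan is to produce $x\in\o_{\ol 1}^{hom}$ whose square $x^2$ is ``doubly regular'' in $\mathfrak{t}:=\operatorname{Lie}\TT$: regular with respect to $\o$, so that $\c_\o(x^2)$ is a Cartan of $\o$, and in addition regular with respect to the $\mathfrak{t}$-weight decomposition of $\g$. Once such an $x$ is in hand, one immediately has $\c_\g(x^2)=\c_\g(\mathfrak{t})$, and this lifts to the group-level equality $\CC_\GG(x^2)=\CC_\GG(\TT)$ since the centralizer of a torus in a connected reductive group is connected. The ``in particular'' clause is then immediate from Corollary~\ref{cor cent in norm}, which gives $\CC_\GG(\TT)\sub\NN_\GG(\OO)$.

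For the initial candidate, I apply Lemma~\ref{lem rootzero} to obtain $x_0\in\o_{\ol 1}^{hom}$ with $\c_\o(x_0^2)=\h$ a Cartan of $\o$; after an $\OO_0$-conjugation (which does not affect the statement being proved) I may assume $\h_{\ol 0}=\mathfrak{t}$, so that $x_0^2\in\mathfrak{t}$ is already regular in $\o$. Let $U\sub\mathfrak{t}$ be the Zariski-open dense complement of the finite union of hyperplanes $\ker\alpha$ as $\alpha$ ranges over the nonzero $\mathfrak{t}$-weights of $\g$; the remaining task is to perturb $x_0$ within $\o_{\ol 1}^{hom}$ so that $x^2$ lands in $U$. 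Equivalently, it suffices to show that the image $I:=\{x^2:x\in\o_{\ol 1}^{hom}\}\cap\mathfrak{t}$ is Zariski-dense in $\mathfrak{t}$, since then $I\cap U$ is nonempty and any $x\in\o_{\ol 1}^{hom}$ with $x^2\in U$ is as required.

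Density of $I$ is where I expect the main difficulty. Lemma~\ref{lem rootzero}(2) gives only that $I$ linearly spans $\mathfrak{t}$, which is not enough: a quadratic map can have image of strictly smaller dimension than its linear span. To upgrade spanning to density, I invoke Theorem~\ref{thm classification 0 groups} to write $\o$, modulo a central extension, as $\SS\times\VV$ with $\SS=\bigoplus_i\s_i\otimes\C[\xi_i]\rtimes\mathfrak{d}$ a Takiff 0-superalgebra and $\VV$ odd abelian. The factor $\VV$ contributes nothing to $x^2$; for the Takiff factor, taking $x=\sum_i H_i\xi_i+d$ with $H_i\in T_i$ (a Cartan of $\s_i$) and $d=\sum_j c_j\d_{\xi_j}\in\mathfrak{d}$, a short bracket computation gives $x^2=\sum_i c_i H_i\in T:=\bigoplus_i T_i$. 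Since each projection $\mathfrak{d}\to\C\langle\d_{\xi_i}\rangle$ is surjective by the definition of a Takiff 0-superalgebra, I may choose $d$ with all $c_i\neq 0$; as the $H_i$ then vary independently, $x^2$ sweeps out all of $T$. The contribution of the central extension in the case $\mathfrak{t}\supsetneq T$ is handled by observing that the cocycle defining the extension is nondegenerate on squares, as may be verified case-by-case from the explicit tables of Sylow subalgebras of simple Lie superalgebras together with Lemma~\ref{lemma no simple ideals}.
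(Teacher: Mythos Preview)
Your route differs from the paper's in a structural way. The paper does \emph{not} analyze $\o$ via the classification of $0$-groups; instead it reduces on the ambient algebra $\g$: first kill $\mathfrak{z}(\g)_{\ol 0}$, then use $\g=\c(\g)\rtimes(\l\rtimes\r)$ and $\o=\o'\rtimes\l$ with $\mathfrak{t}\sub\o'$ to reduce the existence of $x$ to the cases where $\g$ itself is simple or a Takiff $0$-superalgebra, and checks those directly. Since the statement is about $\CC_\GG$, reducing on $\g$ is the natural move and keeps the case-check short; your detour through the structure of $\o$ adds a layer and, as you acknowledge, still ends in a case-by-case verification.

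There is also a genuine gap in your formulation. You aim for Zariski density of the squaring image in $\mathfrak{t}=\operatorname{Lie}\TT$, but the image of $x\mapsto x^2$ lies in $[\o_{\ol 1},\o_{\ol 1}]$, and for a $0$-\emph{group} $\OO$ one need not have $\operatorname{Lie}\OO_0=[\o_{\ol 1},\o_{\ol 1}]$ (oddly generated as a supergroup does not force oddly generated as a superalgebra; cf.\ the remark after Definition~\ref{defn oddly generated}). When $\mathfrak{t}\supsetneq\hat{\mathfrak t}:=\mathfrak{t}\cap[\o_{\ol 1},\o_{\ol 1}]$, your density-in-$\mathfrak{t}$ claim is simply false. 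The fix is that $\TT$ is the global form of $\hat{\mathfrak t}$, so $\CC_\GG(\TT)=\CC_\GG(\hat{\mathfrak t})$ and it suffices to land in the $\g$-regular locus of $\hat{\mathfrak t}$; your connectedness-of-torus-centralizers argument then does lift the Lie-algebra equality to groups. But even restricted to $\hat{\mathfrak t}$, the phrase ``cocycle nondegenerate on squares'' is not a well-defined condition, and surjectivity of the bracket from Lemma~\ref{lem rootzero}(2) does not imply surjectivity (or density) of the quadratic squaring map. So your last paragraph is not yet a proof---it is a promissory case-check that, once carried out, amounts to the paper's own case-check done less directly.
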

	
	\begin{definition}
		We call an element $x\in\g_{\ol{1}}^{hom}$ generic if for some Sylow subgroup $\OO\sub\GG$, $x\in\operatorname{Lie}\OO$ and satisfies the conditions of Corollary \ref{cor gen elt}.
	\end{definition}
	
	\begin{proof}[Proof of Corollary \ref{cor gen elt}]
		The second statement follows from the connectedness of centralizers of tori and Corollary \ref{cor cent in norm}. 
		
		For the first statement, write $\g=\operatorname{Lie}\GG$.  Clearly we may assume that $\mathfrak{z}(\g)_{\ol{0}}=0$, and so we do assume this. As usual, write $\g=\c(\g)\rtimes(\l\rtimes\r)$, so that $\o=\o'\rtimes\l$, where $\o=\operatorname{Lie}\OO$.     Writing $\mathfrak{t}=\operatorname{Lie}\TT$, it is clear that $\mathfrak{t}\sub\o'$, so we only need to find such an element $x$ when $\g$ is either simple or a Takiff 0-superalgebra. 
		These cases are easily checked, case by case.
	\end{proof}
	
	\begin{lemma}
		The set of generic elements of $\g_{\ol{1}}$ form a $\Aut(\g)$-stable subset of $\g_{\ol{1}}^{hom}$.
	\end{lemma}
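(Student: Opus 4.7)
The plan is to show that each of the three conditions defining genericity --- (i) $x \in \g_{\ol{1}}^{hom}$; (ii) $x$ lies in some Sylow subalgebra $\o \subset \g$; and (iii) $\CC_{\GG}(x^{2}) = \CC_{\GG}(\TT)$ for $\TT$ a maximal torus of $\OO_{0}$ --- is intrinsic to the Lie superalgebra structure on $\g$, and hence is preserved by any $\phi \in \Aut(\g)$. By the bijective correspondence of Lemma \ref{lemma bijection 0 subalgs and subgps}, I may work entirely at the level of $\g$ and Sylow subalgebras.

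First I would reformulate (iii) as a purely Lie-algebraic condition. Writing $\mathfrak{t} = \operatorname{Lie}\TT \subset \o_{\ol{0}}$, the condition $\c_{\g}(x^{2}) = \c_{\g}(\mathfrak{t})$ forces $x^{2} \in \c_{\o_{\ol{0}}}(\mathfrak{t}) = \mathfrak{t}$ (since $\mathfrak{t}$ is a maximal torus of the reductive $\o_{\ol{0}}$ and $x^{2}$ is semisimple). In the reductive group $\GG_{0}$ the centralizer $\CC_{\GG_{0}}(\TT)$ is connected, and the identity component of $\CC_{\GG_{0}}(x^{2})$ has Lie algebra $\c_{\g_{\ol{0}}}(x^{2})$. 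Combined with the general fact that connected subgroups of $\GG$ are determined by their Lie subalgebras of $\g$, this reduces the equality $\CC_{\GG}(x^{2}) = \CC_{\GG}(\TT)$ to the algebraic equality $\c_{\g}(x^{2}) = \c_{\g}(\mathfrak{t})$.

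Next I would verify $\Aut(\g)$-stability of (i)--(iii) in turn. For (i), any $\phi \in \Aut(\g)$ is even, satisfies $[\phi(x), \phi(x)] = \phi([x,x])$, and restricts to a Lie algebra automorphism of the reductive $\g_{\ol{0}}$; thus it preserves Jordan decomposition and sends $\g_{\ol{1}}^{hom}$ to itself. For (ii), by Lemma \ref{lemma splitting subalgebras} the splitting property is characterized by an $\Ext$-injectivity condition in $\Rep_{\g_{\ol{0}}}\g$, and $\phi$ induces an autoequivalence of this category; so $\phi$ sends splitting subalgebras to splitting subalgebras and hence Sylow subalgebras to Sylow subalgebras. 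Consequently $\phi(\o)$ is a Sylow subalgebra containing $\phi(x)$, and $\phi(\mathfrak{t})$ is a maximal torus of $\phi(\o)_{\ol{0}}$. For (iii), applying $\phi$ to $\c_{\g}(x^{2}) = \c_{\g}(\mathfrak{t})$ and using that $\phi(x)^{2} = \phi(x^{2})$ gives $\c_{\g}(\phi(x)^{2}) = \c_{\g}(\phi(\mathfrak{t}))$, which is exactly the genericity condition for $\phi(x)$ with respect to $\phi(\o)$ and $\phi(\mathfrak{t})$.

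The main obstacle is the reformulation in the first paragraph: once the group-theoretic equality $\CC_{\GG}(x^{2}) = \CC_{\GG}(\TT)$ is translated to the manifestly $\Aut(\g)$-invariant equality $\c_{\g}(x^{2}) = \c_{\g}(\mathfrak{t})$, the conclusion follows by direct verification. I do not expect genuine difficulties, but one must be careful that the connectedness arguments used to make this translation apply to $\CC_{\GG}(x^{2})$, where $x^{2}$ is semisimple but a priori not contained in any fixed torus until the identification with $\mathfrak{t}$ is established.
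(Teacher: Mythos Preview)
Your proposal is correct and follows the same underlying idea as the paper --- that genericity is an intrinsic property of $\g$ and therefore automorphism-invariant --- but you supply the actual verification where the paper's proof is a single sentence (``The property of being generic is clearly preserved under any automorphism, so the result is clear.''). Your translation of the group-theoretic equality $\CC_{\GG}(x^{2})=\CC_{\GG}(\TT)$ into the Lie-algebraic equality $\c_{\g}(x^{2})=\c_{\g}(\mathfrak{t})$ is exactly the step the paper leaves implicit; indeed, the proof of the very next lemma in the paper uses this same reformulation when it describes the generic locus in $\o_{\ol{1}}$ as the preimage of $\{t\in\o_{\ol{0}}:\c_{\g}(t)=\c_{\g}(\c_{\o}(t))\}$. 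Your connectedness argument for the translation is sound: since $x^{2}$ is semisimple in the reductive $\g_{\ol{0}}$, both $\CC_{\GG_{0}}(x^{2})$ and $\CC_{\GG_{0}}(\TT)$ are connected, and the supergroup centralizers are recovered from these together with the odd parts of the Lie-algebra centralizers.
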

	
	\begin{proof}
		The property of being generic is clearly preserved under any automorphism, so the result is clear.
	\end{proof}
	
	\begin{lemma}
		Let $\g$ be quasireductive with Sylow subalgebra $\o$.  Then the set of generic elements in $\o_{\ol{1}}$ is open and dense in $\o_{\ol{1}}$.
	\end{lemma}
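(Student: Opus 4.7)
The plan is to exhibit the set $U\subset\o_{\ol{1}}$ of generic elements as the $\Ad(\OO_0)$-orbit of an explicit open dense subset $V$ of a Cartan subspace, and then verify openness and density of $U$ via a submersion argument.

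I would begin by fixing a Cartan subalgebra $\h\subset\o$ as provided by Lemma \ref{lem rootzero} and letting $\TT\subset\OO_0$ be the maximal torus with $\operatorname{Lie}\TT=\h_{\ol{0}}$. Let $\Lambda$ denote the set of nonzero weights of $\TT$ on $\g$. For $y\in\h_{\ol{0}}$, the equality $\CC_{\GG}(y)=\CC_{\GG}(\TT)$ holds exactly when $\alpha(y)\neq 0$ for every $\alpha\in\Lambda$. For each such $\alpha$, the map $x\mapsto\alpha(x^2)$ is a homogeneous quadratic polynomial on $\h_{\ol{1}}$; by Corollary \ref{cor gen elt}, some $x_0\in\h_{\ol{1}}$ makes all of these polynomials simultaneously nonzero, so each is a nonzero polynomial. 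Hence
\[
V:=\{x\in\h_{\ol{1}} : \alpha(x^2)\neq 0 \text{ for all } \alpha\in\Lambda\}
\]
is Zariski-open and dense in $\h_{\ol{1}}$, and every element of $V$ is generic by construction.

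Next I would verify the equality $U=\Ad(\OO_0)\cdot V$. The inclusion $\Ad(\OO_0)\cdot V\subseteq U$ is immediate, since conjugation by $\OO_0$ transports $\CC_{\GG}(x^2)$ and $\TT$ covariantly and preserves $\o$. For the reverse inclusion, if $y\in\o_{\ol{1}}$ is generic with centralizing torus $\TT_y\subset\OO_0$, then by $\OO_0$-conjugacy of maximal tori in $\OO_0$ I may replace $y$ by a conjugate so that $\TT_y=\TT$ and $y^2\in\h_{\ol{0}}$. The regularity of $y^2$ yields $\alpha(y^2)\neq 0$ for every root $\alpha$ of $\o$, so in the root decomposition $\o=\h\oplus\bigoplus_\alpha\o_\alpha$ of Lemma \ref{lem rootzero} the element $y^2$ acts invertibly on each $\o_\alpha$. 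Therefore $\c_\o(y^2)=\h$, which forces $y\in\h_{\ol{1}}$, i.e., $y\in V$.

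Finally, consider the morphism $\mu:\OO_0\times V\to\o_{\ol{1}}$, $(g,x)\mapsto\Ad(g)(x)$, whose image is $U$. Its differential at $(e,x_*)$ is $(Y,z)\mapsto[Y,x_*]+z$. For $Y\in(\o_\alpha)_{\ol{0}}$ the super-Jacobi identity gives $[x_*,[x_*,Y]]=[x_*^2,Y]=\alpha(x_*^2)Y\neq 0$, so $[Y,x_*]\neq 0$; because each root space $(\o_\alpha)_{\ol{1}}$ is one-dimensional (as follows from the classification of Theorem \ref{thm classification 0 groups}, using that the Takiff components have $(1|1)$-dimensional root spaces), we obtain $[(\o_\alpha)_{\ol{0}},x_*]=(\o_\alpha)_{\ol{1}}$. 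Combined with the $z$-coordinate contributing $\h_{\ol{1}}$, the differential is surjective, so $\mu$ is smooth on $\OO_0\times V$ and therefore an open morphism. Thus $U$ is Zariski-open in $\o_{\ol{1}}$, and being nonempty in the irreducible affine space $\o_{\ol{1}}$ it is also dense. The main obstacle is the second step — showing that a generic $y$ conjugates into $\h_{\ol{1}}$ — which is the supergeometric analog of the standard fact that regular semisimple elements have Cartan centralizers, and whose verification rests essentially on the structural description of $0$-superalgebras from Theorem \ref{thm classification 0 groups} together with the root decomposition of Lemma \ref{lem rootzero}.
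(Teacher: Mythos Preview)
Your argument is correct, but it is substantially more elaborate than the paper's. The paper observes that an element $x\in\o_{\ol{1}}$ is generic precisely when $t:=x^2\in\o_{\ol{0}}$ is regular semisimple and satisfies $\c_\g(t)=\c_\g(\c_{\o}(t))$; the set $V\subset\o_{\ol{0}}$ of such $t$ is open (regular semisimplicity is open, and the second condition is an upper-semicontinuity bound on $\dim\c_\g(t)$), so the generic locus is the preimage of $V$ under the polynomial squaring map $\o_{\ol{1}}\to\o_{\ol{0}}$. Nonemptiness is Corollary~\ref{cor gen elt}, and density follows from irreducibility of $\o_{\ol{1}}$. No root-space analysis or submersion argument is needed.

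Your approach instead fixes a Cartan subalgebra $\h$, identifies the generic locus as the $\Ad(\OO_0)$-saturation of an explicit Zariski-open subset of $\h_{\ol{1}}$, and verifies openness via surjectivity of the differential of the action map, using the $(1|1)$-dimensionality of root spaces from Lemma~\ref{lem rootzero}. This is valid and gives the additional information that every generic element is $\OO_0$-conjugate to one in $\h_{\ol{1}}$, but it leans on the structural classification of $0$-superalgebras in a way the paper's two-line argument avoids entirely. The paper's route is the more efficient one here; yours would be preferable if one actually needed the conjugacy-into-$\h_{\ol{1}}$ statement.
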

	\begin{proof}
		Let $V\sub\o_{\ol{0}}$ be the open subset of regular semisimple elements $t$ for which \linebreak $\c_{\g}(t)=\c_{\g}(\c_{\o}(t))$. Then the set of generic elements is exactly the preimage of $V$ under the squaring map $\o_{\ol{1}}\to\o_{\ol{0}}$.  By Corollary \ref{cor gen elt} this preimage is nonempty, and so we are done.
	\end{proof}
	
	\begin{cor}
		Let $\g$ be quasireductive with Sylow subalgebras $\o,\o'$ such that $\o_{\ol{0}}=\o_{\ol{0}}'$.  Then $\o=\o'$.
	\end{cor}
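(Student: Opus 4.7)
The approach I would take is to recast the claim as an equality of normalizers and then exploit the structure theory of Sylow subalgebras developed earlier in the paper. By Theorem~\ref{theorem uniqueness conjugacy} applied to a global form $\GG$ of $\g$, there exists $g \in \GG_0$ with $\Ad(g)\o = \o'$, and the hypothesis $\o_{\bar 0} = \o'_{\bar 0}$ forces $g \in \NN_{\GG_0}(\o_{\bar 0})$. Since $\NN_{\GG_0}(\o) \subseteq \NN_{\GG_0}(\o_{\bar 0})$ is automatic, the corollary is equivalent to the reverse inclusion $\NN_{\GG_0}(\o_{\bar 0}) \subseteq \NN_{\GG_0}(\o)$. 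Thus the plan is to show that any element of $\GG_0$ which normalizes the even part of a Sylow subalgebra automatically normalizes the entire Sylow.

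First I would reduce to $\z(\g)_{\bar 0} = 0$ using Lemma~\ref{lemma cent extn even deriv} together with the observation that $\z(\g)_{\bar 0}$ is contained in every Sylow subalgebra and its normalizer. Next, using the decomposition $\g = \c(\g) \rtimes (\l \rtimes \r)$ from Thm.~6.9 of \cite{S} and the explicit description of Sylows from Lemma~\ref{lemma bijection sylows c(g) g} (namely $\o = \o' \rtimes \l$ with $\o' = \prod_i (\o \cap \k_i)$), the problem further reduces to verifying the desired equality of normalizers on each minimal ideal $\k_i$ separately. The minimal ideals come in three flavors: simple, odd abelian, or of the form $\s \otimes \C[\xi]$ for a simple even $\s$.

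The odd abelian and Takiff-type cases are immediate, since $\o \cap \k_i$ is determined tautologically by $\k_i$ in those situations (equals $\k_i$ itself or $\xi\s$, respectively), so normalizing $\o_{\bar 0}$ restricted to $\k_i$ trivially implies normalizing $\o \cap \k_i$. For simple $\k_i$ one invokes the table of Sylow subalgebras preceding Proposition~\ref{prop simple conj unique} together with the normalizer table of Section 9.1: in each row, direct inspection shows that the algebraic group $\NN_{\GG_0}(\o_{\bar 0})$ (which one can compute using the explicit embedding of $\o_{\bar 0}$) agrees with the algebraic group $\NN_{\GG_0}(\o)$ read off from the normalizer table.

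The main obstacle is the case-by-case check in the simple case. A conceptual reason it works is that, by Lemma~\ref{lem rootzero}, the root spaces $\o_\alpha \subseteq \g_\alpha$ (for roots $\alpha$ of $\o_{\bar 0}$) together with the Cartan $\h = \c_\g(\mathfrak{t}) \cap \o$ are rigid in $\g$ once $\o_{\bar 0}$ and its maximal torus $\mathfrak{t}$ are fixed, since each $\o^{\alpha}$ must be a copy of $\s\q(2)$ or $\p\s\q(2)$ inside $\g_\alpha \oplus \g_{-\alpha} \oplus (\o_{\bar 0})_\alpha \oplus (\o_{\bar 0})_{-\alpha}$. I expect that a slicker, table-free proof exists along these lines, but the bookkeeping to make it airtight seems comparable to the direct case analysis.
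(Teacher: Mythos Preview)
Your approach is correct but takes a different route from the paper's proof. You reformulate the statement as the equality $\NN_{\GG_0}(\o_{\bar 0}) = \NN_{\GG_0}(\o)$ and then verify this by reducing to minimal ideals and checking each simple case against the normalizer tables. The paper instead argues directly at the level of Lie superalgebras: pick a maximal torus $\mathfrak{t}\sub\o_{\bar 0}=\o'_{\bar 0}$ and let $\h=\c_{\o}(\mathfrak{t})$; Corollary~\ref{cor cent in norm} gives $\h\sub\c_{\g}(\mathfrak{t})\sub\n_{\g}(\o')$, and since $\h_{\bar 1}$ consists of homological elements, Corollary~\ref{cor normalizer qred hom elts}(3) forces $\h\sub\o'$; then Lemma~\ref{lem rootzero} yields $\o=\h+[\h,\o_{\bar 0}]\sub\o'$, and symmetry finishes it. This is exactly the ``slicker, table-free'' argument you anticipate in your last paragraph---the missing ingredient is that $\n_{\g}(\o')_{\bar 1}^{hom}=(\o')_{\bar 1}^{hom}$, which lets you push the Cartan of $\o$ into $\o'$ without any case analysis. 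Of course, Corollary~\ref{cor cent in norm} is itself proved from the explicit description of normalizers, so ultimately both routes rest on the tables; the paper's version simply packages the table-dependent step into a reusable statement and keeps the present proof to three lines. A minor point: your reduction to individual minimal ideals should also account for elements of $\GG_0$ that permute isomorphic minimal ideals, though this is easily handled.
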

	
	\begin{proof}
		Let $\mathfrak{t}\sub\o_{\ol{0}}$ be a maximal torus, and let $\h$ denote the centralizer of $\mathfrak{t}$ in $\o$.  Then by Corollary \ref{cor cent in norm}, 
		\[
		\h\sub\c_{\g}(\mathfrak{t})\sub \n_{\g}(\o').
		\]
		Since $\h_{\ol{1}}=\h_{\ol{1}}^{hom}$, by Corollary \ref{cor normalizer qred hom elts} we must have $\h\sub\o'$.  Now we apply Lemma \ref{lem rootzero} to obtain: 
		\[
		\o=\h+[\h,\o_{\ol{0}}]=\o'.
		\]
	\end{proof}
	
	\begin{cor}
		Let $\GG$ be quasireductive with Sylow subgroups $\OO,\OO'\sub\GG$.  If $\OO_0=\OO_0'$, then $\OO=\OO'$.
	\end{cor}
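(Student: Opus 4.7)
The plan is to reduce to the preceding corollary via the bijection of Lemma~\ref{lemma bijection 0 subalgs and subgps} between Sylow subgroups of $\GG$ and Sylow subalgebras of $\g:=\operatorname{Lie}\GG$. Setting $\o:=\mathfrak{G}^{-1}(\OO)$ and $\o':=\mathfrak{G}^{-1}(\OO')$, these are Sylow subalgebras of $\g$; since $\mathfrak{G}$ is injective, showing $\OO=\OO'$ is equivalent to showing $\o=\o'$, which by the preceding corollary in turn reduces to the equality $\o_{\ol{0}}=\o'_{\ol{0}}$.

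To deduce this equality from the hypothesis $\OO_0=\OO'_0$, I would establish the auxiliary compatibility $\o_{\ol{0}}=\operatorname{Lie}\OO_0$ for any Sylow subalgebra $\o\subset\g$ with corresponding Sylow subgroup $\OO=\mathfrak{G}(\o)$. This can be extracted from Lemma~\ref{lem rootzero}: the Cartan $\h\subset\o$ has $\h_{\ol{0}}$ integrally spanned by coroots of a classical reduced root system (namely the root system of $\o_{\ol{0}}$), so $\h_{\ol{0}}$ is the Lie algebra of an algebraic torus in $\GG_0$, and the root decomposition $\o_{\ol{0}}=\h_{\ol{0}}+\sum_{\alpha}(\o_{\ol{0}})_\alpha$ then exhibits $\o_{\ol{0}}$ as an algebraic Lie subalgebra equal to $\operatorname{Lie}\OO_0$. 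Granted this, the hypothesis gives $\o_{\ol{0}}=\operatorname{Lie}\OO_0=\operatorname{Lie}\OO'_0=\o'_{\ol{0}}$, and the preceding corollary combined with the bijection yields $\OO=\OO'$.

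The principal obstacle is precisely this compatibility $\o_{\ol{0}}=\operatorname{Lie}\OO_0$: for a general quasireductive subalgebra $\k_{\ol{0}}\subset\g_{\ol{0}}$ one may have $\k_{\ol{0}}\subsetneq\operatorname{Lie}\KK_0$ (consider irrational one-parameter subspaces of tori), so the hypothesis $\OO_0=\OO'_0$ alone does not a priori determine $\o_{\ol{0}}$. The rigidity needed to rule this out for Sylow subalgebras is supplied by the integrality of the root lattice in Lemma~\ref{lem rootzero}.
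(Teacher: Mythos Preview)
Your overall plan --- reduce to the preceding subalgebra corollary via the bijection $\mathfrak{G}$ of Lemma~\ref{lemma bijection 0 subalgs and subgps} --- is the natural one and matches the paper's implicit argument. You have also correctly isolated the nontrivial step: passing from $\OO_0 = \OO'_0$ to $\o_{\ol{0}} = \o'_{\ol{0}}$ requires the compatibility $\o_{\ol{0}} = \operatorname{Lie}\OO_0$, which is not automatic for global forms.

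However, your justification of that compatibility is flawed. You assert that Lemma~\ref{lem rootzero} shows $\h_{\ol{0}}$ is integrally spanned by coroots, but part~(3) of that lemma only identifies the root system of $\o$ with that of $\o_{\ol{0}}$; coroots span at most the derived part of $\h_{\ol{0}}$, never its center. In the most common case $\o = \s\l(1|1)^d$ (the Sylow for every Kac--Moody superalgebra of defect $d$) one has $\o_{\ol{0}} = \h_{\ol{0}} \cong \C^d$ abelian with \emph{empty} root system, hence no coroots whatsoever. The possible obstruction to algebraicity lives exactly in $\z(\o_{\ol{0}})$, and your argument says nothing there.

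The compatibility does hold for Sylow subalgebras, but establishing it seems to require the explicit description of Sylows obtained in the proof of Theorem~\ref{theorem uniqueness conjugacy}: for each simple factor one checks that the even part of the listed Sylow integrates to an algebraic subgroup (e.g.\ for $\s\l(1|1)^d \subset \g\l(m|n)$ the even part is the Lie algebra of an explicit subtorus of the diagonal torus), and then one propagates through the structural reduction of \S8.2--8.3. This is substantially more than a one-line appeal to Lemma~\ref{lem rootzero}.
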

	
	\subsection{The third Sylow Theorem}\label{section third sylow thm}
	
	Let $G$ be a finite group, and write $\text{Syl}_p(G)$ for the set of Sylow $p$-subgroups of $G$.  The third Sylow theorem states that $|\text{Syl}_p(G)|$ is congruent to 1 
	modulo $p$.  Since it is clear that $|\text{Syl}_p(G)|=|G/N_G(P)|$ for any Sylow subgroup $P$, we may view this as saying that $\vol(G/N_G(P))=1$ in the base field.
	
	We would like to give an analogous result in the super setting.  One statement is that $\vol(\GG/\NN_\GG(\OO))\neq0$ for any Sylow subgroup $\OO\sub\GG$; however this is obvious by \ref{lemma nonzero} because $\NN_\GG(\OO)$ contains $\OO$, and thus is splitting.
	
	We formulate a slightly different claim which is quite useful.  For this, recall that if $x\in\g_{\ol{1}}^{hom}$, then we have the Duflo-Serganova functor $DS_x:\Rep_{\g_{\ol{0}}}\g\to s\operatorname{Vec}$ given by
	\[
	DS_xM=\frac{\operatorname{Ker}(x:M\to M)}{\operatorname{Im}(x:M\to M)\cap\operatorname{Ker}(x:M\to M)}.
	\]
	We refer to \cite{GHSS} for more details about this functor.  
	
	If $A$ is a supercommutative algebra in $\Rep_{\g_{\ol{0}}}\g$, then a straightforward check shows that $DS_xA$ will once again be a supercommutative algebra.

	\begin{definition}
		Let $\GG$ be quasireductive with Sylow subgroup $\OO\sub\GG$, and let $\TT\sub\OO_0$ be a maximal torus in $\OO_0$.  Define $W_{\GG}:=\NN_{\GG}(\TT)/\CC_{\GG}(\TT)$, and $W_\OO:=\NN_{\NN_{\GG}(\OO)}(\TT)/\CC_{\NN_{\GG}(\OO)}(\TT)$.
	\end{definition}
	
	\begin{lemma}
		Both $W_{\GG}$ and $W_{\NN_{\GG}(\OO)}$ are finite groups, and $W_{\NN_{\GG}(\OO)}$ is naturally a subgroup of $W_{\GG}$.
	\end{lemma}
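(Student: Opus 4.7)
The plan is to reduce both finiteness and the inclusion to classical facts about tori in algebraic groups of finite type. The first step is to observe that since $\TT$ is purely even, any odd element $\xi\in\NN_\GG(\TT)_{\ol{1}}$ at the Lie-algebra level must satisfy $[\xi,\mathfrak{t}]\sub \mathfrak{t}\cap\g_{\ol{1}}=0$, so it automatically centralizes $\TT$; the same holds at the supergroup level by the functor-of-points description of normalizers and centralizers. Hence $\NN_\GG(\TT)_{\ol{1}}=\CC_\GG(\TT)_{\ol{1}}$, so $W_\GG$ is purely even and equals the classical quotient $\NN_{\GG_0}(\TT)/\CC_{\GG_0}(\TT)$. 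The analogous statement for $W_{\NN_\GG(\OO)}$ follows identically.

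Next I would invoke the rigidity of tori: the conjugation action of $\NN_{\GG_0}(\TT)$ on $\TT$ is an algebraic homomorphism into the discrete group $\Aut(\TT)$, hence trivial on the identity component, so $\NN_{\GG_0}(\TT)^\circ\sub\CC_{\GG_0}(\TT)$. Therefore $W_\GG$ embeds into the finite component group $\pi_0(\NN_{\GG_0}(\TT))$, which is finite because $\GG_0$ is of finite type. For $W_{\NN_\GG(\OO)}$ the same argument applies once one knows $\NN_\GG(\OO)_0$ is a finite-type algebraic group, which is guaranteed by the corollary stated earlier in this section that $\NN_\GG(\OO)$ is quasireductive.

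Finally, for the inclusion $W_{\NN_\GG(\OO)}\hookrightarrow W_\GG$, I would note the tautological identities
\[
\NN_{\NN_\GG(\OO)}(\TT)=\NN_\GG(\OO)\cap\NN_\GG(\TT),\qquad \CC_{\NN_\GG(\OO)}(\TT)=\NN_\GG(\OO)\cap\CC_\GG(\TT),
\]
which yield a canonical group homomorphism $W_{\NN_\GG(\OO)}\to W_\GG$. Injectivity is immediate: an element of $\NN_{\NN_\GG(\OO)}(\TT)$ mapping to the identity of $W_\GG$ lies in $\CC_\GG(\TT)$ and simultaneously in $\NN_\GG(\OO)$, hence in $\CC_{\NN_\GG(\OO)}(\TT)$ by the second identity.

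The only real subtlety lies in the first step, namely checking that the supergroup normalizer and centralizer of a purely even torus agree in their odd parts, so that $W_\GG$ is an honest algebraic group rather than a genuine super object; once this is in place, the remaining arguments are standard algebraic group theory and pose no obstacle.
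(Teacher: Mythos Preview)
Your proof is correct and follows the same underlying route as the paper: both reduce to the classical fact that the normalizer modulo the centralizer of a torus in an algebraic group is finite, and both treat the inclusion as essentially tautological. The paper's proof is a two-sentence sketch that simply invokes this fact from the theory of reductive groups and declares the inclusion clear; you have unpacked both points in full detail, and in particular you make explicit a step the paper glosses over entirely, namely that $\NN_\GG(\TT)$ and $\CC_\GG(\TT)$ agree in their odd parts so that $W_\GG$ is an honest (even) algebraic group rather than a super object. Your rigidity-of-tori argument is precisely the standard proof of the fact the paper cites, so there is no genuine divergence in strategy, only in the level of care.
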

	\begin{proof}
		That these groups are finite is a consequence of the following statement from the theory of reductive groups: the normalizer modulo the centralizer of any torus is always a finite group.  It is clear that $W_{\NN_{\GG}(\OO)}$ is a subgroup of $W_{\GG}$.
	\end{proof}
	
	\begin{thm}
		Let $\GG$ be quasireductive with Sylow subgroup $\OO$, and let $x\in\o_{\ol{1}}^{hom}$ be generic.  Then we have an isomorphism of algebras
		\[
		DS_x\C[\GG/\NN_{\GG}(\OO)]=\C[W_{\GG}/W_{\NN_{\GG}(\OO)}],
		\]
		induced by the inclusion of spaces $W_{\GG}/W_{\NN_{\GG}(\OO)}\sub \GG/\NN_{\GG}(\OO)$.
	\end{thm}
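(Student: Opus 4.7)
The plan is to interpret $DS_x\C[\GG/\NN_\GG(\OO)]$ as the coordinate ring of the $x$-fixed subscheme of $\GG/\NN_\GG(\OO)$ and to identify this fixed subscheme with the finite set $W_\GG/W_{\NN_\GG(\OO)}$ embedded in $\GG/\NN_\GG(\OO)$ via $\NN_\GG(\TT)$. The map asserted in the theorem will then be realized by restriction of functions along this embedding.

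First I would locate the fixed points of the semisimple element $x^2$ on the even part $\GG_0/\NN_\GG(\OO)_0$. A coset $g\NN_\GG(\OO)_0$ is $x^2$-fixed iff $\Ad_{g^{-1}}(x^2)\in\operatorname{Lie}\NN_\GG(\OO)_0$; semisimplicity of the latter element together with conjugacy of maximal tori in $\NN_\GG(\OO)_0$ allows us to assume, after right multiplication by an element of $\NN_\GG(\OO)_0$, that $\Ad_{g^{-1}}(x^2)\in\operatorname{Lie}\TT$. Standard Weyl-type conjugacy then produces $n\in\NN_\GG(\TT)$ with $\Ad_{n^{-1}}(x^2)=\Ad_{g^{-1}}(x^2)$, so $gn^{-1}\in\CC_\GG(x^2)$. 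By the genericity of $x$ (Corollary \ref{cor gen elt}) we have $\CC_\GG(x^2)=\CC_\GG(\TT)$, and by Corollary \ref{cor cent in norm} this centralizer lies inside $\NN_\GG(\OO)$. Consequently $g\in\NN_\GG(\TT)\cdot\NN_\GG(\OO)_0$, and the $x^2$-fixed locus equals $\NN_\GG(\TT)/(\NN_\GG(\TT)\cap\NN_\GG(\OO)_0)\cong W_\GG/W_{\NN_\GG(\OO)}$.

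Next I would appeal to the localization behavior of $DS_x$ on affine homogeneous $\GG$-supervarieties, in the spirit of \cite{GHSS}: because the even fixed locus is a reduced finite set, $DS_x\C[\GG/\NN_\GG(\OO)]$ decomposes as a direct sum of local contributions, one at each fixed point. By $\GG$-equivariance these contributions are all isomorphic to the contribution at the base point $e\NN_\GG(\OO)$, whose completed local ring is the completion of $\Sym((\g/\n_\g(\o))^*)$ with $x$ acting as an odd derivation induced by $\ad(x)$ on $\g/\n_\g(\o)$. The problem thereby reduces to showing $DS_x(\g/\n_\g(\o))=0$, in which case each local contribution collapses to $\C$ and the total dimension matches $|W_\GG/W_{\NN_\GG(\OO)}|$.

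The main obstacle is precisely this vanishing. I would tackle it via the DS long exact sequence attached to $0\to\n_\g(\o)\to\g\to\g/\n_\g(\o)\to 0$—valid here because $x^2$ acts semisimply so one may pass to its zero generalized eigenspace on each term—and then exhibit an isomorphism $DS_x\n_\g(\o)\xrightarrow{\sim} DS_x\g$ induced by the inclusion. The structural input is Lemma \ref{lem rootzero}, which pins down $DS_x\o$ via the odd part of the Cartan $\h_{\ol{1}}$, together with Proposition \ref{prop normalizer description}, which presents $\n_\g(\o)$ as the semidirect product of $\CC_\g(\TT)$ with reductive and odd-abelian pieces commuting with $\o_{\ol{0}}$; neither the reductive summand nor the odd-abelian pieces contribute any new DS classes, while the Takiff/simple components of $\o$ already account for all of $DS_x\g$. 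Once this vanishing is in place, the natural embedding $\NN_\GG(\TT)/\NN_{\NN_\GG(\OO)}(\TT)\hookrightarrow\GG/\NN_\GG(\OO)$ furnishes the restriction map realizing the algebra isomorphism of the theorem.
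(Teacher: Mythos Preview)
Your overall strategy---localize $DS_x$ to the zero locus of $x$ on $\GG/\NN_\GG(\OO)$, identify that locus with $W_\GG/W_{\NN_\GG(\OO)}$, and show each local contribution is $\C$---matches the paper's. The execution differs in two places, and in each the paper's route is both shorter and closes a gap in yours.

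\emph{Fixed-point identification.} You work with $x^2$-fixed points and invoke ``standard Weyl-type conjugacy'' in $\operatorname{Lie}\TT$. But $\TT$ is a maximal torus of $\OO_0$, not of $\GG_0$ nor of $\NN_\GG(\OO)_0$, so neither conjugating $\Ad_{g^{-1}}(x^2)$ into $\operatorname{Lie}\TT$ via $\NN_\GG(\OO)_0$, nor the claim that two $\GG_0$-conjugate elements of $\operatorname{Lie}\TT$ are $\NN_\GG(\TT)$-conjugate, is standard. The paper instead looks at zeroes of $x$ itself: if $\Ad(g)(x)\in\n_\g(\o)_{\ol 1}$, then since it is homological, Corollary~\ref{cor normalizer qred hom elts}(3) gives $\n_\g(\o)^{hom}_{\ol 1}=\o^{hom}_{\ol 1}$, forcing $\Ad(g)(x)\in\o_{\ol 1}$. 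Now $\Ad(g)(x^2)\in\o_{\ol 0}$, so one conjugates by $\OO_0$---where $\TT$ \emph{is} maximal---and the rest follows. You never invoke this equality of homological cones, and without it you are working inside the wrong torus.

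\emph{Local vanishing.} Your long-exact-sequence plan, comparing $DS_x\n_\g(\o)$ with $DS_x\g$ via the structure of $\n_\g(\o)$, is unnecessary. Genericity and Corollary~\ref{cor cent in norm} give $\c_\g(x^2)=\c_\g(\mathfrak t)\subseteq\n_\g(\o)$ in one line; since $x^2$ is semisimple, $\ad(x^2)$---hence $\ad(x)$---is then an isomorphism on $\g/\n_\g(\o)$. That is precisely the hypothesis of the localization theorem, which is Thm.~4.1 of \cite{SSh2} (not \cite{GHSS}), and the result follows immediately.
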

	
	\begin{proof}
		Write $\TT=\CC_{\GG}(x^2)\sub\OO_0$, which is a maximal torus of $\OO_0$ by definition of generic element.
		
		Suppose that $x$ vanishes at some closed point $g\NN_{\GG}(\OO)$ for $g\in\GG(\C)$.  Then $\Ad(g)(x)\in\n_{\g}(\o)^{hom}_{\ol{1}}=\o_{\ol{1}}^{hom}$, where the last equality is by Corollary \ref{cor normalizer qred hom elts}.  If we write $\TT'=\CC_{\GG}(\Ad(g)(x^2))$, then $\TT'$ will also be a maximal torus of $\OO_0$, so after multiplying by some element of $\OO(\C)$ we may assume that $\TT'=\TT$, and thus $g\in\NN_{\GG}(\TT)(\C)$.  If $g\in\CC_{\GG}(\TT)$, then by Corollary \ref{cor cent in norm} we have $g\in\NN_{\GG}(\OO)(\C)$.  
		
		Thus we have shown that the zeroes of $x$ lie in the image of the natural map $W_{\GG}\to \GG/\NN_{\GG}(\OO)$, and from this it is clear that the zeroes are exactly given by $W_{\GG}/W_{\NN_{\GG}(\OO)}$.  
		
		Finally, we observe that for every zero $z$ of $x$, the endomorphism $[x,-]$ of $T_{z}^*\GG/\NN_{\GG}(\OO)$ is an isomorphism, once again by Corollary \ref{cor cent in norm} and the definition of generic elements.  Thus we may conclude by Thm.~4.1 of \cite{SSh2}.
	\end{proof}
	
	\subsection{Table of inclusions $W_{\NN_{\GG}(\OO)}\sub W_{\GG}$}\label{section table} We finish by giving a table of the groups $W_{\NN_{\GG}(\OO)}$ and $W_{\GG}$ for each simple superalgebra.  Observe that $W_{\NN_{\GG}(\OO)}=W_{\GG}$ unless $\g$ is of type $\q$.

	\renewcommand{\arraystretch}{1.5}	
	
	\begin{center}
		\begin{tabular}{|c|c|}
			\hline 
			$(\g,\o)$ & $W_{\NN_{\GG}(\OO)}\sub W_{\GG}$ \\
			\hline
			$m<n$: $(\s\l(m|n),\s\l(1|1)^m)$ & $W_{\NN_{\GG}(\OO)}=W_{\GG}=S_m$\\
			\hline         $(\p\s\l(n|n),\p(\s\l(1|1)^n))$ & $W_{\NN_{\GG}(\OO)}=W_{\GG}=S_n$ \\
			\hline 
			$m> 2n$: $(\o\s\p(m|2n),\s\l(1|1)^{n})$ & $W_{\NN_{\GG}(\OO)}=W_{\GG}=(\Z/2\Z)^n\rtimes S_n$ \\
			\hline 
			$2m\leq 2n$: $(\o\s\p(2m|2n),\s\l(1|1)^{m})$ & $W_{\NN_{\GG}(\OO)}=W_{\GG}=(\Z/2\Z)^{m-1}\rtimes S_m$ \\
			\hline 
			$2m+1\leq 2n$: $(\o\s\p(2m+1|2n),\s\l(1|1)^{m})$ & $W_{\NN_{\GG}(\OO)}=W_{\GG}=(\Z/2\Z)^m\rtimes S_m$ \\
			\hline 
			$(\mathfrak{d}(2,1;\alpha),\s\l(1|1))$ & $W_{\NN_{\GG}(\OO)}=W_{\GG}=\Z/2\Z$ \\ 
			\hline 
			$(\mathfrak{ag}(1|2),\s\l(1|1))$ & $W_{\NN_{\GG}(\OO)}=W_{\GG}=\Z/2\Z$ \\
			\hline 
			$(\mathfrak{ab}(1|3),\s\l(1|1))$ & $W_{\NN_{\GG}(\OO)}=W_{\GG}=\Z/2\Z$ \\
			\hline 
			$(\p\s\q(2n),\p\s(\q(2)^n))$ & $W_{\NN_{\GG}(\OO)}=(S_2)^n\rtimes S_n\sub S_{2n}=W_{\GG}$ \\
			\hline
			$(\p\s\q(2n+1),\p\s(\q(2)^n\times\q(1)))$ & $W_{\NN_{\GG}(\OO)}=(S_2)^n\rtimes S_n\sub S_{2n+1}=W_{\GG}$ \\
			\hline
			$(\s\mathfrak{pe}(2n),\s\mathfrak{pe}(2)^n)$ &  $W_{\NN_{\GG}(\OO)}=W_{\GG}=(S_2)^n\rtimes S_n$ \\
			\hline
			$(\s\mathfrak{pe}(2n+1),\s\mathfrak{pe}(2)^n\times\s\mathfrak{pe}(1))$ & $W_{\NN_{\GG}(\OO)}=W_{\GG}=(S_2)^n\rtimes S_n$ \\
			\hline         
		\end{tabular}
	\end{center}

	\bibliographystyle{amsalpha}
	
\end{document}